\newtheorem{lemma}{Lemma}[section]
\newtheorem{proposition}[lemma]{Proposition}
\newtheorem{corollary}[lemma]{Corollary}
\newtheorem{theorem}[lemma]{Theorem}
\newtheorem{example}[lemma]{Example}
\newtheorem{definition}[lemma]{Definition}
\newtheorem{remark}[lemma]{Remark}
\newtheorem*{Acknowledgement}{Acknowledgements}
\newtheorem{Theorem}{Theorem}
\newtheorem*{Remark}{Remark}
\newtheorem{assumption}[lemma]{Assumption}
\newtheorem{Corollary}[Theorem]{Corollary}
\newcommand\cf{cf\@. }
\newcommand\pa{ \partial}
\newcommand\bbC{\mathbb C}
\newcommand\bbN{\mathbb N}
\newcommand\bbR{\mathbb R}
\newcommand\bbS{\mathbb S}
\renewcommand\Re{\operatorname{Re}}
\newcommand\CI{\mathcal{C}^{\infty}}
\newcommand\fc{\operatorname{fc}}
\newcommand\Diff{\operatorname{Diff}}
\newcommand\cC{\mathcal{C}}
\newcommand\cA{\mathcal{A}}
\newcommand\cE{\mathcal{E}}
\newcommand\cD{\mathcal{D}}
\newcommand\cF{\mathcal{F}}
\newcommand\cV{\mathcal{V}}
\newcommand\cU{\mathcal{U}}
\newcommand\End{\operatorname{End}}
\newcommand\pr{\operatorname{pr}}
\newcommand\phg{\operatorname{phg}}
\newcommand\Spec{\operatorname{Spec}}
\newcommand\Id{\operatorname{Id}}
\renewcommand\sc{\operatorname{sc}}
\newcommand\lb{\operatorname{lb}}
\newcommand\rb{\operatorname{rb}}
\newcommand\fb{\operatorname{bf}}
\newcommand\ff{\operatorname{ff}}
\newcommand\cN{\mathcal{N}}
\newcommand\FB{\operatorname{FB}}
\newcommand\AC{\operatorname{AC}}
\newcommand\SU{\operatorname{SU}}
\newcommand\QAC{\operatorname{QAC}}
\newcommand\QFB{\operatorname{QFB}}
\newcommand\ALE{\operatorname{ALE}}
\newcommand\QALE{\operatorname{QALE}}
\newcommand\cK{\mathcal{K}}
\newcommand\sus{\operatorname{sus}}
\newcommand\Hom{\operatorname{Hom}}
\newcommand\cS{\mathcal{S}}
\newcommand\cQ{\mathcal{Q}}
\newcommand\cR{\mathcal{R}}
\newcommand\cG{\mathcal{G}}
\newcommand{\cO}{\mathcal{O}}
\newcommand{\lf}{\operatorname{lf}}
\newcommand{\rf}{\operatorname{rf}}
\newcommand{\fbf}{\phi\!\operatorname{bf}}
\newcommand{\tf}{\operatorname{tf}}
\newcommand{\zf}{\operatorname{zf}}
\newcommand{\Crit}{\operatorname{Crit}}
\newcommand{\cl}{\operatorname{cl}}
\newcommand{\cW}{\mathcal{W}}
\newcommand{\ALF}{\operatorname{ALF}}
\newcommand{\ALC}{\operatorname{ALC}}
\begin{document}
\title[Low energy limit for $\FB$-operators]
{Low energy limit for the resolvent of some fibered boundary operators}

\author{Chris Kottke}
\address{New College of Florida}
\email{ckottke@ncf.edu}

\author{Fr\'ed\'eric Rochon}
\address{Département de Mathématiques, Universit\'e du Qu\'ebec \`a Montr\'eal}
\email{rochon.frederic@uqam.ca}

\maketitle

\begin{abstract}
For certain   Dirac operators $\eth_{\phi}$ associated to a fibered boundary metric $g_{\phi}$, we provide a pseudodifferential characterization of the limiting behavior of $(\eth_{\phi}+k\gamma)^{-1}$ as $k\searrow 0$, where $\gamma$ is a self-adjoint operator anti-commuting with $\eth_{\phi}$ and whose square is the identity.  This yields in particular a pseudodifferential characterization of the low energy limit of the resolvent of $\eth_{\phi}^2$, generalizing a result of Guillarmou and Sher about the low energy limit of the resolvent of the Hodge Laplacian of an asymptotically conical metric.  As an application, we use our result to give a pseudodifferential characterization of the inverse of some suspended version of the operator $\eth_{\phi}$.  One important ingredient in the proof of our main theorem is that the Dirac operator $\eth_{\phi}$ is Fredholm when acting on suitable weighted Sobolev spaces.  This result has been known to experts for some time and we take this as an occasion to provide a complete explicit proof. \end{abstract}

\tableofcontents

\section*{Introduction}

An important class of complete non-compact Riemannian metrics with bounded geometry is the one of asymptotically conical metrics ($\AC$-metrics).  Those consist in metrics asymptotically modelled on the infinite end of a Riemannian cone $(\cC,g_{\cC})$, where $\cC=(0,\infty)\times Y$ with $(Y,g_Y)$ a closed Riemannian manifold and 
$$
    g_{\cC}= dr^2+ r^2g_{Y}.
$$
When $Y=\bbS^n$ and $g_Y$ is the standard metric $g_{\bbS^n}$, we get the more restricted class of asymptotically Euclidean metrics ($\operatorname{AE}$-metrics).  If $Y=\bbS^n/\Gamma$ for $\Gamma\subset O(n+1,\bbR)$ a finite subgroup and $g_Y$ is the quotient of the standard metric $g_{\bbS^n}$, this corresponds to the slightly larger subclass of asymptotically locally Euclidean metrics ($\ALE$-metrics).  In general relativity, $\operatorname{AE}$-metrics play an important role in the formulation of the positive mass theorem, while many important examples of gravitational instantons are $\ALE$-metrics \cite{Kronheimer1989}.  More generally, there are many examples of asymptotically conical Calabi-Yau metrics \cite{Tian-Yau,Joyce,Goto, vanC,CH2013,CH2015}.  

In terms of scattering theory and spectral theory, $\AC$-metrics constitute a natural generalization of the Euclidean space.  In that respect and compared to other types of geometries like asymptotically hyperbolic metrics \cite{MM1987}, meromorphic continuations of the resolvent of the Laplacian are hard to obtain.  For instance, the meromorphic continuation of the resolvent obtained by Wunsch-Zworski \cite{Wunsch-Zworski} is in a conic neighborhood of the continuous spectrum, which as expected from \cite[\S~6.10]{MelroseGST}, does not include an open neighborhood of $0$.  It is possible however to give a description of the asymptotic behavior of the resolvent $(\Delta-\lambda^2)^{-1}$ of the Laplacian $\Delta$ of an $\AC$-metric when $\lambda=ik$ is in the imaginary axis and $k\searrow 0$.  Since $k^2$ has the interpretation of an energy in quantum mechanics, this asymptotic behavior is often referred to as a low energy limit and is in some sense  the opposite of the semiclassical limit, which consists instead to study what happens when $k^2$ tends to infinity.

More precisely, developing and using a pseudodifferential calculus initially considered by Melrose and Sa Barreto, Guillarmou and Hassell, in a series of two papers \cite{GH1,GH2}, provided a fine pseudodifferential characterization of the low energy limit 
$$
  \lim_{k\to0+} (\Delta+k^2)^{-1}
$$
of the resolvent  of the Laplacian of an $\AC$-metric and used it to obtain results about the boundedness of the Riesz transform.  In \cite{Sher2013,Sher2015},  Sher used instead this description of the low energy limit to give a precise description of the long time asymptotic to the heat kernel of an $\AC$-metric and to study the behavior of the regularized  determinant of the Laplacian under conic degenerations.  All these results were subsequently generalized by Guillarmou-Sher \cite{GS} to the setting where the scalar Laplacian is replaced by the  Hodge Laplacian,  allowing them in particular to describe the limiting behavior of analytic torsion under conic degenerations.  

A somewhat related setting where the pseudodifferential characterization of the low energy limit of the resolvent is used is in the Cheeger-M\"uller theorem for wedge metrics obtained in \cite{ARS3}.  Indeed, the overall strategy of \cite{ARS3} was to describe the limiting behavior of analytic torsion for a family of closed Riemannian metrics degenerating to a wedge metric.  In particular,  this required a uniform description of the resolvent of the Hodge Laplacian under such a degeneration, which in turn could be obtained provided one could invert a model operator of the form
\begin{equation}
P= \Delta_{\AC}+ \Delta_E
\label{int.1}\end{equation}
with $\Delta_{\AC}$ the Hodge Laplacian of an $\AC$-metric and $\Delta_E$ the Euclidean Laplacian on $\bbR^q$.  But taking the Fourier transform of \eqref{int.1} in the Euclidean factor yields 
\begin{equation}
    \Delta_{\AC}+|\xi|^2,
\label{int.2}\end{equation}
whose inverse, setting $k:=|\xi|$, can be described all the way down to $|\xi|=0$ thanks to the results of \cite{GH1,GH2,GS}.  This is precisely what was needed to take the inverse Fourier transform of \eqref{int.2} and obtained a pseudodifferential characterization of the inverse $(\Delta_{\AC}+\Delta_E)^{-1}$ fitting exactly where it should in the wedge-surgery double space of \cite{ARS3}.

Another setting where the model operator \eqref{int.1} naturally arises is in the study of the Hodge Laplacian of a quasi-asymptotically conical metric ($\QAC$-metric).  This type of metrics was introduced by Degeratu-Mazzeo \cite{DM2018} as a generalization of the quasi-asymptotically locally Euclidean metrics ($\QALE$-metrics) of Joyce \cite{Joyce}.  Without entering in the fine details of the definition of such metrics, let us say that one of the simplest non-trivial example of such metric is a Cartesian product of two $\AC$-metrics, so that \eqref{int.1} can be seen indeed as a Hodge Laplacian associated to a $\QAC$-metric.  

Having in mind this sort of application, the purpose of this paper is to generalize the pseudodifferential characterization of the low energy limit of the resolvent of \cite{GH1,GH2,GS} in two different directions:
\begin{enumerate}
\item[(i)] Characterize the limit as $k\searrow 0$ when $(\Delta+k^2)^{-1}$ is replaced by 
$$
     (\eth+k\gamma)^{-1},
$$
where $\eth$ is a Dirac operator and $\gamma$ is a self-adjoint operator of order $0$ such that
$$
     \gamma^2=\Id, \quad \gamma\eth+\eth\gamma=0;
$$
\item[(ii)] Do it not only for asymptotically conical metrics, but also for the  more general class of fibered boundary metrics of \cite{Mazzeo-MelrosePhi,HHM2004}.  
\end{enumerate} 
One motivation for (i) is to characterize the inverse of an operator of the form 
\begin{equation}
 D= \eth+ \eth_E
\label{int.3}\end{equation} 
with $D$ a Dirac operator on $X\times \bbR^q$, $\eth$ a Dirac operator on $X$ (associated to an $\AC$-metric or more generally a fibered boundary metric) and $\eth_E$ a Euclidean Dirac operator on $\bbR^q$.  Indeed, in this case, taking the Fourier transform of \eqref{int.3} in the $\bbR^q$-factor yields
\begin{equation}
       \widehat{D}= \eth+ i\cl(\xi),  \quad \xi\in \bbR^q,
\label{int.4}\end{equation}    
where $\cl(\xi)$ denotes Clifford multiplication by $\xi$.  Restricting \eqref{int.4} to the half-line generated by $\eta\in\bbS^{q-1}\subset \bbR^q$, that is, setting $\xi=k\eta$ with $k\in [0,\infty)$, we obtain precisely 
\begin{equation}
   \eth+ k\gamma \quad \mbox{with} \quad \gamma=i\cl(\eta).
\label{int.5}\end{equation}
Hence, understanding $(\eth+k\gamma)^{-1}$ in the limit $k\searrow 0$ will allow, as $\eta\in \bbS^{q-1}$ varies, to understand $(\eth+i\cl(\xi))^{-1}$ as $\xi\to 0$.  
Concerning (ii), let us for the moment remind the reader that fibered boundary metrics are a natural generalization of the class of $\AC$-metrics modelled at infinity by a fiber bundle over a Riemannian cone (at least when they are product-type at infinity up to some order).  More precisely, if $M$ is a compact manifold with boundary $\pa M$,  one starts with a fiber bundle $\phi: \pa M\to Y$ whose base and fibers are closed manifolds and considers metrics $g_Y$ and $g_{\pa M}=\phi^*g_Y+\kappa$ on $Y$ and $\pa M$ making $\phi$ a Riemannian submersion.  The modelled at infinity is then $(\cC_{\phi},g_{\cC_{\phi}})$ with $\cC_{\phi}=(0,\infty)\times \pa M$ and 
\begin{equation}
  g_{\cC_{\phi}}= dr^2+ r^2\phi^*g_{Y}+ \kappa,
\label{int.6}\end{equation}
so that $\phi$ extends to a fiber bundle $\cC_{\phi}\to \cC$ over the cone $\cC=(0,\infty)\times Y$ which is a Riemannian submersion with respect to the metrics $g_{\cC_{\phi}}$ and $g_{\cC}=dr^2+r^2g_Y$ on $\cC_{\phi}$ and $\cC$.  In dimension $4$, an important class of examples is the one given by asymptotically locally flat  gravitational instantons ($\ALF$-gravitational instantons), in which case the Riemannian cone $(\cC,g_{\cC})$ has cross-section a quotient of the $2$-sphere with its standard metric and the fiber bundle $\phi$ is a circle bundle.  Those include in particular the natural hyperK\"ahler metric on the universal cover of the reduced moduli space of centered $\SU(2)$-monopoles of magnetic charge $2$.   Another important class of examples is given by the asymptotically locally conical metrics ($\ALC$-metrics) with $G_2$-holonomy of \cite{BGGG,FHN1,FHN2}, in which case $\phi$ is a circle bundle over a $5$-dimensional base.    

To formulate the main result of this paper, let $g_{\phi}$ be a fibered boundary metric which is product-type to order $2$ (in the sense of Definition~\ref{phi.7} below) on the interior of the manifold with boundary $M$.  In particular, at infinity, $g_{\phi}$ is modelled by a metric of the form \eqref{int.6}.  Let $E\to M$ be a Clifford module for the associated Clifford bundle and let $\eth_{\phi}$ be the Dirac operator associated to a choice of Clifford connection.  As explained in \S~\ref{fdt.0}, the operator $\eth_{\phi}$ naturally restricts to an elliptic family of fiberwise operators $D_v$ on the fibers of $\phi$.  We suppose that the nullspaces of the members of the family have all the same dimension and hence form a vector bundle $\ker D_v\to Y$ over $Y$.  For instance, by Hodge theory, this is always the case when $\eth_{\phi}$ is the Hodge-deRham operator associated to the metric $g_{\phi}$.  In any case, as shown in Definition~\ref{dt.6} and Lemma~\ref{dt.7} below, given such a vector bundle $\ker D_v\to Y$, there is a well-defined holomorphic family 
$$
     \lambda\mapsto I(D_b,\lambda)
$$    
of elliptic first order operators on $Y$ acting on sections of $\ker D_v$.  This family, called 
the indicial family of $\eth_{\phi}$, is invertible except for a discrete set of values that are called indicial roots.  For our result to hold, we need to assume that 
\begin{equation}
       \Re\lambda\in [-1,0] \; \Longrightarrow \; I(D_b,\lambda) \quad \mbox{is invertible}.
\label{int.7}\end{equation}
Finally, let $\gamma\in\CI(M;\End(E))$ be a self-adjoint operator such that 
\begin{equation}
    \gamma^2=\Id_E \quad \mbox{and} \quad \gamma\eth_{\phi}+ \eth_{\phi}\gamma=0.
\label{int.8}\end{equation}
\begin{Theorem}
If there is a well-defined kernel bundle $\ker D_v\to Y$ such that \eqref{int.7} and \eqref{int.8} hold, then $(\eth_{\phi}+k\gamma)^{-1}$ is an element of the pseudodifferential calculus $\Psi^*_{k,\phi}(M;E)$ of low energy fibered boundary operators introduced in \eqref{kfb.16c} below.
\label{int.9}\end{Theorem}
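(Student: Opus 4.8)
The plan is to construct the Schwartz kernel of $(\eth_{\phi}+k\gamma)^{-1}$ directly on the low energy fibered boundary double space underlying $\Psi^{*}_{k,\phi}(M;E)$ --- a suitable iterated blow-up of $M^{2}\times[0,\infty)_{k}$ --- by a parametrix construction carried out face by face, and then to upgrade the parametrix to a genuine inverse. Note first that the existence of $(\eth_{\phi}+k\gamma)^{-1}$ as a bounded operator for each fixed $k>0$ is immediate from \eqref{int.8}: self-adjointness of $\eth_{\phi}$ and $\gamma$ together with $\eth_{\phi}\gamma+\gamma\eth_{\phi}=0$ and $\gamma^{2}=\Id$ give $\|(\eth_{\phi}+k\gamma)u\|^{2}=\|\eth_{\phi}u\|^{2}+k^{2}\|u\|^{2}$, so that $(\eth_{\phi}+k\gamma)^{2}=\eth_{\phi}^{2}+k^{2}$ and $\eth_{\phi}+k\gamma$ has a spectral gap $(-k,k)$; the content of the theorem is the uniform description of the inverse down to $k=0$. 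This square identity also renders many of the mapping statements below reducible to their scalar analogues, and is what ties the theorem to the Guillarmou--Sher result for Hodge Laplacians.

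The first substantive step is to list the boundary hypersurfaces of the double space and the normal (model) operator of $\eth_{\phi}+k\gamma$ at each. Away from $\pa M$ and for $k>0$ the operator is elliptic, so the conormal singularity along the lifted diagonal is handled by the usual symbol calculus. The genuinely nontrivial faces are: (a) the fibered boundary face over $(\pa M)^{2}$ at $k=0$, where --- using that $g_{\phi}$ is product-type to order $2$ --- the normal operator is the $\phi$-normal operator of $\eth_{\phi}$, assembled from the fiberwise family $D_{v}$ and the indicial family $I(D_{b},\lambda)$; (b) a transition face at the scale $r\sim1/k$ near $\pa M$, where $k\gamma$ becomes comparable to $\eth_{\phi}$ and the model is a resolvent-type problem for $\eth_{\cC_{\phi}}+k\gamma$ on the model cone $\cC_{\phi}$, of $b$-type in the fiber directions and scattering-type in the cone directions; and (c) the zero energy face $\{k=0\}$ at finite distance, whose model is $\eth_{\phi}$ itself. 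Each model is then inverted between the weighted spaces prescribed by $\Psi^{*}_{k,\phi}$: for (c), this is the invertibility of $\eth_{\phi}$ on the critical weighted Sobolev spaces, which follows from the Fredholm property of $\eth_{\phi}$ established earlier in the paper together with \eqref{int.7} (excluding indicial roots with $\Re\lambda\in[-1,0]$ kills both kernel and cokernel at the relevant weight, while the kernel bundle hypothesis is what makes $I(D_{b},\lambda)$ well defined); for (a), one Fourier/Mellin transforms in the cone and fiber variables, uses ellipticity of $D_{v}$ and invertibility of $I(D_{b},\lambda)$ off the indicial set, and reads off from \eqref{int.7} that the inverse has decay matching the index sets of $\Psi^{*}_{k,\phi}$; for (b), one separates cone and fiber variables and solves the resulting one-dimensional problem in $r$ coupled to $k\gamma$, invertibility and the fine asymptotics of the kernel near the faces adjacent to (b) resting once more on \eqref{int.7}.

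Patching the three inverses with a partition of unity on the double space produces $G_{0}\in\Psi^{*}_{k,\phi}(M;E)$ with $(\eth_{\phi}+k\gamma)G_{0}=\Id-R_{0}$, where $R_{0}$ is smoothing and vanishes to positive order at every boundary hypersurface. Using that $\Psi^{*}_{k,\phi}$ is closed under composition, an asymptotic (Borel) summation of the formal Neumann series $\sum_{j\ge0}R_{0}^{j}$ --- legitimate because $R_{0}^{j}$ gains order at each face under iteration --- yields $G_{\infty}\in\Psi^{*}_{k,\phi}(M;E)$ with residual remainder $R_{\infty}$. Then $\Id-R_{\infty}$ is invertible for $k$ small, and for $k$ in any compact subset of $(0,\infty)$ by the spectral gap noted above, with inverse of the form $\Id+S$, $S$ residual; hence $(\eth_{\phi}+k\gamma)^{-1}=G_{\infty}(\Id+S)\in\Psi^{*}_{k,\phi}(M;E)$, as claimed.

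\textbf{Main obstacle.} The crux is the transition face (b): correctly identifying the model operator at the scale $r\sim1/k$ --- the analogue, for a Dirac operator twisted by $k\gamma$ over a fiber bundle, of the phenomenon exploited by Guillarmou--Hassell for the Laplacian on a cone --- and then verifying that its inverse is polyhomogeneous with exactly the index sets demanded by $\Psi^{*}_{k,\phi}$, so that the partial parametrices attached to (a), (b) and (c) are mutually compatible at the corners where those faces meet. This corner bookkeeping on the full low energy fibered boundary double space, rather than any single model inversion, is where \eqref{int.7} is used in its sharpest form and where most of the work lies.
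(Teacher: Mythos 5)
There is a genuine gap at the zero-energy face. You assert that the model at $\zf$ is $\eth_{\phi}$ itself and that \eqref{int.7} ``kills both kernel and cokernel at the relevant weight,'' so that this model can simply be inverted and patched in. But \eqref{int.7} only excludes indicial roots with $\Re\lambda\in[-1,0]$; it does not preclude a nontrivial $L^2$-kernel of $\eth_{\phi}$ (zero modes such as $L^2$ harmonic forms persist and are in fact polyhomogeneous with some positive decay rate $x^{\epsilon}$). Consequently $\eth_{\phi}$ is only Fredholm, not invertible, on the relevant weighted spaces, and your patched parametrix $G_{0}$ cannot have an error vanishing to positive order at $\zf$: the best one can do at $k=0$ is invert modulo the finite-rank projection $\Pi_{\ker_{L^2_b}D_{\phi}}$, so the remainder at $\zf$ is $O(1)$ and the Neumann series does not close. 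This is precisely why the paper's inverse $G_{k,\phi}$ has index set at $\zf$ beginning at $-1$: the construction there starts from the singular term $k^{-1}\gamma\,\Pi_{\ker_{L^2_b}D_{\phi}}$ (using $\gamma$ to rotate the kernel into the cokernel), supplemented by correction kernels built from sections $\chi_{j}$ solving $D_{\fc}\chi_{j}=x^{-1/2}\psi_{j}^{\perp}$, and only then does the error at $\zf$ gain a positive order. Without incorporating this $k^{-1}$ term your argument fails already at the first face, and it also changes the conclusion: the inverse lies in the calculus with a nontrivial (negative) index set at $\zf$, not with smooth vanishing there.

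A secondary, less fatal issue is that the hard analytic content at the transition region is asserted rather than proved. In the paper the model adjacent to $\zf$ and $\ff$ is the conic operator $D_{\cC}=-c\,\partial_{\kappa}+\kappa^{-1}D_{Y}+\gamma$ acting on sections of $\ker D_{v}$ over the cone on $Y$, treated as a $b$-operator near $\kappa=0$ and a scattering operator near $\kappa=\infty$; its invertibility between the correct weighted spaces is not automatic from \eqref{int.7} but requires a separate argument (a modified Bessel equation analysis showing the kernel is trivial, plus formal self-adjointness for surjectivity), together with a matching statement comparing its inverse with the $b$-resolvent kernel coming from the indicial family so that the models at $\zf$, $\fbf_{0}$ and $\ff$ agree at the corners. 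Your sketch names this as the main obstacle, which is correct, but the corner compatibility you defer is exactly where the $k^{-1}$ kernel contribution identified above must reappear consistently (it is what produces the order-$h$, rather than $h+1$, leading term at $\fbf_{0}$), so the two problems are not independent.
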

We refer to Theorem~\ref{le.46} below for a more precise statement of the result.  Even if we are not given $\gamma$ as in \eqref{int.8}, we can use Theorem~\ref{int.9} to obtain a corresponding result for the square of $\eth_{\phi}$.
\begin{Corollary}
If there is a well-defined kernel bundle $\ker D_v\to Y$ such that \eqref{int.7} holds, then $(\eth_{\phi}^2+k^2)^{-1}$ is an element of the pseudodifferential calculus $\Psi^*_{k,\phi}(M;E)$.  
\label{int.10}\end{Corollary}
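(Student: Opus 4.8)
The strategy is to deduce Corollary~\ref{int.10} from Theorem~\ref{int.9} by an artificial doubling that manufactures an operator $\gamma$ of the required type even when none is given. Set $\tilde E = E\oplus E$ and equip it with the Clifford action $\cl_{\tilde E}(v)=\left(\begin{smallmatrix} 0 & \cl_E(v)\\ \cl_E(v) & 0\end{smallmatrix}\right)$ and the Clifford connection $\tilde\nabla=\nabla^E\oplus\nabla^E$. A direct check ($\cl_{\tilde E}(v)\cl_{\tilde E}(w)+\cl_{\tilde E}(w)\cl_{\tilde E}(v)=-2\langle v,w\rangle\Id_{\tilde E}$ and $[\tilde\nabla,\cl_{\tilde E}(v)]=\cl_{\tilde E}(\nabla v)$) shows that $\tilde E$ is again a Clifford module for the fibered boundary Clifford bundle of $(M,g_\phi)$, with associated Dirac operator $\tilde{\eth}_\phi=\left(\begin{smallmatrix} 0 & \eth_\phi\\ \eth_\phi & 0\end{smallmatrix}\right)$ (using that $\eth_\phi$ is formally self-adjoint). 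Take $\gamma=\left(\begin{smallmatrix}\Id_E & 0\\ 0 & -\Id_E\end{smallmatrix}\right)\in\CI(M;\End(\tilde E))$; then $\gamma$ is self-adjoint of order $0$, $\gamma^2=\Id_{\tilde E}$, and $\gamma\tilde{\eth}_\phi+\tilde{\eth}_\phi\gamma=0$, so \eqref{int.8} holds for the pair $(\tilde{\eth}_\phi,\gamma)$.

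Next I would check that the remaining hypotheses of Theorem~\ref{int.9} transfer to $\tilde{\eth}_\phi$. Since only the Clifford module has changed and not the metric, $\tilde{\eth}_\phi$ is still a Dirac operator for a fibered boundary metric which is product-type to order $2$. Its induced family of fiberwise operators is $\tilde D_v=\left(\begin{smallmatrix}0 & D_v\\ D_v & 0\end{smallmatrix}\right)$, whose nullspaces form the vector bundle $\ker\tilde D_v=\ker D_v\oplus\ker D_v\to Y$, well-defined precisely because $\ker D_v\to Y$ is. The block structure is likewise inherited by the indicial family, $I(\tilde D_b,\lambda)=\left(\begin{smallmatrix}0 & I(D_b,\lambda)\\ I(D_b,\lambda) & 0\end{smallmatrix}\right)$, so $I(\tilde D_b,\lambda)$ is invertible if and only if $I(D_b,\lambda)$ is; in particular \eqref{int.7} holds for $\tilde{\eth}_\phi$. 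Theorem~\ref{int.9} then applies and gives $(\tilde{\eth}_\phi+k\gamma)^{-1}\in\Psi^*_{k,\phi}(M;\tilde E)$.

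Finally, from $\gamma^2=\Id_{\tilde E}$ and $\gamma\tilde{\eth}_\phi+\tilde{\eth}_\phi\gamma=0$ one gets $(\tilde{\eth}_\phi+k\gamma)^2=\tilde{\eth}_\phi^2+k^2$, which is the diagonal operator $\left(\begin{smallmatrix}\eth_\phi^2+k^2 & 0\\ 0 & \eth_\phi^2+k^2\end{smallmatrix}\right)$; hence for $k>0$ both operators are invertible and $\big((\tilde{\eth}_\phi+k\gamma)^{-1}\big)^2=(\eth_\phi^2+k^2)^{-1}\oplus(\eth_\phi^2+k^2)^{-1}$. Using that $\Psi^*_{k,\phi}$ is closed under composition and compatible with direct sum decompositions of the bundle, the right-hand side lies in $\Psi^*_{k,\phi}(M;\tilde E)$, and restricting to either summand yields $(\eth_\phi^2+k^2)^{-1}\in\Psi^*_{k,\phi}(M;E)$, as claimed.

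I do not expect a serious obstacle here: given Theorem~\ref{int.9} the corollary is essentially formal. The only points that genuinely require care are the verification that the calculus $\Psi^*_{k,\phi}$ has the expected composition and bundle-decomposition properties and that the construction of the indicial family is natural enough to produce the stated block form for $\tilde{\eth}_\phi$; both should follow from the definitions set up earlier, and everything else is bookkeeping.
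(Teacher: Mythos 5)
Your proposal is correct and is essentially the paper's own argument: the paper likewise doubles the bundle to $E\oplus E$, manufactures an anticommuting involution (it takes $\widetilde{\eth}_{\phi}=\operatorname{diag}(\eth_{\phi},-\eth_{\phi})$ with $\widetilde{\gamma}=\bigl(\begin{smallmatrix}0&-\sqrt{-1}\\ \sqrt{-1}&0\end{smallmatrix}\bigr)$, which is unitarily equivalent to your off-diagonal $\widetilde{\eth}_{\phi}$ with diagonal $\gamma$), applies Theorem~\ref{int.9}, squares the resulting inverse using the composition theorem, and projects onto one summand. The hypotheses \eqref{int.7} and the kernel-bundle condition transfer to the doubled operator exactly as you check, so there is no gap.
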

We refer to Corollary~\ref{le.47} for a detailed statement of the result.  One advantage of using Dirac operators to derive Corollary~\ref{int.10} is that this requires slightly less control on the metric at infinity, see in particular the discussion at the end of \S~\ref{le.0} below.  The fact that the Dirac operator is of order 1 instead of order 2 also yields simplifications in the construction of the parametrix.     

When $\eth_{\phi}$ is the Hodge-deRham operator of $g_{\phi}$, we can give an alternative formulation to \eqref{int.7}.  In this case, $\ker D_v\to Y$ essentially corresponds to the vector bundle of fiberwise harmonic forms, and as such is naturally a flat vector bundle.  There is in particular an associated Hodge-deRham operator 
\begin{equation}
 \mathfrak{d}=\delta^{\ker D_v}+ d^{\ker D_v} \quad \mbox{acting on} \quad  \Omega^*(Y;\ker D_v),
\label{int.11}\end{equation}
where $d^{\ker D_v}$ is the exterior differential associated to $\ker D_v$ and $\delta^{\ker D_v}$ is its formal adjoint.
\begin{Corollary}
Let $\eth_{\phi}$ be the Hodge-deRham operator of $g_{\phi}$.  Suppose that 
\begin{equation}
\begin{gathered}
     H^q(Y;\ker D_v)=\{0\}, \quad q\in\{ \frac{h-1}2, \frac{h}2, \frac{h+1}2\}, \\
    \Spec(d^{\ker D_v}\delta^{\ker D_v}+  \delta^{\ker D_v} d^{\ker D_v})_{\frac{h}2}>\frac{3}4, \\
     \Spec (d^{\ker D_v}\delta^{\ker D_v})_{\frac{h+1}2}>1,
\end{gathered}
\label{int.12b}\end{equation}
where $h=\dim Y$, $H^q(Y;\ker D_v)$ is the de Rham cohomology group of degree $q$ associated to the flat vector bundle $\ker D_v$  and $\Spec(A)_q$ denotes the part of the spectrum of $A$ coming from forms of degree $q$.  Then the conclusion of Corollary~\ref{int.10} holds for $(\eth_{\phi}^2+k^2)^{-1}$.  Moreover, if there is $\gamma$ such that \eqref{int.8} holds, then the conclusion of Theorem~\ref{int.9}  holds for $(\eth_{\phi}+k\gamma)^{-1}$.
 \label{int.12}\end{Corollary}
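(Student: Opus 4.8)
The plan is to reduce everything to the indicial hypothesis \eqref{int.7}. Once we know that $I(D_b,\lambda)$ is invertible for all $\lambda$ with $\Re\lambda\in[-1,0]$, the first conclusion is exactly Corollary~\ref{int.10} and, in the presence of $\gamma$, the second conclusion is exactly Theorem~\ref{int.9}; note that for the Hodge--de Rham operator the kernel bundle $\ker D_v\to Y$ is automatically well defined by Hodge theory. Thus the entire content of the statement is that the three conditions of \eqref{int.12b} imply \eqref{int.7}, and for this we need an explicit description of the indicial family when $\eth_\phi$ is the Hodge--de Rham operator of $g_\phi$.

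Near infinity $\eth_\phi$ is modelled on the Hodge--de Rham operator of the product cone metric \eqref{int.6} on $\cC_\phi=(0,\infty)_r\times W$, so the fiberwise family $D_v$ is the vertical Hodge--de Rham operator of $\kappa$ and, by Hodge theory, $\ker D_v=\cH^*(W/Y)$ is the bundle of fiberwise harmonic forms, graded by vertical degree and carrying the Gauss--Manin flat connection; $d^{\ker D_v}$ and $\delta^{\ker D_v}$ are the operators that this flat structure induces on $\Omega^*(Y;\ker D_v)$. The first step is a separation of variables for this model: projecting a form on $\cC_\phi$ onto its $\ker D_v$-component and writing it as $r^{-\lambda}(\alpha+\tfrac{dr}{r}\wedge\beta)$ with $\alpha,\beta\in\Omega^*(Y;\ker D_v)$, one finds that $\eth_\phi$ acts as a $2\times 2$ first order system in $(\alpha,\beta)$ coupling $\lambda$, the relevant degree (exterior degree on $Y$ shifted by the fixed vertical degree), and the operators $d^{\ker D_v},\delta^{\ker D_v}$; up to the standard identifications this system is $I(D_b,\lambda)$.

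The second step is to diagonalize this system by means of the Hodge decomposition of $\Omega^*(Y;\ker D_v)$ into exact, coexact and harmonic parts, using that $\mathfrak d$ is self-adjoint and $\ker D_v$ is flat. On the $\mu$-eigenspace of $\Delta_Y^{\ker D_v}=d^{\ker D_v}\delta^{\ker D_v}+\delta^{\ker D_v}d^{\ker D_v}$ in $Y$-degree $q$ --- so that $\mu=0$ detects precisely the classes of $H^q(Y;\ker D_v)$ --- the system $I(D_b,\lambda)$ is invertible except for finitely many values of $\lambda$, each of the shape given by the classical formula for the indicial roots of a Hodge--de Rham operator on a cone: an affine function of $q$ plus or minus a square root of (something affine in $q$) $+\,\mu$, in which the only global datum is the dimension $h+1$ of the base cone $\cC$. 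A direct case analysis of this relation shows that such a $\lambda$ has $\Re\lambda\in[-1,0]$ exactly in one of two situations: (i) $\mu=0$ with $q$ in the window $\{\tfrac{h-1}{2},\tfrac{h}{2},\tfrac{h+1}{2}\}$, which is excluded by the first line of \eqref{int.12b}; or (ii) $\mu>0$ but too small, which in the degrees where it can occur is precisely the failure of $\Spec(\Delta_Y^{\ker D_v})_{h/2}>\tfrac{3}{4}$ or of $\Spec(d^{\ker D_v}\delta^{\ker D_v})_{(h+1)/2}>1$, i.e. of the last two lines of \eqref{int.12b} (the two parities of $h$ each using a subset of these). Hence \eqref{int.7} holds and the corollary follows.

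The main obstacle is the separation of variables of the first step: carrying out the radial reduction of the Hodge--de Rham operator of \eqref{int.6} restricted to the kernel bundle while keeping careful track of the interplay between the vertical-degree grading of $\ker D_v$, the exterior degree on $Y$ and the $\tfrac{dr}{r}$-component, and then doing the endpoint analysis that shows the constants $\tfrac{3}{4}$ and $1$ are sharp for keeping indicial roots out of the \emph{closed} strip $\Re\lambda\in[-1,0]$ --- in particular the borderline roots with $\Re\lambda=0$ or $\Re\lambda=-1$, where the cohomology hypotheses of \eqref{int.12b} enter. It should also be checked that, after the reduction, the analysis genuinely depends only on the scalar spectral data of $\Delta_Y^{\ker D_v}$ recorded in \eqref{int.12b}.
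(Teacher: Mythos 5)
Your proposal is correct and follows essentially the same route as the paper: one reduces \eqref{int.12b} to the absence of indicial roots of $I(D_b,\lambda)$ in the closed strip $\Re\lambda\in[-1,0]$, using the explicit form of the indicial family of the Hodge--de Rham operator on the model cone (the paper's \eqref{do.8b} and the root formula of Lemma~\ref{do.8c}), and then a case analysis over the degrees $q\in\{\tfrac{h-2}2,\tfrac{h-1}2,\tfrac{h}2,\tfrac{h+1}2,\tfrac{h+2}2\}$, after which Corollary~\ref{int.10} and Theorem~\ref{int.9} apply verbatim. The reduction you flag as "to be checked" is exactly the paper's closing step: the identification $\Spec(\delta^{\ker D_v}d^{\ker D_v})_q\setminus\{0\}=\Spec(d^{\ker D_v}\delta^{\ker D_v})_{q+1}\setminus\{0\}$ collapses the a priori six spectral conditions (with the strict bounds $\tfrac34$ and $1$ needed to exclude roots exactly at $0$ and $-1$) to the three conditions of \eqref{int.12b}.
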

 \begin{Remark}
 The authors wish to acknowledge that in a parallel work by Grieser, Talebi and Vertman \cite{GTV}, a result similar to the first part of Corollary~\ref{int.12} was obtained independently and simultaneously using partly different methods, in particular working directly with the Hodge Laplacian, and relying on a split-pseudodifferential calculus with parameter, which specifies the asymptotics of the Schwartz kernel with respect to a splitting of differential forms into fiberwise harmonic forms and their orthogonal complement.
 \end{Remark}

When $Y=\pa M$ and $\phi$ is the identity map, so that $g_{\phi}$ is in fact an $\AC$-metric, the part of Corollary~\ref{int.12} involving $(\eth^2_{\phi}+k^2)^{-1}$ corresponds to \cite[Theorem~1]{GS}, though, as discussed just after Corollary~\ref{le.48} below, our assumption \eqref{int.12b} may be  slightly less restrictive then those of \cite{GS} when $h$ is odd.  Notice also that restricting Corollary~\ref{int.12} to forms of degree $0$ gives a corresponding statement for the low energy limit of the resolvent of the scalar Laplacian, though in this case our assumption \eqref{int.12b} is probably not optimal and could possibly be improved by working directly with the scalar Laplacian.  

As suggested above, Theorem~\ref{int.9} can be used to characterize the inverse of the Dirac operator \eqref{int.3} with $\eth=\eth_{\phi}$, that is, the inverse of 
\begin{equation}
      \eth_{\sus}= \eth_{\phi} + \eth_{E}.
\label{int.14}\end{equation}
Indeed, in terms of its Fourier transform
\begin{equation}
\widehat{\eth}_{\sus}(\xi)= \eth_{\phi}+ i\cl(\xi),
\label{int.15}\end{equation} 
its inverse is given by 
$$
           (\eth_{\sus})^{-1}= \frac{1}{(2\pi)^q}\int_{\bbR^q} e^{ix\cdot \xi}(\eth_{\phi}+ i\cl(\xi))^{-1} d\xi
$$
and a closer analysis of the description of $(\eth_{\phi}+ i\cl(\xi))^{-1}$ provided by Theorem~\ref{int.9} yields the following result.
\begin{Corollary}
If $\dim Y>1$, the inverse of $\eth_{\sus}$ is a conormal distribution on a certain manifold with corners described in \eqref{sus.4b} below.   
\label{int.16}\end{Corollary}
Referring to Theorem~\ref{sus.6} below for more details, let us point out that, in agreement with the fact that $\eth_{\sus}$ is not fully elliptic, the inverse $(\eth_{\sus})^{-1}$ is not quite a suspended operator, though it can be understood as an element of an enlarged pseudodifferential suspended $\phi$-calculus.

Our main motivation for proving Corollary~\ref{int.16} is to study Dirac operators associated to yet another class metrics, namely the class of quasi-fibered boundary metrics ($\QFB$-metrics) introduced in \cite{CDR}.  Indeed, in the companion paper \cite{KR1}, we construct a parametrix for the Hodge-deRham operator of a $\QFB$-metric and one of the key steps is to use Corollary~\ref{int.16} to invert a model precisely of the form \eqref{int.14}.   As the name suggests, $\QFB$-metrics are to fibered boundary metrics what $\QAC$-metrics are to $\AC$-metrics.  According to \cite{FKS}, an important example of $\QFB$-metrics is given by the hyperK\"ahler metric on the reduced moduli space of $\SU(2)$-monopoles of charge $k$ on $\bbR^3$.  We know also from \cite{Carron2011} that the Nakajima metric on the Hilbert scheme of $n$ points on $\bbC^2$ is an example of $\QALE$ metric.  In fact, building on these results, we use the parametrix construction of \cite{KR1} to make progress in \cite{KR2} on the Sen conjecture \cite{Sen} and the Vafa-Witten conjecture \cite{Vafa-Witten}, which are conjectures from string theory and $S$-duality making predictions about the reduced $L^2$-cohomology of such moduli spaces.  

To prove our main result, the strategy, as in \cite{GH1}, is to introduce a suitable double space, $M^2_{k,\phi}$, that is, a suitable manifold with corners $M^2_{k,\phi}$ on which the Schwartz kernel of $(\eth_{\phi}+k\gamma)^{-1}$ will admit a description as a conormal distributions with polyhomogeneous expansion at the various boundary hypersurfaces of the double space.  Compared to the double space in \cite{GH1}, the main difference is that there is one more boundary hypersurface and that one other boundary hypersurface, corresponding to $\sc$ in \cite{GH1}, is slightly different in nature.  Given such a double space and the corresponding calculus of pseudodifferential operators, one important step in the construction of the inverse of $(\eth_{\phi}+k\gamma)$ is to show that $\eth_{\phi}$ is Fredholm when acting on suitable weighted Sobolev spaces.  Thanks to the thesis of Vaillant \cite{Vaillant}, which among other things derived a corresponding Fredholm result for the related geometry of fibered cusp metrics, it has been known for some time by experts that such Fredholm result holds.  In particular, a precise statement is provided in \cite[Proposition~16]{HHM2004} when $\eth_{\phi}$ is the Hodge-deRham operator.  Assuming some conditions on the metric, such result follows from a general Fredholm criterion obtained by Grieser and Hunsicker in \cite[Theorem~13]{GH2014}.  

However, since this result is central in proving our main result, we take the opportunity to provide a complete explicit proof for Dirac operators which gives a prelude of the techniques used later in the paper.  First, to extend the statement of \cite[Proposition~16]{HHM2004} to a Dirac operator $\eth_{\phi}$ with well-defined bundle $\ker D_v\to Y$, let $\Pi_h$ denote the fiberwise $L^2$-projection from fiberwise $L^2$-sections of $E\to\pa M$ onto sections of $\ker D_v\to Y$.  Let $\widetilde{\Pi}_h$ denote a smooth extension of $\Pi_h$, first to a collar neighborhood of $\pa M$, and then to all of $M$ using cut-off functions.  Let $L^2_{\phi}(M;E)$ and $H^1_{\phi}(M;E)$ be the $L^2$-space and the $L^2$-Sobolev space of order $1$ associated to the fibered boundary metric $g_{\phi}$ and a choice of bundle metric and connection for $E\to M$.  Let $H^1_b(M;E)$ be the $L^2$-Sobolev space of order $1$ associated to a choice of $b$-metric in the sense of \cite{MelroseAPS}.  Finally, let $x\in \CI(M)$ be a boundary defining function, which, near $\pa M$, corresponds to $\frac{1}r$ in terms of the model metric \eqref{int.6}. Notice that  $L^2_\phi(M;E)=x^{\frac{h+1}2}L^2_b(M;E)$, but that such a simple relation does not hold for $H^1_{\phi}(M;E)$ and $H^1_b(M;E)$.  Then \cite[Proposition~16]{HHM2004} admits the following generalization (see also Corollary~\ref{dt.24} and Corollary~\ref{dt.27} below for alternative formulations).
\begin{Theorem}
If $\delta\in \bbR$ is not a critical weight of the indicial family $I(D_b,\lambda)$ of $\eth_{\phi}$, then $\eth_{\phi}$ induces Fredholm operators
\begin{equation}
\eth_{\phi}: x^{\delta}\left( \widetilde{\Pi}_hx^{\frac{h+1}2}H^1_b(M;E)+ x(\Id-\widetilde{\Pi}_h)H^1_{\phi}(M;E) \right)\to x^{\delta+1}L^2_{\phi}(M;E)
\label{int.13a}\end{equation}
and 
\begin{equation}
\eth_{\phi}: x^{\delta}\left( \widetilde{\Pi}_hx^{\frac{h+1}2}H^1_b(M;E)+ (\Id-\widetilde{\Pi}_h)H^1_{\phi}(M;E) \right)\to x^{\delta}\left( x\widetilde{\Pi}_hL^2_{\phi}(M;E)+ (\Id-\widetilde{\Pi}_h)L^2_{\phi}(M;E)  \right).
\label{int.13b}\end{equation}
\label{int.13}\end{Theorem}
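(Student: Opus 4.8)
The plan is to prove the Fredholm statement by reducing it to a model problem at the boundary and exploiting the fibered structure via a decomposition of $E$ into the fiberwise harmonic part $\widetilde{\Pi}_h E$ and its complement $(\Id - \widetilde{\Pi}_h)E$. First I would set up the geometry: near $\pa M$ the metric $g_\phi$ has the product-type-to-order-2 form \eqref{int.6} with $W = \pa M$, and I would write $\eth_\phi$ in the associated coordinates $(x, y, z)$, where $x = 1/r$ is the boundary defining function, $y$ lifts coordinates on $Y$, and $z$ restricts to coordinates on the fibers of $\phi$. The key structural point (established in \S\ref{fdt.0}) is that $x \eth_\phi$ is a $\phi$-differential operator whose leading behaviour at $\pa M$ splits: on the range of $\widetilde{\Pi}_h$ it degenerates to a $b$-operator in the variables $(x,y)$ — which is why those components live in $x^{(h+1)/2} H^1_b$ — while on the orthogonal complement the fiberwise Dirac operator $D_v$ is invertible, giving an extra order of decay in $x$ and the gain encoded by the $x$ (respectively $\Id$) prefactor on the complement in \eqref{int.13a} (resp.\ \eqref{int.13b}).

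The main steps, in order, would be: \emph{(i)} verify that the weighted spaces in \eqref{int.13a} and \eqref{int.13b} are well-defined Hilbert spaces independent of the auxiliary choices ($\widetilde\Pi_h$, the collar, the cutoffs, the $b$-metric) up to equivalence of norms, using that $\Pi_h$ is smooth down to $x=0$ and that different extensions differ by operators mapping into more decaying spaces; \emph{(ii)} construct a parametrix for $\eth_\phi$ with compact error. For this I would invert the $\phi$-symbol away from $x=0$ (interior ellipticity of the Dirac operator gives a microlocal parametrix there) and patch it with a boundary parametrix built from the \emph{normal operator} of $x\eth_\phi$: after the $\widetilde\Pi_h$/$(\Id-\widetilde\Pi_h)$ splitting, the complement block is invertible by fiberwise invertibility of $D_v$, and the harmonic block reduces to inverting the indicial family $I(D_b,\lambda)$ via the Mellin transform in $x$. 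Because $\delta$ is assumed not to be a critical weight, $I(D_b,\lambda)$ is invertible on the line $\Re\lambda = -\delta - \tfrac{h+1}{2}$ (or the appropriate shifted line matching the weights in \eqref{int.13a}--\eqref{int.13b}), so the Mellin inversion produces a bounded boundary parametrix; \emph{(iii)} assemble these pieces into a global left and right parametrix $Q$ with $\eth_\phi Q - \Id$ and $Q \eth_\phi - \Id$ compact on the stated spaces — compactness following from the extra decay/smoothing gained by the parametrix construction, together with a Rellich-type compact embedding for the relevant weighted Sobolev spaces (this is where the precise matching of weights and of the $b$ versus $\phi$ Sobolev scales on the two summands is essential); \emph{(iv)} conclude Fredholmness from the existence of a two-sided parametrix modulo compacts.

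The main obstacle I expect is step \emph{(ii)}, specifically the bookkeeping at the corner where the fibered-boundary behaviour interacts with the splitting: one must check that the normal operator of $x\eth_\phi$, written with respect to the decomposition $\widetilde\Pi_h E \oplus (\Id - \widetilde\Pi_h)E$, is genuinely block-triangular (or block-diagonal) to the order needed, so that inverting the $b$-type harmonic block and the fiberwise-invertible complement block separately actually yields a parametrix for the full operator rather than only its leading part; the product-type-to-order-2 hypothesis of Definition~\ref{phi.7} is exactly what controls the off-diagonal and subleading terms and makes the error one order more decaying, hence compact. A secondary technical point is verifying mapping properties of $\eth_\phi$ between the precise hybrid spaces in \eqref{int.13a} and \eqref{int.13b}: one must see that $\eth_\phi$ maps $\widetilde\Pi_h x^{(h+1)/2} H^1_b$ into $x \cdot x^{(h+1)/2} \widetilde\Pi_h L^2_\phi$ (note $x H^0_b = x^{h/2+\cdots}$-type matching against $L^2_\phi$, so the power of $x$ must be tracked carefully since $L^2_b$ and $L^2_\phi$ differ by a power of $x$), and that the complement component behaves as claimed; once the model computation in step \emph{(ii)} is correct these mapping properties are essentially a matter of unwinding definitions, but the powers of $x$ relating the $b$-density and the $\phi$-density must be reconciled, and this is the natural place for sign errors. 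I would handle the two displays \eqref{int.13a} and \eqref{int.13b} in parallel, noting they differ only by whether the complement summand and its target carry an extra factor of $x$, which is a cosmetic rearrangement once the harmonic block — the genuinely non-elliptic-at-the-boundary part — is understood.
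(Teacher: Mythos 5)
Your proposal follows essentially the same route as the paper: Theorem~\ref{int.13} is proved there (as Corollaries~\ref{dt.24} and \ref{dt.27}) by building a parametrix for the conjugated operator in the large $\phi$-calculus (Theorem~\ref{dt.10}), splitting the normal operator at $\ff$ by $\Pi_h$, inverting the complement block via invertibility of $D_v$ (as the suspended operator $D_v+\eth_h$), handling the harmonic block through the indicial family $I(D_b,\lambda)$ on the line determined by the non-critical weight $\delta$, and then concluding Fredholmness from compact errors on exactly the hybrid spaces you describe. The one point your sketch underplays is that on the harmonic block the model at $\ff$ is the Euclidean Dirac family $\eth_h$, which is not invertible in the suspended calculus and must be inverted as a weighted $b$-type operator, producing the slowly decaying order-$h$ term at $\fbf$ that the Mellin/indicial-family step then corrects; relatedly, the left parametrix is obtained by taking adjoints at the reflected weight, using the symmetry of critical weights noted in Remark~\ref{dt.9b}.
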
 
To prove this result, our strategy, as in the thesis of Vaillant \cite{Vaillant} for fibered cusp Dirac operators, consists in constructing a sufficiently good parametrix for $\eth_{\phi}$ within the large $\phi$-calculus of \cite{Mazzeo-MelrosePhi}, see Theorem~\ref{dt.10} below for the precise statement.  Besides establishing Theorem~\ref{int.13}, our parametrix is used in Corollary~\ref{dt.22} to show that elements in the kernel of $\eth_{\phi}$ are smooth sections admitting a polyhomogeneous expansion at infinity.  More importantly, for our main result, our parametrix in Corollary~\ref{dt.31} is used to show that the inverse of \eqref{int.13a} defined on the complements of the cokernel of $\eth_{\phi}$ is a pseudodifferential operator of order $-1$ in the large $\phi$-calculus.  In particular, this inverse fits nicely on one of the boundary hypersurfaces of the double space $M^2_{k,\phi}$, allowing us to construct a good approximate inverse to $(\eth_{\phi}+k\gamma)$ within $\Psi^*_{k,\phi}(M;E)$.   

The paper is organized as follows.  In \S~\ref{phi.0}, we make a quick review of the $\phi$-calculus of Mazzeo and Melrose.  This is used in \S~\ref{fdt.0} to construct a parametrix for fibered boundary Dirac operators and derive few consequences, for instance Theorem~\ref{int.13}.  We introduce our calculus of low energy fibered boundary pseudodifferential operators in \S~\ref{kfb.0}.  After constructing a suitable triple space for our calculus in \S~\ref{ts.0}, we can describe how operators compose in \S~\ref{com.0}.  After introducing a few symbol maps in \S~\ref{sm.0}, we can finally provide the desired pseudodifferential characterization of the inverses of $(\eth_{\phi}+k\gamma)$ and $(\eth^2_{\phi}+k^2)$.  This is used in \S~\ref{sus.0} to give a pseudodifferential characterization of the inverse of the suspended operator $\eth_{\sus}$ in \eqref{int.14}.  In Appendix~\ref{cbu.0}, we establish a result about the commutativity of certain blow-ups of $p$-submanifolds that turns out to be useful in \S~\ref{kfb.0} in providing two different points of view on the double space $M^2_{k,\phi}$.

\begin{Acknowledgement}
The authors are grateful to Rafe Mazzeo for helpful conversations and two referees for detailed reports and valuable comments and suggestions. 
CK was supported by NSF Grant No.\ DMS-1811995. In addition, this material is based in part on work 
supported by the NSF
under Grant No.\ DMS-1440140 while CK was in residence at the
Mathematical Sciences Research Institute (MSRI) in Berkeley, California, during the
Fall 2019 semester.  FR was supported by NSERC and a Canada Research chair.  This project was initiated in the Fall 2019 during the program Microlocal Analysis at the MSRI.  The authors would like to thank the MSRI for its hospitality and for creating a stimulating environment for research.  
\end{Acknowledgement}

\numberwithin{equation}{section}

\section{Fibered boundary pseudodifferential operators}\label{phi.0}

In this section, we will review briefly the definitions and main properties of the $\phi$-calculus of Mazzeo-Melrose \cite{Mazzeo-MelrosePhi}.  Here and throughout the paper, we will in particular assume that the reader has some familiarity with manifolds with corners as presented in \cite{MelroseMWC}.  What we will need can be found for instance in \cite[Chapter~2]{Grieser} or\cite[\S~2]{hmm}.

Let $M$ be a compact manifold with boundary $\pa M$ equipped with a fiber bundle $\phi: \pa M\to Y$ over a closed manifold $Y$.  Let also $x\in\CI(M)$ be a boundary defining function, that is, $x>0$ on $M\setminus \pa M$, $x=0$ on $\pa M$ and $dx$ is nowhere zero on $\pa M$.  In terms of this data, the space of $\phi$-vector fields is given by 
\begin{equation}
 \cV_{\phi}(M)= \{ \xi \in \cV_b(M) \; | \; \phi_*(\xi|_{\pa M})=0, \quad \xi x\in x^2\CI(M)\},
 \label{phi.1}\end{equation}
where $\cV_b(M)$, the algebra of $b$-vector fields of \cite{MelroseAPS}, consists of smooth vector fields tangent to the boundary of $M$.  The definition of $\cV_{\phi}(M)$ depends obviously on $\phi$, but it also depends on the choice of boundary defining function $x$.  Two boundary defining functions $x_1$ and $x_2$ will give the same Lie algebra of $\phi$-vector fields if and only if the function $\left. \frac{x_1}{x_2}\right|_{\pa M}$ is constant on the fibers of $\phi: \pa M\to Y$.  In local coordinates  $(x,y_1,\ldots,y_h, z_1,\ldots z_v)$ near $\pa M$ with $(y_1,\ldots,y_h)$ coordinates on $Y$ such that $\phi$ is locally given by
\begin{equation}
          (y_1,\ldots,y_h,z_1,\ldots,z_v)\mapsto (y_1,\ldots, y_h),
\label{phi.2}\end{equation}
the space of $\phi$-vector fields is locally spanned by 
\begin{equation}
  x^2\frac{\pa}{\pa x}, x\frac{\pa}{\pa y_1}, \ldots, x\frac{\pa}{\pa y_h}, \frac{\pa}{\pa z_1},\ldots \frac{\pa}{\pa z_{v}}.
\label{phi.2}\end{equation}
By the Serre-Swan theorem, there is a corresponding vector bundle ${}^{\phi}TM\to M$, the $\phi$-tangent bundle, and a  map of vector bundles 
\begin{equation}
      a_{\phi}: {}^{\phi}TM\to TM
\label{phi.3}\end{equation}
inducing a natural  identification
\begin{equation}
            \CI(M;{}^{\phi}TM)= \cV_{\phi}(M).  
\label{phi.4}\end{equation}  
In other words, ${}^{\phi}TM\to M$ is a Lie algebroid with anchor map $a_{\phi}$.  The anchor map $a_{\phi}$ is neither injective nor surjective when restricted to the boundary $\pa M$.   The kernel of $a_{\phi}|_{\pa M}$ is in fact a vector bundle ${}^{\phi}N\pa M\to \pa M$ on $\pa M$ inducing the short exact sequence of vector bundles
\begin{equation}
\xymatrix{
 0 \ar[r] & {}^{\phi} N\pa M \ar[r] & {}^{\phi}TM|_{\pa M} \ar[r]^-{a_{\phi}} & T(\pa M/Y) \ar[r] & 0,
}
\label{phi.4b}\end{equation}
where $T(\pa M/Y)$ is the vertical tangent bundle of the fiber bundle $\phi:\pa M\to Y$.  In terms of \eqref{phi.2}, $x^2\frac{\pa}{\pa x}, x\frac{\pa}{\pa y_1}, \ldots, x\frac{\pa}{\pa y_h}$ are local sections of ${}^{\phi}N\pa M$. As explained in \cite[(7)]{Mazzeo-MelrosePhi}, there is in fact a canonical isomorphism 
\begin{equation}
       {}^{\phi}N\pa M= \phi^*({}^{\phi}NY)
\label{phi.4c}\end{equation}
for some natural vector bundle ${}^{\phi}NY\to Y$ on $Y$.

The anchor map $a_{\phi}$ induces however an isomorphism on the interior of $M$.  In the terminology of \cite{ALN04}, this means that $(M,\cV_{\phi}(M))$ is a Lie structure at infinity.  In particular, if $g_{\phi}$ is a choice of bundle metric on ${}^{\phi}TM\to M$, then it induces a Riemannian metric on $M\setminus \pa M$, also denoted $g_{\phi}$, via the isomorphism
\begin{equation}
  a_{\phi}: {}^{\phi}TM|_{M\setminus \pa M}\to T(M\setminus \pa M).
\label{phi.5}\end{equation} 
We refer to such a Riemannian metric as a \textbf{fibered boundary metric} or a \textbf{$\phi$-metric}.  By the discussion in \cite{ALN04}, such a metric is complete, of infinite volume and of bounded geometry.   
 
If $c: \pa M\times [0,\delta)\to M$ is a collar neighborhood of $\pa M$ compatible with the boundary defining function $x$ in the sense that $c^*x=\pr_2: \pa M\times [0,\delta)\to [0,\delta)$ is the projection on the second factor,  then a natural example of $\phi$-metric is given by one such that 
\begin{equation}
    c^*g_{\phi}= \frac{dx^2}{x^4}+ \frac{\phi^* g_Y}{x^2} + \kappa,
\label{phi.6}\end{equation}
where $g_Y$ is a Riemannian metric on $Y$ and $\kappa\in\CI(\pa M;S^2(T^*(\pa M))$ is a symmetric $2$-tensor such that $\phi^*g_Y+\kappa$ is a Riemannian metric on $\pa M$ making $\phi:\pa M\to Y$ a Riemannian submersion with respect to $\phi^*g_Y+\kappa$ and $g_Y$.  

\begin{definition}
A \textbf{product-type} $\phi$-metric is a $\phi$-metric $g_{\phi}$ taking the form \eqref{phi.6} in some collar neighborhood $c: \pa M\times [0,\delta)\to M$ compatible with the boundary defining function $x$. More generally, a $\phi$-metric is said to be \textbf{product-type up to order} $k\in\bbN$ if it is a product-type metric up to a term in 
$x^k\CI(M;S^2({}^{\phi}T^*M))$.    
\label{phi.7}\end{definition}
In this paper, we will exclusively work with $\phi$-metrics which are product-type up to order $2$.  An important class metrics conformally related to $\phi$-metrics is the class of fibered cusp metrics.  
\begin{definition}
A \textbf{fibered cusp metric} is a Riemannian metric $g_{\fc}$ on $M\setminus \pa M$ such that 
$$
         g_{\fc}= x^2g_{\phi}
$$
for some $\phi$-metric.  Such a metric is said to be of product-type (respectively product-type up to order $k$) if the conformally related $\phi$-metric $g_{\phi}$ is product-type (respectively product-type up to order $k$).
\label{phi.8}\end{definition}

Like a $\phi$-metric, a fibered cusp metric is complete.  However, if the fibers of $\phi$ are not $0$-dimensional, its volume is finite and it has zero injectivity radius.  Moreover, except in special cases, its curvature is not bounded.  

Within the classes of $\phi$-metrics and fibered cusp metrics, there are special subclasses corresponding to specific choices of fiber bundles $\phi: \pa M\to Y$.  One can consider for instance the case where $Y$ is a point, in which case product-type $\phi$-metrics correspond to metrics with infinite cylindrical ends, while product-type fibred cusp metrics corresponds to metrics with cusp ends.  The other extreme is to take $Y=\pa M$ and $\phi$ to be the identity map, in which case the $\phi$-vector fields correspond to the scattering vector fields of \cite{MelroseGST}, a product-type $\phi$-metric correspond to a metric with an infinite conical end and a product-type fibered cusp metric  corresponds to a metric with infinite cylindrical end.  

The differential operators geometrically constructed from a $\phi$-metric, like the Hodge Laplacian or a Dirac operator, fit in the more general class of differential $\phi$-operators.  The space $\Diff^k_{\phi}(M)$ of differential $\phi$-operators of order $k$ corresponds to differential operators generated by multiplication by an element of $\CI(M)$ and the composition of up to $k$ $\phi$-vector fields.  In other words, $\Diff^*_{\phi}(M)$ is the universal enveloping algebra of $\cV_{\phi}(M)$ with respect to $\CI(M)$.  As explained in \cite{Mazzeo-MelrosePhi,ALN04}, given vector bundles $E$ and $F$ over $M$, one can more generally define the space $\Diff^k_{\phi}(M;E,F)$ of differential $\phi$-operators of order $k$ acting from sections of $E$ to sections of $F$.  

To construct good parametrices for differential $\phi$-operators, Mazzeo and Melrose introduced the notion of pseudodifferential $\phi$-operators.  This is done by defining their Schwartz kernels on a suitable double space, namely the $\phi$-double space.  To define it, one starts with the manifold with corners $M^2=M\times M$.  Denote by $x$ and $x'$ the boundary defining functions of the boundary hypersurfaces $\pa M\times M$ and $M\times \pa M$ obtained by lifting $x\in \CI(M)$ via the projections on the left and right factors.  Blowing up the corner $\pa M\times \pa M$ gives the $b$-double space
\begin{equation}
    M^2_b= [M^2;\pa M \times \pa M] \quad \mbox{with blow-down map} \quad \beta_b:M^2_b\to M^2.
\label{phi.9}\end{equation}
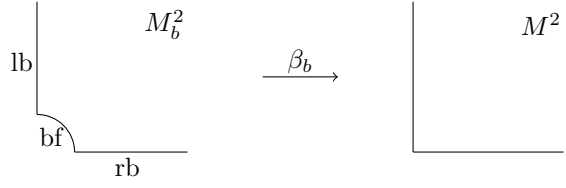
\begin{figure}[h]
\begin{tikzpicture}
\draw(0.5,0) arc [radius=0.5, start angle=0, end angle=90];
\draw(0.5,0)--(2,0);
\draw(0,0.5)--(0,2); 
\draw[->] (3,1)--(4,1);
\node at (3.5,1.2) {$\beta_b$};
\draw(5,0)--(7,0);
\draw(5,0)--(5,2);
\node at (1.2,-0.2) {$\rb$};
\node at (-0.2,1.2) {$\lb$};
\node at (0.2,0.2) {$\fb$};
\node at (1.7,1.7) {$M^2_b$};
\node at (6.7,1.7) {$M^2$};
\end{tikzpicture}
\caption{The $b$-double space }
\label{fig.1}\end{figure}
 The manifold with corners $M^2_b$ has now three boundary hypersurfaces, namely the lift $\lf$ and $\rf$ of the old boundary hypersurfaces $\pa M\times M$ and $M\times \pa M$, as well as a new boundary hypersurfaces $\fb$ created by the blow-up of $\pa M\times \pa M$.  The boundary hypersurface $\bf$ is naturally diffeomorphic to 
\begin{equation}
      \pa M\times \pa M\times [0,\frac{\pi}2]
\label{phi.10}\end{equation}
where the coordinate in the factor $[0,\frac{\pi}2]$ can be taken to be $\theta= \arctan \left( \frac{x}{x'} \right)$.  With respect to this identification, we can consider the $p$-submanifold 
\begin{equation}
   \Phi= \{  (p,q,\theta)\in \pa M\times \pa M\times [0,\frac{\pi}2] \; | \; \phi(p)=\phi(q), \; \theta=\frac{\pi}4\}.
\label{phi.11}\end{equation}
The $\phi$-double space is then the manifold with corners obtained from $M^2_b$ by blowing up the $p$-submanifold $\Phi$,
\begin{equation}
   M^2_{\phi}= [M^2_b;\Phi] \quad \mbox{with blow-down map} \quad \beta_{\phi}: M^2_{\phi}\to M^2.
\label{phi.12}\end{equation}
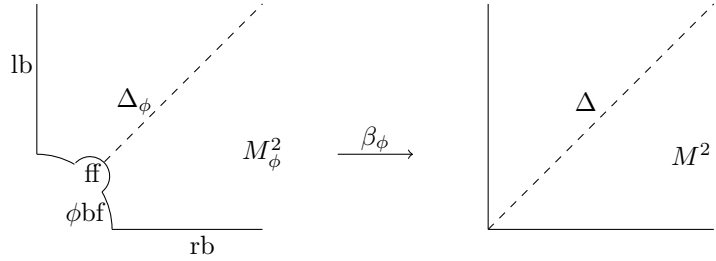
\begin{figure}[h]
\begin{tikzpicture}
\draw(1,0) arc [radius=1, start angle=0, end angle=30];
\draw(0,1) arc [radius=1, start angle=90, end angle=60];
\draw(0.866,0.5) arc[radius=0.261, start angle=-52.5, end angle=137.5];
\draw(1,0)--(3,0);
\draw(0,1)--(0,3); 
\draw[->] (4,1)--(5,1);
\node at (4.5,1.2) {$\beta_\phi$};
\draw(6,0)--(9,0);
\draw(6,0)--(6,3);
\draw[dashed] (6,0)--(9,3);
\node at (7.3,1.7) {$\Delta$};
\draw [dashed] (0.9,0.9)--(3,3);
\node at (1.3,1.7) {$\Delta_{\phi}$};
\node at (2.2,-0.2) {$\rb$};
\node at (-0.2,2.2) {$\lb$};
\node at (0.65,0.2) {$\fbf$};
\node at (0.75,0.75) {$\ff$};
\node at (3,1) {$M^2_{\phi}$};
\node at (8.7,1) {$M^2$};
\end{tikzpicture}
\caption{The $\phi$-double space }
\label{fig.2}\end{figure}

On $M^2_{\phi}$ we denote again by $\lf$ and $\rf$ the boundary hypersurfaces corresponding to the lifts of $\lf$ and $\rf$ from $M^2_b$ to $M^2_{\phi}$.  We also denote by $\fbf$ the lift of $\fb$ to $M^2_{\phi}$ and by $\ff$ the new boundary hypersurface created by the blow-up of $\Phi$.  

Let $\Delta_{\phi}$ be the lift of the diagonal $\Delta\subset M\times M$ to $M^2_{\phi}$.  As shown in \cite{Mazzeo-MelrosePhi}, one of the main features of the $\phi$-double space is that the lift from the left or from the right of $\phi$-vector fields are transverse to $\Delta_{\phi}$.  This suggests to define pseudodifferential $\phi$-operators as conormal distributions with respect to $\Delta_{\phi}$ on $M^2_{\phi}$.  Let 
\begin{equation}
   {}^{\phi}\Omega(M)= |\Lambda^{\dim M}({}^{\phi}T^*M)|
\label{phi.13}\end{equation}
be the bundle of $\phi$-densities on $M$.  If $\pi_R=\pr_R\circ \beta_{\phi}$ and $\pi_{L}=\pr_L\circ\beta_{\phi}$ with $\pr_R: M\times M\to M$ and $\pr_L: M\times M\to M$ the projections on the right and left factor, then on $M^2_{\phi}$ we can consider the bundle of right $\phi$-densities
\begin{equation}
  {}^{\phi}\Omega_R(M)= \pi^*_R({}^{\phi}\Omega(M)),
\label{phi.14}\end{equation}
as well as the homomorphism bundle 
\begin{equation}
        \Hom_{\phi}(E,F)= \pi_L^*E\otimes \pi_R^* F^*
\label{phi.15}\end{equation}
for $E$ and $F$ vector bundles over $M$.  
\begin{definition}
Let $E$ and $F$ be vector bundles on $M$.  The \textbf{small calculus of pseudodifferential $\phi$-operators} acting from sections of $E$ to sections of $F$ is the union over $m\in\bbR$ of the spaces
\begin{equation}
\Psi^m_{\phi}(M;E,F)= \{  \kappa\in I^m(M^2_{\phi},\Delta_{\phi}; \Hom_{\phi}(E,F)\otimes {}^{\phi}\Omega_R(M)) \quad | \quad \kappa\equiv 0 \; \mbox{at} \; \pa M^2_{\phi}\setminus \ff\},
\label{phi.16b}\end{equation}
where $I^m(M^2_{\phi},\Delta_{\phi}; \Hom_{\phi}(E,F)\otimes {}^{\phi}\Omega_R(M))$ is, in the sense of \cite[Definition~18.2.6]{Hormander3}, the space of conormal distributions of order $m$ with respect to $\Delta_{\phi}$ taking value in the vector bundle $\Hom_{\phi}(E,F)\otimes {}^{\phi}\Omega_R(M)$ and $\kappa\equiv 0$ at $\pa M^2_{\phi}\setminus \ff$ means that the Taylor series of $\kappa$ is trivial at all boundary hypersurfaces of $M^2_{\phi}$ except possibly at $\ff$.  
\label{phi.16}\end{definition}
As shown in \cite{Mazzeo-MelrosePhi}, an operator $P\in \Psi^m_{\phi}(M;E,F)$ induces an operator
$$
     P: \CI(M;E)\to \CI(M;F).  
$$
The calculus is also closed under composition in that 
$$
      \Psi^m_{\phi}(M;F,G)\circ \Psi^{m'}_{\phi}(M;E,F)\subset \Psi^{m+m'}_{\phi}(M;E,G).
$$
Furthermore, simple criteria are provided in \cite{Mazzeo-MelrosePhi} to determine when an operator is bounded, compact or Fredholm when acting on weighted $L^2$-Sobolev spaces associated to a $\phi$-metric.  For instance, we know from \cite[Lemma~12]{Mazzeo-MelrosePhi} that a $\phi$-operator  $K$ of negative order  is compact when acting on the $L^2$-space of a $\phi$-metric provided its normal operator $N_{\ff}(K)$, that is, its restriction to $\ff$, vanishes.

As for the $b$-calculus however, some parametrix constructions require  a larger calculus.  If $\cE$ is, in the sense of \cite[\S~4]{Melrose1992}, an index family for the boundary hypersurfaces of $M^2_{\phi}$, one can more generally consider the spaces
\begin{equation}
\begin{gathered}
\Psi^{-\infty,\cE}_{\phi}(M;E,F)= \cA^{\cE}_{\phg}(M^2_{\phi};\Hom_{\phi}(E,F)\otimes {}^{\phi}\Omega_R(M)), \\
\Psi^{m,\cE}_{\phi}(M;E,F)= \Psi^{m}_{\phi}(M;E,F)+ \Psi^{-\infty,\cE}_{\phi}(M;E,F), \quad m\in\bbR,
\end{gathered}
\label{phi.17}\end{equation}
where $\cA^{\cE}_{\phg}(M^2_{\phi};\Hom_{\phi}(E,F)\otimes {}^{\phi}\Omega_R(M))$ denotes the space of polyhomogeneous sections of $\Hom_{\phi}(E,F)\otimes {}^{\phi}\Omega_R(M)$ with polyhomogenous  expansions compatible with the index family $\cE$ in the sense of \cite[(23)]{Melrose1992}.  
Using the pushforward theorem of \cite{Melrose1992}, one can show as in \cite[(26)]{Vaillant} or \cite[Theorem~3.3]{ARS1} that these operators act on polyhomogeneous sections as follows.
\begin{proposition}
Let $A\in \Psi^{m,\cE}_{\phi}(M;E,F)$ and $\sigma\in \cA^{\cF}_{\phg}(M;E)$ with index family $\cE$ and index set $\cF$ such that 
$$
\Re(\cE|_{\rf}+\cF)>h+1,
$$
where $h=\dim Y$ is the dimension of the base of the fiber bundle $\phi:\pa M\to Y$.
Then the action of $A$ on $\sigma$ is well-defined, giving a polyhomogeneous section $A\sigma\in \cA^{\cG}(M;F)$ with index set $\cG$ given by
$$
  \cG= \cE|_{\lf}\overline{\cup}(\cE|_{\ff}+\cF)\overline{\cup}(\cE|_{\fbf}+\cF-h-1),
$$
where $h$ is the dimension of the base $Y$ and $\overline{\cup}$ denotes the extended union of index sets of  \cite[(43)]{Melrose1992}.
\label{phi.17b}\end{proposition}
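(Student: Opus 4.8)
The plan is to realize the action of $A$ on $\sigma$ as a fiber integral and apply Melrose's pushforward theorem \cite{Melrose1992}, following the scheme of \cite[(26)]{Vaillant} and \cite[Theorem~3.3]{ARS1}. It is convenient to first split $A=A'+A''$ with $A'\in\Psi^m_\phi(M;E,F)$ in the small calculus and $A''\in\Psi^{-\infty,\cE}_\phi(M;E,F)$. Since the lifts of $\phi$-vector fields to $M^2_\phi$ are transverse to $\Delta_\phi$, the small calculus sends $\cA^{\cF}_{\phg}(M;E)$ into $\cA^{\cF'}_{\phg}(M;F)$ for some index set $\cF'\subseteq\cF+\bbN_0$ (a standard mapping property, \cite{Mazzeo-MelrosePhi}; alternatively this follows from the conormal version of the pushforward theorem, exactly as below). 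As we may harmlessly enlarge $\cE|_{\ff}$ so that $\bbN_0\subseteq\cE|_{\ff}$, we get $\cF'\subseteq\cF+\bbN_0\subseteq\cE|_{\ff}+\cF\subseteq\cG$, hence $A'\sigma\in\cA^{\cG}_{\phg}(M;F)$, and it remains to treat $A''$.

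For $A''$ I would write $A''\sigma=(\pi_L)_*\bigl(\kappa_{A''}\cdot\pi_R^*\sigma\bigr)$, where $\kappa_{A''}\in\cA^{\cE}_{\phg}(M^2_\phi;\Hom_\phi(E,F)\otimes{}^\phi\Omega_R(M))$ is the Schwartz kernel and $\pi_L,\pi_R\colon M^2_\phi\to M$ are the two projections, which are $b$-fibrations \cite{Mazzeo-MelrosePhi}. Reading off the two blow-downs composing $\beta_\phi$, one finds that $\pi_R^*x$ vanishes simply exactly at $\rf,\fbf,\ff$, while $\pi_L^*x$ vanishes simply exactly at $\lf,\fbf,\ff$. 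In particular $\pi_R^*\sigma$ is polyhomogeneous on $M^2_\phi$ with index set $\cF$ at each of $\rf,\fbf,\ff$ and $\bbN_0$ at $\lf$.

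The heart of the matter is a density computation recasting $\kappa_{A''}\cdot\pi_R^*\sigma$ as a polyhomogeneous $\pi_L^*F$-valued section of ${}^b\Omega(M^2_\phi)$, the form to which the pushforward theorem applies. Here I would use two facts: near $\pa M$ one has ${}^\phi\Omega(M)=x^{-(h+1)}\,{}^b\Omega(M)$; and for the blow-up $\beta\colon[Z;S]\to Z$ of a $p$-submanifold $S$ of codimension $c$ lying in the corner cut out by exactly $j$ boundary hypersurfaces, ${}^b\Omega([Z;S])=\rho_{\ff}^{-(c-j)}\,\beta^*{}^b\Omega(Z)$. Applying the second fact to the blow-up of $\pa M\times\pa M\subset M^2$ (where $c=j=2$, no shift) and then to the blow-up of $\Phi\subset M^2_b$ (where $c=h+2$, $j=1$, shift $-(h+1)$), together with ${}^b\Omega(M^2)=\pr_L^*{}^b\Omega(M)\otimes\pr_R^*{}^b\Omega(M)$, yields ${}^b\Omega(M^2_\phi)=\rho_{\ff}^{-(h+1)}\,\pi_L^*{}^b\Omega(M)\otimes\pi_R^*{}^b\Omega(M)$. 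Combining with ${}^\phi\Omega(M)=x^{-(h+1)}{}^b\Omega(M)$ and $\pi_R^*x\sim\rho_{\rf}\rho_{\fbf}\rho_{\ff}$, the two powers of $\rho_{\ff}$ cancel and ${}^\phi\Omega_R(M)=\rho_{\rf}^{-(h+1)}\rho_{\fbf}^{-(h+1)}\cdot{}^b\Omega(M^2_\phi)\otimes\pi_L^*{}^b\Omega(M)^{-1}$. Hence $\kappa_{A''}\cdot\pi_R^*\sigma$, viewed as a section of $\pi_L^*\bigl(F\otimes{}^b\Omega(M)^{-1}\bigr)$ valued in ${}^b\Omega(M^2_\phi)$, is polyhomogeneous with index set $\cE|_{\lf}$ at $\lf$, $\cE|_{\rf}+\cF-(h+1)$ at $\rf$, $\cE|_{\fbf}+\cF-(h+1)$ at $\fbf$, and $\cE|_{\ff}+\cF$ at $\ff$.

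Finally I would apply the pushforward theorem to the $b$-fibration $\pi_L$. Among the boundary hypersurfaces of $M^2_\phi$, only $\rf$ is mapped onto $M$ (hence only $\rf$ is integrated over), while $\lf,\fbf,\ff$ are all mapped into $\pa M$. Integrability over the fibers of $\pi_L$ thus holds precisely when the index set of the integrand at $\rf$ has positive real part, \ie $\Re(\cE|_{\rf}+\cF-(h+1))>0$, which is exactly the hypothesis $\Re(\cE|_{\rf}+\cF)>h+1$. Under this hypothesis the theorem gives that $A''\sigma$ is polyhomogeneous with index set the extended union of the integrand's index sets at the faces mapped into $\pa M$, namely $\cE|_{\lf}\,\overline{\cup}\,(\cE|_{\ff}+\cF)\,\overline{\cup}\,(\cE|_{\fbf}+\cF-h-1)=\cG$; adding $A'\sigma$ completes the proof. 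I expect the density bookkeeping of the third step to be the main obstacle — in particular, the point that the shift $-(h+1)$ coming from ${}^\phi\Omega(M)=x^{-(h+1)}{}^b\Omega(M)$ cancels, at $\ff$ but not at $\fbf$, against the $+(h+1)$ appearing in the comparison of ${}^b\Omega(M^2_\phi)$ with $\beta_\phi^*{}^b\Omega(M^2)$ that is produced by blowing up $\Phi$, which is precisely why the index set at $\ff$ is left unshifted while the one at $\fbf$ acquires $-h-1$; verifying that $\pi_L$ is genuinely a $b$-fibration and that the conormal part $A'$ behaves as claimed are comparatively routine.
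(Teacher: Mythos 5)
Your argument is correct and is exactly the route the paper itself indicates (and leaves to the references): realize $A\sigma$ as a pushforward along the $b$-fibration $\pi_L$ on $M^2_{\phi}$, convert ${}^{\phi}\Omega_R$ to ${}^{b}\Omega(M^2_{\phi})$ using Lemma~\ref{lbd.1} with $w=h+1$ at $\ff$ (so the shift $-(h+1)$ survives at $\rf$ and $\fbf$ but cancels at $\ff$), and apply Melrose's pushforward theorem as in \cite[(26)]{Vaillant} and \cite[Theorem~3.3]{ARS1}, the hypothesis $\Re(\cE|_{\rf}+\cF)>h+1$ being precisely the integrability condition at the one face mapped onto $M$. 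The density bookkeeping and the treatment of the small-calculus part (with the implicit convention $\bbN_0\subseteq\cE|_{\ff}$) are as intended, so no changes are needed.
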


Similarly, the $\phi$-triple space of \cite{Mazzeo-MelrosePhi} and the pushforward theorem of \cite{Melrose1992} can be used to show as in \cite[Theorem~2.11]{Vaillant} or \cite[Theorem~3.4]{ARS1} that this larger class of $\phi$-operators behaves well under composition.

\begin{proposition}
Let $\cE$ and $\cF$ be index families for the boundary hypersurfaces of $M^2_{\phi}$ such that 
$$
             \Re(\cE|_{\rf})+\Re(\cF|_{\lf})>h+1.
$$
where $h=\dim Y$ as in Proposition~\ref{phi.17b}.  
Then given $A\in \Psi^{m,\cE}_{\phi}(M;F,G)$ and $B\in \Psi^{m',\cF}_{\phi}(M;E,F)$, their composition is well defined with
$$
    A\circ B\in \Psi^{m+m',\cG}_{\phi}(M;E,G),
$$
where $\cG$ is the index family given by
\begin{equation}
\begin{aligned}
\cG|_{\lf} &= (\cE|_{\lf})\overline{\cup} (\cE|_{\fbf}+\cF|_{\lf}-h+1)\overline{\cup}(\cE|_{\ff}+\cF|_{\lf}), \\
\cG|_{\rf} &= (\cF|_{\rf})\overline{\cup}(\cE|_{\rf}+\cF|_{\fbf}-h-1)\overline{\cup}(\cE|_{\rf}+\cF|_{\ff}), \\
\cG|_{\fbf}&=(\cE|_{\lf}+\cF|_{\rf})\overline{\cup}(\cE|_{\fbf}+ \cF|_{\fbf}-h-1)\overline{\cup}(\cE|_{\fbf}+\cF|_{\ff})\overline{\cup}(\cE|_{\ff}+ \cF|_{\fbf}), \\
\cG|_{\ff}&= (\cE|_{\lf}+\cF|_{\rf})\overline{\cup}(\cE|_{\fbf}+\cF|_{\fbf}-h-1)\overline{\cup}(\cE|_{\ff}+\cF|_{\ff}).
\end{aligned}
\label{phi.18b}\end{equation}
\label{phi.18}\end{proposition}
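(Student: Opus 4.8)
The plan is to carry out the composition on the $\phi$-triple space of \cite{Mazzeo-MelrosePhi} and then invoke the pullback and pushforward theorems of \cite{Melrose1992}, following the strategy of \cite[Theorem~2.11]{Vaillant} and \cite[Theorem~3.4]{ARS1} essentially verbatim. Recall that $M^3_\phi$ is a compact manifold with corners carrying three b-fibrations
$$
\pi_F,\ \pi_S,\ \pi_C\colon M^3_\phi\longrightarrow M^2_\phi
$$
lifting the three partial projections $M\times M\times M\to M\times M$ that forget, respectively, the third, the first and the middle factor, and designed so that the lift of $\Delta_\phi$ under $\pi_F$ and the lift of $\Delta_\phi$ under $\pi_S$ are interior p-submanifolds meeting transversally, their intersection being a lift of the triple diagonal that $\pi_C$ maps diffeomorphically onto $\Delta_\phi$ (and is suitably transversal to). Moreover ${}^\phi\Omega_R(M)$ restricts along the fibers of $\pi_C$ to the fiber density bundle needed for $\pi_C$-pushforward, up to an explicit boundary weight. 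Writing $\kappa_A$, $\kappa_B$ for the Schwartz kernels of $A$ and $B$, the kernel of $A\circ B$ is
$$
\kappa_{A\circ B}=(\pi_C)_*\bigl(\pi_F^*\kappa_A\cdot\pi_S^*\kappa_B\bigr),
$$
the bundle factors $\pi_L^*E$, $F^*$, $\pi_R^*G$ composing in the evident way, and it suffices to show the right-hand side is a well-defined element of $\Psi^{m+m',\cG}_\phi(M;E,G)$ with $\cG$ as in \eqref{phi.18b}; a Fubini argument then identifies it with the operator composition.

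First I would apply the pullback theorem: since $\pi_F$ and $\pi_S$ are b-fibrations transverse to the relevant lifts of $\Delta_\phi$, the distributions $\pi_F^*\kappa_A$ and $\pi_S^*\kappa_B$ are conormal of orders $m$ and $m'$ to those lifts and polyhomogeneous at $\pa M^3_\phi$, with index sets read off from $\cE$ and $\cF$ face by face. By transversality of the two lifted diagonals the product is conormal of order $m+m'$ to the lifted triple diagonal and polyhomogeneous at $\pa M^3_\phi$, the index set at each boundary hypersurface being the sum of the pulled-back index sets of $\cE$ and $\cF$ meeting there. The decisive point is that every boundary hypersurface of $M^3_\phi$ that $\pi_C$ does not map onto a boundary hypersurface of $M^2_\phi$ carries, after accounting for the ${}^\phi\Omega_R(M)$-weight, an index set of strictly positive real part; this is exactly where $\Re(\cE|_\rf)+\Re(\cF|_\lf)>h+1$ enters, the threshold $h+1$ being the same one appearing in Proposition~\ref{phi.17b}, forced by the order of vanishing $-(h+2)$ of the $\phi$-density bundle at the right face together with the one-dimensional radial integration. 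Melrose's pushforward theorem then shows $\kappa_{A\circ B}$ is conormal of order $m+m'$ to $\Delta_\phi$, vanishing to infinite order off $\ff$ up to a polyhomogeneous remainder, with index family given by the extended-union-and-shift recipe of the pushforward theorem; assembling it produces exactly \eqref{phi.18b}, the shifts $-h\pm1$ recording the dimensions of the fibers of $\pi_C$ over the corresponding faces. Alternatively one may split $A=A_0+A_1$, $B=B_0+B_1$ with $A_0\in\Psi^m_\phi$, $B_0\in\Psi^{m'}_\phi$ and $A_1,B_1$ residual: then $A_0\circ B_0\in\Psi^{m+m'}_\phi(M;E,G)$ by the small-calculus composition theorem of \cite{Mazzeo-MelrosePhi} (contributing the conormal part of $\cG$ supported at $\ff$), while each of the three remaining products has a residual factor and is handled by the triple-space pushforward above, contributing only to the polyhomogeneous part of $\cG$.

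The hard part is entirely combinatorial: one must track, for each of $\pi_F$, $\pi_S$ and $\pi_C$, which boundary hypersurfaces of $M^3_\phi$ lie over $\lf$, $\rf$, $\fbf$ and $\ff$ of $M^2_\phi$, which of them are collapsed to higher codimension by $\pi_C$ (these are what produce the integrability threshold and the dimension shifts), and then combine the resulting exponents into index sets via $\overline{\cup}$. Since the $\phi$-triple space and the b-fibration property of $\pi_F,\pi_S,\pi_C$ are established in \cite{Mazzeo-MelrosePhi}, and the identical bookkeeping is carried out for fibered cusp operators in \cite[Theorem~2.11]{Vaillant} and for related operators in \cite[Theorem~3.4]{ARS1}, this verification is routine, and I would simply transcribe it, checking that \eqref{phi.18b} is what comes out.
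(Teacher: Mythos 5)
Your proposal follows exactly the route the paper takes: composition via the $\phi$-triple space of \cite{Mazzeo-MelrosePhi} together with the pullback and pushforward theorems of \cite{Melrose1992}, with the face-by-face bookkeeping deferred to \cite[Theorem~2.11]{Vaillant} and \cite[Theorem~3.4]{ARS1}, which is precisely how the paper justifies Proposition~\ref{phi.18}. The sketch, including the integrability threshold $h+1$ at the right/left faces and the density-weight shifts producing the $-h-1$ terms, is correct.
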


\section{Fredholm fibered boundary Dirac operators} \label{fdt.0}

Let $M$ be a compact manifold with boundary $\pa M$ equipped with a fiber bundle $\phi:\pa M\to Y$ over a closed manifold $Y$.  Fix a boundary defining function $x\in\CI(M)$ and let $g_{\phi}$ be a product-type fibered boundary metric up to order 2.  Let $E\to M$ be a Hermitian vector bundle and consider an elliptic formally self-adjoint first order fibered boundary operator $\eth_{\phi}\in\Diff^1_{\phi}(M;E)$.  An example to keep in mind is the situation where $E$ is a Clifford module for the Clifford bundle of the $\phi$-tangent bundle and $\eth_{\phi}$ is the Dirac operator associated to a choice of Clifford connection.  

Instead of $\eth_{\phi}$ acting formally on $L^2_{\phi}(M;E)$, it is convenient to consider equivalently the fibered boundary operator
\begin{equation}
D_{\phi}= x^{-\frac{h+1}2}\eth_{\phi}x^{\frac{h+1}2} \quad \mbox{acting formally on} \; L^2_b(M;E)=x^{-\frac{h+1}2}L^2_{\phi}(M;E),
\label{fdt.1}\end{equation}
where $h:= \dim Y$.  In this way, one important model operator, the indicial family of Definition~\ref{dt.6} below, can be defined essentially by Mellin transform of a restriction to $\fbf$, in direct analogy with the indicial family of \cite{MelroseAPS} for $b$-operator.  This will in particular ease the use of results from \cite{MelroseAPS} for the construction of the parametrix.   

Since $\eth_{\phi}$ is formally self-adjoint with respect to $L^2_{\phi}(M;E)$, notice that $D_{\phi}$ is formally self-adjoint with respect to $L^2_b(M;E)$.  

\begin{definition}
The \textbf{vertical family} is the family of vertical operators $D_v\in\Diff^1(\pa M/Y;E)$ obtained by restricting the action of $D_{\phi}$ to the boundary $\pa M$.
\label{dt.2}\end{definition}
The vertical family is closely related to the normal operator $N_{\ff}(D_{\phi})$ of $D_{\phi}$ obtained by restricting $D_{\phi}$ to $\ff$ as a conormal distribution.  As described in \cite[\S~4]{Mazzeo-MelrosePhi}, the normal operator is a family of suspended operators in the fibers of $\phi:\pa M\to Y$.  A direct computation  shows that 
\begin{equation}
  Y\ni p \mapsto N_{\ff}(D_{\phi})_p= D_v|_{\phi^{-1}(p)}+ \eth_h(p),
\label{dt.3}\end{equation}
where $p\mapsto \eth_h(p)$ is a family of fiberwise translation invariant elliptic first order differential operators associated to the vector bundle ${}^{\phi}N\pa M\to \pa M$ of \eqref{phi.4b} restricted to $\phi^{-1}(p)$.      We will assume that $\eth_h$ is in fact a family of Euclidean Dirac operators anti-commuting with $D_v$.  As the next lemma shows, this condition is automatically satisfied if $\eth_{\phi}$ is a Dirac operator, for instance if it is the Hodge-deRham operator of the metric $g_{\phi}$.
\begin{lemma}
If $\eth_{\phi}$ is a Dirac operator, then $\eth_h$ is a family of Euclidean Dirac operators anti-commuting with $D_v$.  
\label{dt3.b}\end{lemma}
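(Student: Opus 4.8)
The plan is to analyze the structure of a Dirac operator $\eth_\phi$ near the boundary using the Clifford action of the $\phi$-tangent bundle, and to track what happens under the conjugation $D_\phi = x^{-(h+1)/2}\eth_\phi x^{(h+1)/2}$ and the restriction to the boundary. First I would fix local coordinates $(x,y_1,\dots,y_h,z_1,\dots,z_v)$ near $\pa M$ as in \eqref{phi.2}, so that a local frame for ${}^\phi TM$ is given by the rescaled vector fields $x^2\pa_x$, $x\pa_{y_j}$, $\pa_{z_\ell}$. Writing $\cl(\cdot)$ for Clifford multiplication by elements of ${}^\phi T^*M$, the Dirac operator takes the form
\begin{equation}
\eth_\phi = \cl\!\left(\tfrac{dx}{x^2}\right)\nabla_{x^2\pa_x} + \sum_{j}\cl\!\left(\tfrac{dy_j}{x}\right)\nabla_{x\pa_{y_j}} + \sum_\ell \cl(dz_\ell)\nabla_{\pa_{z_\ell}} + (\text{order $0$}),
\end{equation}
where the connection $\nabla$ is a Clifford connection compatible with the product-type structure \eqref{phi.6} up to order $2$. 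The key point, which I would establish by a direct local computation, is that because $g_\phi$ is product-type to order $2$, modulo terms vanishing to sufficiently high order at $x=0$ the bundle ${}^\phi T^*M|_{\pa M}$ splits Clifford-orthogonally as the pullback of ${}^\phi N^*Y$ (spanned by $\frac{dx}{x^2}$ and the $\frac{dy_j}{x}$, the "horizontal" covectors) plus the vertical cotangent bundle $T^*(\pa M/Y)$ (spanned by the $dz_\ell$). Accordingly the symbol of $\eth_\phi$ restricted to the boundary splits as a sum of a "horizontal" piece built from $\cl(\frac{dx}{x^2})$ and $\cl(\frac{dy_j}{x})$ and a "vertical" piece built from the $\cl(dz_\ell)$.

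Next I would compute the normal operator $N_{\ff}(D_\phi)$ explicitly. Passing to the front face $\ff$ of $M^2_\phi$ amounts to freezing coefficients at $x=0$ and introducing the natural linear fiber coordinates on ${}^\phi N\pa M = \phi^*({}^\phi NY)$ — concretely, replacing $x^2\pa_x$ and $x\pa_{y_j}$ by translation-invariant vector fields $\pa_s$, $\pa_{u_j}$ on the fibers of ${}^\phi N\pa M$. The conjugation by $x^{(h+1)/2}$ in \eqref{fdt.1} only contributes lower-order (indeed, bounded, $x$-independent) zeroth-order terms once one passes to the normal operator, and in the translation-invariant model these do not affect the leading structure; I would verify that $N_{\ff}(D_\phi)$ decomposes exactly as in \eqref{dt.3}, with $\eth_h(p)$ the operator obtained from the horizontal part of the symbol, i.e.\ $\eth_h(p) = \cl(ds)\pa_s + \sum_j \cl(du_j)\pa_{u_j}$ acting on the Euclidean fiber ${}^\phi N_pY \cong \bbR^{h+1}$, and $D_v|_{\phi^{-1}(p)}$ the operator obtained from the vertical part, $D_v = \sum_\ell \cl(dz_\ell)\nabla_{\pa_{z_\ell}} + (\text{order }0)$, acting on the fiber $\phi^{-1}(p)$. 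Since $\eth_h(p)$ is precisely a constant-coefficient Dirac operator for the flat Euclidean metric on ${}^\phi N_pY$, it is a family of Euclidean Dirac operators, which is the first assertion.

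For the anti-commutation $\eth_h(p) D_v + D_v \eth_h(p) = 0$, I would invoke the Clifford relations: the horizontal covectors $ds, du_1,\dots,du_h$ and the vertical covectors $dz_1,\dots,dz_v$ are mutually orthogonal in the (product-type) $\phi$-metric, hence their Clifford multiplications anti-commute, $\cl(du_j)\cl(dz_\ell) + \cl(dz_\ell)\cl(du_j) = 0$ and likewise for $\cl(ds)$. Because $\eth_h$ has constant coefficients in $(s,u)$ and involves no $z$-derivatives while $D_v$ involves only $z$-derivatives (and coefficients independent of $(s,u)$), the derivative parts commute; the only contributions to the anti-commutator come from the Clifford-coefficient parts, which cancel by the orthogonality just noted — here I would be slightly careful to check that the zeroth-order (curvature/connection) terms in $D_v$ also anti-commute with $\cl(ds)$ and the $\cl(du_j)$, which again follows from the product-type-to-order-$2$ hypothesis, since to that order the Clifford connection respects the horizontal/vertical splitting and its vertical curvature terms are built from $\cl$ of vertical covectors only. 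The main obstacle I anticipate is precisely this bookkeeping of lower-order terms: one must confirm that the product-type-to-order-$2$ condition is exactly strong enough that, after conjugation by $x^{(h+1)/2}$ and passage to the normal operator, no "cross" zeroth-order terms mixing the horizontal Clifford generators with the vertical ones survive. Once the local normal form is pinned down this is a finite Clifford-algebra computation, but getting the normal form right — in particular tracking the effect of the conjugation and of the nontrivial term $\kappa$ in \eqref{phi.6} to the allowed order — is where the real work lies.
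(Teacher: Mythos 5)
Your proposal is correct and is essentially the paper's argument: both identify the normal operator $N_{\ff}(D_{\phi})$ with the Dirac operator of the metric product $\phi^{-1}(p)\times {}^{\phi}N_pY$ carrying the pulled-back Clifford module and connection (this is exactly where product-type to order $2$ enters), so that $\eth_h(p)$ is the Euclidean Dirac operator on the ${}^{\phi}N_pY$-factor, and both deduce the anti-commutation with $D_v$ from the Clifford relations together with the vanishing of all mixed connection and curvature terms — the paper invariantly, via the Clifford-connection identity and $[\nabla^{E_p}_{e_1},\nabla^{E_p}_{e_2}]=0$, you in boundary coordinates. Only minor tightening is needed: the conjugation contributes $x^{-\frac{h+1}{2}}[\eth_{\phi},x^{\frac{h+1}{2}}]=\tfrac{h+1}{2}\,x\,\cl\!\left(\tfrac{dx}{x^2}\right)$, which is $\mathcal{O}(x)$ (not merely bounded) and hence genuinely absent from $N_{\ff}$, and the zeroth-order part of $D_v$ anti-commutes with the horizontal Clifford generators because it has odd vertical Clifford degree, being of the form $\cl(dz_{\ell})\cdot(\text{even vertical Clifford terms})$ — a parity point that your phrasing about being built from ``$\cl$ of vertical covectors only'' leaves implicit, since even products would commute rather than anti-commute.
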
  
\begin{proof}
Let $p\in Y$ be given.  Since $g_{\phi}$ is product-type up to order 2, notice that under the identification 
\begin{equation}
        {}^{\phi}N\pa M|_{\phi^{-1}(p)}= \phi^{-1}(p)\times {}^{\phi}N_pY
\label{dt.3c}\end{equation}
coming form \eqref{phi.4c}, the metric induced by $g_{\phi}$ corresponds to a Cartesian product.  On the other hand, the Clifford module $E$ used to define $\eth_{\phi}$ induces one on this Cartesian product that we will  denote by $E_p$.  This bundle $E_p$ is in fact naturally the pullback of $E|_{\phi^{-1}(p)}$ via the bundle projection ${}^{\phi}N\pa M|_{\phi^{-1}(p)}\to \phi^{-1}(p)$.  Similarly, there is an induced Clifford connection $\nabla^{E_p}$ which is just the pull-back of the Clifford connection of $E|_{\phi^{-1}(p)}$.  With respect to this data, the normal operator $N_{\ff}(D_{\phi})$ restricted to \eqref{dt.3c} is just the corresponding Dirac operator with $D_v|_{\phi^{-1}(p)}$ the part acting on the fibers of $\phi^{-1}(p)\times {}^{\phi}N_pY\to {}^{\phi}N_pY$ (the operator is the same for each fiber) and $\eth_h(p)$ is the part acting on the fibers of ${}^{\phi}N\pa M|_{\phi^{-1}(p)}\to \phi^{-1}(p)$.  In particular, $\eth_h(p)$ is a family of Euclidean Dirac operators.  To see that $D_v|_{\phi^{-1}(p)}$ and $\eth_h(p)$ anti-commute, it suffices to check that $c(e_1)\nabla^{E_p}_{e_1}$ and $c(e_2)\nabla^{E_p}_{e_2}$ anti-commute, where   $e_1$ and $e_2$ are vector fields on $\phi^{-1}(p)$ and ${}^{\phi}N_pY$ lifted to the Cartesian product \eqref{dt.3c} and $c(e_i)$ denotes Clifford multiplication by $e_i$.

But in this case, $\nabla_{e_1}e_2= \nabla_{e_2}e_1=0$, so using that $\nabla^{E_p}$ is a Clifford connection, we compute that 
\begin{equation}
\begin{aligned}
c(e_1)\nabla^{E_p}_{e_1}c(e_2)\nabla^{E_p}_{e_2}&= c(e_1)[\nabla^{E_p}_{e_1},c(e_2)]\nabla^{E_p}_{e_2}+ c(e_1)c(e_2) \nabla^{E_p}_{e_1}\nabla^{E}_{e_2} \\
&= c(e_1)c(\nabla_{e_1}e_2)\nabla^{E}_{e_2}+c(e_1)c(e_2) \nabla^{E_p}_{e_1}\nabla^{E}_{e_2} \\
&= c(e_1)c(e_2) \nabla^{E_p}_{e_1}\nabla^{E}_{e_2}.
\end{aligned}
\label{ac.1}\end{equation} 
Similarly, 
\begin{equation}
  c(e_2)\nabla^{E_p}_{e_2}c(e_1)\nabla^{E_p}_{e_1}=c(e_2)c(e_1) \nabla^{E_p}_{e_2}\nabla^{E}_{e_1}.
\label{ac.2}\end{equation}
Now, the curvature of $(E_p,\nabla^{E_p})$ is just the pull-back of the curvature of $E|_{\phi^{-1}(p)}$, which implies that $[\nabla^{E_p}_{e_1},\nabla^{E_p}_{e_2}]=0$.  Since $c(e_1)c(e_2)=-c(e_2)c(e_1)$, we thus deduce from \eqref{ac.1} and \eqref{ac.2} that $c(e_1)\nabla^{E_p}_{e_1}$ and $c(e_2)\nabla^{E_p}_{e_2}$ anti-commute as claimed.
\end{proof}

To be able to construct a good parametrix, we will make the following assumption.

\begin{assumption}
The nullspaces of the various fiberwise operators of the family $D_v$ form a vector bundle 
$$
     \ker D_v\to Y.
$$
\label{dt.4}\end{assumption}
Using the restriction of the metric $g_{\phi}$ to the fibers of $\phi:\pa M\to Y$ and the Hermitian metric of $E$, we can define a family of $L^2$-projections
\begin{equation}
\Pi_h: \CI(Y; L^2(\pa M/Y;E))\to \CI(Y;\ker D_v)
\label{dt.5}\end{equation}
onto $\ker D_v$, where $L^2(\pa M/Y;E)\to Y$ is the infinite rank vector bundle with fiber above $y\in Y$ given by $L^2(\phi^{-1}(y);E)$.  This can be used to define a natural indicial family.

\begin{definition}
The indicial family $\bbC\ni \lambda \mapsto I(D_b,\lambda)\in \Diff^1(Y;\ker D_v)$ associated to $D_{\phi}$ is defined by
$$
     I(D_b,\lambda)u:= \Pi_h\left(  \left(x^{-\lambda}(x^{-1}D_{\phi})x^{\lambda}\widetilde{u}\right)|_{\pa M}\right), \quad u\in\CI(Y;\ker D_v),
$$
where $\widetilde{u}\in\CI(M;E)$ is such that $\widetilde{u}|_{\pa M}=u$.  As the notation suggests, the indicial family $I(D_b,\lambda)$ is the Mellin transform of the operator
\begin{equation}
    D_b:=cx\frac{\pa}{\pa x}+ D_Y,  \quad \mbox{with} \quad c:= \left.\frac{\pa}{\pa \lambda}I(D_b,\lambda)\right|_{\lambda=0} \quad \mbox{and} \quad  D_Y:= I(D_b,0).
\label{dt.8b}\end{equation}
\label{dt.6}\end{definition}
The interested reader may look at \cite[\S~5.2]{HHM2004} for nice intuitive explanations motivating  Definition~\ref{dt.6}.  

\begin{lemma}
The indicial family $I(D_b,\lambda)$ is well-defined, namely $I(D_b,\lambda)$ does not depend on the choice of extension $\widetilde{u}$.  
\label{dt.7}\end{lemma}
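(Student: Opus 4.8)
The plan is to reduce the statement to a short computation near $\pa M$ showing that the part of $x^{-\lambda}(x^{-1}D_{\phi})x^{\lambda}\widetilde{u}$ which depends on the choice of extension, once restricted to $\pa M$, lies in the fiberwise range of $D_v$ and is therefore killed by $\Pi_h$. Concretely, if $\widetilde{u}_1,\widetilde{u}_2\in\CI(M;E)$ both restrict to $u$ on $\pa M$, then $w:=\widetilde{u}_1-\widetilde{u}_2\in\CI(M;E)$ vanishes on $\pa M$, hence $w=xw'$ for some $w'\in\CI(M;E)$; by linearity it is enough to prove that
\[
\Pi_h\big((x^{-\lambda}(x^{-1}D_{\phi})x^{\lambda}w)|_{\pa M}\big)=0 \quad \text{for all } w\in x\CI(M;E).
\]
The same manipulation will also show that the expression is genuinely finite at $\pa M$ for any admissible $\widetilde{u}$: since $u$ is a section of $\ker D_v$, one has $D_{\phi}\widetilde{u}|_{\pa M}=D_v u=0$, so $D_{\phi}\widetilde{u}$ is divisible by $x$ and the prefactor $x^{-1}$ does no harm.

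The only structural input needed is the defining property of $\phi$-vector fields in \eqref{phi.1}: $\xi x\in x^{2}\CI(M)$ for every $\xi\in\cV_{\phi}(M)$. Since $[\xi,x]=\xi x$ as operators, this gives $[D_{\phi},x]\in x^{2}\CI(M;\End E)$ for any $D_{\phi}\in\Diff^{1}_{\phi}(M;E)$, and likewise $[D_{\phi},x^{\lambda}]\in x^{\lambda+1}\CI(M;\End E)$. Consequently, writing $P_{\lambda}:=x^{-\lambda}D_{\phi}x^{\lambda}$, we get $P_{\lambda}=D_{\phi}+xR_{\lambda}$ for some $R_{\lambda}\in\CI(M;\End E)$, and, since multiplication operators commute, $[P_{\lambda},x]=[D_{\phi},x]\in x^{2}\CI(M;\End E)$.

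With these identities the computation is immediate. As $x^{-1}$ commutes with $x^{\pm\lambda}$ we have $x^{-\lambda}(x^{-1}D_{\phi})x^{\lambda}=x^{-1}P_{\lambda}$, so for $w=xw'$,
\[
x^{-1}P_{\lambda}(xw')=x^{-1}\big(xP_{\lambda}w'+[P_{\lambda},x]w'\big)=P_{\lambda}w'+x^{-1}[P_{\lambda},x]w'=(D_{\phi}w')+x\big(R_{\lambda}+R'\big)w'
\]
for some $R'\in\CI(M;\End E)$. Restricting to $\pa M$ annihilates every term carrying a factor of $x$ and leaves $(D_{\phi}w')|_{\pa M}$, which by Definition~\ref{dt.2} is exactly the fiberwise operator $D_v$ applied to $w'|_{\pa M}$. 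Finally, $D_v$ is fiberwise formally self-adjoint with respect to the $L^{2}$-inner product on the fibers (this follows from the formal self-adjointness of $D_{\phi}$ on $L^{2}_b(M;E)$ recorded after \eqref{fdt.1}) and each $D_v|_{\phi^{-1}(p)}$ is elliptic on a closed manifold, so fiberwise $\ran D_v\perp\ker D_v$ and hence $\Pi_h\circ D_v=0$. Therefore $\Pi_h\big((x^{-\lambda}(x^{-1}D_{\phi})x^{\lambda}w)|_{\pa M}\big)=\Pi_h\big(D_v(w'|_{\pa M})\big)=0$, as desired.

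The one point that requires genuine care is the bookkeeping around the formally singular operator $x^{-1}D_{\phi}$, which is neither a $\phi$-operator nor a $b$-operator, so it cannot simply be ``restricted to $\pa M$''; what rescues the argument is that in every step the dangerous $x^{-1}$ is cancelled by a compensating power of $x$, coming either from $[D_{\phi},x]\in x^{2}\CI$ or from the vanishing of $w$ (resp.\ of $D_{\phi}\widetilde{u}$) on $\pa M$. The only non-formal ingredient is the identity $\Pi_h\circ D_v=0$, which is precisely the reason the fiberwise projection $\Pi_h$ is inserted in Definition~\ref{dt.6}; everything else (divisibility by $x$ of a smooth section vanishing on $\pa M$, the behaviour of commutators in $\Diff^1_{\phi}$, and standard fiberwise elliptic self-adjoint theory) is routine, so I do not anticipate further obstacles.
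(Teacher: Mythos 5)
Your proposal is correct and follows essentially the same route as the paper: write the difference of two extensions as $x w'$, use $[D_{\phi},x]\in x^{2}\CI(M;\End E)$ (equivalently $[D_\phi,x^{\lambda+1}]=(\lambda+1)x^\lambda[D_\phi,x]$) to see that the restriction to $\pa M$ is $D_v(w'|_{\pa M})$, and then kill it with $\Pi_h$ using the fiberwise formal self-adjointness of $D_v$ inherited from that of $D_{\phi}$ on $L^2_b$. The only addition beyond the paper's argument is your remark on finiteness of $x^{-1}D_\phi\widetilde u$ at $\pa M$, which is a correct and harmless side observation.
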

\begin{proof}
Essentially by definition of the Lie algebra of fibered boundary vector fields, notice first that 
$$
[D_{\phi},x]\in x^2\CI(M;\End(E)).
$$  
Moreover, if $\widetilde{u}_1$ and $\widetilde{u}_2$ are two choices of extensions of $u$, then 
$\widetilde{u}_1-\widetilde{u}_2= xw$ for some $w\in \CI(M;E)$, so that 
\begin{equation}
\begin{aligned}
(x^{-\lambda}(x^{-1}D_{\phi})x^{\lambda}(\widetilde{u}_1-\widetilde{u}_2))|_{\pa M} &= (x^{-\lambda-1}D_{\phi}x^{\lambda+1}w)|_{\pa M}= (x^{-\lambda-1}(x^{\lambda+1}D_{\phi} + [D_{\phi},x^{\lambda+1}]))w \\
&= (D_{\phi}w+ x^{-\lambda-1}(\lambda+1)x^{\lambda}[D_{\phi},x]w)|_{\pa M} \\
&= (D_{\phi} w)|_{\pa M}= D_v(w|_{\pa M}).
\end{aligned}
\label{dt.8}\end{equation}
Now, we see from \eqref{dt.3} that the formal self-adjointness of $D_{\phi}$ on $L^2_b(M;E)$ implies the formal self-adjointness of $D_v$.  This implies in particular that the image of $D_v$ is orthogonal to its kernel, hence that
$$
 \Pi_h((x^{-\lambda}(x^{-1}D_{\phi})x^{\lambda}(\widetilde{u}_1-\widetilde{u}_2))|_{\pa M})= \Pi_h(D_v(w|_{\pa M}))=0,
$$ 
showing that $I(D_b,\lambda)u$ does not depend on the choice of smooth extension $\widetilde{u}$ as claimed.
\end{proof}

We will now give a more detailed description of the indicial family when $\eth_{\phi}$ is a Dirac operator, see \eqref{dt.8e} below.  This is important for two reasons:
 \begin{enumerate}
 \item  it will then be easier to determine for which weights Theorem~\ref{int.13} in the introduction will apply;
 \item such a detailed description will play a crucial role in the proof of the pseudodifferential characterization of the low energy limit, notably through the proof of Lemma~\ref{le.29} below.
 \end{enumerate}
To give this more detailed description of the indicial family, recall first that by assumption, $g_{\phi}$ is modelled at infinity by the metric 
\begin{equation}
  g_{\cC_{\phi}}= \frac{dx^2}{x^4}+ \frac{g_Y}{x^2} + \kappa
\label{em.1}\end{equation}
on $(0,\infty)\times \pa M$ with the map $\Id\times \phi: (0,\infty)\times \pa M\to (0,\phi)\times Y$ inducing a Riemannian submersion onto the Riemannian cone $\left((0,\infty)\times Y, \frac{dx^2}{x^4}+\frac{g_Y}{x^2}\right).$
 On the other hand, $\ker D_v$ is naturally a Clifford module for the tangent bundle $TY\to Y$ via the natural map
\begin{equation}
    \begin{array}{lcl}
       TY&\to & {}^{\phi}TM|_{\pa M} \\
         \xi& \mapsto & x\xi.
    \end{array}
\label{dt.8c}\end{equation}
  By \cite[Proposition~10.12, Lemma~10.13]{BGV}, the Dirac operator corresponding to the model metric $g_{\cC_{\phi}}$ is
\begin{equation}
   \eth_{\cC_{\phi}}= D_v+ \widetilde{\eth}_{\cC}
\label{em.2}\end{equation}
where $\widetilde{\eth}_{\cC}$ is the horizontal Dirac operator induced by the connection of $\phi:\pa M\to Y$ and  the Clifford connection 
\begin{equation}
\nabla^E+ \frac{1}{2} c(\omega),
\label{em.3}\end{equation} 
where $\omega$ is the $\Lambda^2T^*(\pa M)$-valued $1$-form on $\pa M$ of \cite[Definition~10.5]{BGV} defined by
$$
 \omega(X)(Y,Z)=S(X,Z)(Y)-S(X,Y)(Z)+ \frac12(\Omega(X,Z),Y) - \frac12(\Omega(X,Y),Z)+ \frac12(\Omega(Y,Z),X)
$$
with $S$ and $\Omega$ the second fundamental form and curvature of  the Riemannian submersion $\phi:\pa M\to Y$, while  $c(\omega)$ is defined in \cite[Proposition~10.12(2)]{BGV} by
$$
  c(\omega)= \frac12 \sum_{abc} \omega(e_a)(e_b,e_c) e^a\otimes c(e^b)c(e^c)
$$
with $e_a$ a local frame for $T(\pa M)$ and $e^a$ its dual frame.

 Using the projection $\Pi_h$ on $\cC_{\phi}$, this yields a corresponding Dirac operator $\eth_{\cC}=\Pi_h\widetilde{\eth}_{\cC}\Pi_h$ on $\ker D_v$ with Clifford connection 
\begin{equation}
  \Pi_h(\nabla^E+ \frac{c(\omega)}{2})\Pi_h.
\label{em.3b}\end{equation}

As described above, the term $c(\omega)$ involves the second fundamental form and the curvature of $\phi:\pa M\to Y$.  Those depend only on the fiberwise metric, so really are pull-back of forms on $\pa M$ via the projection $(0,\infty)\times \pa M\to \pa M$.  However, when measured with respect to the metric $g_{\cC_{\phi}}$, that is, in terms of the $\phi$-tangent bundle, the part involving the curvature is $\mathcal{O}(x^2)$ when $x\searrow 0$, so does not contribute to the indicial family $I(D_b,\lambda)$.  However, the part coming from the second fundamental form is $\mathcal{O}(x)$, so does contribute to the indicial family.    

To describe this more explicitly, suppose first that $\cC=(0,\infty)\times Y$ is spin and consider the Dirac operator $\eth_{\cC}^{\cS}$ associated to the cone metric 
\begin{equation}
  g_{\cC}=\frac{dx^2}{x^4}+ \frac{g_Y}{x^2}
\label{do.1}\end{equation}
and acting on the sections of the spinor bundle $\cS$ over $\cC$.  If $\psi$ is a section of $\cS|_{\{1\}\times Y}$, let 
$\overline{\psi}\in\CI(\cC;\cS)$ be the section obtained by parallel transport of $\psi$ along geodesics emanating from the tip of the cone.  This induces a decomposition
\begin{equation}
    \CI(\cC;\cS)\cong \CI((0,\infty))\widehat{\otimes} \CI(Y;\cS|_{\{1\}\times Y}). 
\label{do.2}\end{equation}
By \cite[Proposition~2.5]{Chou}, the Dirac operator takes the form
\begin{equation}
cx^2\frac{\pa}{\pa x}+ x\left( \eth_Y^{\cS}-\frac{ch}2 \right)
\label{do.2}\end{equation}
in terms of this decomposition, where $h=\dim Y$, $c$ is Clifford multiplication by $x^2\frac{\pa}{\pa x}$ (which corresponds to the $c$ of \eqref{dt.8b}) and $\eth_Y^{\cS}$ is the Dirac operator on $(Y,g_Y)$ acting on sections of $\cS|_{\{1\}\times Y}$.  If we twist the spinor bundle by a Euclidean vector bundle $\cW$ with orthogonal connection, there is a  corresponding twisted Dirac operator $\eth^{\cS\otimes \cW}_{\cC}$.  We will suppose that $\cW$ is constructed geometrically from $(\cC, g_{\cC})$ and the spin structure, or else that it is the pull-back of a Euclidean vector bundle with orthogonal connection on $Y$.

Again, parallel transport along geodesics emanating from the tip of the cone induces a decomposition
\begin{equation}
    \CI(\cC;\cS)=\CI((0,\infty))\widehat{\otimes} \ \CI(Y;(\cS\otimes \cW)|_{\{1\}\times Y})
\label{do.3}\end{equation}  
in terms of which \eqref{do.2} is replaced by
\begin{equation}
 cx^2\frac{\pa}{\pa x}+ x\left(\eth^{\cS\otimes\cW}_Y+N^{\cS\otimes\cW}-\frac{ch}2  \right),
\label{do.4}\end{equation}
where now $\eth_Y^{\cS\otimes \cW}$ is the Dirac operator on $(Y,g_Y)$ acting on sections of $(\cS\otimes \cW)|_{\{1\}\times Y}$ and $N^{\cS\otimes\cW}$ is a self-adjoint operator  of order zero acting on sections of 
  $\cS\otimes\cW$ which anti-commutes with $c$.  For instance, if $\cW$ is the pull-back of an Euclidean bundle with orthogonal connection on $Y$, then $N^{\cS\otimes \cW}=0$.  Since the computations considered were local on $Y$ and since a spin structure always exists at least locally on $Y$, we see that \eqref{do.4} extends to Dirac operators by \cite[Proposition~3.40]{BGV}.  Thus, if $\cE$ is a Clifford module with Clifford connection on $(\cC,g_{\cC})$ and $\eth^{\cE}$ is the corresponding Dirac operator, then in terms of the decomposition
\begin{equation}
  \CI(\cC;\cE)\cong \CI((0,\infty))\widehat{\otimes} \  \CI(Y;\cE|_{\{1\}\times Y}),
\label{do.5}\end{equation}    
we have that 
\begin{equation}
      \eth^{\cE}= cx^2\frac{\pa}{\pa x}+ x\left( \eth^{\cE}_Y+N^{\cE}-\frac{ch}2\right)
\label{do.6}\end{equation}
with $\eth^{\cE}_Y$ the Dirac operator of $\cE|_{\{1\}\times Y}$ on $Y$ and $N^{\cE}$ is a self-adjoint term of order zero anti-commuting with $c$.

We would like to apply \eqref{do.6} to the operator $\eth_{\cC}= \Pi_h \widetilde{\eth}_{\cC}\Pi_h$.  However, we must be careful because of the extra term $\frac12 \cl(\omega)$.  First, because the form $\omega$ is a pull-back of a form on $\pa M$, parallel transport along geodesics emanating from the tip of the cone is the same whether we use $\Pi_h \nabla^E\Pi_h$ or \eqref{em.3b}.  This yields again a decomposition
$$
  \CI(\cC;\ker D_v)\cong \CI((0,\infty))\widehat{\otimes} \ \CI(Y;\ker D_v|_{\{1\}\times Y})
$$
in terms of which  we have
\begin{equation}
  \eth_{\cC}= cx^2\frac{\pa}{\pa x}+ x\left( \eth_Y-\frac{ch}2 \right)+ x^2\cV_{\Omega},
\label{dt.8d}\end{equation} 
where $\eth_Y=\hat{\eth}_Y+N$ with $\hat{\eth}_Y$ the Dirac operator induced by the connection
\begin{equation}
   \Pi_h(\nabla^E+ \frac{\hat{\omega}}2)\Pi_h
\label{dt.8f}\end{equation}
with $\hat{\omega}$ the part of $\omega$ involving the second fundamental form of $\phi:\pa M\to Y$,  $N\in\CI(Y;\End(\ker D_v))$ is a self-adjoint operator of order $0$ anti-commuting with $c$ and $x^2\cV_{\Omega}$ is the part of $\Pi_h\frac{\cl(\omega)}2\Pi_h$ coming from the curvature of $\phi: \pa M\to Y$.  
Hence, in terms of this description, the operator $x\left( cx\frac{\pa}{\pa x}+ D_Y \right)$ in \eqref{dt.8b} is obtained from \eqref{dt.8d} by suppressing the curvature term $x^2\cV_{\Omega}$,
\begin{equation}
xD_b=x\left( cx\frac{\pa}{\pa x}+ D_Y \right)= x^{-\frac{h+1}2}\left( cx^2\frac{\pa}{\pa x}+ x\left(\eth_Y-\frac{ch}2\right)\right)x^{\frac{h+1}2}= cx^2\frac{\pa}{\pa x}+ x\left(\eth_Y+\frac{c}2\right).
\label{dt.8e}\end{equation}
Since $N$ is self-adjoint and anti-commutes with $c$, the operator $\eth_Y$ is also self-adjoint and anti-commutes with $c$, a fact that will be useful in the proof of Lemma~\ref{le.29} below.    

Among Dirac operators, our main motivating example is the Hodge-deRham operator acting on forms with values in a flat vector bundle.  In this case, the bundle $\ker D_v$ corresponds to the bundle of fiberwise harmonic forms.  By \cite[Proposition~15]{HHM2004}, this is a flat vector bundle with respect to the connection \eqref{em.3b}.  Now, if $\overline{\eta}$ is a $\ker D_v$-valued $k$-form on $\cC$  obtained by parallel transport of its restriction $\eta$ to $\{1\}\times Y$ along geodesics emanating from the tip of the cone, then there is a decomposition
$$
  \overline{\eta}= \overline{\alpha}+ \frac{dx}{x^2}\wedge \overline{\beta}, \quad \eta=\alpha+ dx\wedge \beta, \quad \overline{\alpha}=\frac{\alpha}{x^k}, \; \overline{\beta}= \frac{\beta}{x^{k-1}}
$$ 
for some $\ker D_v$-valued forms $\alpha$ and $\beta$ on $Y$.  In terms of this decomposition, we know from \cite[Proposition~15]{HHM2004} that the operator $\eth_Y= \hat{\eth}_Y+N$ in \eqref{dt.8e} is such that
\begin{equation}
   \hat{\eth}_Y= \left( \begin{array}{cc} \mathfrak{d} & 0 \\ 0 & -\mathfrak{d}   \end{array} \right)
\label{do.7}\end{equation}
with $\mathfrak{d}$ the Hodge-deRham operator acting on $\Omega^*(Y;\ker D_v)$, while
\begin{equation}
   N= \left( \begin{array}{cc} 0 & \frac{h}2-\cN_Y \\ \frac{h}2-\cN_Y & 0  \end{array} \right) \quad \mbox{and} \quad c= \left( \begin{array}{cc} 0 & -1 \\ 1 & 0\end{array}  \right)
\label{do.8}\end{equation}
with $\cN_Y$ the number operator acting on a form in $\Omega^*(Y;\ker D_v)$ (of pure degree) by multiplying it by its degree.  The indicial family is therefore given in that case by
\begin{equation}
  I(D_b,\lambda)= \left( \begin{array}{cc}  \mathfrak{d} & -\lambda+ \frac{h-1}2-\cN_Y \\
                      \lambda+\frac{h+1}2-\cN_Y & -\mathfrak{d}  \end{array} \right).
\label{do.8b}\end{equation}

Keeping these examples in mind, let us come back to the indicial family $I(D_b,\lambda)$ and recall the following standard definition. 
\begin{definition}
An \textbf{indicial root} of the indicial family $I(D_b,\lambda)$ is a complex number $\zeta$ such that 
$$
I(D_b,\zeta):L^2_1(Y;\ker D_v)\to L^2(Y,\ker D_v)
$$
is not invertible, where $L^2_1(Y;\ker D_v)$ is the natural $L^2$-Sobolev space of order $1$ of sections of $\ker D_v\to Y$ with respect to $g_Y$.  A \textbf{critical weight} of the indicial family $I(D_b,\lambda)$ is a real number $\delta$ such that $\delta+i\nu$ is an indicial root for some $\nu\in\bbR$.  In other words, $\delta$ is a critical weight if it is the real part of some indicial root.  We will denote by $\Crit(D_b)$ the set of critical weights of the indicial family $I(D_b,\lambda)$.
\label{dt.9}\end{definition}
\begin{remark}
Since $x^{-\frac12}D_{\phi}x^{-\frac12}= x^{\frac12}(x^{-1}D_{\phi})x^{-\frac12}$ is formally self-adjoint, notice that the indicial roots are real and that  $\lambda$ is an indicial root of $I(D_b,\lambda)$ if and only if $-1-\lambda$ is an indicial root.  
\label{dt.9b}\end{remark}

For instance, the indicial roots of the Hodge-deRham operator can be described in terms of the eigenvalues of $\mathfrak{d}= d^{\ker D_v}+ \delta^{\ker D_v}$ as the next lemma shows.

\begin{lemma}
The indicial roots of the indicial family \eqref{do.8b} are given by 
\begin{equation}
\begin{gathered}
      (q-\frac{h+1}2), -(q-\frac{h-1}2), \quad \mbox{if}  \quad H^q(Y;\ker D_v)\ne \{0\}, \\
      \bigcup \left\{ \ell\pm \sqrt{\zeta+(q-\frac{h-1}2)^2} \quad | \quad \ell\in\{-1,0\}, \quad \zeta \in \Spec(\delta^{\ker D_v}d^{\ker D_v})_q\setminus\{0\}  \right\}  \\
      \bigcup \left\{ \ell\pm \sqrt{\zeta+ (q-\frac{h+1}2)^2}, \quad |  \quad \ell\in \{-1,0\}, \quad \zeta \in \Spec(d^{\ker D_v}\delta^{\ker D_v})_q\setminus \{0\}  \right\}.
\end{gathered}
\label{do.8d}\end{equation}
In particular, in agreement with Remark~\ref{dt.9b}, $\lambda$ is an indicial root if and only if $-1-\lambda$ is an indicial root.  
\label{do.8c}\end{lemma}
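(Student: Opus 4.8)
The plan is to reduce the computation of the indicial roots to that of the spectrum of the self‑adjoint Dirac operator $\eth_Y$ of \eqref{dt.8d}--\eqref{dt.8f} on $Y$ with coefficients in $\ker D_v$, and then to diagonalize $\eth_Y$ using Hodge theory with coefficients in the flat bundle $\ker D_v$.

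First, combining \eqref{dt.8b} and \eqref{dt.8e}, the indicial family of the Hodge--deRham operator is
$$
I(D_b,\lambda)= c\lambda+D_Y= \eth_Y+\Big(\lambda+\tfrac12\Big)c ,
$$
where $\eth_Y=\left(\begin{smallmatrix}\mathfrak{d}&\frac h2-\cN_Y\\\frac h2-\cN_Y&-\mathfrak{d}\end{smallmatrix}\right)$ as in \eqref{do.7}--\eqref{do.8}, with $\mathfrak{d}=d^{\ker D_v}+\delta^{\ker D_v}$, where $c=\left(\begin{smallmatrix}0&-1\\1&0\end{smallmatrix}\right)$ satisfies $c^2=-\Id$, and where $\eth_Y$ anti‑commutes with $c$. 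Writing $\mu=\lambda+\tfrac12$, these relations give the identity $(\eth_Y+\mu c)^2=\eth_Y^2-\mu^2\Id$. Since $I(D_b,\lambda)$ is invertible if and only if its square $(\eth_Y+\mu c)^2=\eth_Y^2-\mu^2\Id$ is, $I(D_b,\lambda)$ is non‑invertible precisely when $\mu^2\in\Spec(\eth_Y^2)$. As $\eth_Y$ is self‑adjoint, $\Spec(\eth_Y^2)=\{\kappa^2:\kappa\in\Spec(\eth_Y)\}\subset[0,\infty)$, which forces $\mu=\lambda+\tfrac12$, hence $\lambda$, to be real (re‑proving part of Remark~\ref{dt.9b}); and since conjugation by the invertible $c$ carries $\eth_Y$ to $-\eth_Y$, one has $\Spec(\eth_Y)=-\Spec(\eth_Y)$, so that $\mu^2\in\Spec(\eth_Y^2)$ is equivalent to $\mu\in\Spec(\eth_Y)$. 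Thus $\lambda$ is an indicial root if and only if $\lambda+\tfrac12\in\Spec(\eth_Y)$, and the symmetry $\kappa\mapsto-\kappa$ of $\Spec(\eth_Y)$ yields at once the last assertion of the lemma. It remains to compute $\Spec(\eth_Y)$.

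To do so, decompose $\Omega^*(Y;\ker D_v)$ --- in each of the two summands on which $\eth_Y$ acts --- by the Hodge decomposition with coefficients in the flat bundle $\ker D_v$ into harmonic, exact and coexact forms, refined by the spectral decomposition of $\Delta=\mathfrak{d}^2$. For a nonzero harmonic $q$‑form $\phi$, the $2$‑dimensional span of $\phi$ in the two summands is $\eth_Y$‑invariant and $\eth_Y$ acts there by $\left(\begin{smallmatrix}0&\frac h2-q\\\frac h2-q&0\end{smallmatrix}\right)$, with eigenvalues $\pm(\frac h2-q)$; subtracting $\tfrac12$ gives the roots $q-\frac{h+1}2$ and $-(q-\frac{h-1}2)$ of the first line of \eqref{do.8d}. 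For a coexact $q$‑form $\phi$ with $\delta^{\ker D_v}d^{\ker D_v}\phi=\zeta\phi$, $\zeta\in\Spec(\delta^{\ker D_v}d^{\ker D_v})_q\setminus\{0\}$, put $\psi=\zeta^{-1/2}d^{\ker D_v}\phi$, an exact $(q{+}1)$‑form; then the $4$‑dimensional span of $\phi$ and $\psi$ in the two summands is $\eth_Y$‑invariant, on it $\mathfrak{d}=\zeta^{1/2}\left(\begin{smallmatrix}0&1\\1&0\end{smallmatrix}\right)$ and $\tfrac h2-\cN_Y=\left(\begin{smallmatrix}\frac h2-q&0\\0&\frac h2-q-1\end{smallmatrix}\right)$ in the basis $(\phi,\psi)$, and a direct computation shows that the characteristic polynomial of the resulting $4\times4$ matrix factors as $\big(\kappa^2-(S+\tfrac12)^2\big)\big(\kappa^2-(S-\tfrac12)^2\big)$ with $S=\sqrt{\zeta+(q-\frac{h-1}2)^2}$. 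Hence $\eth_Y$ has eigenvalues $\pm(S\pm\tfrac12)$ there, and subtracting $\tfrac12$ produces exactly the roots $\ell\pm\sqrt{\zeta+(q-\frac{h-1}2)^2}$, $\ell\in\{-1,0\}$, of the second line of \eqref{do.8d}.

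To conclude, $d^{\ker D_v}$ identifies coexact $q$‑forms with exact $(q{+}1)$‑forms and $\Spec(d^{\ker D_v}\delta^{\ker D_v})_{q+1}\setminus\{0\}=\Spec(\delta^{\ker D_v}d^{\ker D_v})_q\setminus\{0\}$, so the eigenvalues just computed are also recorded, with the exact‑form degree as index, by the third line of \eqref{do.8d}: the second and third lines simply parametrize the same set twice. Since harmonic, exact and coexact forms exhaust $\Omega^*(Y;\ker D_v)$, the invariant subspaces above span $\Omega^*(Y;\ker D_v)^{\oplus2}$, so these are all the eigenvalues of $\eth_Y$ and, after the shift by $-\tfrac12$, all the indicial roots --- namely exactly those in \eqref{do.8d}. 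I expect the $4\times4$ characteristic polynomial to be the only real computation; the point to keep straight is that $\cN_Y$ acts by the intrinsic $Y$‑degree of a form in each summand, the one‑degree shift between the summands (coming from the $\tfrac{dx}{x^2}\wedge$ in $\overline\eta=\overline\alpha+\tfrac{dx}{x^2}\wedge\overline\beta$) mattering only for the indexing in \eqref{do.8d}.
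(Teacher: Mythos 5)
Your argument is correct, but it is not the route the paper takes: the paper carries out no computation at all, observing instead that $I(D_b,\lambda)$ coincides, after the substitution $\lambda\mapsto-\lambda$, with the indicial family already analyzed in the proof of \cite[Proposition~2.3]{ARS3} (the same Hodge--deRham operator viewed from the opposite end of the cone), so that \eqref{do.8d} is obtained simply by flipping the signs of the roots listed there. What you supply is the self-contained ``standard computation'' that the paper outsources: writing $I(D_b,\lambda)=\eth_Y+(\lambda+\frac12)c$, using $c^2=-\Id$ and the anticommutation of $c$ with $\eth_Y$ to reduce the indicial roots to the condition $\lambda+\frac12\in\Spec(\eth_Y)$, and then diagonalizing $\eth_Y$ on the invariant blocks produced by the Hodge decomposition of $\Omega^*(Y;\ker D_v)$ --- a $2\times2$ block for each harmonic form and a $4\times4$ block for each coexact eigenform paired with its exact partner. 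I checked the pieces: the $2\times2$ block has eigenvalues $\pm(\frac h2-q)$, and the $4\times4$ block has characteristic polynomial $\kappa^4-\kappa^2(2S^2+\frac12)+(S^2-\frac14)^2=(\kappa^2-(S+\frac12)^2)(\kappa^2-(S-\frac12)^2)$ with $S^2=\zeta+(q-\frac{h-1}2)^2$, exactly as you claim (with the $\pm$ pairs genuinely occurring because $c$ maps the $\kappa$-eigenspace of $\eth_Y$ onto the $-\kappa$-eigenspace), and after the shift $\lambda=\mu-\frac12$ these reproduce all three lines of \eqref{do.8d}; your remark that the second and third lines record the same set twice, via $\Spec(d^{\ker D_v}\delta^{\ker D_v})_{q+1}\setminus\{0\}=\Spec(\delta^{\ker D_v}d^{\ker D_v})_{q}\setminus\{0\}$, is also accurate and consistent with the statement. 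Your approach buys independence from the external reference and yields, as by-products, the realness of the roots and the symmetry $\lambda\mapsto-1-\lambda$ directly from $c\,\eth_Y\,c^{-1}=-\eth_Y$; the paper's citation buys brevity, since the bookkeeping over harmonic, exact and coexact degrees is already done in \cite{ARS3}.
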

\begin{proof}
This is a standard computation.  We can proceed for instance as in the proof of \cite[Proposition~2.3]{ARS3}.  In fact, the indicial family of \cite[Proposition~2.3]{ARS3}, after suitable identifications, corresponds to $I(D_b,-\lambda)$, since it is the indicial family of the same operator, but considered at the opposite end of the cone.  Hence, \eqref{do.8d} follows by flipping the sign of the indicial roots in \cite[Proposition~2.3]{ARS3}.
\end{proof}

\begin{theorem}
Suppose that the operator $D_{\phi}$ satisfies Assumption~\ref{dt.4} and that  $\delta\in\bbR$ is not a critical weight of the indicial family $I(D_b,\lambda)$.  Let $\mu>0$ be such that $(\delta-\mu,\delta+\mu)\cap\Crit(D_b)=\emptyset$.  Then, in the notation of \S~\ref{phi.0}, there exists $Q\in \Psi^{-1,\cQ}_{\phi}(M;E)$ and $R\in\Psi^{-\infty,\cR}_{\phi}(M;E)$ such that
$$
           (x^{-\delta}D_{\phi}x^{\delta}) Q=\Id-R,
$$ 
where $\cQ$ is an index family such that 
$$
   \inf \Re(\cQ|_{\lf})\ge\mu, \quad \inf \Re(\cQ|_{\rf})\ge h+\mu, \quad \inf \Re (\cQ|_{\fbf})\ge h, \quad \inf \Re(\cQ|_{\ff})\ge 0,
$$
and $\cR$ is an index family giving the empty set at all boundary hypersurfaces except at $\rf$, where we have instead 
$$
   \inf\Re(\cR|_{\rf})\ge h+\mu.
$$
Moreover, the term $A$ of order $h$ at $\fbf$ of $Q$ is such that $A=\Pi_h A\Pi_h$. 
Here, an inequality of the form $\inf \Re (\cE)\ge a$ for $\cE$ an index set and $a\in \bbR$ means, in the equality case, that if $(a+i\nu, k)\in \cE$ with $\nu\in\bbR$, then $k=\nu=0$.  Finally, each term $r$ of order $h+1$ or less in the asymptotic expansion of $R$ at $\rf$ is such that $b\Pi_h =b$.  \label{dt.10}\end{theorem}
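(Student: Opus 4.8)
The plan is to build $Q$ by the standard iterative parametrix method for the $\phi$-calculus, following the strategy of Vaillant \cite{Vaillant}: invert the $\phi$-principal symbol, then successively remove the error at the boundary hypersurfaces of $M^2_{\phi}$ by inverting the relevant model operators. First I would construct a symbolic parametrix. Conjugation by $x^{\delta}$ preserves the principal symbol, so $x^{-\delta}D_{\phi}x^{\delta}$ is still an elliptic element of $\Diff^1_{\phi}(M;E)$; inverting its $\phi$-principal symbol and composing inside the small calculus yields $Q_1\in\Psi^{-1}_{\phi}(M;E)$ with $(x^{-\delta}D_{\phi}x^{\delta})Q_1=\Id-R_1$, where $R_1\in\Psi^{-\infty}_{\phi}(M;E)$ has smooth kernel on $M^2_{\phi}$ vanishing to infinite order at every boundary hypersurface except $\ff$, and is therefore determined modulo faster-vanishing terms at $\ff$ by its normal operator $N_{\ff}(R_1)$, a Schwartz family on the $\ff$-model.

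The heart of the matter is the inversion of the normal operator at $\ff$. Since $x^{-\delta}[D_{\phi},x^{\delta}]\in x\,\Diff^1_{\phi}(M;E)$ vanishes at $\ff$, we have $N_{\ff}(x^{-\delta}D_{\phi}x^{\delta})=N_{\ff}(D_{\phi})$, which by \eqref{dt.3} is the family $p\mapsto D_v|_{\phi^{-1}(p)}+\eth_h(p)$ of Euclidean Dirac operators on $\phi^{-1}(p)\times{}^{\phi}N_pY$, with $\eth_h(p)$ anticommuting with $D_v$. I would invert it using the splitting $\Id=\Pi_h+(\Id-\Pi_h)$: since $\eth_h$ anticommutes with $D_v$ and is formally self-adjoint it commutes with $\Pi_h$, so $N_{\ff}(D_{\phi})$ is block-diagonal for this splitting, equal to the Euclidean Dirac operator $\eth_h|_{\ker D_v}$ on $\ran\Pi_h$ and to the ``massive'' Dirac operator $D_v+(\Id-\Pi_h)\eth_h$ on $\ran(\Id-\Pi_h)$. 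On the latter block $D_v$ is uniformly invertible, its square equals $D_v^2$ plus the Euclidean Laplacian and is bounded below by a positive constant, and its inverse has a kernel decaying exponentially in the Euclidean variable, hence contributes nothing to the boundary asymptotics. All the nontrivial behaviour comes from $\eth_h|_{\ker D_v}$, a Euclidean Dirac operator with values in the fibres of $\ker D_v$, which I invert by the Fourier transform: the Green's kernel is conormal with leading behaviour decaying like the $(-h)$-th power of the Euclidean distance and valued in $\ker D_v$ on both sides. Feeding Schwartz data through $N_{\ff}(D_{\phi})^{-1}$ and reading off its asymptotics at the faces of the $\ff$-model — this is where the hypothesis that $\delta$ is not a critical weight enters, through invertibility of the indicial family $I(D_b,\lambda)$ in the strip $-\mu<\Re\lambda<\mu$, which is needed to control the expansion at the corner of the model — I obtain a correction $Q'\in\Psi^{-\infty,\cE}_{\phi}(M;E)$ with $\inf\Re(\cE|_{\lf})\ge\mu$, $\inf\Re(\cE|_{\rf})\ge h+1+\mu$, $\inf\Re(\cE|_{\fbf})\ge h$, $\inf\Re(\cE|_{\ff})\ge 0$, solving $N_{\ff}(Q')=N_{\ff}(D_{\phi})^{-1}N_{\ff}(R_1)$; since the $\fbf$-asymptotics come only from the Euclidean Green's kernel on $\ran\Pi_h$, the leading coefficient $A$ of $Q'$ at $\fbf$ satisfies $A=\Pi_hA\Pi_h$.

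Setting $Q_2=Q_1+Q'$ replaces $R_1$ by an error vanishing to positive order at $\ff$ and carrying only the $\lf,\rf,\fbf$ tails inherited from $Q'$, since composing with the $\phi$-differential operator $x^{-\delta}D_{\phi}x^{\delta}$ does not lower orders at boundary hypersurfaces. I would then iterate: solve the normal operator at $\ff$ order by order in its expansion there, and remove the $\fbf$-error by Mellin transform and inversion of $I(D_b,\lambda)$ on a contour in $0<\Re\lambda<\mu$ — again using $\delta\notin\Crit(D_b)$, and noting that on $\ran(\Id-\Pi_h)$ there is no obstruction since $D_v$ is invertible there — which simultaneously forces the $\lf$-error to vanish to infinite order because of the gained positive power of $x$ on the left. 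At every stage the corrections lie in $\Psi^{-\infty,\cE'}_{\phi}$ with index sets obeying the bounds claimed for $\cQ$, so an asymptotic (Borel) summation produces $Q\in\Psi^{-1,\cQ}_{\phi}(M;E)$ and a residual $R\in\Psi^{-\infty,\cR}_{\phi}(M;E)$ vanishing to infinite order at $\lf,\fbf,\ff$ with $\inf\Re(\cR|_{\rf})\ge h+1+\mu$, the order-$h$ term of $Q$ at $\fbf$ being sandwiched by $\Pi_h$ as required.

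The main obstacle is the second step: inverting $N_{\ff}(D_{\phi})$ and determining the asymptotics of its inverse precisely enough. One must use the $\Pi_h$ splitting to isolate the exponentially decaying ``massive'' part from the Euclidean Dirac part, and then correctly match the algebraic decay of the Euclidean Green's kernel to the expansion at the corners of the $\ff$-model, where the indicial family governs the behaviour; it is this matching that produces the exponents $\mu$, $h+1+\mu$ and $h$ at $\lf$, $\rf$ and $\fbf$ and pins down the $\Pi_hA\Pi_h$ structure of the leading term at $\fbf$.
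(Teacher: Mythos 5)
Your overall architecture is the same as the paper's (symbolic inversion; inversion of $N_{\ff}(D_{\phi})=D_v+\eth_h$ via the $\Pi_h$ splitting, with the Euclidean Dirac Green's kernel on $\ker D_v$ producing the order-$h$ term $A=\Pi_hA\Pi_h$ at $\fbf$; Mellin inversion of the indicial family using $\delta\notin\Crit(D_b)$; asymptotic summation). But there is a genuine gap at the left face. You assert that the Mellin/contour-shifting step ``simultaneously forces the $\lf$-error to vanish to infinite order,'' and that the final Borel/Neumann summation yields a residual $R$ that is trivial at $\lf$. Neither mechanism does this. The absence of critical weights in $(\delta-\mu,\delta+\mu)$ only lets you shift contours to conclude that the correction behaves like $x^{\mu}$ at $\lf$ and $x^{h+1+\mu}$ at $\rf$, so after this step the error still has a nontrivial polyhomogeneous expansion at $\lf$ with leading order of size roughly $\mu$. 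Moreover, the composition law \eqref{phi.18b} gives $\cG|_{\lf}\supset\cE|_{\lf}$, so under iteration $R^i$ the $\lf$ index set does not recede: a Neumann series or asymptotic summation can iterate away the expansions at $\fbf$ and $\ff$, but it can never remove an $\lf$ expansion. To obtain $\cR|_{\lf}=\emptyset$, as the theorem requires, one needs a dedicated step that solves away the \emph{entire} expansion of the error at $\lf$ by a formal-series argument using the invertibility of $I(D_b,\lambda)$ off the critical weights (the argument of \cite[Lemma~5.44]{MelroseAPS}); this is the paper's Step~4, and it is absent from your plan.

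Two secondary inaccuracies are worth flagging, since they blur where the hypotheses are actually used. First, the inversion of $N_{\ff}(D_{\phi})$ does not use the condition $\delta\notin\Crit(D_b)$ at all: on $\ran(\Id-\Pi_h)$ the operator is an invertible suspended operator, and on $\ran\Pi_h$ one inverts the Euclidean Dirac operator $\eth_h$, whose Green's kernel decay ($\sim r^{-h}$) is what produces the order $h$ at $\fbf$; the exponents $\mu$ and $h+1+\mu$ at $\lf$ and $\rf$ arise only later, from the $\fbf$ (indicial) step, not from the $\ff$-model as you claim, and the first correction has empty index sets at $\lf,\rf$. Second, before Mellin-transforming the order-$(h+1)$ error at $\fbf$ you must first arrange that its $\Pi_h$-part vanishes to infinite order at the corner $\ff\cap\fbf$ (by solving $I(D_b,\delta)q=r$ to infinite order there, again à la \cite[Lemma~5.44]{MelroseAPS}); otherwise it cannot be treated as a $b$-type kernel in $s=x/x'$ and the Mellin inversion you invoke is not justified. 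This preliminary corner step (the paper's Step~2) is only hinted at in your ``matching'' remark and needs to be carried out explicitly.
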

The construction of the parametrix $Q$ will involve few steps and is closely related to the resolvent construction of Vaillant \cite[\S~3]{Vaillant} for fibered cusp Dirac operators.

\subsection*{Step 0: Symbolic inversion}
We can first use ellipticity to do a symbolic inversion.
\begin{proposition}
There exist $Q_0\in \Psi^{-1}_{\phi}(M;E)$ and $R_0\in\Psi^{-\infty}_{\phi}(M;E)$ such that
$$
      (x^{-\delta}D_{\phi}x^{\delta})Q_0= \Id-R_0.
$$
\label{dt.11}\end{proposition}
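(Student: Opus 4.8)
The plan is to exploit the ellipticity of $\eth_{\phi}$, hence of $D_{\phi}$, hence of the conjugated operator $x^{-\delta}D_{\phi}x^{\delta}$, to perform the usual symbol-level inversion inside the small $\phi$-calculus. First I would recall that $D_{\phi}\in\Diff^1_{\phi}(M;E)$ is elliptic, meaning its principal $\phi$-symbol $\sigma_1(D_{\phi})\in\CI({}^{\phi}T^*M\setminus 0;\pi^*\End(E))$ is invertible away from the zero section, and that conjugation by the smooth positive function $x^{\delta}$ does not change the principal symbol, so $x^{-\delta}D_{\phi}x^{\delta}$ is again an elliptic element of $\Diff^1_{\phi}(M;E)\subset\Psi^1_{\phi}(M;E)$. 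Since the principal symbol map is multiplicative on the small $\phi$-calculus and $\sigma_1(D_{\phi})$ is invertible, one can choose a smooth homogeneous parametrix symbol $\sigma_{-1}:=\sigma_1(D_{\phi})^{-1}$ (cut off near the zero section, which only changes things by an order $-\infty$ symbol), and quantize it to an operator $Q_0'\in\Psi^{-1}_{\phi}(M;E)$ with $\sigma_{-1}(Q_0')=\sigma_1(D_{\phi})^{-1}$.

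Next I would check that $(x^{-\delta}D_{\phi}x^{\delta})Q_0'=\Id-R_0'$ with $R_0'\in\Psi^{-1}_{\phi}(M;E)$: indeed the composition lies in $\Psi^0_{\phi}(M;E)$ by the composition property $\Psi^1_{\phi}\circ\Psi^{-1}_{\phi}\subset\Psi^0_{\phi}$ recalled after Definition~\ref{phi.16}, and its principal symbol of order $0$ is $\sigma_1(x^{-\delta}D_{\phi}x^{\delta})\,\sigma_{-1}(Q_0')=\Id$, so the remainder has order $-1$. Then I would run the standard Neumann/asymptotic-summation argument internal to the small calculus: iterating, one improves the error by one order at each step, and since $\Psi^{-\infty}_{\phi}(M;E)=\bigcap_m\Psi^m_{\phi}(M;E)$ is asymptotically complete (one can Borel-sum a sequence of operators of decreasing order within the small calculus, as in the classical pseudodifferential setting and as in \cite{Mazzeo-MelrosePhi}), there is $S\in\Psi^0_{\phi}(M;E)$ with $S\sim\sum_{j\ge0}(R_0')^j$ so that $Q_0:=Q_0' S\in\Psi^{-1}_{\phi}(M;E)$ satisfies $(x^{-\delta}D_{\phi}x^{\delta})Q_0=\Id-R_0$ with $R_0\in\Psi^{-\infty}_{\phi}(M;E)$. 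It is worth emphasizing that at this stage the error $R_0$ is only smoothing in the interior sense — it is an element of the \emph{small} calculus, hence its Schwartz kernel still vanishes to infinite order at every boundary hypersurface of $M^2_{\phi}$ except $\ff$, and in particular $R_0$ is by no means compact on the relevant weighted $L^2$-spaces; removing this boundary error is precisely the content of the later steps of the construction of $Q$ in Theorem~\ref{dt.10}.

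The only genuinely delicate point, and the one I would treat with care rather than routine, is the asymptotic completeness / Borel summation step: one must produce a single element of $\Psi^{-\infty}_{\phi}$, equivalently a single conormal-plus-residual kernel on $M^2_{\phi}$, realizing a prescribed asymptotic sum of kernels of strictly decreasing conormal order along $\Delta_{\phi}$. This is standard — it follows the classical construction of an operator with prescribed symbol expansion, carried out uniformly on the compact manifold with corners $M^2_{\phi}$ using a partition of unity adapted to $\Delta_{\phi}$ and its intersections with the boundary faces — and it is already implicit in \cite{Mazzeo-MelrosePhi}, so I would simply cite it. Everything else (ellipticity being preserved under $x^{\delta}$-conjugation, multiplicativity of the symbol map, the order bookkeeping $1+(-1)=0$ and then $0,-1,-2,\dots$) is formal. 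Note finally that no hypothesis on $\delta$, on Assumption~\ref{dt.4}, or on the indicial family enters here: Proposition~\ref{dt.11} is purely about the interior symbol and holds for every real $\delta$, the role of $\delta\notin\Crit(D_b)$ only becoming relevant in the subsequent steps that correct the boundary behavior of $R_0$.
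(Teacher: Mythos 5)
Your proposal is correct and follows essentially the same route as the paper: symbolic inversion of the elliptic operator $x^{-\delta}D_{\phi}x^{\delta}$ in the small $\phi$-calculus followed by an asymptotic (Neumann-type) summation, your $Q_0'S$ with $S\sim\sum_j (R_0')^j$ being just a repackaging of the paper's iterative correctors $Q_0^{(k)}=Q_0'R_0^{(k-1)}$ summed asymptotically. Your side remarks (irrelevance of $\delta$ and of Assumption~\ref{dt.4} at this stage, non-compactness of the small-calculus error $R_0$) are also accurate.
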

\begin{proof}
The operator $x^{-\delta}D_{\phi}x^{\delta}$ is elliptic with principal symbol
$$
      {}^{\phi}\sigma_1(x^{-\delta}D_{\phi}x^{\delta})= {}^{\phi}\sigma_1(D_{\phi})={}^{\phi}\sigma_1(\eth_{\phi}),
$$
so we can find $Q_0'\in \Psi^{-1}_{\phi}(M;E)$ with principal symbol 
$$
     {}^{\phi}\sigma_{-1}(Q_0')= ({}^{\phi}\sigma_1(\eth_{\phi}))^{-1},
$$ 
so that
$$
    (x^{-\delta}D_{\phi}x^{\delta})Q_0'= \Id-R_0' \quad \mbox{for some} \quad R_0'\in \Psi^{-1}_{\phi}(M;E).
$$
Proceeding inductively, we then define more generally $Q_0^{(k)}= Q_0'R_0^{(k-1)}\in \Psi^{-k}_{\phi}(M;E)$ and $R^{(k)}_0\in \Psi^{-k}_{\phi}(M;E)$ such that 
$$
        (x^{-\delta}D_{\phi}x^{\delta})\left( \sum_{j=1}^k Q_0^{(j)} \right)= \Id-R_0^{(k)}.
$$
Taking an asymptotic sum over the $Q_0^{(k)}$ then gives the desired operator $Q_0$.   
\end{proof}

\subsection*{Step 1: Removing the error term at $\ff$}

In this step, we improve the parametrix so that the error term vanishes at the front face $\ff$.
\begin{proposition}
There exist $Q_1\in\Psi^{-1,\cQ_1}_{\phi}(M;E)$ and $R_1\in\Psi^{-\infty,\cR_1}_{\phi}(M;E)$ such that 
$$
    (x^{-\delta}D_{\phi}x^{\delta})Q_1=\Id-R_1,
$$ 
where the index families $\cQ_1$ and $\cR_1$ are the empty set at $\rf$ and $\lf$ and given otherwise by 
\begin{equation}
\cQ_1|_{\ff}=\bbN_0, \quad \cQ_1|_{\fbf}=\bbN_0+ h, \quad \cR_1|_{\ff}=\bbN_0+1, \quad \cR_1|_{\fbf}=\bbN_0+ h+1.
\label{dt.12b}\end{equation}
 Moreover, the leading term $A$ of $Q_1$ at $\fbf$ is such that $A=\Pi_h A\Pi_h $.  
\label{dt.12}\end{proposition}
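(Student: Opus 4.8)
The plan is to correct the parametrix $Q_0$ of Step~0 so that the error disappears at the front face $\ff$, following the normal-operator correction scheme of \cite[\S~3]{Vaillant}: one inverts the normal operator $N_{\ff}(D_{\phi})$ and adds a residual correction. Note first that $R_0\in\Psi^{-\infty}_{\phi}(M;E)$ lies in the \emph{small} calculus, so its Schwartz kernel is smooth up to $\ff$ and vanishes to infinite order at $\lf$, $\rf$ and $\fbf$; its restriction to $\ff$ is $N_{\ff}(R_0)$, and the whole point is to solve this term away.

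First I would invert $N_{\ff}(D_{\phi})$ (equivalently $N_{\ff}(x^{-\delta}D_{\phi}x^{\delta})$, as conjugation by $x^{\delta}$ does not affect the normal operator at $\ff$). By \eqref{dt.3} and Lemma~\ref{dt3.b} this operator is, over $p\in Y$, the translation-invariant operator $D_v|_{\phi^{-1}(p)}+\eth_h(p)$ on the fibres of ${}^{\phi}N\pa M\to\pa M$, with $\eth_h(p)$ a Euclidean Dirac operator anti-commuting with $D_v$. Conjugating by the fibrewise Fourier transform in the fibres of ${}^{\phi}N\pa M\cong\phi^*({}^{\phi}NY)$ turns $N_{\ff}(D_{\phi})$ into the family $D_v+i\cl(\xi)$, $\xi\in{}^{\phi}N^*_pY\cong\bbR^{h+1}$. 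Because $\eth_h$ anti-commutes with $D_v$ it preserves $\ker D_v$, so $\cl(\xi)$ commutes with the fibrewise projection $\Pi_h$, and $(D_v+i\cl(\xi))^2=D_v^2+|\xi|^2$; hence $D_v+i\cl(\xi)$ is block diagonal with respect to $L^2=\ran\Pi_h\oplus\ran(\Id-\Pi_h)$, invertible for $\xi\neq0$, with
\[
(D_v+i\cl(\xi))^{-1}=|\xi|^{-2}\,i\cl(\xi)\,\Pi_h+(\Id-\Pi_h)(D_v+i\cl(\xi))^{-1}(\Id-\Pi_h),
\]
the second summand extending smoothly across $\xi=0$. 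Taking the inverse Fourier transform gives an operator $G$ on $\ff$ that inverts $N_{\ff}(D_{\phi})$: it is conormal of order $-1$ at $\Delta_{\phi}\cap\ff$, smooth up to the interior of $\ff$, and polyhomogeneous at the corner $\ff\cap\fbf$ with expansion beginning at order $h$ --- the Fourier transform in $h+1$ variables of the homogeneous degree $-1$ term $|\xi|^{-2}i\cl(\xi)\Pi_h$ is homogeneous of degree $-h$ --- and with leading coefficient of the form $\Pi_h(\,\cdot\,)\Pi_h$, since $\cl(\xi)$ commutes with $\Pi_h$.

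Next I would choose $Q_1'\in\Psi^{-\infty,\cQ_1'}_{\phi}(M;E)$ with $N_{\ff}(Q_1')=G\circ N_{\ff}(R_0)$, vanishing to infinite order at $\lf$ and $\rf$ and with $\fbf$-expansion beginning at order $h$, and set $Q_1:=Q_0+Q_1'$ and $R_1:=R_0-(x^{-\delta}D_{\phi}x^{\delta})Q_1'$, so that $(x^{-\delta}D_{\phi}x^{\delta})Q_1=\Id-R_1$. Since $N_{\ff}\big((x^{-\delta}D_{\phi}x^{\delta})Q_1'\big)=N_{\ff}(D_{\phi})\circ N_{\ff}(Q_1')=N_{\ff}(R_0)$, the error $R_1$ vanishes at $\ff$; tracking the orders at $\ff$ and $\fbf$ through these compositions (as in Proposition~\ref{phi.18}) then yields the index families \eqref{dt.12b}. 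For the last assertion I would take the leading term at $\fbf$ of $(x^{-\delta}D_{\phi}x^{\delta})Q_1=\Id-R_1$: the right-hand side vanishes there to order $\ge h+1$, while the only part of $D_{\phi}$ not vanishing at $\fbf$ is the vertical family $D_v$, so the leading coefficient $A$ of $Q_1$ at $\fbf$ (of order $h$) satisfies $D_vA=0$, that is, $A=\Pi_hA$; and since $\Pi_hD_v=0$, a short computation using $N_{\ff}(R_0)=\Id-N_{\ff}(D_{\phi})\circ N_{\ff}(Q_0)$ shows that the leading term of $G\circ N_{\ff}(R_0)$ at $\ff\cap\fbf$ is already $\Pi_h$-sandwiched, so the extension can be chosen with $A=\Pi_hA\Pi_h$.

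The step I expect to be the main obstacle is the construction of $G$ with precisely this polyhomogeneous structure at the corner $\ff\cap\fbf$ --- the leading order $h$ together with the $\Pi_h$-sandwiched leading coefficient --- which requires combining the model Fourier computation above with a careful description of $\ff$, and of its corner with $\fbf$, inside the blown-up space $M^2_{\phi}$.
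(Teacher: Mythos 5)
Your proposal is correct and follows essentially the same route as the paper: decompose $N_{\ff}(R_0)$ via $\Pi_h$, invert $D_v+\eth_h$ on the orthogonal complement of $\ker D_v$ as an invertible suspended family, invert the Euclidean family $\eth_h$ on $\ker D_v$ (the paper quotes \cite[Corollary~A.4]{ARS1} for the order-$h$ expansion at $\fbf$, which your Fourier computation of $|\xi|^{-2}i\cl(\xi)\Pi_h$ reproduces), and then choose the extension of the order-$h$ coefficient at $\fbf$ to be $\Pi_h$-sandwiched so that $D_v$ annihilates it and the error becomes $\cO(x_{\fbf}^{h+1})$. The only (minor, presentational) caveat is in your final paragraph: the vanishing of $\Id-R_1$ to order $h+1$ at $\fbf$ is a consequence of, not a premise for, the choice $A=\Pi_hA\Pi_h$, exactly as you in fact arrange via the $\Pi_h$-sandwiched leading term of $G\circ N_{\ff}(R_0)$ at $\ff\cap\fbf$.
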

\begin{proof}
We need to find $Q_1'$ such that 
\begin{equation}
 N_{\ff}(x^{-\delta}D_{\phi}x^{\delta}Q_1')= N_{\ff}(R_0),
\label{dt.13}\end{equation}
for then it suffices to take $Q_1=Q_0+ Q_1'$.  To solve \eqref{dt.13}, we can decompose $N_{\ff}(R_0)$ using the fiberwise projection $\Pi_h$ onto the bundle $\ker D_v$,
$$
N_{\ff}(R_0)= \Pi_h N_{\ff}(R_0) + (\Id-\Pi_h)N_{\ff}(R_0),
$$
where the right hand side makes sense since $\Pi_h$ can be regarded as an element of $\Psi^0_{\sus({}^{\phi}NY)}(\pa M/Y;E)$, the space of ${}^{\phi}NY$-suspended families of pseudodifferential operators of order $0$ of \cite{Mazzeo-MelrosePhi}.  Now, recall from \eqref{dt.3} that
\begin{equation}
   N_{\ff}(x^{-\delta}D_{\phi}x^{\delta})= N_{\ff}(D_{\phi})= D_v+\eth_h,
\label{dt.14}\end{equation} 
where $\eth_h$ is a family of Euclidean Dirac operators in the fibers of ${}^{\phi}N\pa M\to \pa M$ anti-commuting with $D_v$.  In particular, $\eth_h$ commutes with $\Pi_h$.  On the range of 
$\Id-\Pi_h$, the operator $D_v+\eth_h$ is on each fiber an invertible suspended operator in the sense of \cite{Mazzeo-MelrosePhi}, so has an inverse  $(D_v+\eth_h)^{-1}_{\perp}\in\Psi^{-1}_{\sus({}^{\phi}NY)}(\pa M/Y;E)$.  On the range of $\Pi_h$, we can apply instead \cite[Corollary~A.4]{ARS1} to invert $\eth_h$ as a weighted $b$-operator.  Thus, it suffices to take $Q_1'$ such that 
\begin{equation}
   N_{\ff}(Q_1')= (\eth_h)^{-1}\Pi_h(N_{\ff}(R_0)) + (D_v+\eth_h)^{-1}_{\perp} (\Id-\Pi_h)N_{\ff}(R_0).
\label{dt.15}\end{equation}

The price to pay is that by \cite[Corollary~A.4]{ARS1}, the image of $(\eth_h)^{-1}$ has an expansion at infinity with index set $J_{h+1}$ such that $\inf \Re(J_{h+1})=h$.  This expansion corresponds to a non-trivial expansion of $Q_1'$ at $\fbf$, so that $Q_1=Q_0+Q_1'\in\Psi^{-1,\cQ_1}_{\phi}(M;E)$ with $\cQ_1$ as in the statement of the proposition.  Since $Q_1$ is $\cO(x_{\fbf}^{h})$ at $\fbf$, the same is true for $R_1$.  Moreover, at $\ff$, we must have
$$
    N_{\ff}(x^{-\delta}D_{\phi}x^{\delta})N_{\ff}(Q_1)=\Id
$$
which means that
$$
     N_{\ff}(Q_1)= \Pi_{h}\eth_{h}^{-1}\Pi_h+(D_v+\eth_h)^{-1}_{\perp}. 
$$
Thus, the top order term $A$ at $\fbf$ of $Q_1$ comes from the expansion of $\Pi_h\eth^{-1}_h\Pi_h$, which is just a family of Green functions of Euclidean Dirac operators, those being of the form $\frac{\cl(u)}{|u|^{h+1}}$ in terms of the Euclidean variable $u$ and Clifford multiplication.  In particular, the index set of $Q_1$ at $\fbf$ is just $\bbN_0+h$.    Hence, choosing suitably the definition of $Q_1'$ on $\fbf$, we can assume that 
$A=\Pi_h A\Pi_h$.  Since by definition $D_v$ acts trivially on such an operator, this implies that $x^{-\delta}Dx^{\delta}Q_1$ vanishes instead at order $h+1$ at $\fbf$ so that $R_1$ must also be $\cO(x_{\fbf}^{h+1})$ at $\fbf$, that is, $\cR_1|_{\fbf}=\bbN_0+h+1$.    
\end{proof}

\subsection*{Step 2: preliminary step to remove the error term at $\fbf$}

Since $R_1$ has a term of order $h+1$ at $\fbf$, it cannot be compact as an operator acting on $L^2_b(M;E)$.  This is because being $\cO(x_{\fbf}^{h+1})$ in terms of right $\phi$-densities corresponds to being $\cO(1)$ in terms of right $b$-densities.  To get rid of the term of order $h+1$ at $\fbf$, we can first remove the expansion of this term at $\ff\cap \fbf$, in fact just the expansion of this term lying in the range of $\Pi_h$.  
\begin{proposition}
There exist $Q_2\in \Psi^{-1,\cQ_2}_{\phi}(M;E)$ and $R_2\in\Psi^{-\infty,\cR_2}_{\phi}(M;E)$ such that 
$$
    (x^{-\delta}D_{\phi}x^{\delta})Q_2= \Id -R_2,  
$$
where $\cQ_2$ and $\cR_2$ are index families given by the empty set at $\lf$ and $\rf$ and such that
$$
\cQ_2|_{\ff}= \bbN_0, \quad \cQ_2|_{\fbf}=\bbN_0+ h, \quad \cR_2|_{\ff}=\bbN_0+1, \quad \inf\Re(\cR_2|_{\fbf})\ge h+1.
$$
Moreover, the term $A$ of order $h$ at $\fbf$ of $Q_2$ is such that $A=\Pi_hA\Pi_h$, while $R_2$ is such that its term $B$ of order $h+1$ at $\fbf$ is such that $\Pi_hB=\Pi_h B\Pi_h$ has a trivial expansion at $\ff\cap \fbf$.   
\label{dt.16}\end{proposition}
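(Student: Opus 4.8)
The plan is to correct the parametrix $Q_1$ of Proposition~\ref{dt.12} by adding a term $Q_2'$ whose Schwartz kernel is supported in a small neighbourhood of the corner $\ff\cap\fbf$ (in particular, away from $\lf$ and $\rf$) and which is chosen so that, to leading order at $\fbf$, the action of $x^{-\delta}D_{\phi}x^{\delta}$ on $Q_2'$ cancels the Taylor expansion at $\ff\cap\fbf$ of the leading $\fbf$-coefficient of $R_1$, restricted to the range of $\Pi_h$. Writing $B_1$ for the coefficient of $x_{\fbf}^{h+1}$ in the expansion of $R_1$ at $\fbf$, one then sets $Q_2=Q_1+Q_2'$ and $R_2=R_1-(x^{-\delta}D_{\phi}x^{\delta})Q_2'$. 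This is only a preliminary step: the full removal of the order $h+1$ term at $\fbf$, which would force nontrivial contributions at $\lf$ and $\rf$, is deferred until after its expansion at $\ff\cap\fbf$ has been removed here. Since $Q_2'$ can be taken $\cO(x_{\fbf}^{h})$ with leading $\fbf$-coefficient of the form $\Pi_h(\cdot)\Pi_h$, $\cO(1)$ at $\ff$, and trivial at $\lf$ and $\rf$, the index families $\cQ_2$ and $\cR_2$ will have the asserted structure, provided the new leading $\fbf$-coefficient $B$ of $R_2$ satisfies $\Pi_h B=\Pi_h B\Pi_h$ with $\Pi_h B\Pi_h$ of trivial expansion at $\ff\cap\fbf$.

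First I would record that $\Pi_h B_1=\Pi_h B_1\Pi_h$, which is needed both to make sense of the statement and to organise the correction. From $R_1=\Id-(x^{-\delta}D_{\phi}x^{\delta})Q_1$ and the fact that the leading $\fbf$-coefficient of $Q_1$ has the form $\Pi_h A\Pi_h$ (Proposition~\ref{dt.12}), expanding in powers of $x_{\fbf}$ yields schematically $B_1=-D_vC-S(\Pi_h A\Pi_h)$, where $C$ is the next $\fbf$-coefficient of $Q_1$, the leading $\fbf$-coefficient of $x^{-\delta}D_{\phi}x^{\delta}$ is $D_v$ (cf.\ Definition~\ref{dt.2}), and $S$ is its subleading $\fbf$-coefficient. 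Composing with $\Pi_h$ on the left annihilates the first term, because $\Pi_hD_v=(D_v\Pi_h)^*=0$, and the second term manifestly carries a $\Pi_h$ on the right; hence $\Pi_h B_1=\Pi_h B_1\Pi_h$.

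The heart of the matter is a model inversion localised at $\ff\cap\fbf$. By \eqref{dt.14} the $\ff$-normal operator of $x^{-\delta}D_{\phi}x^{\delta}$ is $D_v+\eth_h$, which on $\ran\Pi_h$ reduces to the family of Euclidean Dirac operators $\eth_h$; as in the proof of Proposition~\ref{dt.12} (via \cite[Corollary~A.4]{ARS1}), this family is invertible as a weighted $b$-operator, its inverse carrying a polyhomogeneous expansion towards $\fbf$ whose orders are controlled by the indicial family $I(D_b,\lambda)$ (whence the role of the hypothesis $\delta\notin\Crit(D_b)$). Using this, I would write the Taylor expansion of $\Pi_h B_1\Pi_h$ at $\ff\cap\fbf$ — which begins at order $1$, since $\cR_1|_{\ff}=\bbN_0+1$ — solve inductively, order by order, for the corresponding Taylor expansion of the $\fbf$-leading coefficient of $Q_2'$, asymptotically sum, and multiply by a cutoff supported in a neighbourhood of $\ff\cap\fbf$ disjoint from $\lf\cup\rf$. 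The commutators of $x^{-\delta}D_{\phi}x^{\delta}$ with this cutoff vanish to one higher order at $\fbf$, commute with $\Pi_h$ (being, near $\fbf$, essentially Clifford multiplication by a covector in the ${}^{\phi}N\pa M$-directions), and are supported away from $\ff\cap\fbf$; they are therefore absorbed into $R_2$ without spoiling the bound $\inf\Re(\cR_2|_{\fbf})\ge h+1$ or the required structure of $B$. A final check, using $\cR_1|_{\ff}=\bbN_0+1$ once more, shows that the correction does not degrade the behaviour at $\ff$, giving $\cR_2|_{\ff}=\bbN_0+1$ and $\inf\Re(\cQ_2|_{\ff})\ge 0$.

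The step I expect to be the main obstacle is the bookkeeping around the corner $\ff\cap\fbf$: identifying precisely the model operator governing each order of the $\ff\cap\fbf$-expansion, checking that all of these are invertible — this is where the combination of $\delta\notin\Crit(D_b)$ with the explicit (integer-shifted) exponents occurring in the expansion enters — and verifying that localising the solution near $\ff\cap\fbf$ really does keep every new contribution trivial at $\lf$ and $\rf$ while producing only errors of order $\ge h+1$ at $\fbf$ and $\ge 1$ at $\ff$. This is the analogue, in the fibered boundary setting, of the corresponding step in Vaillant's resolvent construction for fibered cusp Dirac operators \cite[\S~3]{Vaillant}.
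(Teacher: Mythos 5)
Your overall strategy has the right shape and matches the paper's: write $Q_2=Q_1+\widetilde Q_2$ with $\widetilde Q_2$ a correction of order $h$ at $\fbf$, of the form $\Pi_h(\cdot)\Pi_h$, supported away from $\lf$ and $\rf$, chosen so that the Taylor series at $\ff\cap\fbf$ of the $\Pi_h$-part of the order $h+1$ error is cancelled, deferring the full removal at $\fbf$ to the next step. The gap is in the model inversion you propose at the corner. The family $\eth_h$, inverted as a weighted $b$-operator via \cite[Corollary~A.4]{ARS1}, is the $\ff$-normal model on the range of $\Pi_h$; it was already used, and the $\ff$-model equation solved exactly, in Proposition~\ref{dt.12}, and it acts in the fibre directions of $\ff$ (the ${}^{\phi}NY$-directions), transversally to $\fbf$. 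It is not the operator relating a correction of order $h$ at $\fbf$ to the resulting contribution of order $h+1$ there: on the range of $\Pi_h$ the vertical operator $D_v$ annihilates the order $h$ coefficient, and what survives at order $h+1$ is the horizontal (indicial) action. The paper identifies the $\Pi_h$-part of the order $h+1$ error as $r_1^o=I(D_b,\delta)A_1$, with $A_1$ the order $h$ term of $Q_1$ at $\fbf$, and then finds $q_2^o$ so that $I(D_b,\delta)q_2^o-r_1^o$ vanishes to infinite order at $\ff\cap\fbf$, using \cite[Lemma~5.44]{MelroseAPS} (see also \cite[Proposition~3.17]{Vaillant} and \cite[Propositions~4.14 and A.7]{ARS1}).

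This also corrects where the hypothesis $\delta\notin\Crit(D_b)$ enters: it is the invertibility of the indicial family $I(D_b,\delta)$ on sections of $\ker D_v$ over $Y$ that makes the formal (Taylor-series) solution at the corner possible. Your attribution of that hypothesis to the expansion of $\eth_h^{-1}$ towards $\fbf$ is incorrect: that expansion has the fixed Euclidean index set $J_{h+1}$ with $\inf\Re J_{h+1}=h$, independent of $I(D_b,\lambda)$ and of $\delta$ (indeed, no non-criticality assumption was needed in Proposition~\ref{dt.12}, where $\eth_h^{-1}$ is used). Your preliminary observation that $\Pi_h B_1=\Pi_h B_1\Pi_h$ is consistent with the paper, which obtains it directly from $r_1^o=I(D_b,\delta)A_1$ and $A_1=\Pi_h A_1\Pi_h$; but as written, your inductive scheme inverts the wrong operator at each order of the corner expansion, so the key step would fail.
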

\begin{proof}
Writing $Q_2=Q_1+\widetilde{Q}_2$, we need to find $\widetilde{Q}_2$ such that the term $B$ at order $h+1$ of  
$$
        (x^{-\delta}D_{\phi}x^{\delta})\widetilde{Q}_2-R_1
$$
at $\fbf$ is such that $\Pi_h B$ vanishes to infinite order at $\ff\cap \fbf$.  Let $r_1^o$ denote the part of the restriction at order $h+1$ to $\fbf$ of $R_1$ whose image is in the range of $\Pi_h$.  By step 1, $r_1^o=I(D_b,\delta)A_1$ where $A_1$ is the term of order $h$ of $Q_1$ at $\fbf$.  In particular, $r_1^o=\Pi_h r_1^o\Pi_h$.  Now, we need to find $q_2^o$ such that
$$
    I(D_b,\delta)(q_2^o)- r_1^o
$$
vanishes to infinite order at $\ff\cap \fbf$.  To construct such a term $q_{2}^o$, which can be achieved working locally near $\ff\cap \fbf$, the idea is to use \cite[Lemma~5.44]{MelroseAPS}.  We refer to \cite[Proposition~4.14 and Proposition~A.7]{ARS1} or \cite[Proposition~3.17]{Vaillant} for further details.  Extending $q_2^o$ smoothly off $\fbf$, thinking of it as a term of order $h$ there, we obtain $\widetilde{Q}_2$ as desired.  Clearly, $q_2^o=\Pi_hq_2^o\Pi_h$, so that the terms of order $h$ of $Q_2$ at $\fbf$ and the term of order $h+1$ of $R_2$ at $\fbf$ are as claimed. 
\end{proof}

\subsection*{Step 3: Removing the error term at $\fbf$}

\begin{proposition}
There exist $Q_3\in\Psi^{-1,\cQ_3}_{\phi}(M;E)$ and $R_3\in\Psi^{-\infty,\cR_3}_{\phi}(M;E)$ such that 
$$
     (x^{-\delta}D_{\phi}x^{\delta})Q_3=\Id- R_3,
$$
where $Q_3$ and $R_3$ are index families such that 
\begin{equation}
\begin{gathered}
\inf\Re(\cQ_3|_{\lf})\ge\mu, \quad \inf\Re(\cQ_3|_{\rf})\ge h+\mu, \quad \cQ_3|_{\ff}=\bbN_0, \quad \cQ_3|_{\fbf}=\bbN_0+ h, \\
\inf\Re(\cR_3|_{\lf})> \mu, \quad \inf\Re(\cR_3|_{\rf})\ge h+\mu, \quad \cR_3|_{\ff}=\bbN_0+1, \quad \cR_3|_{\fbf}=\bbN_0+ h+2. 
\end{gathered}
\end{equation}
Moreover, the term $A$ of order $h$ at $\fbf$ of $Q_3$ is such that $A=\Pi_hA\Pi_h$, while any term $r$ in the expansion of $R_3$ at $\rf$ is such that $r\Pi_h=r$. 
\label{dt.17}\end{proposition}
\begin{proof}
Let $B$ be the term of order $h+1$ of $R_2$ at $\fbf$ and write 
$$
     B= b^o+ b^{\perp}, \quad b^o=\Pi_h B=\Pi_h B\Pi_h, \quad b^{\perp}= (\Id-\Pi_h)B. 
$$
By Proposition~\ref{dt.16}, $b^o$ can be thought of as a smooth kernel on the interior of the $b$-front face $Y^2\times (0,\infty)_s$ of the $b$-double space of $Y \times [0,1)_x$, where $s=x/x'$ is the usual coordinate.  Hence, since $\delta$ is not a critical weight, this suggests to consider 
$$
    q_3^o(s)= \frac{1}{2\pi}\int_{\bbR} e^{i\xi \log s}I(D_b,\delta+i\xi)^{-1}\widehat{b}^o(\xi) d\xi,
$$
where 
$$
    \widehat{b}^o(\xi)= \int_{0}^{\infty} e^{-i\xi\log s}b^o(s) \frac{ds}{s}
$$
is the Mellin transform of $b^o(s).$  Since $I(D_b,\lambda)$ has no critical weight in $(\delta-\mu,\delta+\mu)$, notice that in the strip $\delta-\mu\le \Re \lambda\le \delta+\mu$, $I(D_b,\lambda)^{-1}$ has at most simple poles on the lines $\Re\lambda=\delta\pm\mu$, which means by the integral contour argument of \cite{MelroseAPS} that $e^{\mu|\log s|}q^o_3(s)$ is bounded.   
In this case, if $Q^o_3$ is a smooth extension of $q^o_3x^{-1}$, seen as a term of order $h$ at $\fbf$ in terms of $\phi$-densities, we have that 
$$
        (x^{-\delta}D_{\phi}x^{\delta}) Q^o_3= b^o
$$
at order $h+1$ at $\fbf$.  We can also assume that each term $a$ in the expansion of $Q^0_3$ at $\rf$ is such that $a\Pi_h=a$. Moreover, the boundedness of $e^{\mu|\log s|}q_3^o(s)$ ensures that  $Q^o_3$ has leading terms at least of order $x_{\lf}^{\mu}$ and $x_{\rf}^{h+\mu}$ at $\lf$ and $\rf$ respectively.  Since the term of order $\mu$ of $q_3^o(s)$ is killed by $x^{-\delta}D_{\phi}x^{\delta}$, we can assume the same is true for $Q_3^o$.  Hence, considering $\widetilde{Q}_3= Q_2+ Q^o_3$, we see that 
$$
       (x^{-\delta}D_{\phi}x^{\delta})\widetilde{Q}_3= \Id-\widetilde{R}_3
$$
with $\widetilde{R}_3$ similar to $R_2$, but with term $\widetilde{B}$ of order $h+1$ at $\fbf$ such that 
$$
     \Pi_h\widetilde{B}=0
$$
and with leading terms at $\lf$ and $\rf$ at least of order $x^{\mu+\nu}_{\lf}$ and $x^{h+\mu}_{\rf}$ for some $\nu>0$.  
Moreover, each term $r$ in the expansion of $\widetilde{R}_3$ at $\rf$ is such that $r\Pi_h=r$.  To get rid of $\widetilde{B}$, it suffices then to consider 
$$
     q_3^{\perp}:= D_v^{-1}\widetilde{B}
$$
and a smooth extension $Q_3^{\perp}$ having $q_3^{\perp}$ as a term of order $h+1$ at $\fbf$.  By construction, $x^{-\delta}D_{\phi}x^{\delta}Q^{\perp}_3$ has term of order $h+1$ at $\fbf$ precisely given by $\widetilde{B}$.  Hence it suffices to take $Q_3=\widetilde{Q}_3 + Q_3^{\perp}$.  Since $q^o_3=\Pi_hq^o_3\Pi_h$, notice that term of order $h$ at $\fbf$ of $Q_3$ is as claimed.  By our choice of $Q_3$, notice that $R_3$ has no term of order $\mu$ at $\lf$ and the expansion at $\rf$ is as claimed.  
\end{proof}

\subsection*{Step 4: Removing the expansion at $\lf$}

\begin{proposition}
There exist $Q_4\in \Psi^{-1,\cQ_4}_{\phi}(M;E)$ and $R_4\in\Psi^{-\infty,\cR_4}_{\phi}(M;E)$ such that
$$
      (x^{-\delta}D_{\phi}x^{\delta})Q_4=\Id-R_4,
$$
where $\cQ_4$ and $\cR_4$ are index families such that 
\begin{equation}
\begin{gathered}
\inf\Re(\cQ_4|_{\lf})\ge\mu, \quad \inf\Re(\cQ_4|_{\rf})\ge h+\mu, \quad \cQ_4|_{\ff}=\bbN_0, \quad \cQ_4|_{\fbf}=\bbN_0+ h, \\
\cR_4|_{\lf}=\emptyset, \quad \inf\Re(\cR_4|_{\rf})\ge h+\mu, \quad \cR_4|_{\ff}=\bbN_0+1, \quad \cR_4|_{\fbf}=\bbN_0+h+2. 
\end{gathered}
\end{equation}
Moreover, the term $A$ of order $h$ of $Q_4$ at $\fbf$ is such that $A=\Pi_h A\Pi_h$.
\label{dt.18}\end{proposition}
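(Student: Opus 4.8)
The plan is to correct $Q_3$ by a smoothing term supported near $\lf$. Precisely, with $Q_3$ and $R_3$ as furnished by Proposition~\ref{dt.17}, I would look for $Q_4'\in\Psi^{-\infty,\cQ_4'}_\phi(M;E)$ whose kernel is supported in a neighbourhood of $\lf$ disjoint from $\ff$ and $\rf$ and such that
$$
(x^{-\delta}D_\phi x^\delta)Q_4'\equiv R_3 \quad\text{to infinite order at }\lf ,
$$
and then set $Q_4=Q_3+Q_4'$ and $R_4=R_3-(x^{-\delta}D_\phi x^\delta)Q_4'$. This automatically yields $(x^{-\delta}D_\phi x^\delta)Q_4=\Id-R_4$ with $\cR_4|_{\lf}=\emptyset$, while leaving the structure of $Q_3$ intact away from $\lf$, so that the orders at the other faces are inherited.

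To build $Q_4'$ I would work near $\lf$, that is, with the left variable in a collar of $\pa M$ and the right variable bounded away from $\pa M$, which is consistent since $\lf\cap\rf=\emptyset$ in $M^2_\phi$. There $x^{-\delta}D_\phi x^\delta$ differentiates only in the left variable and, by Proposition~\ref{dt.17}, $R_3$ has a polyhomogeneous expansion in the left boundary defining function with all exponents of real part $>\mu$. Splitting this expansion with the fibrewise projection $\Pi_h$ as in the earlier steps: on the range of $\Id-\Pi_h$ the vertical family $D_v$ is invertible, so that part is removed term by term by applying $D_v^{-1}$; on the range of $\Pi_h$ the operator acts as a weighted $b$-operator whose indicial family is $\lambda\mapsto I(D_b,\lambda+\delta)$, which by the hypothesis $(\delta-\mu,\delta+\mu)\cap\Crit(D_b)=\emptyset$ is invertible for $|\Re\lambda|<\mu$ with an inverse having at most simple poles on the lines $\Re\lambda=\pm\mu$. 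As in the proof of Proposition~\ref{dt.17} (and as in \cite[Lemma~5.44]{MelroseAPS}, \cite[Proposition~4.14 and Proposition~A.7]{ARS1} or \cite[Proposition~3.17]{Vaillant}), a Mellin transform together with a contour shift then produces the $\Pi_h$-part of the $\lf$-expansion of $Q_4'$ and shows that the exponents it creates all have real part $\ge\mu$. Extending the resulting formal expansion to a smooth, compactly supported section of $\Hom_\phi(E,E)\otimes{}^\phi\Omega_R(M)$ on $M^2_\phi$ gives $Q_4'\in\Psi^{-\infty,\cQ_4'}_\phi(M;E)$ with $\inf\Re(\cQ_4'|_{\lf})\ge\mu$.

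Finally I would check the index sets. Since $Q_4'$ is supported away from $\rf$ and $\ff$, the orders of $Q_4$ and $R_4$ at $\rf$ and $\ff$ coincide with those of $Q_3$ and $R_3$ from Proposition~\ref{dt.17}; at $\lf$ one has $\inf\Re(\cQ_4|_{\lf})\ge\mu$ and $\cR_4|_{\lf}=\emptyset$ by construction. The delicate point — which I expect to be the main obstacle — is the behaviour at the corner $\lf\cap\fbf$: one must arrange the $\lf$-expansion of $Q_4'$ so that, continued into that corner, it remains compatible with the normal operator of $Q_3$ at $\fbf$ fixed in the earlier steps, so that $(x^{-\delta}D_\phi x^\delta)Q_4'$ introduces no error of order $\le h+1$ at $\fbf$ and the leading term $A$ of $Q_4$ at $\fbf$ is still of the form $\Pi_h A\Pi_h$. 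Granting this corner compatibility, which is handled exactly as the analogous bookkeeping in Steps 1--3, the index sets $\cQ_4$ and $\cR_4$ in the statement follow.
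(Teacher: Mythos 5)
Your proposal follows essentially the same route as the paper: the paper's proof simply invokes the argument of \cite[Lemma~5.44]{MelroseAPS} to produce a correction $\widetilde{Q}_4$ supported near $\lf$ whose image under $x^{-\delta}D_{\phi}x^{\delta}$ matches the $\lf$-expansion of $R_3$ to infinite order, and then sets $Q_4=Q_3+\widetilde{Q}_4$, which is exactly your construction (your $\Pi_h$-splitting with $D_v^{-1}$ on the complement and inversion of the indicial family on the harmonic part being the content of that term-by-term solution). The corner behaviour at $\lf\cap\fbf$ that you flag is handled implicitly in the same way, so no genuinely different idea is involved.
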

\begin{proof}
Proceeding as in the proof of \cite[Lemma~5.44]{MelroseAPS}, we can find $\widetilde{Q}_4$ defined near $\lf$ such that $(x^{-\delta}D_{\phi}x^{\delta})\widetilde{Q}_4$ has the same expansion as $R_3$ at $\lf$.  Indeed, if $R_3$ has a term $x^{\alpha}r_{\alpha}$  of order $\alpha$ in its expansion at $\lf$, then we can first look at $\Pi_h r_{\alpha}$ and look for $q_{\alpha}$ such that
$$
  I(D_b,\delta+\alpha-1)q_{\alpha}= \Pi_h r_{\alpha}.
$$ 
This is possible provided $ I(D_b,\delta+\alpha-1)$ is invertible, in which case we have that
$$
     (x^{-\delta} D_{\phi}x^{\delta})x^{\alpha-1}q_{\alpha}= x^{\alpha}\Pi_h r_{\alpha}+ x^{\alpha}r_{\alpha}^{\perp}+ \cO(x^{\alpha+1}),
$$
where $r_{\alpha}^{\perp}$ is such that $\Pi_h r_{\alpha}^{\perp}=0$ by Lemma~\ref{dt.7}.  Hence, picking $q^{\perp}_{\alpha}$ such that 
$$
    D_vq^{\perp}_{\alpha}= (r_{\alpha}-\Pi_hr_{\alpha}- r_{\alpha}^{\perp}),
$$
we see that 
$$
    (x^{-\delta} D_{\phi}x^{\delta})(x^{\alpha-1}q_{\alpha}+ x^{\alpha}q_{\alpha}^{\perp})= x^{\alpha}r_{\alpha}+ \cO(x^{\alpha+1}),
$$
so that we found a way to remove the term $x^{\alpha}r_{\alpha}$.  If instead $I(D_b,\delta+\alpha-1)$ is not invertible, we can remove $\Pi_h r_{\alpha}$ by replacing $x^{\alpha-1}q_{\alpha}$ by a term of the form
$$
       x^{\alpha-1}(q_{\alpha}+ q_{\alpha,1}\log x),
$$
and then proceeding as before.  Similarly, for a term of order $x^{\alpha}(\log x)^k r_{\alpha,k}$, we have more generally to replace $x^{\alpha-1}q_{\alpha}$ by $x^{\alpha-1}(\log x)(q_{\alpha}+q_{\alpha,1}\log x)$.  In any case, we can in this manner recursively remove all the terms in the expansion of $R_3$ at $\lf$, sot that $\widetilde{Q}_4$ can be obtained by taking a Borel sum.
Hence, setting $Q_4=Q_3+\widetilde{Q}_4$ gives the desired operator.
\end{proof}

\subsection*{Step 5:  Proof of Theorem~\ref{dt.10}}

\begin{proof}[Proof of Theorem~\ref{dt.10}]
To prove the theorem, we can remove the expansions of the error term at $\fbf$ and $\ff$ by using a Neumann series argument.  First, choose $S_5$ to be an asymptotic sum 
$$
    S_5\sim \sum_{i=1}^{\infty} R_4^i
$$
at $\fbf$ and $\ff$.  This is possible since from the composition rules of fibered boundary operators (see for instance \cite[Theorem~2.11]{Vaillant}), the index family of $(R_4)^i$ iterates away at $\fbf$ and $\ff$ while it is stabilizing at $\rf$.  Taking 
$Q= Q_4(\Id+S_5)$ then gives the desired operator.

\end{proof}

The parametrix of Theorem~\ref{dt.10} has various implications.

\begin{corollary}
If $\sigma\in x^{\alpha}H^{-\infty}_b(M;E)$ with $\alpha\in\bbR$  is such that  $f:=D_{\phi}\sigma\in \cA^{\cF}_{\phg}(M;E)$ for some index set $\cF$, then $\sigma\in \cA^{\cE}_{\phg}(M;E)$ for some  index set $\cE$ depending on $\cF$ and $\alpha$ such that $\inf\Re\cE>\alpha$.
\label{dt.19}\end{corollary}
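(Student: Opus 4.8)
The plan is to deploy the parametrix $Q$ from Theorem~\ref{dt.10} together with the regularizing error term $R$, and to run a bootstrap argument using the polyhomogeneity-propagation properties of the $\phi$-calculus. Since the statement is about the regularity and decay of $\sigma$ given only that $\sigma\in x^{\alpha}H^{-\infty}_b(M;E)$ and $f=D_{\phi}\sigma$ is polyhomogeneous, the first move is to reduce to the operator $x^{-\delta}D_{\phi}x^{\delta}$: pick $\delta$ close to $\alpha$ but not a critical weight of $I(D_b,\lambda)$, with $\delta<\alpha$ and $(\delta-\mu,\delta+\mu)\cap\Crit(D_b)=\emptyset$ for some $\mu>0$. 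Writing $\sigma=x^{\delta}\tau$, we have $\tau\in x^{\alpha-\delta}H^{-\infty}_b(M;E)$ with $\alpha-\delta>0$, and $(x^{-\delta}D_{\phi}x^{\delta})\tau=x^{-\delta}f=:g\in\cA^{\cF'}_{\phg}(M;E)$ with $\inf\Re\cF'=\inf\Re\cF-\delta$. So it suffices to show $\tau$ is polyhomogeneous with index set having real part $>\alpha-\delta$.

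Next I would apply $Q$ on the left of the identity $(x^{-\delta}D_{\phi}x^{\delta})Q=\Id-R$, or rather use the parametrix identity in the form $Q(x^{-\delta}D_{\phi}x^{\delta})=\Id-R'$ for a left parametrix (which exists by the same construction applied to the formal adjoint, or can be extracted by the usual argument that a two-sided calculus parametrix gives both). Then $\tau=Q g+R'\tau$. The term $Qg$ is handled by Proposition~\ref{phi.17b}: $Q\in\Psi^{-1,\cQ}_{\phi}$ with the index bounds $\inf\Re(\cQ|_{\lf})\ge\mu$, $\inf\Re(\cQ|_{\rf})\ge h+1+\mu$, $\inf\Re(\cQ|_{\fbf})\ge h$, $\inf\Re(\cQ|_{\ff})\ge 0$, and provided $\Re(\cQ|_{\rf}+\cF')>h+1$ — which holds once $\delta$ is chosen close enough to $\alpha$ that $\inf\Re\cF'$ is not too negative, or more robustly by first improving the \emph{a priori} decay of $\tau$ — the section $Qg$ is polyhomogeneous with index set whose real part at the left face is bounded below by $\min\{\mu,\ \inf\Re(\cQ|_{\ff})+\inf\Re\cF',\ \inf\Re(\cQ|_{\fbf})+\inf\Re\cF'-h-1\}$. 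The key point is that the $\lf$-component of this index set has real part at least $\mu>0$, and by taking $\delta\uparrow\alpha$ (hence $\mu$ can be taken with $\delta+\mu$ just below the next critical weight above $\alpha$, i.e. essentially $\mu$ as large as $\alpha-\delta$ plus the gap) one gets the real part strictly above $\alpha-\delta$.

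The remaining term $R'\tau$ is where the bootstrap happens. Since $R'\in\Psi^{-\infty,\cR'}_{\phi}$ is smoothing with index family supported (after Theorem~\ref{dt.10}) only at $\rf$ with $\inf\Re(\cR'|_{\rf})\ge h+1+\mu$, applying $R'$ to $\tau\in x^{\alpha-\delta}H^{-\infty}_b$ gains decay: one iterates $\tau=Qg+R'\tau=Qg+R'Qg+(R')^2\tau=\cdots$, and because the $\rf$-index set of $(R')^n$ pushes off to $+\infty$ as $n\to\infty$ while the left-face and fibered-face behaviour stabilizes, the Neumann series $\sum_n (R')^n Qg$ converges to a polyhomogeneous section and the remainder $(R')^N\tau$ becomes arbitrarily regular. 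Assembling, $\tau=\sum_{n\ge0}(R')^n Qg$ is polyhomogeneous with index set $\cE'$ satisfying $\inf\Re\cE'>\alpha-\delta$, hence $\sigma=x^{\delta}\tau\in\cA^{\cE}_{\phg}(M;E)$ with $\inf\Re\cE>\alpha$.

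The main obstacle I anticipate is bookkeeping the threshold condition $\Re(\cQ|_{\rf}+\cF)>h+1$ (and its analogue for composing $R'$ with $Q$) so that Proposition~\ref{phi.17b} and Proposition~\ref{phi.18} genuinely apply — in particular, if $\alpha$ is very negative one may need a preliminary step improving the \emph{a priori} weight of $\sigma$ from $H^{-\infty}_b$ ellipticity and the small calculus before the polyhomogeneous machinery can be switched on, and one must check that the choice of $\delta$ (non-critical, $\delta<\alpha$, close to $\alpha$) can always be made compatibly with all these inequalities. The other point requiring care is the existence of a \emph{left} parametrix with the stated index bounds; if one prefers to avoid that, one can instead work with the right parametrix by noting $D_{\phi}\sigma=f$ forces, via elliptic regularity in the small calculus, that $\sigma$ is already conormal, and then applying $Q$ on the right to the equation $\sigma=Q f+R\sigma$ read the other way — either route reduces to the same Neumann-series estimate.
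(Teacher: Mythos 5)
Your skeleton is the paper's: conjugate by a power of $x$, turn the right parametrix of Theorem~\ref{dt.10} into a left parametrix via the formal self-adjointness of $D_{\phi}$, handle the $Qg$ term with Proposition~\ref{phi.17b}, and handle the error term using its residual structure. The genuine gap is in your description of that error term. When you obtain the left parametrix by taking adjoints of $(x^{-\delta}D_{\phi}x^{\delta})Q=\Id-R$, the faces $\lf$ and $\rf$ are exchanged: the error $R'=R^*$ vanishes rapidly at $\rf$, $\ff$ and $\fbf$ and carries its polyhomogeneous expansion at $\lf$ (with orders bounded below by $h+1+\mu$ up to the density shift), not at $\rf$ as you assert. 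This is not cosmetic, because the correct face structure is exactly what closes the argument in a single step: an operator whose kernel is polyhomogeneous at $\lf$ and rapidly vanishing at all other faces maps any extendible distribution of admissible weight, in particular your $\tau\in x^{\alpha-\delta}H^{-\infty}_b(M;E)$, directly into $\cA^{\cdot}_{\phg}(M;E)$, with index set read off from the $\lf$ expansion of the kernel. So in $\tau=Qg+R'\tau$ the term $R'\tau$ is already polyhomogeneous with positive leading order, and no bootstrap is needed. (Even with the face structure you assumed, i.e.\ expansion only at $\rf$, one application of $R'$ would output a rapidly vanishing section, so the iteration would still be pointless.)

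Moreover, the Neumann series as you justify it would fail: by Proposition~\ref{phi.18}, if an error term has index family trivial except at one side face, the index set of its powers at that face \emph{stabilizes} under composition rather than ``pushing off to $+\infty$'' --- the paper makes precisely this observation in Step~5 of the parametrix construction, where the error iterates away at $\fbf$ and $\ff$ but stabilizes at $\rf$. Hence $\sum_n (R')^nQg$ cannot be summed asymptotically, and the claim that $(R')^N\tau$ becomes arbitrarily regular has no basis. The repair is to discard the iteration and argue with the correct ($\lf$) location of the error's expansion, as above. Concerning the threshold for Proposition~\ref{phi.17b} that you flag: the clean choice is not $\delta$ close to $\alpha$ but a conjugating weight large enough that the conjugated $f$ lies in $L^2_b(M;E)$, which makes the integrability hypothesis automatic; the sharper statement $\inf\Re\cE>\alpha$ then follows either by rerunning the argument with a weight adapted to $\alpha$ (as is done in Corollary~\ref{dt.22}) or from compatibility of a polyhomogeneous expansion with the a priori membership $\sigma\in x^{\alpha}H^{-\infty}_b(M;E)$.
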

\begin{proof}
Take $\delta\ge-\alpha$ large enough so that  $x^{\delta-1}f\in L^2_b(M;E)$ and $\delta-1$ is not a critical weight of $I(D_b,\lambda)$.  By Theorem~\ref{dt.10}, there exist $Q\in\Psi^{-1,\cQ}_{\phi}(M;E)$ and $R\in\Psi^{-\infty,\cR}_{\phi}(M;E)$ such that 
$$
      (x^{1-\delta}D_{\phi}x^{\delta-1})Q= \Id -R.
$$
Conjugating by $x$, this gives the following parametrix for the corresponding fibered cusp operator,
$$
    x^{-1}(x^{1-\delta}D_{\phi}x^{\delta-1})Qx= \Id -x^{-1}Rx.$$
Taking the adjoint and using that $D_{\phi}$ is formally self-adjoint, we find that 
\begin{equation}
       xQ^*x^{-1}(x^{\delta}D_{\phi}x^{-\delta})= \Id-xR^*x^{-1}.
\label{dt.20}\end{equation}
Applying both side of this equation to $x^{\delta}\sigma\in H^{-\infty}_b(M;E)$ yields
\begin{equation}
         x^{\delta}\sigma= xQ^*x^{\delta-1}f+ (xR^*x^{-1})x^{\delta}\sigma.
\label{dt.21}\end{equation}
Now, $(xRx^{-1})^*x^{\delta}\sigma$ is well-defined since $xR^*x^{-1}$ vanishes rapidly at $\rf, \ff$ and $\fbf$ and $(xR^*x^{-1})x^{\delta}\sigma\in \cA^{\cR|_{\rf}-h}_{\phg}(M;E)$.  On the other hand, since by our assumption on $\delta$, $\inf\Re (\cQ|_{\lf}+\cF+\delta-1)>0$, we can apply Proposition~\ref{phi.17b} to conclude that $xQ^*x^{\delta-1}f\in\cA^{\cG}_{\phg}(M;E)$ with
$$
   \cG= (\cQ|_{\rf}-h)\overline{\cup}(\cQ|_{\ff}+\cF+\delta)\overline{\cup}(\cQ|_{\fbf}+\cF+\delta-h-1).
$$ 
Hence, we see from \eqref{dt.21} that $\sigma$ is polyhomogeneous, from which the result follows.
\end{proof}

The particular case where $f=0$ yields the following.

\begin{corollary}
For each $\alpha\in\bbR$, the kernel of $D_{\phi}$ in $x^{\alpha}L^2_b(M;E)$ is finite dimensional and its elements are polyhomogeneous.  Moreover, if for some $\mu>0$, $(\alpha-\mu,\alpha+\mu)\cap\Crit(D_b)=\emptyset$, then elements of that kernel have their leading term at least of order $x^{\alpha+\mu}$ in their polyhomogeneous expansion at $\pa M$.  
\label{dt.22}\end{corollary}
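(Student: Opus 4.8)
The plan is to read off both assertions from the parametrix of Theorem~\ref{dt.10} together with Corollary~\ref{dt.19}, specialized to $f=0$. For the \emph{polyhomogeneity}, if $\sigma\in\ker D_{\phi}$ lies in $x^{\alpha}L^2_b(M;E)\subset x^{\alpha}H^{-\infty}_b(M;E)$, then $f:=D_{\phi}\sigma=0$ is trivially polyhomogeneous, so Corollary~\ref{dt.19} gives directly $\sigma\in\cA^{\cE}_{\phg}(M;E)$ with $\inf\Re\cE>\alpha$; thus every kernel element is polyhomogeneous and its leading term at $\pa M$ has order strictly larger than $\alpha$.

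For \emph{finite-dimensionality}, the idea is to build a left parametrix for $D_{\phi}$ with compact remainder on $L^2_b$ by applying Theorem~\ref{dt.10} to the formal adjoint at a suitably chosen weight. Since $\Crit(D_b)$ is a discrete subset of $\bbR$ (cf.\ Remark~\ref{dt.9b} and \eqref{dt.8b}) and $x^{\alpha}L^2_b(M;E)\subseteq x^{\beta}L^2_b(M;E)$ whenever $\beta\le\alpha$, it suffices to prove the claim for a single weight $\beta\le\alpha$ with $-\beta\notin\Crit(D_b)$. Put $P:=x^{-\beta}D_{\phi}x^{\beta}$, so that $\sigma\mapsto x^{-\beta}\sigma$ identifies $\ker D_{\phi}\cap x^{\beta}L^2_b(M;E)$ with $\ker P\cap L^2_b(M;E)$. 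By formal self-adjointness of $D_{\phi}$ on $L^2_b(M;E)$, $P^{*}=x^{\beta}D_{\phi}x^{-\beta}$ has the form $x^{-\delta}D_{\phi}x^{\delta}$ with $\delta=-\beta\notin\Crit(D_b)$, so Theorem~\ref{dt.10} produces $Q\in\Psi^{-1,\cQ}_{\phi}(M;E)$ and $R\in\Psi^{-\infty,\cR}_{\phi}(M;E)$ with $P^{*}Q=\Id-R$, the index family $\cR$ being empty at every boundary hypersurface except $\rf$, where $\inf\Re(\cR|_{\rf})\ge h+1+\mu$. In right $b$-density normalization this says that $R$ decays at $\rf$ and vanishes rapidly at $\lf$, $\ff$, $\fbf$, so $R$ is Hilbert--Schmidt, hence compact, on $L^2_b(M;E)$ (this is exactly the compactness criterion whose failure for $R_1$ is discussed before Proposition~\ref{dt.16}), and so is $R^{*}$. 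Taking formal adjoints in $P^{*}Q=\Id-R$ gives $Q^{*}P=\Id-R^{*}$, so any $v\in\ker P\cap L^2_b(M;E)$ satisfies $v=R^{*}v$ and lies in the finite-dimensional space $\ker(\Id-R^{*})$. Hence $\ker D_{\phi}\cap x^{\beta}L^2_b(M;E)$, and with it its subspace $\ker D_{\phi}\cap x^{\alpha}L^2_b(M;E)$, is finite-dimensional.

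For the \emph{refined decay rate} under the hypothesis $(-\alpha-\mu,-\alpha+\mu)\cap\Crit(D_b)=\emptyset$, I would rerun the argument of Corollary~\ref{dt.19} with the optimal weight $\delta:=-\alpha-\tfrac{\mu}{2}$ (legitimate since $f=0$ imposes no lower bound on $\delta$): then $\delta\notin\Crit(D_b)$ and $(\delta-\tfrac{\mu}{2},\delta+\tfrac{\mu}{2})=(-\alpha-\mu,-\alpha)$ avoids $\Crit(D_b)$, so Theorem~\ref{dt.10} applies with $\tfrac{\mu}{2}$ in the role of $\mu$ and yields $(x^{-\delta}D_{\phi}x^{\delta})Q=\Id-R$ with $\inf\Re(\cR|_{\rf})\ge h+1+\tfrac{\mu}{2}$ and $\cR$ empty elsewhere. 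Taking adjoints and applying $Q^{*}(x^{\delta}D_{\phi}x^{-\delta})=\Id-R^{*}$ to $x^{\delta}\sigma$, and using $D_{\phi}\sigma=0$, gives $x^{\delta}\sigma=R^{*}(x^{\delta}\sigma)\in\cA^{\cR|_{\rf}-(h+1)}_{\phg}(M;E)$, an index set with $\inf\Re\ge\tfrac{\mu}{2}$; multiplying through by $x^{-\delta}$ shows $\sigma\in\cA^{\cF}_{\phg}(M;E)$ with $\inf\Re\cF\ge\tfrac{\mu}{2}-\delta=\alpha+\mu$, i.e.\ the leading term of $\sigma$ at $\pa M$ is at least of order $x^{\alpha+\mu}$. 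The only real subtlety—hardly an obstacle—is that Theorem~\ref{dt.10} is available only at weights outside $\Crit(D_b)$ whereas the space of interest is attached to the possibly critical weight $\alpha$; the elementary inclusions $x^{\alpha}L^2_b\subseteq x^{\beta}L^2_b$ for $\beta\le\alpha$ and the discreteness of $\Crit(D_b)$ sidestep this in both steps, and everything else is a direct transcription of Theorem~\ref{dt.10}, Corollary~\ref{dt.19}, and the compactness criterion for residual $\phi$-operators on $L^2_b$.
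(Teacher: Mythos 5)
Your argument is correct and follows essentially the paper's own route: polyhomogeneity and the identity $x^{\delta}\sigma=R^{*}x^{\delta}\sigma$ come from Corollary~\ref{dt.19} (the adjoint of the parametrix of Theorem~\ref{dt.10}) with $f=0$, finite-dimensionality from the compactness of $R^{*}$ on $L^{2}_{b}(M;E)$ applied to the conjugated kernel inside $L^{2}_{b}$, and the refined decay from $\inf\Re(\cR|_{\rf})$. The only cosmetic difference is in the last step, where the paper simply takes $\delta=-\alpha$ (noncritical by hypothesis) with the full $\mu$, instead of your shifted weight $\delta=-\alpha-\tfrac{\mu}{2}$ with $\tfrac{\mu}{2}$; both yield the same leading order $x^{\alpha+\mu}$.
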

\begin{proof}
Polyhomogeneity is a consequence Corollary~\ref{dt.19} with $f=0$.  With $\delta\ge -\alpha$ as in the proof of Corollary~\ref{dt.19}, the finite dimension of the kernel follows from the fact that $x^{\delta}\ker_{x^{\alpha}L^2_b}D_{\phi}\subset \ker_{L^2_b}(x^{\delta}D_{\phi}x^{-\delta})$ and that by \eqref{dt.21}, $xR^*x^{-1}$, which is a compact operator when acting on $L^2_b(M;E)$, restricts to be the identity on this subspace.  

Now, if $(\alpha-\mu,\alpha+\mu)\cap\Crit(D_b)=\emptyset$, then by Remark~\ref{dt.9b} we can take $\delta=-\alpha$, so that
$$
          x^{\delta}\sigma=  (xR^*x^{-1})x^{\delta}\sigma \in \cA_{\phg}^{\cR|_{\rf}-h}
$$
with $\inf\Re \cR|_{\rf}\ge h+\mu$, so that $\sigma$ has leading term of order at least $x^{-\delta+\mu}=x^{\alpha+\mu}$ in its polyhomogeneous expansion at $\pa M$.  
\end{proof}

The parametrix of Theorem~\ref{dt.10} can also be used to obtain a Fredholm criterion.  Let $\cD_{\delta-1}$ be the minimal domain of the fibered cusp operator $x^{1-\delta}(x^{-1}D_{\phi})x^{\delta-1}$ acting on $L^2_b(M;E)$.  Since the fibered cusp metric $g_{\fc}:=x^2g_{\phi}$ is complete, recall that a standard argument shows that there is in fact only one closed extension since the maximal domain is equal to the minimal domain.  Let $\widetilde{\Pi}_h$ be a smooth extension of $\Pi_h$, first to a collar neighborhood of $\pa M$ and then to all of $M$ using a cut-off function.  Then one can readily check that 
\begin{equation}   
\cD_{\delta-1}= \widetilde{\Pi}_hH^1_b(M;E)+ x(\Id-\widetilde{\Pi}_h)x^{-\frac{h+1}2}H^1_{\phi}(M;E).
\label{dt.23}\end{equation}

\begin{corollary}
If $\delta-1$ is not a critical weight of $I(D_b,\lambda)$, then
$$
         D_{\phi}: x^{\delta-1}\cD_{\delta-1}\to x^\delta L^2_b(M;E)
$$
is a Fredholm operator, that is, the operator
$$
      \eth_{\phi}: x^{\delta-1}x^{\frac{h+1}2}\cD_{\delta-1}\to x^{\delta}L^2_{\phi}(M;E)
$$
is Fredholm, where $x^{\delta-1}x^{\frac{h+1}2}\cD_{\delta-1}$ is a domain in $x^{\delta-1}L^2_{\phi}(M;E)$.
\label{dt.24}\end{corollary}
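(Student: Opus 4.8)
The plan is to conjugate away the weights and reduce to a Fredholmness statement for a fibered cusp operator on $L^2_b(M;E)$. Precisely, multiplication by $x^{\delta-1}$ is a topological isomorphism $M_{\delta-1}\to x^{\delta-1}M_{\delta-1}$, multiplication by $x^{-\delta}$ is one from $x^{\delta}L^2_b(M;E)$ to $L^2_b(M;E)$, and $x^{-\delta}D_{\phi}x^{\delta-1}$ equals the operator $P:=x^{1-\delta}(x^{-1}D_{\phi})x^{\delta-1}$ whose unique closed extension has domain $M_{\delta-1}$ as in \eqref{dt.23}. Hence it suffices to show that $P\colon M_{\delta-1}\to L^2_b(M;E)$ is Fredholm, i.e.\ that $\ker P$ is finite dimensional and $\ran P$ is closed of finite codimension.

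For the kernel I would invoke Corollary~\ref{dt.22} directly: if $u\in M_{\delta-1}$ with $Pu=0$, then $v:=x^{\delta-1}u$ lies in $x^{\delta-1}L^2_b(M;E)$ (since $M_{\delta-1}\subset L^2_b(M;E)$) and satisfies $D_{\phi}v=0$, so $v$ lies in the kernel of $D_{\phi}$ on $x^{\delta-1}L^2_b(M;E)$, which is finite dimensional by Corollary~\ref{dt.22}; as $u\mapsto x^{\delta-1}u$ is injective, $\dim\ker P<\infty$.

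For the range I would build a right parametrix out of Theorem~\ref{dt.10}. Since $\Crit(D_b)$ is discrete and $\delta-1\notin\Crit(D_b)$, choose $\mu>0$ with $(\delta-1-\mu,\delta-1+\mu)\cap\Crit(D_b)=\emptyset$ and apply Theorem~\ref{dt.10} with weight $\delta-1$ to obtain $Q\in\Psi^{-1,\cQ}_{\phi}(M;E)$ and $R\in\Psi^{-\infty,\cR}_{\phi}(M;E)$ with $(x^{-(\delta-1)}D_{\phi}x^{\delta-1})Q=\Id-R$. Since $P=x^{-1}\bigl(x^{-(\delta-1)}D_{\phi}x^{\delta-1}\bigr)$, I would set $\widetilde Q:=Q\circ x$ (multiply by $x$, then apply $Q$), so that
\[
P\widetilde Q=x^{-1}(\Id-R)\,x=\Id-R',\qquad R':=x^{-1}\circ R\circ x .
\]
Two things must then be checked. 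First, that $\widetilde Q$ is bounded from $L^2_b(M;E)$ into $M_{\delta-1}$; this uses the index bounds $\inf\Re(\cQ|_{\lf})\ge\mu$, $\inf\Re(\cQ|_{\rf})\ge h+1+\mu$, $\inf\Re(\cQ|_{\ff})\ge 0$, $\inf\Re(\cQ|_{\fbf})\ge h$ from Theorem~\ref{dt.10}, together with the refinement that the leading term $A$ of $Q$ at $\fbf$ has the form $A=\Pi_h A\Pi_h$. It is precisely this $\Pi_h$-structure at $\fbf$ that, after converting between $\phi$- and $b$-densities, places the $\widetilde\Pi_h$-part of the image of $\widetilde Q$ in $\widetilde\Pi_h H^1_b(M;E)$ and the $(\Id-\widetilde\Pi_h)$-part in $x(\Id-\widetilde\Pi_h)x^{-\frac{h+1}{2}}H^1_{\phi}(M;E)$, matching \eqref{dt.23}; this is a local computation near $\fbf$ of the same kind as in \cite[\S~3]{Vaillant} and \cite{ARS1}. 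Second, that $R'$ is compact on $L^2_b(M;E)$: $R$ is smoothing, vanishes to infinite order at $\lf,\ff,\fbf$, and at $\rf$ has index set with real parts $\ge h+1+\mu$, which becomes $\ge\mu>0$ once re-expressed with respect to right $b$-densities; multiplying on the left by $x^{-1}$ and on the right by $x$ preserves flatness at $\lf,\ff,\fbf$ and only improves the order at $\rf$, so $R'$ is a smoothing operator whose Schwartz kernel, as a right $b$-density, is flat at $\lf,\fbf,\ff$ and decays at $\rf$, hence compact on $L^2_b(M;E)$.

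Granting these, $\ran P\supseteq\ran(P\widetilde Q)=\ran(\Id-R')$, which is closed of finite codimension by the Fredholm alternative for $\Id$ minus a compact operator; hence $\ran P$ has finite codimension, and combined with the finite dimensionality of $\ker P$ this forces $\ran P$ to be closed. Therefore $P$ — equivalently $D_{\phi}\colon x^{\delta-1}M_{\delta-1}\to x^{\delta}L^2_b(M;E)$, i.e.\ $\eth_{\phi}\colon x^{\delta-1+\frac{h+1}{2}}M_{\delta-1}\to x^{\delta}L^2_{\phi}(M;E)$ — is Fredholm. I expect the genuine obstacle to be the first of the two verifications above: showing that $\widetilde Q$ maps into exactly $M_{\delta-1}$ and not into some slightly off space. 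This is where the $\Pi_h A\Pi_h$ form of the leading $\fbf$-term of $Q$ (the payoff of Steps~1--3 in the proof of Theorem~\ref{dt.10}) and the explicit description \eqref{dt.23} of $M_{\delta-1}$ are indispensable, and one must track the $\phi$-versus-$b$ density and weight conversions carefully near $\fbf$.
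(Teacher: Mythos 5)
Your argument is correct, and it leans on the same engine as the paper (Theorem~\ref{dt.10} at the weight $\delta-1$, conjugated by $x$ as in \eqref{dt.25}, together with the mapping property $Q_{\delta-1}x\colon L^2_b(M;E)\to M_{\delta-1}$ coming from the $\Pi_hA\Pi_h$ form of the order-$h$ term at $\fbf$), but the way you close the argument differs from the paper. The paper never invokes Corollary~\ref{dt.22}: instead it applies Theorem~\ref{dt.10} a second time at the weight $-\delta$ (legitimate by Remark~\ref{dt.9b}), takes adjoints and conjugates by $x$ to get a \emph{left} parametrix $xQ^*_{-\delta}x^{-1}$ as in \eqref{dt.26}, and concludes Fredholmness from invertibility modulo compacts on both sides. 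You use only the right parametrix, get finite codimension and closedness of the range from $\ran P\supseteq\ran(\Id-R')$ with $R'$ compact, and handle the kernel separately via Corollary~\ref{dt.22}. Both routes are sound; yours is slightly leaner for this one statement, but it is not really more elementary, since Corollary~\ref{dt.22} was itself extracted from the same parametrix, whereas the paper's symmetric two-sided argument is exactly the form that gets reused later (both \eqref{dt.25} and \eqref{dt.26} feed into Corollary~\ref{dt.31}). Two small points: the closedness of $\ran P$ already follows from its containing a closed subspace of finite codimension, so the appeal to $\dim\ker P<\infty$ there is superfluous; and, as you correctly flag, the one genuinely delicate step — that $Qx$ lands exactly in $M_{\delta-1}$, after the $\phi$-to-$b$ density and weight bookkeeping at $\fbf$ — is also only asserted, not proved in detail, in the paper, so your level of rigor matches the original.
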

\begin{proof}
We need to construct a parametrix for $x^{-1}D_{\phi}$ acting formally on $x^{\delta-1}L^2_b(M;E)$, that is, we need a parametrix for $x^{1-\delta}(x^{-1}D_{\phi})x^{\delta-1}$ acting formally on $L^2_b(M;E)$.  First,  since $\delta-1$ is not a critical weight, we know by  Theorem~\ref{dt.10} that  there exist $Q_{\delta-1}\in\Psi^{-1,\cQ_{\delta-1}}_{\phi}(M;E)$ and $R_{\delta-1}\in\Psi^{-\infty,\cR_{\delta-1}}_{\phi}(M;E)$ such that 
$$
         (x^{1-\delta}(D_{\phi})x^{\delta-1})Q_{\delta-1}= \Id- R_{\delta-1}.
$$
Conjugating by $x$ then gives
\begin{equation}
  (x^{1-\delta}(x^{-1}D_{\phi})x^{\delta-1})(Q_{\delta-1}x)= \Id - x^{-1}R_{\delta-1}x.
\label{dt.25}\end{equation}
Similarly, by Remark~\ref{dt.9b}, $-\delta$ is not a critical weight, so we see from Theorem~\ref{dt.10} that there exist $Q_{-\delta}\in\Psi^{-1,\cQ_{-\delta}}_{\phi}(M;E)$ and $R_{-\delta}\in\Psi^{-\infty,\cR_{-\delta}}_{\phi}(M;E)$ such that 
$$
          (x^{\delta}D_{\phi}x^{-\delta})Q_{-\delta}= \Id -R_{-\delta}.
$$
Taking the adjoint and using that $D_{\phi}$ is formally self-adjoint, we thus see, after conjugating by $x$, that
\begin{equation}
    (xQ^*_{-\delta})x^{1-\delta}(x^{-1}D_{\phi})x^{\delta-1}= \Id -xR^*_{-\delta}x^{-1}.  
\label{dt.26}\end{equation}
Since the terms $A_{\delta-1}$ and $A^*_{-\delta}$ of order $h$ of $Q_{\delta-1}$ and $Q^*_{-\delta}$ are such that $A_{\delta-1}=\Pi_hA_{\delta-1}\Pi_h$ and $A^*_{-\delta}=\Pi_h A^*_{-\delta}\Pi_h$, we see that $Q_{\delta-1}$ and $Q^*_{-\delta}$ induce bounded operators
$$
     Q_{\delta-1}x: L^2_b(M;E)\to \cD_{\delta-1}, \quad xQ^*_{-\delta}: L^2_b(M;E)\to \cD_{\delta-1}.
$$  

Hence, since both $x^{-1}R_{\delta-1}x$ and $xR^{*}_{-\delta}x^{-1}$ act as compact operators on $L^2_b(M;E)$ and $\cD_{\delta-1}$, we deduce from \eqref{dt.25} and \eqref{dt.26} that 
$$
            x^{1-\delta}(x^{-1}D_{\phi})x^{\delta-1}: \cD_{\delta-1}\to L^2_b(M;E)
$$
is Fredholm, from which the result follows.
\end{proof}

\begin{corollary}
If $\delta$ is not a critical weight, then 
$$
   D_{\phi}: x^{\delta}\widetilde{\Pi}_hH^1_b(M;E)+ x^{\delta}(\Id-\widetilde{\Pi}_h)x^{-\frac{h+1}2}H^1_{\phi}(M;E)\;\longrightarrow \;x^{\delta+1}\widetilde{\Pi}_{h}L^2_b(M;E)+ x^{\delta}(\Id-\widetilde{\Pi}_h)L^2_{b}(M;E)
$$
is a Fredholm operator.  
\label{dt.27}\end{corollary}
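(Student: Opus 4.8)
The approach is to construct a two-sided parametrix for $D_{\phi}$ acting between the displayed spaces within the large $\phi$-calculus, following the line of argument of the proof of Corollary~\ref{dt.24} and relying on the same fine structure of the parametrix produced by Theorem~\ref{dt.10}. Two points must be addressed: first, that $D_{\phi}$ really does map the displayed domain into the displayed target (so that the statement is meaningful); and second, that one can produce operators $Q$, $Q'$ in the calculus with $D_{\phi}Q=\Id-K$ and $Q'D_{\phi}=\Id-K'$, where $K$, $K'$ act compactly on the relevant weighted spaces.

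For the mapping property, I would work in a collar $\pa M\times[0,\delta)$ of $\pa M$, on which $\widetilde{\Pi}_h$ is pulled back from $\Pi_h$, and decompose $D_{\phi}$ into its four blocks relative to the splitting $\Id=\Pi_h+(\Id-\Pi_h)$. By \eqref{dt.3} the normal operator is $N_{\ff}(D_{\phi})=D_v+\eth_h$, where $D_v$ is formally self-adjoint with kernel bundle the range of $\Pi_h$ and $\eth_h$ commutes with $\Pi_h$; hence $N_{\ff}(D_{\phi})$ is block-diagonal with vanishing $\Pi_h$--$\Pi_h$ block, so the two off-diagonal blocks of $D_{\phi}$ vanish to leading order at $\pa M$, while the $\Pi_h$--$\Pi_h$ block of $D_{\phi}$ equals, modulo a term of higher order in $x$, the operator $x\bigl(cx\frac{\pa}{\pa x}+D_Y\bigr)$ of \eqref{dt.8b}, \eqref{dt.8e}, which gains one power of $x$ relative to $b$-scaling. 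Consequently $D_{\phi}$ sends $x^{\delta}\widetilde{\Pi}_hH^1_b(M;E)$ into $x^{\delta+1}\widetilde{\Pi}_hL^2_b(M;E)$ up to a remainder in $x^{\delta}(\Id-\widetilde{\Pi}_h)L^2_b(M;E)$, while on $(\Id-\widetilde{\Pi}_h)$-sections we use the boundedness $D_{\phi}: H^1_{\phi}(M;E)\to L^2_{\phi}(M;E)$ together with $x^{-\frac{h+1}2}L^2_{\phi}(M;E)=L^2_b(M;E)$; combining these yields the asserted mapping property.

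For the parametrix, one applies Theorem~\ref{dt.10} at the weight $\delta$, which is not critical by hypothesis, to obtain a right parametrix $Q$ for $x^{-\delta}D_{\phi}x^{\delta}$ with error supported only at $\rf$. The decisive structural input, already used in the proof of Corollary~\ref{dt.24}, is that the leading, $\mathcal{O}(x_{\fbf}^{h})$ term $A$ of $Q$ at $\fbf$ satisfies $A=\Pi_hA\Pi_h$; this is exactly dual to the gain of one power of $x$ by $D_{\phi}$ on $\widetilde{\Pi}_h$-sections established above, and it forces the appropriately $x$-conjugated $Q$ to be bounded from the displayed target $x^{\delta+1}\widetilde{\Pi}_hL^2_b(M;E)+x^{\delta}(\Id-\widetilde{\Pi}_h)L^2_b(M;E)$ into the displayed mixed domain, the fact that $A$ is only $\mathcal{O}(x_{\fbf}^{h})$ and not $\mathcal{O}(x_{\fbf}^{h+1})$ being precisely what produces the $\widetilde{\Pi}_h$-summand $x^{\delta}H^1_b$ rather than merely $x^{\delta-1}H^1_b$. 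A left parametrix is obtained by taking the $L^2_b$-adjoint of a right parametrix for the formal transpose of $D_{\phi}$ between the $L^2_b$-dual spaces; after computing these duals, the transposed operator is again of the type treated by the parametrix construction of Corollary~\ref{dt.24}, now at the weight $-\delta$, whose applicability only requires the weight $-1-\delta$ to be non-critical, and this holds because $\delta$ is non-critical, by the symmetry of the set of critical weights recorded in Remark~\ref{dt.9b}. The errors $K$, $K'$ land in $\Psi^{-\infty}_{\phi}(M;E)$ with index set supported at $\rf$ only and with strictly positive real part there, so they act compactly on all the weighted spaces in play, exactly as in Corollary~\ref{dt.24}. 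Having a left and a right inverse modulo compacts between the stated spaces, $D_{\phi}$ is Fredholm there.

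The step I expect to be the main obstacle is the bookkeeping tying the $\Pi_hA\Pi_h$-structure of the leading $\fbf$-term of the parametrix, after the conjugations by $x$, to the precise $\widetilde{\Pi}_h$-weights of the displayed domain, together with the parallel verification in the mapping property that all subleading and off-diagonal contributions are genuinely of lower order. Both reduce to the single normal-operator computation \eqref{dt.8b}, \eqref{dt.8e}, so in practice one carries it out once; the remaining analytic content is already contained in Theorem~\ref{dt.10}, Corollary~\ref{dt.24} and Remark~\ref{dt.9b}.
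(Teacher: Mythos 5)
Your proposal is correct and follows essentially the same route as the paper: the paper likewise works with the conjugated operator $x^{-\delta}D_{\phi}x^{\delta}$, takes a right parametrix from Theorem~\ref{dt.10} at the weight $\delta$ and a left parametrix from the adjoint of the parametrix at the symmetric weight $-1-\delta$ (non-critical by Remark~\ref{dt.9b}), and concludes Fredholmness from compactness of the error terms, with the $\Pi_h A\Pi_h$ structure of the leading $\fbf$-term providing the boundedness into the mixed spaces exactly as in the proof of Corollary~\ref{dt.24}. Your explicit check of the mapping property via the normal operator \eqref{dt.3} and \eqref{dt.8e} is left implicit in the paper but is the same computation.
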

\begin{proof}
We need to show that 
\begin{equation}
   x^{-\delta}D_{\phi}x^{\delta}: \widetilde{\Pi}_hH^1_b(M;E)+ (\Id-\widetilde{\Pi}_h)x^{-\frac{h+1}2}H^1_{\phi}(M;E)\;\longrightarrow \;x\widetilde{\Pi}_{h}L^2_b(M;E)+(\Id-\widetilde{\Pi}_h)L^2_{b}(M;E)
\label{dt.27b}\end{equation}
is a Fredholm operator.  
By Remark~\ref{dt.9b}, we know that both $\delta$ and $-1-\delta$ are not critical weights.  Hence, applying Theorem~\ref{dt.10} gives operators 
$Q_{\delta}, R_{\delta}, Q_{-1-\delta}$ and $R_{-1-\delta}$ such that
\begin{gather}
\label{dt.28}  (x^{-\delta}D_{\phi}x^{\delta})Q_{\delta}= \Id-R_{\delta}, \\
\label{dt.29}    (xQ^*_{-1-\delta}x^{-1})(x^{-\delta}D_{\phi}x^{\delta})=\Id -xR_{-1-\delta}^*x^{-1}.
\end{gather}
Thanks to the fact that each term $r$ of order $h+1$ or less in the expansion of $R_{\delta}$ is such that $r\Pi_h=r$, we see that $R_{\delta}$ is a compact operator when acting on
$$
  x\widetilde{\Pi}_{h}L^2_b(M;E)+(\Id-\widetilde{\Pi}_h)L^2_{b}(M;E)\subset L^2_b(M;E).
  $$
Hence, we see from \eqref{dt.28} that $Q_{\delta}$ is a right inverse modulo compact operators.  On the other hand,
since $xR^*_{-1-\delta}x^{-1}$ is a compact operator when acting on
$$
\widetilde{\Pi}_hH^1_b(M;E)+ (\Id-\widetilde{\Pi}_h)x^{-\frac{h+1}2}H^1_{\phi}(M;E)\subset L^2_b(M;E),
$$
we see from \eqref{dt.29} that $xQ_{-1-\delta}^*x^{-1}$ is a left inverse modulo compact operators.   Hence, we see that \eqref{dt.27b} is invertible modulo compact operators and must therefore be Fredholm. 

\end{proof}

\begin{remark}
When $\eth_{\phi}$ is the Hodge-deRham operator of $g_{\phi}$,  Corollaries~\ref{dt.19}, \ref{dt.22}, \ref{dt.24} and \ref{dt.27} correspond to \cite[Proposition~16]{HHM2004}.
\end{remark}

Finally, we can use Theorem~\ref{dt.10} to give a pseudodifferential description of the inverse of $D_{\phi}$ when it is inverted as a Fredholm operator.  More precisely, for $\delta-1$ not a critical weight, consider the Fredholm operator
\begin{equation}
   x^{1-\delta}(x^{-1}D_{\phi})x^{\delta-1}: \cD_{\delta-1}\to L^2_{b}(M;E)
\label{dt.30}\end{equation}
of Corollary~\ref{dt.24}.  Let $\mu>0$ be such that the interval $(\delta-1-\mu,\delta-1+\mu)$ contains no critical weight of the indicial family $I(D_b,\lambda)$.  By Remark~\ref{dt.9b}, the interval $(-\delta-\mu,-\delta+\mu)$ is a also free of critical weights of the indicial family $I(D_b,\lambda)$.   Let $P_1$ be the orthogonal projection in $L^2_b(M;E)$ onto the kernel of \eqref{dt.30}.  By Corollary~\ref{dt.22}, $P_1\in\Psi^{-\infty,\cE}(M;E)$ is a very residual operator in the sense of \cite{MazzeoEdge}, where $\cE=(\cE_{\lf},\cE_{\rf})$ is an index family with $\inf\Re\cE_{\lf}\ge\mu$ and $\inf\Re\cE_{\rf}\ge\mu$.   
Similarly, let $P_2$ be the orthogonal projection onto the orthogonal complement of the range of \eqref{dt.30} in $L^2_b(M;E)$.  From the formal self-adjointness of $D_{\phi}$, on can check that the orthogonal complement of the range of \eqref{dt.30} is given by $\ker_{L^2_b}(x^{\delta}D_{\phi}x^{-\delta})$.  By Corollary~\ref{dt.22}, this space is finite dimensional and its elements are polyhomogeneous.  Hence, we also have that $P_2\in\Psi^{-\infty,\cF}(M;E)$ is very residual with $\cF=(\cF_{\lf},\cF_{\rf})$ an index family such that $\inf\Re\cF_{\lf}\ge \mu$ and $\inf\Re\cF_{\rf}\ge \mu$.   

Now, by Corollary~\ref{dt.24}, there is a bounded operator $G_{\delta-1}: L^2_b(M;E)\to \cD_{\delta-1}$ such that
\begin{gather}
G_{\delta-1}(x^{1-\delta}(x^{-1}D_{\phi})x^{\delta-1})= \Id -P_1, \\
(x^{1-\delta}(x^{-1}D_{\phi})x^{\delta-1})G_{\delta-1}=\Id -P_2.
\label{dt.30b}\end{gather}
\begin{corollary}
Suppose $\delta-1$ is not a critical weight of the indicial family $I(D_b,\lambda)$.  Let $\mu>0$ be such that $(\delta-1-\mu,\delta-1+\mu)\cap \Crit(D_b)=\emptyset$.  Then the inverse $G_{\delta-1}$ is an element of $\Psi^{-1,\cG}_{\phi}(M;E)$ with index family $\cG$ such that 
$$
   \inf \Re(\cG|_{\lf})\ge\mu, \quad \inf \Re(\cG|_{\rf})\ge h+1+\mu, \quad \inf \Re (\cG|_{\fbf})\ge h+1, \quad \inf \Re(\cG|_{\ff})\ge 1.
$$
Moreover, the term $A$ of order $h+1$ at $\fbf$ of $G_{\delta-1}$ is such that $A=\Pi_h A\Pi_h$.
\label{dt.31}\end{corollary}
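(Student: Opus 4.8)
The plan is to compare $G_{\delta-1}$ with the right and left parametrices already produced in the proof of Corollary~\ref{dt.24} and to absorb all the resulting error terms into the large $\phi$-calculus, using the composition rule (Proposition~\ref{phi.18}), the action on polyhomogeneous sections (Proposition~\ref{phi.17b}), and the regularity statements of Corollary~\ref{dt.19} and Corollary~\ref{dt.22}. Write $\tilde{L}:=x^{1-\delta}(x^{-1}D_{\phi})x^{\delta-1}$ for the fibered cusp operator inverted by $G_{\delta-1}$. From the proof of Corollary~\ref{dt.24} and Theorem~\ref{dt.10} applied at the weight $\delta-1$ and at its reflection $-\delta$ (both non-critical with gap $\mu$ by Remark~\ref{dt.9b}), there are a right parametrix $Q_r:=Q_{\delta-1}x$ and a left parametrix $Q_l:=xQ^*_{-\delta}$ with $\tilde{L}Q_r=\Id-R_r$ and $Q_l\tilde{L}=\Id-R_l$, where $R_r:=x^{-1}R_{\delta-1}x$ has index family trivial at $\lf$, $\ff$, $\fbf$, while $R_l:=xR^*_{-\delta}x^{-1}$ has index family trivial at $\rf$, $\ff$, $\fbf$. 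The extra factor of $x$ shifts the index sets of $Q_{\delta-1}$ so that $Q_r\in\Psi^{-1}_{\phi}(M;E)$ has $\inf\Re$ at $\lf,\rf,\fbf,\ff$ equal to $\mu,\, h+1+\mu,\, h+1,\, 1$, and the leading term of $Q_r$ at $\fbf$, now of order $h+1$, is still of the form $\Pi_hA\Pi_h$ since the corresponding order-$h$ term of $Q_{\delta-1}$ is; tracking index sets through the adjoint $Q^*_{-\delta}$ and the conjugations is routine, as in \cite{Vaillant,ARS1}.

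Using $G_{\delta-1}\tilde{L}=\Id-P_1$, $\tilde{L}G_{\delta-1}=\Id-P_2$, $P_1G_{\delta-1}=G_{\delta-1}P_2=0$, I would apply $G_{\delta-1}$ to $\tilde{L}Q_r=\Id-R_r$ to get $G_{\delta-1}=(\Id-P_1)Q_r+G_{\delta-1}R_r$, and then insert into the remainder the identity $G_{\delta-1}=Q_l(\Id-P_2)+R_lG_{\delta-1}$ (coming from $Q_l\tilde{L}=\Id-R_l$), obtaining
\[
G_{\delta-1}=Q_r-P_1Q_r+Q_lR_r-Q_lP_2R_r+R_lG_{\delta-1}R_r .
\]
Here $Q_r\in\Psi^{-1}_{\phi}(M;E)$; $P_1Q_r$ and $Q_lR_r$ are compositions of an element of $\Psi^{-\infty}_{\phi}$ (respectively the very residual $P_1$ and the residual $R_r$) with an element of the calculus, hence lie in $\Psi^{-\infty}_{\phi}$ by Proposition~\ref{phi.18}; and $Q_lP_2$ is very residual, because the sections spanning $\ran P_2$ are polyhomogeneous with leading exponent $\ge\mu$ by Corollary~\ref{dt.22} and $Q_l$ keeps them polyhomogeneous by Proposition~\ref{phi.17b}, so $Q_lP_2R_r\in\Psi^{-\infty}_{\phi}$ as well. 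Each of these four terms has, at $\fbf$, index set with $\inf\Re>h+1$: the compositions all involve the factor $R_r$ or $R_l$, which is trivial at $\fbf$, together with a gain of order $\ge\mu$ from $P_1$, $P_2$, or the parametrices.

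The substantive point is the last term $R_lG_{\delta-1}R_r$. Since the index family of $R_r$ is trivial at $\lf$, $\ff$ and $\fbf$, its Schwartz kernel is rapidly decreasing as the left variable tends to $\pa M$, so for each $z$ the section $\psi_z:=R_r(\cdot,z)$ lies in $\dot\CI(M;E)$, and $z\mapsto\psi_z$ is a polyhomogeneous family carrying the $\rf$-index of $R_r$. Now $\tilde{L}(G_{\delta-1}\psi_z)=\psi_z-P_2\psi_z$ is polyhomogeneous, so by Corollary~\ref{dt.19} (reduced from $\tilde{L}$ to $D_{\phi}$ by conjugation) $G_{\delta-1}\psi_z$ is polyhomogeneous, with leading exponent $\ge\mu$ by the refinement in Corollary~\ref{dt.22}, since $\delta-1$ lies at distance $\ge\mu$ from $\Crit(D_b)$. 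Then $R_l[G_{\delta-1}\psi_z]$ is the action of the $\phi$-operator $R_l\in\Psi^{-\infty}_{\phi}$ on a polyhomogeneous section, hence polyhomogeneous with index the $\lf$-index of $R_l$ by Proposition~\ref{phi.17b}. As the Schwartz kernel of $R_lG_{\delta-1}R_r$ at a point $z$ of the right factor is exactly the section $p'\mapsto R_l[G_{\delta-1}\psi_z](p')$, and this depends polyhomogeneously on $z$ (as in \cite{Vaillant,ARS1}), we conclude $R_lG_{\delta-1}R_r\in\Psi^{-\infty}_{\phi}(M;E)$, with index family of $\inf\Re\ge\mu$ at $\lf$, $\ge h+1+\mu$ at $\rf$, and, being a product of a factor of order $\ge\mu$ in the left variable and a factor of order $\ge h+1+\mu$ in the right, of $\inf\Re>h+1$ at $\fbf$ and $>1$ at $\ff$.

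Putting these together, $G_{\delta-1}$ is the sum of $Q_r\in\Psi^{-1}_{\phi}(M;E)$ and four operators in $\Psi^{-\infty}_{\phi}(M;E)$, hence lies in $\Psi^{-1,\cG}_{\phi}(M;E)$, and the union of the index sets gives $\inf\Re(\cG|_{\lf})\ge\mu$, $\inf\Re(\cG|_{\rf})\ge h+1+\mu$, $\inf\Re(\cG|_{\fbf})\ge h+1$ and $\inf\Re(\cG|_{\ff})\ge1$. Furthermore, the only term whose $\fbf$-index reaches order $h+1$ is $Q_r=Q_{\delta-1}x$, so the order-$(h+1)$ term of $G_{\delta-1}$ at $\fbf$ equals that of $Q_r$, which is $\Pi_hA\Pi_h$. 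I expect the genuine obstacle to be the term $R_lG_{\delta-1}R_r$: one must upgrade the a priori merely bounded generalized inverse $G_{\delta-1}$ into an operator sending polyhomogeneous data to polyhomogeneous data, which is what the sandwiching of $G_{\delta-1}$ between the two residual operators $R_l$ and $R_r$ achieves via Corollary~\ref{dt.19}, Corollary~\ref{dt.22} and Proposition~\ref{phi.17b}; securing the joint polyhomogeneity on $M^2_{\phi}$---not merely separately in the two variables---is the technical heart, handled as in \cite{Vaillant} and \cite{ARS1}.
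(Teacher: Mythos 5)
Your argument is, in its skeleton, exactly the paper's: the two parametrix identities from the proof of Corollary~\ref{dt.24} are combined into the same five-term formula (the paper's \eqref{dt.34}), the first four terms are absorbed using the polyhomogeneity of $P_1$, $P_2$ (Corollary~\ref{dt.22}) together with Propositions~\ref{phi.17b} and~\ref{phi.18}, and your index bookkeeping, including the identification of the order-$(h+1)$ term at $\fbf$ with that of $Q_{\delta-1}x$, matches the paper. The one place you deviate is the sandwiched term $R_lG_{\delta-1}R_r$, which you correctly single out as the crux. The paper disposes of it in one line by the semi-ideal property of very residual operators in the sense of \cite{MazzeoEdge}: since $R_l$ and $R_r$ are very residual and $G_{\delta-1}$ is merely bounded on $L^2_b(M;E)$, the composite is very residual, and the joint polyhomogeneity is built into that statement (the kernel at $(p',z)$ is the pairing of the $L^2$-valued polyhomogeneous family $p'\mapsto R_l(p',\cdot)$ with the $L^2$-valued polyhomogeneous family $z\mapsto G_{\delta-1}R_r(\cdot,z)$, the latter polyhomogeneous because $G_{\delta-1}$ is bounded). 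Your column-wise route through Corollary~\ref{dt.19} is workable but more roundabout, and it leaves exactly the joint polyhomogeneity in the two variables, which you flag and defer to \cite{Vaillant,ARS1}, unproved; the semi-ideal property is the intended shortcut and needs no elliptic regularity for $G_{\delta-1}$ at all. Two small inaccuracies, both harmless to the stated bounds: your appeal to Corollary~\ref{dt.22} for the leading exponent of $G_{\delta-1}\psi_z$ is off target, since that corollary concerns the nullspace of $D_{\phi}$ rather than solutions of an inhomogeneous equation (the $\lf$-exponent of the composite in fact comes from $R_l$ via Proposition~\ref{phi.17b}); and multiplying $Q_{\delta-1}$ on the right by $x$ shifts the $\rf$-index to $h+2+\mu$, not $h+1+\mu$, which only makes the estimate better.
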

\begin{proof}
We follow the approach of \cite[Theorem~4.20]{MazzeoEdge}.  Using \eqref{dt.25}, we have that
\begin{equation}
\begin{aligned}
G_{\delta-1}&= G_{\delta-1}\Id= G_{\delta-1}\left[ (x^{1-\delta}(x^{-1}D_{\phi})x^{\delta-1})(Q_{\delta-1}x)+ x^{-1}R_{\delta-1}x   \right] \\
&= (\Id-P_1)Q_{\delta-1} x+ G_{\delta-1}(x^{-1}R_{\delta-1}x).
\end{aligned}
\label{dt.32}\end{equation}
Using instead \eqref{dt.26}, we have that
\begin{equation}
\begin{aligned}
G_{\delta-1}&= \Id G_{\delta-1}= \left[ (xQ^*_{-\delta}) (x^{1-\delta}(x^{-1}D_{\phi})x^{\delta-1})+ xR^*_{-\delta}x^{-1} \right]G_{\delta-1} \\
&=(xQ^*_{-\delta})(\Id-P_2)+ xR^*_{-\delta}x^{-1}G_{\delta-1}.
\end{aligned}
\label{dt.33}\end{equation}
Thus, inserting \eqref{dt.33} into \eqref{dt.32}, we find that
\begin{equation}
G_{\delta-1}= Q_{\delta-1}x- P_1(Q_{\delta-1}x)+ xQ^*_{-\delta}x^{-1}R_{\delta-1}x- (xQ^*_{-\delta})P_2(x^{-1}R_{\delta-1}x)+ xR^*_{-\delta}x^{-1}G_{\delta-1}(x^{-1}R_{\delta-1}x).
\label{dt.34}\end{equation}
Since $xR^*_{-\delta}x^{-1}$ and $x^{-1}R_{\delta-1}x$ are very residual and $G_{\delta-1}$ is a bounded operator on $L^2_b(M;E)$, we see by the semi-ideal property of very residual operators that the last term in \eqref{dt.34} is very residual.  Hence, the result follows from \eqref{dt.34} and the result about composition of fibered boundary operators.

\end{proof}

\section{The low energy fibered boundary operators} \label{kfb.0}

In this section, we will introduce the natural calculus of pseudodifferential operators  associated to the low energy limit of  Dirac fibered boundary operators.  First,
on the manifold $M\times [0,\infty)_k$, we consider the lift of fibered boundary vector fields
\begin{equation}
\cV_{k,\phi}(M\times [0,\infty)_k)= \{\xi \in \cV(M\times [0,\infty)_k) \; | \; (\pr_2)_*\xi=0, \quad \xi|_{M\times \{k\}}\in \cV_{\phi}(M)\; \forall k\in[0,\infty)\}, 
\label{kfb.1}\end{equation}
where $\pr_2:M\times [0,\infty)_k\to [0,\infty)_k$ is the projection on the second factor.  We can also consider this lift on the transition single space of \cite{Kottke}
\begin{equation}
   M_t= [M\times [0,\infty)_k; \pa M\times \{0\}],
\label{kfb.2}\end{equation}
where we denote by $\sc$, $\zf$ and $\tf$ the boundary hypersurfaces of the lifts of $\pa M\times [0,\infty)_k$, $M\times \{0\}$ and $\pa M\times \{0\}$.
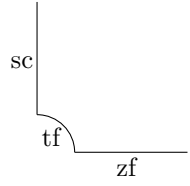
\begin{figure}[h]
\begin{tikzpicture}
\draw(0.5,0) arc [radius=0.5, start angle=0, end angle=90];
\draw(0.5,0)--(2,0);
\draw(0,0.5)--(0,2); 
\node at (1.2,-0.2) {$\zf$};
\node at (-0.2,1.2) {$\sc$};
\node at (0.2,0.2) {$\tf$};
\end{tikzpicture}
\caption{The transition single space $M_t$ }
\label{fig.3}\end{figure}

\begin{definition}
The \textbf{Lie algebra of $k,\phi$-vector fields} is the Lie algebra of vector fields on $M_{t}$ generated over $\CI(M_t)$ by the lift of vector fields in $\cV_{k,\phi}(M\times [0,\infty)_k)$ to $M_t$.  The space of \textbf{differential $k,\phi$-operators} is the universal enveloping algebra over $\CI(M_t)$ of $\cV_{k,\phi}(M_t)$.  In other words, the space $\Diff^m_{k,\phi}(M_t)$ of differential $k,\phi$-operators of order $m$ is generated by multiplication by elements of $\CI(M_{t})$ and up to $m$ vector fields in $\cV_{k,\phi}(M_t)$. 
\label{kfb.3}\end{definition}
If $E$ and $F$ are vector bundles on $M_t$, one can consider more generally the space
\begin{equation}
\Diff^m_{k,\phi}(M_t;E,F):= \Diff^m_{k,\phi}(M_t) \otimes_{\CI(M_t)}\CI(M_t;E^*\otimes F).
\label{kfb.4}\end{equation}

Using the Serre-Swan theorem, there is in fact a vector bundle ${}^{k,\phi}TM_t\to M_t$, the $k,\phi$-tangent bundle, inducing the natural identification 
\begin{equation}
      \cV_{k,\phi}(M_t)= \CI(M_t;{}^{k,\phi}TM_t).
\label{la.1}\end{equation}  
This identification is induced by an anchor map $a: {}^{k,\phi}TM_t\to TM_t$ giving ${}^{k,\phi}TM_t$ a Lie algebroid structure.  By construction, on $\zf$, ${}^{k,\phi}TM_t$ is just the $\phi$-tangent bundle of Mazzeo-Melrose \cite{Mazzeo-MelrosePhi},
\begin{equation}
    {}^{k,\phi}TM_t|_{\zf}\cong {}^{\phi}TM.
\label{la.2}\end{equation}
On the other hand, on $\sc$ and $\tf$, let ${}^{k,\phi}N_{\sc}$ and ${}^{k,\phi}N_{\tf}$ be the kernels of the anchor map, so that there are the short exact sequences of vector bundles 
\begin{equation}
\begin{gathered}
\xymatrix{
0 \ar[r] & {}^{k,\phi}N_{\sc}\ar[r] & {}^{k,\phi}TM_t|_{\sc} \ar[r]^{a}&  {}^{k,\phi}V_{\sc}\ar[r] & 0,
} \\
\xymatrix{
0 \ar[r] & {}^{k,\phi}N_{\tf}\ar[r] & {}^{k,\phi}TM_t|_{\tf} \ar[r]^{a}&  {}^{k,\phi}V_{\tf}\ar[r] & 0,
}
\end{gathered}
\label{la.3}\end{equation}
where ${}^{k,\phi}V_{\sc}$ and ${}^{k,\phi}V_{\tf}$ are the vertical tangent bundles associated to the fiber bundles 
\begin{equation}
   \phi_{\sc}:=\phi\times \Id_{[0,\infty)_k}: \sc\to Y\times [0,\infty)_k, \quad \phi_{\tf}:= \phi\times \Id_{[0,\frac{\pi}2]_{\theta}}: \tf\to Y\times [0,\frac{\pi}2]_{\theta},
\label{la.4}\end{equation}
induced by the $\phi$ and the natural identifications $\sc\cong\pa M\times [0,\infty)_k$ and $\tf\cong \pa M\times [0,\frac{\pi}2]_{\theta}$, the function $\theta= \arctan\frac{x}{k}$ being the natural angular coordinate on $\tf$.
Using  the coordinates \eqref{phi.2}, we can consider the coordinates 
$$
     X=\frac{x}{k}, k, y_1,\ldots, y_h, z_1,\ldots,z_v
$$
near $\sc$ on $M_t$, in terms of which ${}^{k,\phi}TM_t$ is locally spanned by
$$
   kX^2\frac{\pa}{\pa X}, kX\frac{\pa}{\pa y_1}, \ldots, kX\frac{\pa}{\pa y_h}, \frac{\pa}{\pa z_1}, \ldots, \frac{\pa}{\pa z_v},  
$$
so that ${}^{k,\phi}N_{\sc}$  and ${}^{k,\phi}N_{\tf}$ are locally spanned by 
$$
kX^2\frac{\pa}{\pa X}, kX\frac{\pa}{\pa y_1}, \ldots, kX\frac{\pa}{\pa y_h}
$$
on $\sc$ and $\tf$ respectively.

The vector bundles ${}^{k,\phi}N_{\sc}$ and ${}^{k,\phi}N_{\tf}$ are in fact pull-backs of vector bundles with respect to $\phi_{\sc}$ and $\phi_{\tf}$.  To see this, notice that the fiber bundles $\phi_{\sc}$ and $\phi_{\tf}$ induce as well the short exact sequences
\begin{equation}
\begin{gathered}
\xymatrix{
0 \ar[r] & {}^{k,\phi}V_{\sc} \ar[r] & {}^{k,\phi}TM_t|_{\sc} \ar[r]^{(\phi_{\sc})_*} & \phi_{\sc}^*({}^{k,\phi}N_{\sc}Y)\ar[r] & 0,  
}
\\
\xymatrix{
0 \ar[r] & {}^{k,\phi}V_{\tf} \ar[r] & {}^{k,\phi}TM_t|_{\tf} \ar[r]^{(\phi_{\tf})_*} & \phi_{\tf}^*({}^{k,\phi}N_{\tf}Y)\ar[r] & 0,  
}
\end{gathered}
\label{la.5}\end{equation}
with ${}^{k,\phi}N_{\sc}Y= \pr_1^*NY$ and ${}^{k,\phi}N_{\tf}Y= \widetilde{\pr}_1^* NY$, where $NY={}^{\sc}T(Y\times [0,1))|_{Y\times\{0\}}\cong TY\times \bbR$ is the restriction of the scattering tangent bundle on $Y\times [0,1)$ to the boundary $Y\times \{0\}$ and $\pr_1$, $\widetilde{\pr}_1$ denote the projections onto $Y$ in the Cartesian product $Y\times [0,\infty)_k$ and $Y\times[0,\frac{\pi}2]_{\theta}$ respectively.  In particular, the inclusions ${}^{k,\phi}V_{\sc}\to {}^{k,\phi}TM_t|_{\sc}$ and ${}^{k,\phi}V_{\tf}\to {}^{k,\phi}TM_t|_{\tf}$ induce splitting for the short exact sequences in \eqref{la.3},
\begin{equation}
           {}^{k,\phi}TM_t|_{\sc}= {}^{k,\phi}N_{\sc}\oplus {}^{k,\phi}V_{\sc}, \quad {}^{k,\phi}TM_t|_{\tf}= {}^{k,\phi}N_{\tf}\oplus {}^{k,\phi}V_{\tf}.
\label{la.6}\end{equation}
Hence, we see from \eqref{la.5} and \eqref{la.6} that
\begin{equation}
\begin{gathered}
{}^{k,\phi}N_{\sc}= (\phi_{\sc})_*{}^{k,\phi}TM_t|_{\sc}= \phi_{\sc}^*({}^{k,\phi}N_{\sc}Y), \\
{}^{k,\phi}N_{\tf}= (\phi_{\tf})_*{}^{k,\phi}TM_t|_{\tf}= \phi_{\tf}^*({}^{k,\phi}N_{\tf}Y),
\end{gathered}
\label{la.7}\end{equation}

If $\phi: \pa M\to Y$ is the identity map with $Y=\pa M$, $\cV_{\phi}(M)$ corresponds to the Lie algebra of scattering vector fields $\cV_{\sc}(M)$.  In this case, we denote $\cV_{k,\phi}(M_t)$ by $\cV_{k,\sc}(M_t)$.  One can check that the vector fields of this Lie algebra, as elements of $\cV(M_t)$, vanish to order one at the boundary hypersurface $\tf$ corresponding to the blow-up of $\pa M\times \{0\}$.     
\begin{definition}(\cite{Kottke}) In the case $\pa M=Y$ and $\phi=\Id$, the \textbf{Lie algebra of transition vector fields} on $M_t$ is given by 
$$
        \cV_t(M_t):= \frac{1}{x_{\tf}}\cV_{k,\sc}(M_t),
$$
where $x_{\tf}$ is a choice of boundary defining function for $\tf$.  The space of \textbf{differential transition operators} is the universal enveloping algebra over $\CI(M_t)$ of the Lie algebra of transition vector fields.  Thus, the space $\Diff^m_{t}(M_t)$ of differential transition operators of order $m$ is generated by multiplication by elements of $\CI(M_t)$ and up to $m$ transition vector fields.  For $E$ and $F$ vector bundles on $M_t$, we define more generally the space of differential transition operators of order $m$ acting from sections of $E$ to sections of $F$ by
$$
  \Diff^m_t(M_t;E,F):= \Diff^m_t(M_t)\otimes_{\CI(M_t)} \CI(M_t;E^*\otimes F).
$$  
\label{kfb.5}\end{definition}

To define the associated space of pseudodifferential operators, we need first to introduce a double space.
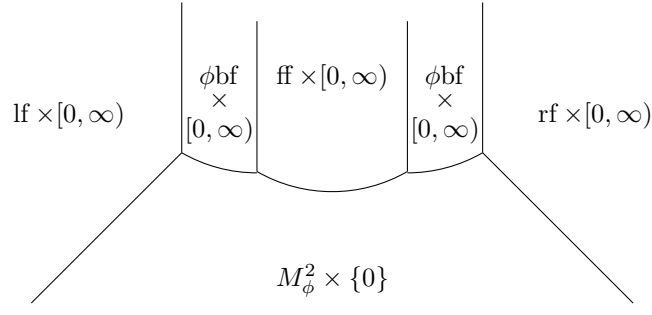
\begin{figure}[h]
\begin{tikzpicture}
\draw(0,0)--(-2,-2);
\draw(0,0)--(0,2);
\draw(0,0) arc [radius=2, start angle=240, end angle=270];
\draw(1,-0.25) arc [radius=2, start angle=240, end angle=300];
\draw(1,-0.25)--(1,1.75);
\draw(3,-0.25)--(3,1.75);
\draw(4,0) arc [radius=2, start angle=300, end angle=270];
\draw(4,0)--(4,2);
\draw(4,0)--(6,-2);
\node at (2,-1.7) {$M^2_{\phi}\times\{0\}$};
\node at (-1.5,0.5) {$\lf\times [0,\infty)$};
\node at (5.5,0.5) {$\rf\times[0,\infty)$};
\node at (2,1) {$\ff\times [0,\infty)$};
\node at (0.5,1) {$\fbf$};
\node at (0.5,0.7){$\times$};
\node at (0.5,0.3) {$[0,\infty)$};
\node at (3.5,1) {$\fbf$};
\node at (3.5,0.7){$\times$};
\node at (3.5,0.3) {$[0,\infty)$};\end{tikzpicture}
\caption{The space $M^2_{\phi}\times [0,\infty)_k$ }
\label{fig.4}\end{figure}

\begin{definition}
The $k,\phi$-double space associated to $(M,\phi)$ and a choice of boundary defining function $x\in\CI(M)$ is the manifold with corners 
\begin{equation}
   M^2_{k,\phi}= [M^2_{\phi}\times [0,\infty)_k; \fbf\times \{0\}, \lf\times \{0\}, \rf\times\{0\}, \ff\times \{0\}]
\label{kfb.6b}\end{equation}
with blow-down map 
$$
     \beta_{k,\phi}: M^2_{k,\phi}\to M^2\times[0,\infty)_k.
$$
\label{kfb.6}\end{definition}
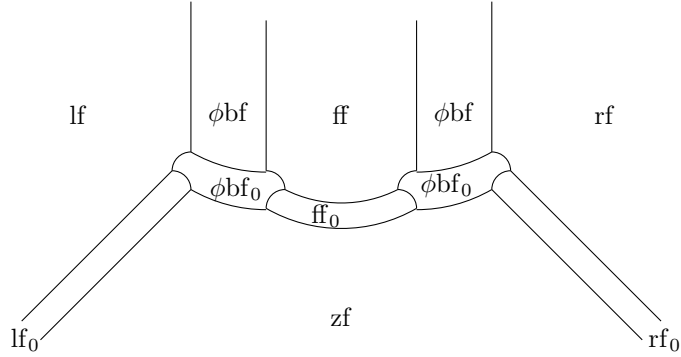
\begin{figure}[h]
\begin{tikzpicture}
\draw(0,0)--(-2,-2);
\draw(0,0) arc [radius=0.25, start angle=0, end angle=90];
\draw(-0.25,0.25)--(-2.25,-1.75);
\draw(-0.25,0.25) arc [radius=0.25, start angle=180, end angle=90];
\draw(0,0.5)--(0,2.5);
\draw(0,0) arc [radius=2, start angle=240, end angle=270];
\draw(0,0.5) arc [radius=2, start angle=240, end angle=270];
\draw(1,-0.25) arc [radius=0.25, start angle=180, end angle=90];
\draw(1.25,0) arc [radius=0.25, start angle=0, end angle=90];
\draw(1,0.25)--(1,2.25);
\draw(1,-0.25) arc [radius=2, start angle=240, end angle=300];
\draw(3,-0.25) arc [radius=0.25, start angle=0, end angle=90];
\draw(1.25,0) arc [radius=1.6610, start angle=243.158, end angle=296.842];
\draw(2.75,0) arc [radius=0.25, start angle=180, end angle=90];
\draw(3,0.25)--(3,2.25);
\draw(4,0) arc [radius=2, start angle=300, end angle=270];
\draw(4,0.5) arc [radius=2, start angle=300, end angle=270];
\draw(4,0) arc [radius=0.25, start angle=180, end angle=90];
\draw(4.25,0.25) arc [radius=0.25, start angle=0, end angle=90];
\draw(4,0.5)--(4,2.5);
\draw(4,0)--(6,-2);
\draw(4.25,0.25)--(6.25,-1.75);
\node at (2,-1.7) {$\zf$};
\node at (-2.2,-2) {$\lf_0$};
\node at (6.3,-2) {$\rf_0$};
\node at (-1.5,1) {$\lf$};
\node at (5.5,1) {$\rf$};
\node at (2,1) {$\ff$};
\node at (0.5,1) {$\fbf$};
\node at (3.5,1) {$\fbf$};
\node at(0.6,0) {$\fbf_0$};
\node at (3.4,0.05) {$\fbf_0$};
\node at (1.8,-0.35) {$\ff_0$};
\end{tikzpicture}
\caption{The $k,\phi$-double space $M^2_{k,\phi}$ }
\label{fig.5}\end{figure}

On $M^2_{k,\phi}$, the lifts of $M^2\times\{0\}$, $\lf\times [0,\infty)_k$, $\rf\times [0,\infty)_k$, $\ff\times [0,\infty)_k$ and $\fbf\times[0,\infty)_k$ will be denoted by $\zf, \lf, \rf, \ff$ and $\fbf$, while the new boundary hypersurfaces created by the blow-ups of $\fbf\times\{0\}$, $\ff\times\{0\}$, $\lf\times\{0\}$ and $\rf\times\{0\}$ will be denoted by
$\fbf_0$, $\ff_0$, $\lf_0$ and $\rf_0$.  

When $\phi$ is the identity map, the space $M^2_{k,\phi}=M^2_{k,\Id}$ is intimately related to the $b$-$\sc$ transition double space $M^2_{t}$ of \cite{Kottke} (denoted $M^2_{k,\sc}$ in \cite{GH1}),
\begin{equation}
  M_t^2:= [M^2_b\times [0,\infty)_k; \fb\times\{0\}, \Delta_b\cap \fb\times[0,\infty)_k, \lf\times\{0\}, \rf\times\{0\}] 
\label{kfb.6a}\end{equation}
with blow-down map 
$$
\beta_t: M^2_t\to M^2\times [0,\infty)_k,
$$ 
where $\Delta_b\subset M^2_b$ is the lifted diagonal, $\fb\subset M^2_b$ is the $b$-front face and $\lf$ and $\rf$ are the lifts of $\pa M\times M$ and $M\times \pa M$ to $M^2_b$.  If $\fb_0\subset M_t^2$ denotes the face created by the blow-up of $\fb\times\{0\}$, then the relation between $M^2_{k,\Id}$ and $M^2_t$ is given by 
\begin{equation}
   M^2_{k,\Id}=[M^2_{t}; \operatorname{bf}_0\cap \Delta_{k,\sc}],
\label{kfb.7}\end{equation}   
where $\Delta_{k,\sc}$ is the lift of the diagonal $\Delta_M\times [0,\infty)_k\subset M^2\times [0,\infty)_k$ to $M^2_{t}$.   Indeed, using the commutativity of blow-ups of Lemma~\ref{kqfb.7} below, one can check that in this setting, $M^2_{k,\Id}$ can alternatively be defined by
\begin{equation}
M^2_{k,\Id}=[ M^2_b\times[0,\infty)_k; \operatorname{bf}\times\{0\}, \Delta_b\cap\operatorname{bf}\times[0,\infty)_k, \Delta_b\cap \operatorname{bf}\times \{0\}, \lf\times\{0\}, \rf\times\{0\}].
\label{kfb.8}\end{equation}
  More precisely, Lemma~\ref{kqfb.7} is used to see that the blow-ups of $\fb\times\{0\}$ and $\Delta_b\cap\fb\times[0,\infty)_k$ commute provided we subsequently blow-up the lift of their intersection $\Delta_b\cap\fb\times\{0\}$.  As we will see, these two different, but nevertheless equivalent ways of constructing $M^2_t$ will be quite important for the construction of parametrices.

This can also be used to give the following alternative definition of $M^2_{k,\phi}$.  To see this, consider the $k,b$-double space 
\begin{equation}
M^2_{k,b}= [M^2_b\times[0,\infty)_k; \fb\times\{0\},\lf\times\{0\},\rf\times\{0\}].
\label{kfb.9c}\end{equation}

\begin{lemma}
The $k,\phi$-double space can alternatively be defined by 
\begin{equation}
   M^2_{k,\phi}=[M^2_{k,b}; \Phi_+, \Phi_0],
\label{kfb.9bb}\end{equation}
where $\Phi_+$ is the lift of $\Phi\times [0,\infty)_k\subset M^2_b\times [0,\infty)_k$ defined in \eqref{phi.11} to $M^2_{k,b}$ and $\Phi_0$ is the lift of $\Phi\times \{0\}$ to $M^2_{k,b}$.  
\label{kfb.9b}\end{lemma}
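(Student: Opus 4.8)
The plan is to rewrite each side of \eqref{kfb.9bb} as an iterated blow-up of the single space $W:=M^2_b\times[0,\infty)_k$ and then to match the two results by means of the commutation of blow-ups established in Lemma~\ref{kqfb.7}. First I would unwind the two definitions. Since $M^2_\phi=[M^2_b;\Phi]$ we have $M^2_\phi\times[0,\infty)_k=[W;\Phi\times[0,\infty)_k]$, and under $\beta_\phi\times\Id$ the face $\fbf$ is the lift of $\fb$ while $\ff$ is the total transform of $\Phi$; hence $\fbf\times\{0\}$ is the lift of $\fb\times\{0\}$, the face $\ff\times\{0\}=(\beta_\phi\times\Id)^{-1}(\Phi\times\{0\})$ is the total transform of $\Phi\times\{0\}$, and (using that $\Phi$ lies in the interior of $\fb$ with respect to the angular variable $\theta=\frac\pi4$, hence is disjoint from $\lf$ and $\rf$) the faces $\lf\times\{0\}$ and $\rf\times\{0\}$ are the lifts of $\lf\times\{0\}$ and $\rf\times\{0\}$. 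Thus, by Definition~\ref{kfb.6},
$$M^2_{k,\phi}=[W;\ \Phi\times[0,\infty)_k,\ \fb\times\{0\},\ \Phi\times\{0\},\ \lf\times\{0\},\ \rf\times\{0\}],$$
where every entry after the first is replaced by its lift, the lift of $\Phi\times\{0\}$ — which lies inside the previously blown-up $\Phi\times[0,\infty)_k$ — being understood as its total transform $\ff\times\{0\}$. In exactly the same way, \eqref{kfb.9c} together with the disjointness of $\Phi$ from $\lf,\rf$ gives
$$[M^2_{k,b};\Phi_+,\Phi_0]=[W;\ \fb\times\{0\},\ \lf\times\{0\},\ \rf\times\{0\},\ \Phi\times[0,\infty)_k,\ \Phi\times\{0\}],$$
where now $\Phi_0$ is the total transform of $\Phi\times\{0\}$ (which lies inside the previously blown-up $\fb\times\{0\}$). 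So both spaces are blow-ups of $W$ along lifts of the same five $p$-submanifolds, taken in different orders, and $\Phi\times\{0\}=(\Phi\times[0,\infty)_k)\cap(\fb\times\{0\})$ because $\Phi\subset\fb$.

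Next I would peel off the blow-ups of $\lf\times\{0\}$ and $\rf\times\{0\}$. The submanifolds $\lf$ and $\rf$ are disjoint in $M^2_b$ and both disjoint from $\Phi$, and these disjointnesses persist under the intervening blow-ups: blowing up $\fb\times\{0\}$ cannot merge any two of them because $\lf\cap\fb$, $\rf\cap\fb$ and $\Phi$ are pairwise disjoint $p$-submanifolds of $\fb$. Hence on each side the last two blow-ups can be moved to the very end, and the diffeomorphism over $W$ produced by the remaining blow-ups carries the lift of $\lf\times\{0\}$ (resp.\ of $\rf\times\{0\}$) on one side to the corresponding lift on the other, since each such lift is the closure of the preimage of $(\lf\times\{0\})\setminus(\fb\times\{0\})$ (resp.\ of $(\rf\times\{0\})\setminus(\fb\times\{0\})$), the $\Phi$-blow-ups not touching a neighbourhood of these faces. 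It therefore suffices to prove the core identity
$$[W;\ \Phi\times[0,\infty)_k,\ \fb\times\{0\},\ \Phi\times\{0\}]\ \cong\ [W;\ \fb\times\{0\},\ \Phi\times[0,\infty)_k,\ \Phi\times\{0\}]$$
by a diffeomorphism commuting with the blow-down maps to $W$.

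Finally, this core identity is precisely an instance of Lemma~\ref{kqfb.7}, applied to $A=\Phi\times[0,\infty)_k$ and $B=\fb\times\{0\}$: since $\Phi\subset\fb$ one has $A\cap B=\Phi\times\{0\}$, and the assertion is that the blow-ups of $A$ and $B$ commute provided one subsequently blows up the (total transform of the) intersection $A\cap B$ — exactly as in the discussion following \eqref{kfb.7} and \eqref{kfb.8}, with $\Delta_b\cap\fb$ there replaced by $\Phi$. To invoke the lemma I would check its hypotheses: $A$, $B$ and $A\cap B$ are $p$-submanifolds of $W$ (because $\Phi$ is a $p$-submanifold of $M^2_b$ contained in the boundary hypersurface $\fb$), and $A$ and $B$ intersect cleanly. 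The cleanness is the only point requiring a local computation: near a point of $\Phi\times\{0\}$ both $A$ and $B$ are contained in the boundary hypersurface $\fb\times[0,\infty)_k$ of $W$, inside which $A=\Phi\times[0,\infty)_k$ and $B=\fb\times\{0\}$ meet cleanly because $\Phi\subset\fb$ and the $[0,\infty)_k$-direction is tangent to $A$; consequently $A$ and $B$ meet cleanly in $W$. I expect this verification, together with the careful identification of the faces $\ff\times\{0\}$ and $\Phi_0$ as total transforms of $\Phi\times\{0\}$, to be the only subtle point; everything else is routine bookkeeping with lifts of $p$-submanifolds under blow-up. Assembling the three steps yields \eqref{kfb.9bb}.
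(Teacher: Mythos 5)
Your proposal is correct and follows essentially the same route as the paper: both unwind $M^2_{k,\phi}$ and $[M^2_{k,b};\Phi_+,\Phi_0]$ as iterated blow-ups of $M^2_b\times[0,\infty)_k$, invoke Lemma~\ref{kqfb.7} to commute the blow-ups of $\Phi\times[0,\infty)_k$ and $\fb\times\{0\}$ given the subsequent blow-up of their intersection $\Phi\times\{0\}$, and use disjointness of $\Phi$ from $\lf$ and $\rf$ to commute the blow-ups of $\lf\times\{0\}$ and $\rf\times\{0\}$ with the $\Phi$-centres. The only difference is cosmetic (you move the $\lf\times\{0\}$, $\rf\times\{0\}$ blow-ups to the end before applying the commutation lemma, while the paper applies the lemma first), and your local description near $\Phi\times\{0\}$ does supply the product normal form \eqref{kqfb.7b} required by that lemma.
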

\begin{proof}
By Lemma~\ref{kqfb.7}, we can commute the blow-ups of $\fbf\times \{0\}$ and $\Phi\times [0,\infty)_k$ in Definition~\ref{kfb.6}, yielding
$$
 M^2_{k,\phi}= [M^2_b\times [0,\infty)_k; \fb\times\{0\},\Phi\times [0,\infty)_k, \Phi\times \{0\}, \lf\times \{0\}, \rf\times \{0\}].
$$ 
Since $\lf\times \{0\}$ and $\rf\times\{0\}$ do not intersect the lifts of $\Phi\times [0,\infty)_k$ and $\Phi\times \{0\}$ when $\fb\times \{0\}$ is first blown up, their blow-ups commute with those of $\Phi\times \{0\}$ and $\Phi\times[0,\infty)$, so that 
\begin{equation}
\begin{aligned}
  M^2_{k,\phi}&=[M^2_b\times [0,\infty)_k;\fb\times\{0\}, \lf \times \{0\}, \rf\times\{0\}, \Phi\times[0,\infty)_k,\Phi\times \{0\}] \\
  &= [M^2_{k,b}; \Phi_+,\Phi_0]
\end{aligned}  
\label{kfb.9d}\end{equation}
as claimed.  
\end{proof}

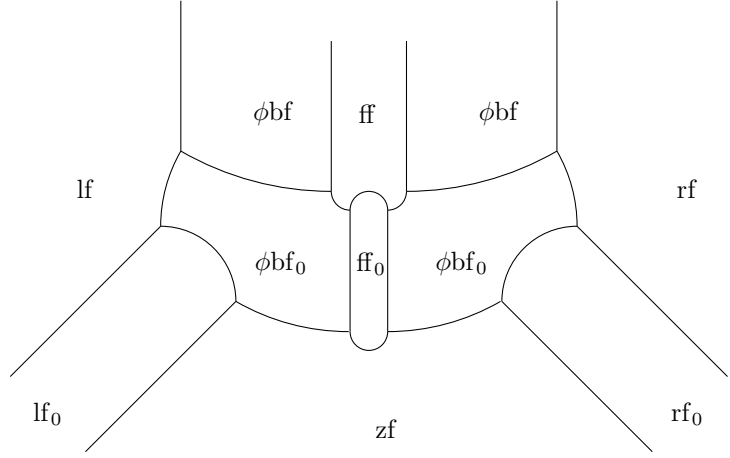
\begin{figure}[h]
\begin{tikzpicture}
\draw(0,0)--(-2,-2);
\draw(0,0) arc [radius=1, start angle=0, end angle=90];
\draw(-1,1)--(-3,-1);
\draw(-1,1) arc [radius=2, start angle=180, end angle=150];
\draw(-0.73205,2)--(-0.73205,4);
\draw(-0.73205,2) arc [radius=4, start angle=240, end angle=270];
\draw(1.26795,1.4641) arc[radius=0.25, start angle=180, end angle=270];
\draw(1.26795,1.4641)--(1.26795,3.4641);
\draw(1.51795,1.2141) arc[radius=0.25, start angle=180, end angle=0];
\draw (1.51795,1.2141)--(1.51795,-0.40192);
\draw (2.01795,1.2141)--(2.01795,-0.40192);
\draw (1.51795,-0.40192) arc [radius=0.25, start angle=180, end angle=360];
\draw (0,0) arc [radius=3, start angle=240, end angle=270];
\draw (2.01795,-0.40192) arc [radius=3, start angle=270, end angle=300];
\draw(2.01795,1.2141) arc[radius=0.25, start angle=270, end angle=360];
\draw  (2.26795,1.4641)--(2.26795,3.4641); 
\draw  (2.26795,1.4641) arc [radius=4, start angle=270, end angle=300];
\draw (4.26795,2)--(4.26795,4);
\draw (4.26795,2) arc[radius=2, start angle=30, end angle=0];
\draw(4.5359,1)--(6.5359,-1);
\draw (4.5359,1) arc [radius=1, start angle=90, end angle=180];
\draw (3.5359,0)--(5.5359,-2); 
\node at (2,-1.7) {$\zf$};
\node at (-2.5,-1.5) {$\lf_0$};
\node at (6,-1.5) {$\rf_0$};
\node at (-2,1.5) {$\lf$};
\node at (6,1.5) {$\rf$};
\node at (1.75,2.5) {$\ff$};
\node at (0.5,2.5) {$\fbf$};
\node at (3.5,2.5) {$\fbf$};
\node at(0.6,0.5) {$\fbf_0$};
\node at (3,0.5) {$\fbf_0$};
\node at (1.8,0.5) {$\ff_0$};
\end{tikzpicture}
\caption{Alternative picture of the $k,\phi$-double space $M^2_{k,\phi}$ from the point of view of Lemma~\ref{kfb.9b} }
\label{fig.6}\end{figure}

\begin{lemma}
The projections $\pr_{L}\times \Id_{[0,\infty)_k}: M^2\times [0,\infty)_k\to M\times [0,\infty)_k$ and 
$\pr_R\times \Id_{[0,\infty)_k}: M^2\times [0,\infty)_k\to M\times [0,\infty)_k$ lift to $b$-fibrations
$\pi_{k,\phi,L}: M^2_{k,\phi}\to M_t$ and $\pi_{k,\phi,R}: M^2_{k,\phi}\to M_t$, where we recall that 
$\pr_L:M^2\to M$ and $\pr_R: M^2\to M$ denote the projections on the left and on the right factors.
\label{kfb.9}\end{lemma}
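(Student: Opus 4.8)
The plan is to deduce the statement from the fact that the two projections realize $M^2_\phi$ as a $b$-fibration over $M$, combined with the standard behaviour of $b$-fibrations under blow-up. Since interchanging the two factors of $M$ is an involution of $M^2_{k,\phi}$ that intertwines $\pr_L$ with $\pr_R$ and fixes $M_t$, it is enough to construct $\pi_{k,\phi,L}$.

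First I would recall from \cite{Mazzeo-MelrosePhi} that $\pr_L\colon M^2\to M$ lifts to a $b$-fibration $\beta_{\phi,L}\colon M^2_\phi\to M$, noting that the boundary defining function $x$ of $\pa M$ pulls back under $\beta_{\phi,L}$ to a product $\rho_{\lf}\rho_{\fbf}\rho_{\ff}$ times a positive smooth function, so that it vanishes to first order exactly along $\lf$, $\ff$ and $\fbf$ and nowhere along $\rf$. Taking the product with the identity on $[0,\infty)_k$ yields a $b$-fibration
$$
 g\colon M^2_\phi\times[0,\infty)_k\longrightarrow M\times[0,\infty)_k.
$$
Writing $G_1=\pa M\times[0,\infty)_k$ and $G_2=M\times\{0\}$, so that $M_t=[M\times[0,\infty)_k;G_1\cap G_2]$, one has $g^{-1}(G_1)=(\lf\cup\ff\cup\fbf)\times[0,\infty)_k$, $g^{-1}(G_2)=M^2_\phi\times\{0\}$, and hence $g^{-1}(G_1\cap G_2)=(\lf\cup\ff\cup\fbf)\times\{0\}$.

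By the standard rule for lifting a $b$-fibration over the blow-up of a boundary corner, $g$ lifts to a $b$-fibration onto $M_t$ once one blows up in $M^2_\phi\times[0,\infty)_k$ the corners $H\times\{0\}$ for $H$ running over the boundary hypersurfaces $\fbf,\ff,\lf$ of $M^2_\phi$ lying over $\pa M$, i.e.\ the centres $\fbf\times\{0\}$, $\ff\times\{0\}$, $\lf\times\{0\}$. In $M^2_\phi$ the only intersections among $\fbf,\ff,\lf,\rf$ are those of $\fbf$ with each of the other three --- indeed $\lf\cap\ff=\lf\cap\rf=\rf\cap\ff=\emptyset$, since $\Phi$ sits at $\theta=\frac{\pi}4$, away from $\lf$ and $\rf$ --- so these three blow-ups can be carried out in the order prescribed in Definition~\ref{kfb.6}, with $\fbf\times\{0\}$ first. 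It then remains to account for the last centre of Definition~\ref{kfb.6}, $\rf\times\{0\}$: its image under $g$ is the whole hypersurface $G_2=M\times\{0\}$, which lifts to $\zf$ in $M_t$; thus blowing it up is a refinement of the source compatible with the lifted $b$-fibration, since $g^*x$ and $g^*k$ both restrict to nonvanishing functions along $\rf$ and so no new failure of $b$-normality is introduced. Finally I would invoke the commutativity results of Appendix~\ref{cbu.0} (in particular Lemma~\ref{kqfb.7}) together with the alternative description \eqref{kfb.9bb} of $M^2_{k,\phi}$ to rearrange the order of the blow-ups as necessary, concluding that $\pi_{k,\phi,L}\colon M^2_{k,\phi}\to M_t$ is a $b$-fibration; $\pi_{k,\phi,R}$ then follows by the swap symmetry.

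The main obstacle is the combinatorial bookkeeping at the new boundary hypersurfaces $\fbf_0,\ff_0,\lf_0,\rf_0$ and at their mutual corners: one must confirm that after the four extra blow-ups every boundary hypersurface of $M^2_{k,\phi}$ still maps into a single boundary hypersurface of $M_t$ ($b$-normality) and that the $b$-differential of $\pi_{k,\phi,L}$ stays surjective. The correspondences to verify are $\lf,\ff,\fbf\mapsto\sc$, $\fbf_0,\ff_0,\lf_0\mapsto\tf$, $\zf\mapsto\zf$, $\rf_0\mapsto\zf$, with $\rf$ surjecting onto $M_t$ with trivial exponents; checking these at the corners --- for instance where $\ff_0$ meets $\fbf_0$, or where $\rf_0$ abuts the faces over $\tf$ --- requires local coordinates near $(\ff\cap\fbf)\times\{0\}$ and $(\rf\cap\fbf)\times\{0\}$, and it is precisely there that the commutation lemmas of Appendix~\ref{cbu.0} do the real work.
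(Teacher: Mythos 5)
Your argument is correct and follows essentially the same route as the paper's proof: start from the Mazzeo--Melrose $b$-fibration $\pi_{\phi,L}\times\Id_{[0,\infty)_k}$, lift it through the blow-ups of $\fbf\times\{0\}$, $\ff\times\{0\}$, $\lf\times\{0\}$ against the blow-up of the corner $\pa M\times\{0\}$ in the target (this is \cite[Lemma~2.5]{hmm}), then treat the source-only blow-up of $\rf\times\{0\}$, whose image is the boundary hypersurface lifting to $\zf$, via \cite[Lemma~2.7]{hmm}, and conclude by the left--right symmetry. The appeal to Lemma~\ref{kqfb.7} is not actually needed here, since the order of blow-ups you use is exactly that of Definition~\ref{kfb.6}.
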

\begin{proof}
By symmetry, it suffices to prove the result for $\pr_L\times \Id_{[0,\infty)_k}$.  First, by \cite{Mazzeo-MelrosePhi}, this projection lifts to a $b$-fibration
$$
        \pi_{\phi,L}\times \Id_{[0,\infty)}: M^2_{\phi}\times [0,\infty)_k\to M\times [0,\infty)_{k}.
$$
Applying \cite[Lemma~2.5]{hmm}, this lifts to a $b$-fibration
$$
    [M^2_{\phi}\times[0,\infty)_k;\fbf\times\{0\},\ff\times\{0\},\lf\times\{0\}].
$$
Finally, by \cite[Lemma~2.7]{hmm}, this further lifts to a $b$-fibration 
$$
     \pi_{k,\phi,L}: M^2_{\phi}\to M_t
$$
as desired.  
\end{proof}

Similarly, we know from \cite{Kottke} that $\pr_L\times [0,\infty)_k$ and $\pr_R\times [0,\infty)_k$ lift to $b$-fibrations
$$
          \pi_{t,L}: M^2_{t}\to M_t, \quad \pi_{t,R}:M^2_t\to M_t.
$$

Let $\Delta_{k,\phi}$ be the lift of $\Delta_M\times [0,\infty)_k\subset M^2\times [0,\infty)$ to $M^2_{k,\phi}$, where $\Delta_M$ is the diagonal in $M\times M$.  

\begin{lemma}
The lifts of $\cV_{k,\phi}(M)$ via the maps $\pi_{k,\phi,L}$ and $\pi_{k,\phi,R}$ are transversal to $\Delta_{k,\phi}$.  
\label{kfb.10}\end{lemma}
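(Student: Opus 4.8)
The plan is to verify transversality locally along $\Delta_{k,\phi}$. Away from the new front faces this reduces to the transversality already present in the $\phi$-double space, and the only genuinely new region is a neighbourhood of $\ff_0$, which I would handle by an explicit computation in projective coordinates.

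First I would locate where the lifted diagonal meets the boundary. Since $\Delta_\phi\subset M^2_\phi$ is disjoint from $\fbf$, $\lf$ and $\rf$ and meets only $\ff$ (recall that $\Delta_b\cap\fb\subset\Phi$), the $p$-submanifold $\Delta_\phi\times[0,\infty)_k$ is disjoint from the centres $\fbf\times\{0\}$, $\lf\times\{0\}$ and $\rf\times\{0\}$ and meets only $\ff\times\{0\}$ among the centres defining $M^2_{k,\phi}$. Hence, on a neighbourhood of $\Delta_{k,\phi}$ the space $M^2_{k,\phi}$ is naturally diffeomorphic to $[M^2_\phi\times[0,\infty)_k;\ff\times\{0\}]$, and $\Delta_{k,\phi}$ meets $\pa M^2_{k,\phi}$ only along $\zf$, $\ff$ and $\ff_0$. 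Since the anchor of ${}^{k,\phi}TM_t$ is an isomorphism over the interior, transversality is automatic away from the boundary, so it remains to check it near these three boundary hypersurfaces.

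Near a point of $\Delta_{k,\phi}\cap\zf$ not lying on $\ff_0$ no blow-up occurs, so $M^2_{k,\phi}$ is locally $M^2_\phi\times[0,\epsilon)_k$, so that $\pi_{k,\phi,L}=\pi_{\phi,L}\times\Id$, $\Delta_{k,\phi}=\Delta_\phi\times[0,\epsilon)_k$, and $\cV_{k,\phi}$ is generated by $\cV_\phi$ with $k$ an inert parameter; transversality then follows from the transversality of the left and right lifts of $\cV_\phi$ to $\Delta_\phi$ established in \cite{Mazzeo-MelrosePhi}. The same product argument, with $k$ now ranging over an open subinterval of $(0,\infty)$, takes care of a neighbourhood of $\Delta_{k,\phi}\cap\ff$ not meeting $\ff_0$.

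The remaining and main case is a neighbourhood of $\Delta_{k,\phi}\cap\ff_0$. Near the diagonal portion of $\ff\subset M^2_\phi$ choose coordinates $(x,y,z,X,Y,Z)$ in which $\ff=\{x=0\}$, $X=(x-x')/x^2$, $Y=(y-y')/x$, $Z=z-z'$, and $\Delta_\phi=\{X=Y=Z=0\}$; then $\ff\times\{0\}=\{x=k=0\}$, and blowing it up gives coordinates $(x,\kappa=k/x,y,z,X,Y,Z)$ on a neighbourhood of $\ff_0^\circ\cup(\ff_0\cap\zf)$, in which $\ff_0=\{x=0\}$, $\zf=\{\kappa=0\}$, and $\Delta_{k,\phi}=\{X=Y=Z=0\}$ is transversal to $\ff_0$. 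Near $\ff_0$ the left lift of $\cV_{k,\phi}$ via $\pi_{k,\phi,L}$ is generated over $\CI$ by the lifts, through the blow-downs $M^2_{k,\phi}\to M^2_\phi\times[0,\infty)_k\to M\times[0,\infty)_k$, of the $\phi$-vector fields $x^2\pa_x$, $x\pa_{y_i}$, $\pa_{z_j}$ acting on the left variables with $k$ inert; a direct computation expresses these in the above coordinates as
\begin{equation*}
x^2\pa_x-x\kappa\,\pa_\kappa+(1-2xX)\pa_X-xY\pa_Y,\qquad x\pa_{y_i}+\pa_{Y_i},\qquad \pa_{z_j}+\pa_{Z_j},
\end{equation*}
each of which is tangent to $\ff_0$ and restricts, along $\Delta_{k,\phi}\cap\ff_0=\{X=Y=Z=0,\ x=0\}$, to $\pa_X$, $\pa_{Y_i}$ and $\pa_{z_j}+\pa_{Z_j}$ respectively. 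Together with $T\Delta_{k,\phi}$, which at such a point is spanned by $\pa_x,\pa_\kappa,\pa_{y_i},\pa_{z_j}$, these span $TM^2_{k,\phi}$ along $\Delta_{k,\phi}$, which is exactly transversality. The right lifts are treated identically after exchanging primed and unprimed variables, and the corner $\ff_0\cap\ff$ is handled by the analogous computation in the adjacent chart, where $k$ rather than $x$ is a local defining function near $\ff_0$ and $X,Y,Z$ are replaced by suitable rescalings by powers of $k$. The only real obstacle is the bookkeeping: tracking the several blow-ups and coordinate changes across the corners $\ff_0\cap\zf$ and $\ff_0\cap\ff$. Here the commutativity of blow-ups of Lemma~\ref{kqfb.7}, already used in Lemma~\ref{kfb.9b}, gives the freedom to order the blow-ups so that $\Delta_{k,\phi}$, $\ff_0$ and the coordinate charts are as simple as possible.
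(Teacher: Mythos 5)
Your proposal is correct and is essentially the paper's argument: after noting that near $\Delta_{k,\phi}$ only the blow-up of $\ff\times\{0\}$ is relevant, both proofs verify transversality by an explicit chain-rule computation on the lifted spanning fields $x^2\pa_x$, $x\pa_{y_i}$, $\pa_{z_j}$ in coordinates adapted to that blow-up, and your computation in the chart $(x,\kappa=k/x,y,z,X,Y,Z)$ checks out. The differences are mostly cosmetic: the paper uses the right-based singular coordinates $S=\bigl(\tfrac{x}{x'}-1\bigr)/x'$, $Y=(y-y')/x'$ together with polar coordinates $r'=\sqrt{(x')^2+k^2}$, $\theta=\arctan(x'/k)$, which cover all of $\ff_0$ near the diagonal, including both corners $\ff_0\cap\zf$ and $\ff_0\cap\ff$, in a single chart, so no case analysis or second chart is needed; your projective chart misses $\ff_0\cap\ff$ and you defer to ``the adjacent chart''. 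That deferral is legitimate, but your parenthetical that $X,Y,Z$ should be ``replaced by suitable rescalings by powers of $k$'' is off: in the chart $(k,\,t=x/k,\,y,z,X,Y,Z)$ the unchanged $X=(x-x')/x^2$, $Y=(y-y')/x$, $Z=z-z'$ are still valid coordinates near $\ff_0\cap\ff$, and the same computation (substituting $x=kt$) gives $x^2\pa_x\mapsto kt^2\pa_t+(1-2ktX)\pa_X-ktY\pa_Y$, hence transversality at that corner; by contrast a rescaling such as $(x-x')/k^2=t^2X$ degenerates along $\ff$ (where $t=0$) and would not be a coordinate exactly where you need it.
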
 
\begin{proof}
By symmetry, it suffices to consider the lift by $\pi_{k,\phi,L}$.  Moreover, since it is a local statement near the lifted diagonal, it suffices to consider the blow-up of $\ff\times\{0\}$.  Now, by \cite[Lemma~5]{Mazzeo-MelrosePhi}, we know that the lift of $\cV_{k,\phi}(M\times[0,\infty))$ to $M^2_{\phi}\times [0,\infty)_k$ is transversal to the lifted diagonal.  In fact, if $y=(y^1,\ldots,y^h)$ denotes coordinates on the base of the fiber bundle $\phi: \pa M\to Y$ and $z=(z^1,\ldots,z^v)$ coordinates on the fibers,  then 
\begin{equation}
     S= \frac{\frac{x}{x'}-1}{x'}, Y=\frac{y-y'}{x'}, x', y', z,z',k
\label{kfb.11}\end{equation} 
are coordinates near the intersection of the lifted diagonal with $\ff\times [0,\infty)_k$ in $M^2_{\phi}\times [0,\infty)_k$.  In these coordinates, the lifted diagonal corresponds to $S=Y=0$, $z=z'$, while the lift from the left of $\cV_{k,\phi}(M\times [0,\infty)_k)$ is spanned by 
\begin{equation}
 (1+x'S)^2\frac{\pa}{\pa S}, \quad (1+x'S)\frac{\pa}{\pa Y^i},\quad \frac{\pa}{\pa z^j}. 
\label{kfb.12}\end{equation}
Now, blowing-up $\ff\times \{0\}$ corresponds to replace the coordinates \eqref{kfb.11} by 
\begin{equation}
  S,Y, y', z,z',  \quad r'= \sqrt{(x')^2+k^2}, \quad \theta= \arctan (\frac{x'}{k}).
\label{kfb.13}\end{equation}
In these new coordinates, the lift from the left of $\cV_{k,\phi}(M_t)$ is still spanned by \eqref{kfb.12}, that is,
by
$$
(1+r'(\sin\theta)S)^2\frac{\pa}{\pa S}, \quad (1+r'(\sin\theta)S)\frac{\pa}{\pa Y^i},\quad \frac{\pa}{\pa z^j}.
$$
Since the lifted diagonal still corresponds to $S=Y=0$, $z=z'$, transversality follows.
\end{proof}

Similarly, let $\Delta_{t}$ the lifted diagonal in $M^2_t$.  
\begin{lemma}
The lift of $\cV_t(M_t)$ via the maps $\pi_{t,L}$ and $\pi_{t,R}$ are transversal to the lifted diagonal $\Delta_t$.
\label{kfb.14}\end{lemma}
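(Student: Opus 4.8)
The plan is to follow the same template as the proof of Lemma~\ref{kfb.10}. By the symmetry of $M^2_t$ under exchange of the two factors of $M$, it suffices to establish transversality for the lift via $\pi_{t,L}$. Since transversality to $\Delta_t$ is local along $\Delta_t$, I would check it region by region, the point being to isolate the single place where the transition normalization $\tfrac{1}{x_\tf}$ actually plays a role.

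Away from the front face $\tf$ of $M_t$ one may take $x_\tf\equiv 1$, so there $\cV_t(M_t)=\cV_{k,\sc}(M_t)$. In the interior of $M_t$ these vector fields span all of $TM_t$, so the claim is immediate from $\pi_{t,L}$ being a submersion. Near the interior of $\sc$, restricting to a slice $k=\mathrm{const}>0$ reduces the statement to the transversality of scattering vector fields to the lifted diagonal on the scattering double space (\cite{MelroseGST}); near the interior of $\zf$, restricting to $k=0$ and localizing near $\pa M$, it reduces to the transversality of $b$-vector fields to the lifted $b$-diagonal (\cite{MelroseAPS}). Hence the only region requiring genuine work is a neighbourhood of the part of $\Delta_t$ whose image under $\pi_{t,L}$ approaches $\tf\subset M_t$.

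There I would write out projective coordinates on $M^2_t$ coming from the iterated blow-ups of $\fb\times\{0\}$ and of $\Delta_b\cap\fb\times[0,\infty)_k$ in \eqref{kfb.6a}: with a boundary defining function $x$ and local coordinates $w$ on $\pa M$, and primed copies $x',w'$ on the right factor, one gets coordinates of the type $\rho=\sqrt{(x')^2+k^2}$, an angular variable interpolating between $\zf$ and $\sc$, rescaled diagonal variables built from $(x-x')/x'$ and $(w-w')/x'$, and $w'$. In these coordinates $\Delta_t$ is cut out by the vanishing of the rescaled diagonal variables, exactly as in the pattern of \eqref{kfb.12}; and the lift from the left of $\cV_{k,\sc}(M_t)$ --- which, as noted just before Definition~\ref{kfb.5}, vanishes to first order at $\tf$ --- is spanned by vector fields each of which is $x_\tf$ times a vector field whose leading part at the relevant front face is one of the coordinate fields in the rescaled diagonal directions. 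Dividing by $x_\tf$ to pass to $\cV_t(M_t)$ then produces a frame of those diagonal directions at points of $\Delta_t$, which is exactly transversality.

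The step I expect to be the main obstacle is the bookkeeping in this last region: one must check that the factor $\tfrac{1}{x_\tf}$ precisely compensates the order-one vanishing of $\cV_{k,\sc}(M_t)$ at $\tf$, and that the blow-up of $\Delta_b\cap\fb\times[0,\infty)_k$ in \eqref{kfb.6a} resolves the lifted diagonal enough for the resulting vector fields to be simultaneously non-degenerate along $\tf$ and transversal to $\Delta_t$ there. This is exactly what the transition double space of \cite{Kottke} is designed to achieve, so an alternative to the explicit computation is simply to quote the corresponding transversality statement from \cite{Kottke}.
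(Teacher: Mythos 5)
Your plan is essentially the paper's own proof: reduce by symmetry to $\pi_{t,L}$, localize near the lifted diagonal so that only the blow-ups of $\fb\times\{0\}$ and $\Delta_b\cap\fb\times[0,\infty)_k$ matter, and then check in projective coordinates (the paper uses $r'=\sqrt{(x')^2+k^2}$, $\theta=\arctan(x'/k)$, $S=(s-1)/\sin\theta$, $Y=(y-y')/\sin\theta$) that dividing the lifted $\cV_{k,\sc}$ frame by the lift of $x_{\tf}$ and passing through the second blow-up produces vector fields with nonvanishing $\pa_S,\pa_Y$ components along $\Delta_t$. The paper simply carries out this coordinate bookkeeping explicitly rather than quoting \cite{Kottke}, so your proposal is correct and follows the same route.
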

\begin{proof}
By symmetry, we only need to prove the result for $\pi_{t,L}$.  Moreover, since the statement is local near the lifted diagonal, the relevant blow-ups in \eqref{kfb.8} are those of $\fb\times\{0\}$ and $\Delta_b\cap\fb\times [0,\infty)_k$. Now, on $M^2_b\times [0,\infty)_k$, one can consider the coordinates $s=\frac{x}{x'}, x', y,y',k$ near $\fb\times[0,\infty)_k$, where $y=(y^1,\ldots,y^{n-1})$ represents coordinates on $\pa M$.  In these coordinates, the lift from the left of 
$\cV_{k,\sc}(M\times[0,\infty)_k)$ is spanned by $x's^2\frac{\pa}{\pa s}, x's\frac{\pa}{\pa y^i}$ and there is a lack of transversality at $x'=0$, that is, at $\fb\times[0,\infty)_k$.  Blowing up $\fb\times\{0\}$ corresponds to introducing the coordinates 
$$
      s, y, y', \quad r'= \sqrt{(x')^2+k^2}, \quad \theta=\arctan(\frac{x'}k). 
$$    
In these new coordinates, the lift of $\cV_{k,\sc}(M_t)$ is spanned by 
$$
      r'(\sin\theta)s^2\frac{\pa}{\pa s}, \quad r'(\sin\theta)\frac{\pa}{\pa y^i}.
$$
Since $r=\sqrt{x^2+k^2}$ is boundary defining function for $\tf$,  this means that the lift of $\cV_t(M_t)$ is spanned by 
$$
       \frac{(\sin\theta) s^2}{\sqrt{s^2\sin^2\theta+\cos^2\theta}}\frac{\pa}{\pa s},\quad \frac{\sin\theta}{\sqrt{s\sin^2\theta+ \cos^2\theta}}\frac{\pa}{\pa y^i}.      
$$ 
There is still a lack of transversality at $\sin\theta=0$, that is, at the lift of $\fb\times[0,\infty)_k$.  However, blowing up the lift of $\Delta_b\cap \fb\times [0,\infty)_k$ corresponds to introducing the coordinates
$$
      S=\frac{s-1}{\sin\theta}, \quad Y=\frac{y-y'}{\sin\theta}, r', \theta,
$$ 
in terms of which the lift from the left of $\cV_t(M_t)$ is locally spanned by 
$$
   \frac{ s^2}{\sqrt{s^2\sin^2\theta+\cos^2\theta}}\frac{\pa}{\pa s},\quad \frac{1}{\sqrt{s\sin^2\theta+ \cos^2\theta}}\frac{\pa}{\pa y^i}.   
$$ 
This is clearly transverse to the lifted diagonal given by $Y=0, S=0$ in those coordinates.  
\end{proof}

These transversality results allow us to give a simple description of the Schwartz kernels of differential $k,\phi$-operators and transition differential operators.  Starting with the former, consider the coordinates \eqref{kfb.13}.  In these coordinates, the Schwartz kernel of the identity operator takes the form
$$
  \kappa_{\Id}= \delta(-S)\delta(-Y)\delta(z'-z)\beta^*_{k,\phi}(\pr_R\times \Id_{[0,\infty)_k})^*\pr_1^*\nu_{\phi},
$$
where $\pr_1: M\times [0,\infty)_k\to M$ is the projection on the first factor and $\nu_{\phi}$ is some non-vanishing $\phi$-density.  Hence, 
$$
      \kappa_{\Id}\in \cD^0(\Delta_{k,\phi})\cdot \nu^R_{k,\phi}
$$
with 
\begin{equation}
\nu^R_{k,\phi}=\beta^*_{k,\phi}(\pr_R\times \Id_{[0,\infty)_k})^*\pr_1^*\nu_{\phi}
\label{rd.1}\end{equation}
 a lift from the right of a non-vanishing $k,\phi$-density and $\cD^0(\Delta_{k,\phi})$ is the space of smooth delta distributions on $\Delta_{k,\phi}$.  More generally, by the transversality of Lemma~\ref{kfb.10}, the Schwartz kernel of an operator $P\in \Diff^m_{k,\phi}(M_t)$ is of the form
$$
    \kappa_P= \pi^*_{k,\phi,L}P\cdot\kappa_{\Id}\in \cD^m(\Delta_{k,\phi})\cdot \nu^R_{k,\phi},
$$
where $\cD^m(\Delta_{k,\phi})=\Diff^m(M^2_{k,\phi})\cdot \cD^0(\Delta_{k,\phi})$ is the space of smooth delta distributions of order $m$ on $\Delta_{k,\phi}$.  In fact, the transversality of Lemma~\ref{kfb.10} ensures that there is a bijection between $\Diff^m_{k,\phi}(M_t)$ and $\cD^m(\Delta_{k,\phi})\cdot \nu^R_{k,\phi}$.  This suggests the following definition.  

\begin{definition}
Let $E$ and $F$ be vector bundles on transition single space $M_t$.  The small calculus of pseudodifferential $k,\phi$-operators acting from sections of $E$ to section of $F$ is the union over all $m\in \bbR$ of the spaces
\begin{equation}
\Psi^m_{k,\phi}(M;E,F):= \{  \kappa\in I^{m-\frac14}(M^2_{k,\phi},\Delta_{k,\phi};\Hom_{k,\phi}(E,F)\otimes {}^{k,\phi}\Omega_R(M)\; | \; \kappa\equiv 0 \; \mbox{at} \; \pa M^2_{k,\phi}\setminus\ff_{k,\phi}\}, 
\label{kfb.16}\end{equation}
where $\ff_{k,\phi}$ is the union of the boundary hypersurfaces of $M^2_{k,\phi}$ intersecting the lifted diagonal $\Delta_{k,\phi}$,  $\Hom_{k,\phi}(E,F)= \pi^*_{k,\phi,L}F\otimes \pi^*_{k,\phi,R}E^*$ and ${}^{k,\phi}\Omega_R(M):= \beta^*_{k,\phi}(\pr_R\times\Id_{[0,\infty)_k})^*\pr^*_1  {}^{\phi}\Omega(M)$.
 \label{kfb.15}\end{definition}
More generally, for $\cE$ an index family of the boundary hypersurfaces of $M^2_{k,\phi}$, we can consider the spaces
\begin{gather}
\label{kfb.16}
\Psi^{-\infty,\cE}_{k,\phi}(M;E,F):= \cA^{\cE}_{\phg}(M^2_{k,\phi};\Hom_{k,\phi}(E,F)\otimes {}^{k,\phi}\Omega_R(M)), \\
\label{kfb.16c} \Psi^{m,\cE}_{k,\phi}(M;E,F):= \Psi^{m}_{k,\phi}(M;E,F)+\Psi^{-\infty,\cE}_{k,\phi}(M;E,F), \quad m\in \bbR. 
\end{gather}

Recall from \cite{GH1,Kottke} that for $\phi=\Id$ on $\pa M$, the calculus of $b$-$\sc$ transition pseudodifferential operators admits a similar definition.  Let $\pi_{t,L}= (\pr_L\times \Id_{[0,\infty)_k})\circ\beta_t$ and $\pi_{t,R}= (\pr_R\times \Id_{[0,\infty)_k})\circ\beta_t$ be the analog of $\pi_{k,\phi,L}$ and $\pi_{k,\phi,R}$ and let $x_{\sc}$ be a boundary defining function for the boundary hypersurface $\sc$ in $M^2_t$.  Then the small calculus of $b$-$\sc$ transition pseudodifferential operators acting from sections of $E$ to sections of $F$ is defined as the union over $m\in \bbR$ of 
\begin{equation}
\Psi^m_t(M;E,F)=\{\kappa\in I^{m-\frac14}(M_t^2,\Delta_t;\Hom_t(E,F)\otimes {}^{t}\Omega_R(M)\; | \; \kappa\equiv 0 \; \mbox{at} \; \pa M^2_t\setminus \ff_t\},
\label{kfb.17}\end{equation}
where $\ff_t$ is the union of boundary hypersurfaces of $M^2_t$ intersecting the lifted diagonal, $\Hom_t(E,F)= \pi_{t,L}^*F\otimes \pi^*_{t,R}E^*$ and ${}^{t}\Omega_R(M)= x_{\sc}^{-n}\beta_t^*(\pr_R\times \Id_{[0,\infty)_k})^*\pr_1^*{}^b\Omega(M)$ with ${}^{b}\Omega(M)$ the bundle of $b$-densities on $M$ in the sense of \cite{MelroseAPS}.  If $\cE$ is an index family for the boundary hypersurfaces of $M^2_t$, we can consider more generally the spaces
\begin{equation}
\begin{gathered}
\Psi^{-\infty,\cE}_{t}(M;E,F):= \cA^{\cE}_{\phg}(M^2_t; \Hom_t(E,F)\otimes {}^{t}\Omega_R), \\
\Psi^{m,\cE}_{t}(M;E,F):= \Psi^m_t(M;E,F)+ \Psi^{-\infty,\cE}_t(M;E,F), \quad m\in \bbR.
\end{gathered}
\label{kfb.18}\end{equation}

\section{The triple space of low energy fibered boundary operators}\label{ts.0}
To obtain nice composition results for $k,\phi$-operators, we can follow the approach of Melrose and use a suitable triple space and apply the pushforward theorem of \cite[Theorem~5]{Melrose1992}.
To construct such a  $k,\phi$-triple space, we can start with the Cartesian product $M^3\times [0,\infty)_k$ and consider the projections $\pi_L,\pi_C,\pi_R: M^3\times[0,\infty)_k\to M^2\times [0,\infty)_k$ given by
$$
        \pi_L(m,m',m'',k)=(m,m',k), \quad \pi_C(m,m',m'',k)=(m,m'',k), \quad \pi_R(m,m',m'',k)=(m',m'',k).
$$
As in \cite{GH1}, let us use the $4$-digit binary codes for the faces of $M^3\times [0,\infty)_k$, where
$H_{0000}$ represent $(\pa M)^3\times \{0\}$, $H_{1010}$ stands for $M\times \pa M\times M\times \{0\}$, 
$H_{0011}$ stands for $(\pa M)^2\times M\times [0,\infty)_k$, and so on.  In this notation, recall from \cite{GH1, Kottke} that 
\begin{equation}
 M_{k,b}^3= [M^3\times [0,\infty)_k; H_{0000}, H_{1000}, H_{0100}, H_{0010}, H_{0001}, H_{1100}
, H_{1010}, H_{1001}, H_{0110}, H_{0101}, H_{001}]
\label{ts.1}\end{equation}
is obtained by blowing up all the corners of $M^3\times [0,\infty)_k$ in order of decreasing codimension.

\begin{lemma}
For $o\in\{L,C,R\}$, $\pi_o$ lift to a $b$-fibration
$$
    \pi^3_{b,o}: M^3_{k,b}\to M^2_{k,b}.
$$
\label{ts.2}\end{lemma}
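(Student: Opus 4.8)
The plan is to lift $\pi_o$ through the iterated blow-ups defining $M^3_{k,b}$ and $M^2_{k,b}$, one centre at a time, in the same way one lifts the projections $M^3_b\to M^2_b$ to $b$-fibrations in the ordinary $b$-calculus and as was done for the $b$-$\sc$ transition calculus in \cite{GH1,Kottke}. First I would reduce to a single projection: the natural $S_3$-action permuting the three copies of $M$ in $M^3\times[0,\infty)_k$ interchanges $\pi_L,\pi_C,\pi_R$ and permutes the list of centres in \eqref{ts.1} while preserving their codimensions, hence preserving the blow-up order up to reordering blow-ups of disjoint centres. So it suffices to treat $o=L$. Before any blow-up, $\pi_L=\pr_L\times\Id_{[0,\infty)_k}$ is the projection of a trivial $M$-bundle over $M^2\times[0,\infty)_k$, hence a $b$-fibration; this is the base case.

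Next I would set up the combinatorics of the two blow-up sequences. Writing $4$-digit codes for the corners of $M^3\times[0,\infty)_k$ and $3$-digit codes for those of $M^2\times[0,\infty)_k$ (with the understanding that $\pi_L$ forgets the middle copy of $M$, i.e.\ the third digit), one has $\pi_L(H_{d_1d_2d_3d_4})=H_{d_1d_2d_4}$ and $\pi_L^{-1}(H_{abc})=H_{ab1c}$. Recalling from \eqref{kfb.9c} that
\[
M^2_{k,b}=\bigl[M^2\times[0,\infty)_k;\ \pa M\times\pa M\times[0,\infty)_k,\ \fb\times\{0\},\ \lf\times\{0\},\ \rf\times\{0\}\bigr],
\]
I would check that the $\pi_L$-preimages of these four centres (taken after the preceding blow-ups) are exactly the four centres of \eqref{ts.1} with interior middle factor, namely $H_{0011},H_{0010},H_{0110},H_{1010}$, occurring there in an order compatible with the downstairs order; whereas the remaining seven centres of \eqref{ts.1}, all with boundary middle factor (namely $H_{0000},H_{1000},H_{0100},H_{0001},H_{1100},H_{1001},H_{0101}$), are mapped by $\pi_L$ and its successive lifts submersively onto corners of $M^2\times[0,\infty)_k$, in such a way that blowing them up does not require any accompanying blow-up downstairs.

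With this matching in hand I would run the lift by induction, processing the blow-ups in order of decreasing codimension and interleaving the downstairs sequence appropriately, using the commutativity of blow-ups of disjoint $p$-submanifolds and of Lemma~\ref{kqfb.7} to rearrange where necessary. Each inductive step is one of two standard facts about $b$-fibrations: (i) if $f\colon X\to Y$ is a $b$-fibration and $G\subset X$ is a $p$-submanifold to which $f$ is $b$-transversal, then $f$ lifts to a $b$-fibration $[X;G]\to Y$; and (ii) if moreover $C'\subset Y$ is a $p$-submanifold to which $f$ is $b$-transversal, then $f$ lifts to a $b$-fibration $[X;f^{-1}(C')]\to[Y;C']$. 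These are \cite[Lemma~2.5]{hmm} and \cite[Lemma~2.7]{hmm}, already invoked in the proof of Lemma~\ref{kfb.9}. Fact (i) is used when blowing up the seven ``extra'' upstairs centres, and fact (ii) when the upstairs centre is the $\pi_L$-preimage of a centre blown up at the same stage downstairs. The composite of the resulting $b$-fibrations is the desired $\pi^3_{b,L}\colon M^3_{k,b}\to M^2_{k,b}$.

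The hard part will be the verification in the second paragraph: confirming, stage by stage, that the $\pi_L$-preimage of each of the four downstairs centres is at that moment an iterated blow-up of finitely many corners from \eqref{ts.1} (so that (ii) applies with the correct front-face matching), and that each remaining upstairs centre then sits as a $p$-submanifold transversal to the current lift of $\pi_L$ (so that (i) applies). This is a finite but fiddly bookkeeping exercise on the lattice of corners, wholly analogous to the case $\phi=\Id$ treated in \cite{GH1,Kottke} and to Melrose's lifting of the $b$-triple space to the $b$-double space; once it is carried out, the statement follows formally.
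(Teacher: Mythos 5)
Your strategy is the paper's: reduce to $o=L$ by symmetry and lift $\pi_L$ step by step through the two blow-up sequences using the lifting lemmas of \cite{hmm}, then reorder by commutativity of blow-ups. But the ``fiddly bookkeeping'' you defer is exactly where your plan, as stated, would stall. Your fact (ii) only lets you blow up the \emph{preimage} of a downstairs centre, and your fact (i) presupposes that no accompanying downstairs blow-up is needed; with this toolkit you cannot process the centres of \eqref{ts.1} in order of decreasing codimension. The very first centre, $H_{0000}$, is not the preimage of any downstairs centre (the preimage of $\pa M\times\pa M\times\{0\}$ is $H_{0010}$), and blowing it up before $\pa M\times\pa M\times\{0\}$ has been blown up downstairs destroys $b$-normality---the new front face maps onto a codimension-three corner of $M^2\times[0,\infty)_k$---so neither (i) nor (ii) applies at that stage. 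The same problem arises for $H_{0001}$, $H_{0100}$, $H_{1000}$, which map onto the downstairs centres rather than onto boundary hypersurfaces: your claim that all seven ``boundary third-digit'' centres need no accompanying downstairs blow-up is correct only for $H_{1100}$, $H_{1001}$, $H_{0101}$, and for the other four only after the corresponding downstairs centre has already been blown up.

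The paper avoids this by using \cite[Lemma~2.5]{hmm} in its full strength: each downstairs centre is blown up together with \emph{all} boundary faces of $M^3\times[0,\infty)_k$ mapping onto it, namely $H_{0000},H_{0010}$ with $\pa M\times\pa M\times\{0\}$, and then $H_{0100},H_{0110}$; $H_{1000},H_{1010}$; $H_{0001},H_{0011}$ with the remaining three downstairs centres, so that only $H_{1100},H_{1001},H_{0101}$ (the centres mapping onto boundary hypersurfaces) are left for \cite[Lemma~2.7]{hmm}; note that you have the roles of the two cited lemmas interchanged relative to their use here and in Lemma~\ref{kfb.9}. After that, the reordering needed to arrive at \eqref{ts.1} uses only disjointness of the lifted centres once $H_{0000}$ has been blown up; neither Lemma~\ref{kqfb.7} nor nested commutation is needed. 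Alternatively, you could keep your grouping---preimage first, deeper corner afterwards via fact (i)---but then matching the result with \eqref{ts.1} requires commuting \emph{nested} blow-ups (e.g.\ $H_{0000}\subset H_{0010}$), a tool you did not include among your rearrangement devices.
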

\begin{proof}
By symmetry, it suffices to consider the case $o=L$.  By \cite[Lemma~2.5]{hmm}, the projection $\pi_L$ first lifts to a $b$-fibration
$$
[M^3\times [0,\infty)_k; H_{0000},H_{0010}]\to [M^2\times[0,\infty)_k; \pa M\times\pa M\times \{0\}].
$$
Applying \cite[Lemma~2.5]{hmm} three more times, this lifts to a $b$-fibration
\begin{equation}
[M^3\times [0,\infty)_k; H_{0000}, H_{0010}, H_{0100}, H_{0110}, H_{1000}, H_{1010}, H_{0001}, H_{0011}]\to M^2_{k,b}.
\label{ts.3}\end{equation}
Now, after the blow-up of $H_{0000}$, the lift of the corner $H_{0110}$ is disjoint from those of $H_{1000}$ and $H_{0001}$, while the lift of $H_{1010}$ is disjoint from the one of $H_{0001}$.  Hence, their blow-ups commute in \eqref{ts.3}, which can be rewritten
\begin{equation}
[M^3\times[0,\infty)_{k}; H_{0000}, H_{0010}, H_{0100}, H_{1000}, H_{0001}, H_{0110}, H_{1010}, H_{0011}]\to M^2_{k,b}.
\label{ts.4}\end{equation}
Hence, by \cite[Lemma~2.7]{hmm}, the $b$-fibration \eqref{ts.4} lifts to a $b$-fibration 
$$
  [M^3\times[0,\infty)_k; H_{0000}, H_{0010}, H_{0100}, H_{1000}, H_{0001}, H_{0110}, H_{1010}, H_{0011}, H_{1100}, H_{1001}, H_{0101}]\to M^2_{k,b}.
$$
The result then follows by the commutativity of the blow-ups of non-intersecting $p$-submanifolds.

\end{proof}

Let $H_{ij\ell m}^b$ be the boundary hypersurface of $M^3_{k,b}$ corresponding to the blow-up of $H_{ij\ell m}$.  Since $M^2_{k,\phi}$ is constructed from $M^2_{k,b}$ by blowing up the $p$-submanifolds $\Phi_+$ and $\Phi_0$ defined in Lemma~\ref{kfb.9b}, this suggests to look at the lifts of  $\Phi_+$ and $\Phi_0$   with respect to $\pi_{b,o}$ for $o\in\{L,C,R\}$ to construct the triple space of $M^2_{k,\phi}$.  For $\Phi_+$, this gives the $p$-submanifolds $G^+_o$ contained in $H_{0001}^b$ for each $o\in\{L,C,R\}$, as well a the p-submanifolds $J^+_L$ contained in $H^b_{0011}$, $J^+_C$ contained in $H^b_{0101}$ and $J^+_R$ contained in $H^b_{1001}$.  The $p$-submanifold $G^+_L$, $G^+_C$ and $G^+_R$ have a non-zero intersection.  To describe it, notice that there is a natural diffeomorphism
$$
   H^b_{0001}\cong \pa M^3\times L_b
$$    
where $L_b$ is the face corresponding to $H^b_{0001}$ inside $[0,1)^3_{k,b}$.  We have further that $L_b\cong G_b\times [0,\infty)_k$, where $G_b$ is the front face of the $b$-triple space $[0,1)^3_b$.  

\begin{lemma}
The intersection of any pair of $G^+_L$, $G^+_C$ and $G_{R}^+$ in $M^3_{k,b}$ is the $p$-submanifold 
$$
      K^+=G_L^+\cap G_C^+\cap G_R^+.
$$
\label{ts.5}\end{lemma}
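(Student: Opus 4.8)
The plan is to reduce the statement to an elementary transitivity observation by working in local coordinates near $K^+$. By the symmetry of the construction it suffices to prove, say, that $G^+_L\cap G^+_C=K^+$; since $K^+\subseteq G^+_L\cap G^+_C$ holds by definition, the content is the reverse inclusion together with the verification that $G^+_L\cap G^+_C$ is a $p$-submanifold.

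First I would record the local structure of $\Phi_+\subset M^2_{k,b}$ and of its lifts $G^+_o$ under the $b$-fibrations $\pi^3_{b,o}$ of Lemma~\ref{ts.2}, near the interior of $H^b_{0001}\cong\pa M^3\times L_b$ with $L_b\cong G_b\times[0,\infty)_k$. The submanifold $\Phi$, and hence $\Phi_+$ near $K^+$, is cut out by two conditions: a \emph{base condition}, that the two relevant copies of $\pa M$ have the same image under $\phi$, and a \emph{ratio condition} $\theta=\pi/4$, i.e.\ the two relevant boundary defining functions agree to leading order. Writing $x_1,x_2,x_3$ for the boundary defining functions lifted from the three $M$-factors and $s_2=x_2/x_1$, $s_3=x_3/x_1$ for coordinates on the interior of $G_b$, and recalling that $\pi^3_{b,L}$, $\pi^3_{b,C}$, $\pi^3_{b,R}$ remember the factor pairs $(1,2)$, $(1,3)$, $(2,3)$, one finds near $K^+$ that
\[
G^+_L=\{\phi(m_1)=\phi(m_2),\ s_2=1\},\qquad G^+_C=\{\phi(m_1)=\phi(m_3),\ s_3=1\},\qquad G^+_R=\{\phi(m_2)=\phi(m_3),\ s_2=s_3\}.
\]

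The key point is then purely algebraic: the three base conditions $\phi(m_i)=\phi(m_j)$ are pairwise transitive, as are the three ratio conditions $x_i=x_j$ (equivalently $s_2=1$, $s_3=1$, $s_2=s_3$). Hence the defining conditions of any two of $G^+_L,G^+_C,G^+_R$ already imply those of the third, so $G^+_L\cap G^+_C\subseteq G^+_R$ and therefore
\[
G^+_L\cap G^+_C=G^+_L\cap G^+_C\cap G^+_R=K^+,
\]
with the two analogous identities for the remaining pairs. Finally, near $K^+$ the set $G^+_L\cap G^+_C$ is cut out by the two independent equations $s_2=1$, $s_3=1$ together with the base conditions $\phi(m_1)=\phi(m_2)$, $\phi(m_1)=\phi(m_3)$, which define the iterated fiber product $\pa M\times_Y\pa M\times_Y\pa M\subseteq\pa M^3$ and are transversal to the $s$-equations; thus $G^+_L\cap G^+_C$ is a $p$-submanifold, and combined with the set equality this identifies it with $K^+$ as a $p$-submanifold.

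The main obstacle I anticipate is bookkeeping rather than anything conceptual: one must check that the coordinate descriptions of $\Phi_+$ and of the $G^+_o$ displayed above remain valid along all of $K^+$, including near the corners where $H^b_{0001}$ meets the other boundary hypersurfaces $H^b_{ij\ell m}$ of $M^3_{k,b}$, so that the transitivity argument applies uniformly rather than merely over the interior of $H^b_{0001}$. This amounts to tracking the effect on $\Phi\times[0,\infty)_k$, $\Phi\times\{0\}$ and their lifts of the iterated blow-ups defining $M^3_{k,b}$, in the same spirit as the alternative description of $M^2_{k,\phi}$ in Lemma~\ref{kfb.9b}, but is otherwise routine.
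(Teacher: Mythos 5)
Your argument is correct and is essentially the paper's own proof: both identify $H^b_{0001}$ with $\pa M^3\times G_b\times[0,\infty)_k$, describe each $G^+_o$ there by a fibered-diagonal condition on the $\pa M$ factors together with an equality of boundary defining functions on $G_b$, and conclude by transitivity that any pairwise intersection already forces $\phi(m)=\phi(m')=\phi(m'')$ and pins the $G_b$-variable to the unique diagonal point $p_b$, which is manifestly a $p$-submanifold. The corner bookkeeping you flag is dealt with in the paper simply by stating the ratio conditions globally on $G_b$ (as $x(q)=x'(q)$, etc.) rather than in the interior coordinates $s_2,s_3$.
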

\begin{proof}
Let $x,x'$ and $x''$ denote the boundary defining functions for each factor of $M^3$.  let $p_b\in G_b$ be the unique point of $G_b$ contained in the lifted diagonal on $[0,1)^3_b$.  Then under the identification $H^b_{0001}\cong \pa M^3\times G_b\times [0,\infty)_k$, we have that
$$
 G_L^+\cong \{ (m,m',m'',q,k)\in\pa M^3\times G_b\times [0,\infty)_k \; |\; \phi(m)=\phi(m'), \; x(q)= x'(q) \}
$$
and there are similar descriptions for $G^+_C$ and $G^+_R$.  From those descriptions, we see that the intersection of any pair in $G^+_L$, $G_C^+$ and $G^+_R$ is given by 
$$
K^+=\{ (m,m',m'',q,k)\in \pa M^3\times G_b\times [0,\infty)_k \; | \; \phi(m)=\phi(m')=\phi(m''), q=p_b\},
$$  
which is clearly a $p$-submanifold of $H^b_{0001}$.  

\end{proof}

Similarly, the lifts of $\Phi_0\in M^2_{k,b}$ with respect to $\pi^3_{b,L}$, $\pi^3_{b,C}$ and $\pi^3_{b,R}$ gives $p$-submanifolds 
$G_L$, $G_C$ and $G_R$ inside $H^b_{0000}$ as well as the $p$-submanifolds $J_L$, $J_C$ and $J_R$ inside 
$H^b_{0010}$, $H^b_{0100}$ and $H^b_{1000}$.  Again $G_L$, $G_C$ and $G_R$ have a non-trivial intersection.

\begin{lemma}
The intersection for any pair of $G_L$, $G_C$ and $G_R$ is the $p$-submanifold 
$$
    K=G_L\cap G_C\cap G_R.
$$
\label{ts.6}\end{lemma}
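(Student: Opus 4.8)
The plan is to mirror, almost verbatim, the proof of Lemma~\ref{ts.5}, with $\Phi_0$ in place of $\Phi_+$ and $H^b_{0000}$ in place of $H^b_{0001}$. First I would fix boundary defining functions $x,x',x''$ for the three copies of $M$ in $M^3$, together with $k$ for the last factor. Since $H_{0000}=(\pa M)^3\times\{0\}$ is the deepest corner and is blown up first in \eqref{ts.1}, while every submanifold blown up afterwards meets $H^b_{0000}$ only in one of its boundary hypersurfaces, the face $H^b_{0000}\subset M^3_{k,b}$ carries a natural product identification
$$
  H^b_{0000}\cong(\pa M)^3\times F_0,
$$
where $F_0$ is the face corresponding to $H^b_{0000}$ inside $[0,1)^3_{k,b}$; it plays here the role played by $L_b$ in the proof of Lemma~\ref{ts.5}. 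Inside $F_0$ there is a distinguished point $p_0$, namely the unique point of $F_0$ lying on the lift of the total diagonal $\{x=x'=x'',\ k=0\}$.

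Second, I would track the lift of $\Phi_0$. Recall that $\Phi\subset\fbf\subset M^2_b$ is cut out, in the identification $\fbf\cong\pa M\times\pa M\times[0,\tfrac{\pi}2]_\theta$, by $\phi(p)=\phi(q)$ and $\theta=\tfrac{\pi}4$, and that $\Phi_0$ is the lift of $\Phi\times\{0\}$ to $M^2_{k,b}$. Pulling back by the $b$-fibrations $\pi^3_{b,o}$ of Lemma~\ref{ts.2} ($o\in\{L,C,R\}$), and using the commutativity of blow-ups (Lemma~\ref{kqfb.7}) to realise $M^3_{k,b}$ as a convenient iterated blow-up, one obtains, exactly as for $G^+_L,G^+_C,G^+_R$ in Lemma~\ref{ts.5}, descriptions
$$
  G_L\cong\{(m,m',m'',q)\in(\pa M)^3\times F_0\ :\ \phi(m)=\phi(m'),\ q\in D_L\},
$$
and likewise $G_C$ (with $\phi(m)=\phi(m'')$, $q\in D_C$) and $G_R$ (with $\phi(m')=\phi(m'')$, $q\in D_R$), where $D_o\subset F_0$ is the locus where the two boundary defining functions indexed by $o$ agree on the relevant front face; each $D_o$ is a $p$-submanifold of $F_0$ containing $p_0$.

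Third, I would compute the pairwise intersections. Any pair among $\{G_L,G_C,G_R\}$ imposes two of the three matching conditions $\phi(m)=\phi(m')$, $\phi(m)=\phi(m'')$, $\phi(m')=\phi(m'')$, hence all three; and the intersection of the corresponding pair among $\{D_L,D_C,D_R\}$ is precisely $\{p_0\}$, just as $G^+_L\cap G^+_C=\{q=p_b\}$ in Lemma~\ref{ts.5}. Consequently each pairwise intersection equals
$$
  \{(m,m',m'',q)\in(\pa M)^3\times F_0\ :\ \phi(m)=\phi(m')=\phi(m''),\ q=p_0\}=G_L\cap G_C\cap G_R=:K,
$$
which is a $p$-submanifold of $H^b_{0000}$. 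The step I expect to be the main obstacle is the second one: establishing the explicit form of the $G_o$ after the full sequence of blow-ups defining $M^3_{k,b}$ --- in particular keeping track of the $k$-direction blow-ups --- and verifying that the three submanifolds meet pairwise cleanly, so that the computed intersection is genuinely a $p$-submanifold. Here Lemma~\ref{ts.2}, Lemma~\ref{kqfb.7}, and the fact that a $b$-fibration pulls $p$-submanifolds back to $p$-submanifolds do the real work, but the combinatorics of the front faces requires care.
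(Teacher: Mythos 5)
Your overall strategy (mirror Lemma~\ref{ts.5}: the fibered-boundary conditions pairwise imply the triple condition, and the corresponding loci in the model face intersect pairwise in the same set) is the same as the paper's, but your explicit identification of the intersection is wrong, and the error is precisely at the point you flag as the main obstacle, namely the $k$-direction. The submanifold $\Phi_0$ is the lift of $\Phi\times\{0\}$, which is contained in the blown-up center $\fb\times\{0\}$, so its lift is the \emph{full preimage} in the front face: it imposes the condition $x=x'$ (and the fibered diagonal in $\pa M\times\pa M$) but \emph{no} condition in the angular direction toward $k$. Consequently, under $H^b_{0000}\cong\pa M^3\times D_b$, each $G_o$ is cut out by one matching condition $\phi(\cdot)=\phi(\cdot)$ together with the equality of the two corresponding $x$-type "direction" variables on $D_b$, with the ratio to $k$ left free. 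Hence any pairwise intersection is
$$
K\cong \Delta^3_{\phi}\times E_b,\qquad E_b=D_b\cap\bigl(\text{lift of }\{(x,x,x,k):x\in[0,1),\,k\in[0,\infty)\}\bigr),
$$
and $E_b$ is a one-dimensional arc in $D_b$ (in terms of normalized directions $(a,a',a'',b)$ at the corner, it is the set $\{a=a'=a''\}$ with $b/a\in[0,\infty]$ as a free parameter). This is the paper's description. Your $p_0$, the point on the lift of the diagonal with $k=0$, is only the endpoint of this arc, so your claimed $K=\{q=p_0\}$ has the wrong dimension; in fact it is inconsistent with your own description of the $D_o$ as the loci where two boundary defining functions agree, since $D_L\cap D_C$ is then the whole arc, not a point.

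The source of the slip is that you transported the splitting used in Lemma~\ref{ts.5} too literally: there $L_b\cong G_b\times[0,\infty)_k$ splits off the $k$-factor, and the diagonal condition in the purely $x$-variables does pin down the single point $p_b\in G_b$. For $H^b_{0000}$ the face $D_b$ arises from blowing up the codimension-four corner \emph{including} $k$, it does not factor off a $[0,\infty)_k$, and the conditions coming from $\Phi_0$ never constrain the $k$-angle. The lemma's assertion (pairwise intersection equals the triple intersection, and it is a $p$-submanifold) survives once $E_b$ is used in place of $\{p_0\}$, so your argument can be repaired; but since $K$ is precisely the center blown up in \eqref{ts.8}, identifying it as a point rather than the arc would produce the wrong triple space, so this is not a cosmetic issue.
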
 
\begin{proof}
For the boundary hypersurface $H^b_{0000}$, there is a natural diffeomorphism
\begin{equation}
   H^b_{0000}\cong \pa M^3\times D_b,
\label{ts.7}\end{equation}
where $D_b$ is the corresponding face $H^b_{0000}$ in $[0,1)^3_{k,b}$.  Let $E_b\subset D_b$ be the $p$-submanifold given by the intersection of $D_b$ with the lift of the diagonal 
$$
   \{(x,x,x,k)\in M^3\times [0,\infty)_k\; | \; x\in [0,1), k\in [0,\infty)_k \}\subset [0,1)^3\times [0,\infty)_k 
$$ 
to $[0,1)^3_{k,b}$.  Then, under the identification \eqref{ts.7}, the intersection of any pair of $G_L$, $G_C$ and $G_R$ is given by the $p$-submanifold 
$$
  K\cong \Delta^3_{\phi}\times E_b\subset \pa M^3\times D_b \cong H^b_{0000},
$$
where 
$$
     \Delta^3_{\phi}=\{(m,m',m'')\in \pa M^3\; | \; \phi(m)= \phi(m')= \phi(m'')\}
$$
is the triple fibered diagonal in $\pa M^3$.  
\end{proof}

This suggests to define the $k,\phi$-triple space by
\begin{equation}
  M^3_{k,\phi}=[M^3_{k,b}; K^+, G^+_L, G^+_C, G^+_R, J^+_L, J^+_C, J^+_R, K, G_L, G_C, G_R, J_L, J_C, J_R]
\label{ts.8}\end{equation}
with blow-down map
\begin{equation}
\beta^3_{k,\phi}: M^3_{k,\phi}\to M^3\times [0,\infty).
\label{ts.8b}\end{equation}
\begin{proposition}
For each $o\in \{L,C,R\}$, the $b$-fibration $\pi^3_{b,o}: M^3_{k,b}\to M^2_{k,b}$ lifts to a $b$-fibration 
$$
   \pi^3_{k,\phi,o}: M^3_{k,\phi}\to M^2_{k,\phi}.
$$
\label{ts.9}\end{proposition}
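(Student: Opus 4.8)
By symmetry it suffices to treat a single $o\in\{L,C,R\}$ — the $S_3$-action permuting the three factors of $M^3$ intertwines the three maps $\pi^3_{b,o}$ and preserves all the spaces involved — and the argument parallels those of Lemma~\ref{ts.2} and Lemma~\ref{kfb.9}. The plan is to realise $M^3_{k,\phi}$, as defined in \eqref{ts.8}, as an iterated blow-up of $M^3_{k,b}$ that lifts, one step at a time, the passage from $M^2_{k,b}$ to $M^2_{k,\phi}=[M^2_{k,b};\Phi_+,\Phi_0]$ of Lemma~\ref{kfb.9b}. Throughout I would invoke only the two standard lifting lemmas for $b$-fibrations: \cite[Lemma~2.5]{hmm}, which lifts a $b$-fibration across the blow-up of a $p$-submanifold of the \emph{target} by blowing up its preimage in the source, and \cite[Lemma~2.7]{hmm}, which lifts a $b$-fibration across the blow-up of a $p$-submanifold of the \emph{source} that is carried by the map into a single boundary hypersurface of the target.

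First I would lift $\pi^3_{b,o}$ across the blow-up of $\Phi_+$. Of the seven submanifolds $K^+,G^+_L,G^+_C,G^+_R,J^+_L,J^+_C,J^+_R$ of $M^3_{k,b}$, the ones lying over $\Phi_+$ under $\pi^3_{b,o}$ are $G^+_o$ and $J^+_o$ (which together make up the closure of $(\pi^3_{b,o})^{-1}(\Phi_+)$) and the triple-coincidence locus $K^+$ of Lemma~\ref{ts.5}, which is mapped into $\Phi_+$; the remaining ones, $G^+_{o'}$ and $J^+_{o'}$ with $o'\neq o$, are each mapped by $\pi^3_{b,o}$ into a boundary hypersurface of $M^2_{k,b}$ (they sit inside $H^b_{0001}$, respectively one of $H^b_{0011},H^b_{0101},H^b_{1001}$, all of which go into the lifts of $\fb\times[0,\infty)_k$ or $\fb\times\{0\}$). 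Blowing these up in the order prescribed by \eqref{ts.8}, namely $K^+$ then $G^+_L,G^+_C,G^+_R$ then $J^+_L,J^+_C,J^+_R$, I would apply \cite[Lemma~2.5]{hmm} at the steps $K^+,G^+_o,J^+_o$ and \cite[Lemma~2.7]{hmm} at the other four. That this order is admissible and that each submanifold is, at the relevant stage, either a preimage component of $\Phi_+$ or swept into the boundary is checked directly in the coordinates of Lemma~\ref{kfb.10} and of the proofs of Lemmas~\ref{ts.5}--\ref{ts.6}; the blow-up of $K^+$ (and later of the $J^+$'s) is precisely what resolves the triple and partial intersections of the $G^+$'s, so that the subsequent centres remain $p$-submanifolds. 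This produces a $b$-fibration onto $[M^2_{k,b};\Phi_+]$.

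Then I would repeat the procedure for the blow-up of the lift of $\Phi_0$, now a $p$-submanifold of $[M^2_{k,b};\Phi_+]$: its preimage is accounted for by $G_o$, $J_o$ and the locus $K=\Delta^3_\phi\times E_b$ of Lemma~\ref{ts.6}, while $G_{o'},J_{o'}$ with $o'\neq o$ are swept into the boundary; blowing up $K,G_L,G_C,G_R,J_L,J_C,J_R$ in that order and alternating \cite[Lemma~2.5]{hmm} and \cite[Lemma~2.7]{hmm} exactly as before yields the desired $b$-fibration $\pi^3_{k,\phi,o}\colon M^3_{k,\phi}\to M^2_{k,\phi}$. Wherever it is convenient to reorder blow-ups that commute — those of non-intersecting $p$-submanifolds, or the $\fb$- and $\Phi$-blow-ups via Lemma~\ref{kqfb.7} — I would do so to line up the source and target orders.

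The one genuinely non-routine point, and where I expect the work to concentrate, is the combinatorial bookkeeping: one must verify that the single ordered list of fourteen blow-ups in \eqref{ts.8} is simultaneously compatible, for \emph{each} of the three $o$, with the order $(\Phi_+,\Phi_0)$ downstairs and with the preimage structure of $\pi^3_{b,o}$ — i.e.\ that at every stage the centre upstairs either projects onto the centre being blown up downstairs or is carried into a boundary hypersurface, with no leftover intersection obstructing the next blow-up. The auxiliary $K$- and $J$-type faces are included precisely to make this work for all $o$ at once, so the real content is checking that, for a fixed $o$, the ``wrong'' faces $G^+_{o'},J^+_{o'},G_{o'},J_{o'}$ with $o'\neq o$, together with $K^+$ and $K$, are harmless — which is the coordinate computation indicated above.
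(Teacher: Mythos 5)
Your overall plan — reduce to one $o$ by symmetry, lift across the two downstairs blow-ups $\Phi_+$ and then $\Phi_0$ using \cite[Lemma~2.5]{hmm} and \cite[Lemma~2.7]{hmm}, and reorder commuting blow-ups to match \eqref{ts.8} — is the paper's strategy. But there is a concrete misstep in the lemma bookkeeping: you propose to perform the blow-ups in the definitional order $K^+, G^+_L,\dots$ and to justify the step $K^+$ (and, at the second stage, $K$) by \cite[Lemma~2.5]{hmm}. That lemma matches a blow-up in the target with the blow-up of its preimage in the source, and $K^+$ is not the preimage of $\Phi_+$, nor a component of it: the (closure of the) preimage of $\Phi_+$ under $\pi^3_{b,L}$ is $G^+_L\cup J^+_L$, while $K^+=G^+_L\cap G^+_C\cap G^+_R$ is a proper $p$-submanifold of $G^+_L$. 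At the stage where you blow up $K^+$ nothing is blown up downstairs, so Lemma~2.5 does not apply; and if you insist on blowing up $K^+$ first, the later application of Lemma~2.5 to $G^+_L, J^+_L$ needs extra care, since after the $K^+$-blow-up the preimage of $\Phi_+$ also contains the new front face. As written, the $K^+$ and $K$ steps are therefore unjustified.

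The paper's proof handles exactly this point by changing the order and then commuting back: first apply \cite[Lemma~2.5]{hmm} once, blowing up $G^+_L$ and $J^+_L$ against $\Phi_+$ to get a $b$-fibration $[M^3_{k,b};G^+_L,J^+_L]\to[M^2_{k,b};\Phi_+]$; then apply \cite[Lemma~2.7]{hmm} to $K^+, G^+_C, G^+_R, J^+_C, J^+_R$, each of which at that stage is carried into a boundary hypersurface of the target (note, by the way, that $J^+_C\subset H^b_{0101}$ and $J^+_R\subset H^b_{1001}$ are mapped by $\pi^3_{b,L}$ into $\lf$ and $\rf$, not into the lifts of $\fb\times[0,\infty)_k$ or $\fb\times\{0\}$ as you assert — harmless for Lemma~2.7, but worth getting right); finally use the commutativity of nested blow-ups (e.g.\ $K^+\subset G^+_L$) and of blow-ups of disjoint centres to rearrange into the order $K^+, G^+_L, G^+_C, G^+_R, J^+_L, J^+_C, J^+_R$ of \eqref{ts.8}, and repeat the whole argument with $\Phi_0$, $K$, $G_o$, $J_o$. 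So the reordering device you mention only in passing is not an optional convenience: it is what carries the $K^+$ and $K$ steps, in place of the appeal to Lemma~2.5.
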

\begin{proof}
By symmetry, it suffices to check the result for $o=L$.  We can then essentially proceed as in the proof of \cite[Proposition~6]{Mazzeo-MelrosePhi}.  First, by \cite[Lemma~2.5]{hmm}, the map $\pi^3_{b,L}$ lifts to a $b$-fibration
$$
  [M^3_{k,b};G^+_L,J^+_L]\to [M^2_{k,b},\Phi_+].
$$
By \cite[Lemma~2.7]{hmm}, this further lift to a $b$-fibration 
$$
 [M^3_{k,b}; G_L^+, J_L^+, K^+, G_C^+, G_R^+, J_C^+, J_R^+]\to [M^2_{k,b}; \Phi_+].
$$
Using the commutativity of nested blow-ups and of blow-ups of non-intersecting $p$-submanifolds, this corresponds to a $b$-fibration
$$
 [M^3_{k,b};K^+,G_L^+, G_C^+, G_R^+, J_L^+, J_C^+, J_R^+]\to [M^2_{k,b},\Phi_+].
$$
Repeating this argument, but with $\Phi_+$, $K^+$, $G^+_o$ and $J^+_o$ replaced by $\Phi_0$, $K$, $G_o$ and $J_o$, we can check that this lifts further to a $b$-fibration
$$
   \pi^3_{k,\phi,L}: M^3_{k,\phi}\to [M^2_{k,b};\Phi_+,\Phi_0]= M^2_{k,\phi}
$$
as claimed.
\end{proof}

As in \cite{Mazzeo-MelrosePhi}, the $b$-fibrations $\pi^3_{k,\phi,o}$ for $o\in\{L,C,R\}$ behave well with respect to the lifted diagonals.  More precisely, for $o\in\{L,C,R\}$, set $\Delta^3_{k,\phi,o}:= \pi^{-1}_{k,\phi,o}(\Delta_{M,k})$ where $$
   \Delta_{M,k}=\{ (m,m,k) \in M^2\times [0,\infty) \; | \; m\in M, \; k\in [0,\infty)\}
$$  
is the diagonal.  These are clearly $p$-submanifolds.  Moreover, for $o\ne o'$, the intersection $\Delta^3_{k,\phi,o}\cap \Delta^3_{k,\phi,o'}$ is the $p$-submanifold $\Delta^3_{k,\phi,T}$ in $M^3_{k,\phi}$ given by the lift of the triple diagonal
$$
       \Delta^3_{M,k}= \{(m,m,m,k)\in M^3\times [0,\infty) \; | \; m\in M, \; k\in [0,\infty)\}.
$$

\begin{lemma}
For $o\ne o'$, the $b$-fibration $\pi^3_{k,\phi,o}$ is transversal to $\Delta^3_{k,\phi,o'}$ and induces a diffeomorphism 
$\Delta^3_{k,\phi,o'}\cong M^2_{k,\phi}$ sending $\Delta^3_{k,\phi,T}$ onto $\Delta_{k,\phi}\subset M^2_{k,\phi}$.
\label{ts.10}\end{lemma}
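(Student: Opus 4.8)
The plan is to use symmetry to reduce to one case, verify the statement at the level of $M^3\times[0,\infty)_k$, where it is elementary, and then transport it through the blow-ups defining $M^3_{k,\phi}$ and $M^2_{k,\phi}$. Take for instance $o=C$ and $o'=R$, the other cases being obtained by permuting the three factors. Before any blow-up, $\Delta^3_{M,R}:=\{(m,w,w,k)\ | \ m,w\in M,\ k\in[0,\infty)\}\subset M^3\times[0,\infty)_k$ is the total space of $\pi_R$ over $\Delta_{M,k}$, and $\pi_C$ restricts on it to the evident diffeomorphism $\Delta^3_{M,R}\to M^2\times[0,\infty)_k$, $(m,w,w,k)\mapsto(m,w,k)$, carrying $\Delta^3_{M,T}=\{(m,m,m,k)\}$ onto $\Delta_{M,k}$. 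Transversality downstairs is immediate: a fiber of $\pi_C$ is a curve along which the middle factor moves, and its tangent space together with $T\Delta^3_{M,R}$ spans everything at every point of $\Delta^3_{M,R}$.

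The substance of the proof is to check that, under the identification $\Delta^3_{M,R}\cong M^2\times[0,\infty)_k$ just fixed, the ordered sequence of blow-ups producing $M^3_{k,\phi}$ from $M^3\times[0,\infty)_k$ (that is, \eqref{ts.1} followed by \eqref{ts.8}) restricts to the sequence producing $M^2_{k,\phi}$ from $M^2\times[0,\infty)_k$. The mechanism is the standard fact that blowing up commutes with restriction to a $p$-submanifold which is transversal to (or nested with) the successive centers, so that the lift of $\Delta^3_{M,R}$ to $M^3_{k,\phi}$ is obtained by iteratively blowing up the traces of the centers on $\Delta^3_{M,R}$, and $\pi_C$ carries these traces to $p$-submanifolds of $M^2\times[0,\infty)_k$. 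Concretely, the corner $H_{ij\ell m}$ of \eqref{ts.1} meets $\Delta^3_{M,R}$ in the locus where the common middle/right point satisfies both conditions $j$ and $\ell$ and the first point satisfies $i$ and $k$ satisfies $m$; $\pi_C$ sends this onto the boundary face of $M^2\times[0,\infty)_k$ cut out by $i$ on the left factor, $j=\ell$ on the right factor, and $m$ on $[0,\infty)_k$. One then checks that, after deleting repetitions (distinct $H$'s may restrict to the same face, which is harmless) and the obvious commutations of disjoint and nested centers, \eqref{ts.1} becomes the list defining $M^2_{k,b}$ in \eqref{kfb.9c}; the analogous computation with the centers of \eqref{ts.8} shows that $G^+_C$ and $G_C$ (the $\pi_C$-lifts of $\Phi_+$ and $\Phi_0$) restrict onto $\Phi_+$ and $\Phi_0$, while the $L$- and $R$-labelled centers and the triple centers $K^+$, $K$ are, after the earlier blow-ups, either disjoint from the lift of $\Delta^3_{M,R}$ or meet it transversally, so their blow-ups disappear upon restriction. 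Since $\pi^3_{k,\phi,C}$ was built in Proposition~\ref{ts.9} precisely by lifting $\pi_C$ through these same blow-ups (via \cite[Lemma~2.5]{hmm} and \cite[Lemma~2.7]{hmm}), it follows that $\pi^3_{k,\phi,C}$ restricts to a diffeomorphism $\Delta^3_{k,\phi,R}\cong M^2_{k,\phi}$, and the corresponding statement for the nested triple diagonal gives that $\Delta^3_{k,\phi,T}$ goes onto $\Delta_{k,\phi}$. Transversality of $\pi^3_{k,\phi,C}$ to $\Delta^3_{k,\phi,R}$ is then automatic, since a $b$-fibration whose restriction to a $p$-submanifold is a diffeomorphism has fibers meeting that $p$-submanifold $b$-transversally.

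I expect the main obstacle to be the bookkeeping just sketched: matching the two dozen or so centers in \eqref{ts.1} and \eqref{ts.8} with those of \eqref{kfb.9c} and \eqref{kfb.9bb} in a compatible order, and verifying the transversality and nesting hypotheses that license commuting blow-ups past the restriction to $\Delta^3_{k,\phi,R}$ --- in particular that the centers living entirely in the forgotten middle factor do not interfere. The most delicate spot is the deepest corner of $M^3_{k,\phi}$, where all blown-up faces meet the three lifted diagonals simultaneously. There I would fall back on explicit local coordinates: triple-space analogues of the coordinates $S,Y,r',\theta$ of \eqref{kfb.13}, with separate ``left--center'' and ``center--right'' relative position variables, in which $\Delta^3_{k,\phi,R}$ is a coordinate subspace, $\pi^3_{k,\phi,C}$ is a coordinate projection, and both the diffeomorphism and the $b$-transversality can be read off directly, much as in the proofs of Lemma~\ref{kfb.10} and Lemma~\ref{kfb.14}.
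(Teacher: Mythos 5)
Your strategy is in substance the one the paper uses: reduce by symmetry, check the statement at a level where it is elementary, and then track it step by step through the blow-ups, the only real difference being the order of the two conclusions. The paper checks, blow-up by blow-up (in the order used in the proof of Proposition~\ref{ts.9}), that transversality to the lifted diagonal is preserved and then reads off the diffeomorphism; you propose to first obtain the diffeomorphism, by showing that the blow-up sequence restricts on $\Delta^3_{M,o'}$ to the sequence defining $M^2_{k,\phi}$, and then deduce transversality. That last inference is fine (surjectivity of the restricted $b$-differential gives ${}^bT\Delta^3_{k,\phi,o'}+\ker{}^bd\pi^3_{k,\phi,o}={}^bTM^3_{k,\phi}$), but note that to invoke ``blow-up commutes with restriction'' at each stage you must verify exactly the clean-intersection conditions the paper verifies inductively, so the reversal buys no savings; it is the same step-by-step check in different packaging.

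Two specific claims in your bookkeeping are off and should be repaired, since they sit at precisely the spot you identify as delicate. First, in the order of \eqref{ts.8} the triple center $K^+$ is blown up \emph{before} $G^+_C$, and its trace on the lifted diagonal coincides with that of $G^+_C$: with $o=C$, $o'=R$, both traces are $\{\phi(m)=\phi(w),\ q=p_b\}$ in the notation of Lemma~\ref{ts.5}, i.e.\ the preimage of $\Phi_+$. So it is the blow-up of $K^+$ (and likewise of $K$ for $\Phi_0$), not of $G^+_C$ and $G_C$, that effects the $\Phi_+$ and $\Phi_0$ blow-ups on the diagonal; one then checks (a short computation in the normal directions $y-y''$, $y'-y''$, $\log(x/x'')$, $\log(x'/x'')$ to $K^+$) that after the $K^+$ blow-up the lifts of $G^+_L$ and $G^+_C$ are disjoint from the lifted diagonal, while $G^+_R$ and $J^+_R$ meet it only in boundary hypersurfaces. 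Second, the criterion ``meets the lifted diagonal transversally, so the blow-up disappears upon restriction'' is not correct as a general principle: a center transversal to a $p$-submanifold can still have a trace of codimension $\ge 2$ in it, whose blow-up genuinely changes it (blow up the $z$-axis in $\bbR^3$ and restrict to the plane $z=0$). What you need, and what the computation above delivers, is that after the earlier blow-ups each remaining trace is empty or a boundary hypersurface of the lifted diagonal. With these corrections your argument goes through and is, in effect, the paper's proof written out in more detail.
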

\begin{proof}
By symmetry, we can assume $o=L$ and $o'=C$.  Now, one can check that the corresponding statement for $M^3_{k,b}$ holds.  Doing the blow-ups in the order used to show that $\pi^3_{k,\phi,L}$ is a $b$-fibration, we can check step by step that transversality is preserved.  The diffeomorphism is then a direct consequence of the transversality statement.  
\end{proof}

\section{Composition of low energy fibered boundary operators}\label{com.0}

We can use the triple space of the previous section to describe the composition of $k,\phi$-pseudodifferential operators.  Let us denote by $H^{k,\phi}_{ijlm}$ the boundary hypersurface of $M^{3}_{k,\phi}$ corresponding to the lift of $H_{ijlm}$ in $M^3\times [0,\infty)$.  Let us denote by $\ff^+_T, \ff^+_{LT}, \ff^+_{CT}, \ff^+_{RT}, \ff^+_{L}, \ff^+_C$ and $\ff^+_R$ the boundary hypersurfaces corresponding to the blow-ups of $K^+,G_L^+, G_C^+, G_R^+, J_L^+, J_C^+$ and $J^+_R$ respectively, while let $\ff^0_T, \ff^0_{LT}, \ff^0_{CT}, \ff^0_{RT}, \ff^0_{L}, \ff^0_C$ and $\ff^0_R$ denote the boundary hypersurfaces of $M^3_{k,\phi}$ corresponding to the lifts of $K,G_L, G_C, G_R, J_L, J_C$ and $J_R$.  Using this notation, let us describe how the boundary hypersurfaces behave with respect to the three $b$-fibrations of Proposition~\ref{ts.9}.  For the $b$-fibration $\pi^3_{k,\phi,L}$, it sends $H^{k,\phi}_{1101}$ surjectively  onto $M^2_{k,\phi}$, and otherwise is such that 
\begin{equation}
\begin{aligned}
(\pi^3_{k,\phi,L})^{-1}(\zf)= H^{k,\phi}_{1110}\cup H^{k,\phi}_{1100}, \quad \quad &(\pi^3_{k,\phi,L})^{-1}(\ff_0)= \ff^0_T\cup \ff^0_{LT} \cup \ff^0_L, \\
(\pi^3_{k,\phi,L})^{-1}(\lf_0)= H^{k,\phi}_{0110}\cup H^{k,\phi}_{0100}\cup \ff^0_C, \quad \quad & (\pi^3_{k,\phi,L})^{-1}(\fbf_0)=H^{k,\phi}_{0000}\cup \ff^0_{CT}\cup \ff^0_{RT}\cup H^{k,\phi}_{0010}, \\
(\pi^3_{k,\phi,L})^{-1}(\rf_0)= H^{k,\phi}_{1010}\cup H^{k,\phi}_{1000}\cup \ff^0_R, \quad \quad & (\pi^3_{k,\phi,L})^{-1}(\ff)= \ff^+_{T}\cup \ff^+_{LT}\cup \ff^+_L, \\
(\pi^3_{k,\phi,L})^{-1}(\lf)= H^{k,\phi}_{0111}\cup H^{k,\phi}_{0101}\cup \ff^+_C, \quad \quad & (\pi^3_{k,\phi,L})^{-1}(\fbf)= H^{k,\phi}_{0001}\cup \ff^+_{CT}\cup \ff^+_{RT}\cup H^{k,\phi}_{0011}, \\
(\pi^3_{k,\phi,L})^{-1}(\rf)= H^{k,\phi}_{1011}\cup H^{k,\phi}_{1001}\cup \ff^+_{R}. \quad \quad &
\end{aligned}
\label{com.1}\end{equation}
For the $b$-fibration $\pi^3_{k,\phi,C}$, it sends surjectively $H^{k,\phi}_{1011}$ onto $M^2_{k,\phi}$, and otherwise  is such that
\begin{equation}
\begin{aligned}
(\pi^3_{k,\phi,C})^{-1}(\zf)= H^{k,\phi}_{1110}\cup H^{k,\phi}_{1010}, \quad \quad &(\pi^3_{k,\phi,C})^{-1}(\ff_0)= \ff^0_T\cup \ff^0_{CT} \cup \ff^0_C, \\
(\pi^3_{k,\phi,C})^{-1}(\lf_0)= H^{k,\phi}_{0110}\cup H^{k,\phi}_{0010}\cup \ff^0_L, \quad \quad & (\pi^3_{k,\phi,C})^{-1}(\fbf_0)=H^{k,\phi}_{0000}\cup \ff^0_{LT}\cup \ff^0_{RT}\cup H^{k,\phi}_{0100}, \\
(\pi^3_{k,\phi,C})^{-1}(\rf_0)= H^{k,\phi}_{1100}\cup H^{k,\phi}_{1000}\cup \ff^0_R, \quad \quad & (\pi^3_{k,\phi,C})^{-1}(\ff)= \ff^+_{T}\cup \ff^+_{CT}\cup \ff^+_C, \\
(\pi^3_{k,\phi,C})^{-1}(\lf)= H^{k,\phi}_{0111}\cup H^{k,\phi}_{0011}\cup \ff^+_L, \quad \quad & (\pi^3_{k,\phi,C})^{-1}(\fbf)= H^{k,\phi}_{0001}\cup \ff^+_{LT}\cup \ff^+_{RT}\cup H^{k,\phi}_{0101}, \\
(\pi^3_{k,\phi,C})^{-1}(\rf)= H^{k,\phi}_{1101}\cup H^{k,\phi}_{1001}\cup \ff^+_{R}. \quad \quad &
\end{aligned}
\label{com.2}\end{equation}
Finally, the $b$-fibration $\pi^3_{k,\phi,R}$ sends $H^{k,\phi}_{0111}$ surjectively onto $M^2_{k,\phi}$, and otherwise is such that
\begin{equation}
\begin{aligned}
(\pi^3_{k,\phi,R})^{-1}(\zf)= H^{k,\phi}_{1110}\cup H^{k,\phi}_{0110}, \quad \quad &(\pi^3_{k,\phi,R})^{-1}(\ff_0)= \ff^0_T\cup \ff^0_{RT} \cup \ff^0_R, \\
(\pi^3_{k,\phi,R})^{-1}(\lf_0)= H^{k,\phi}_{1010}\cup H^{k,\phi}_{0010}\cup \ff^0_L, \quad \quad & (\pi^3_{k,\phi,R})^{-1}(\fbf_0)=H^{k,\phi}_{0000}\cup \ff^0_{LT}\cup \ff^0_{CT}\cup H^{k,\phi}_{1000}, \\
(\pi^3_{k,\phi,R})^{-1}(\rf_0)= H^{k,\phi}_{1100}\cup H^{k,\phi}_{0100}\cup \ff^0_C, \quad \quad & (\pi^3_{k,\phi,R})^{-1}(\ff)= \ff^+_{T}\cup \ff^+_{RT}\cup \ff^+_R, \\
(\pi^3_{k,\phi,R})^{-1}(\lf)= H^{k,\phi}_{1011}\cup H^{k,\phi}_{0011}\cup \ff^+_L, \quad \quad & (\pi^3_{k,\phi,R})^{-1}(\fbf)= H^{k,\phi}_{0001}\cup \ff^+_{LT}\cup \ff^+_{CT}\cup H^{k,\phi}_{1001}, \\
(\pi^3_{k,\phi,R})^{-1}(\rf)= H^{k,\phi}_{1101}\cup H^{k,\phi}_{0101}\cup \ff^+_{C}. \quad \quad &
\end{aligned}
\label{com.3}\end{equation}

To see what happens to the lift of densities, the following lemma due to Melrose will be useful.
\begin{lemma}[Melrose]
Let $Y$ be a $p$-submanifold of a manifold with corners.  Let $w$ be the codimension of $Y$ within the smallest boundary face of $X$ containing $Y$.  Let $\beta$ be the blow-down map from $[X;Y]$ to $X$.  If $\rho_Y\in \CI([X;Y])$ is a boundary defining function for the new boundary hypersurface created by the blow-up of $Y$, then
$$
        \beta^*{}^{b}\Omega(X)= (\rho_Y^w){}^{b}\Omega([X;Y]).
$$ 
\label{lbd.1}\end{lemma}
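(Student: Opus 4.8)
The plan is to reduce the statement to a local computation in a single projective coordinate chart of $[X;Y]$. Away from the front face $\ff := \beta^{-1}(Y)$ the map $\beta$ is a diffeomorphism, so there is nothing to prove there; and since ${}^b\Omega$ is a line bundle, the claim amounts to saying that the natural map $\beta^*\,{}^b\Omega(X)\to {}^b\Omega([X;Y])$, which is an isomorphism over the interior, vanishes to order exactly $w$ along $\ff$. As this is a local assertion and $w$ is locally constant along $Y$, it suffices to verify it in a collection of charts covering a dense subset of $\ff$.

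First I would choose adapted coordinates near a point of $Y$. Let $F$ be the smallest boundary face containing $Y$, of codimension $\ell$, cut out by boundary defining functions $x_1,\dots,x_\ell$; after shrinking the chart, the only boundary hypersurfaces of $X$ meeting it are $\{x_1=0\},\dots,\{x_\ell=0\}$. Choose the remaining coordinates as $(y_1,\dots,y_w,z)$ so that $Y=\{x_1=\dots=x_\ell=y_1=\dots=y_w=0\}$ in the chart, where $z$ runs along $Y$ and $w=\operatorname{codim}_F Y$. A generating $b$-density of $X$ here is
$$\mu=\Bigl|\tfrac{dx_1}{x_1}\wedge\cdots\wedge\tfrac{dx_\ell}{x_\ell}\wedge dy_1\wedge\cdots\wedge dy_w\wedge dz\Bigr|.$$
Passing to the $x_1$-projective chart of $[X;Y]$, with coordinates $(x_1,s_2,\dots,s_\ell,t_1,\dots,t_w,z)$ and blow-down $\beta(x_1,s,t,z)=(x_1,\,x_1 s_2,\dots,x_1 s_\ell,\,x_1 t_1,\dots,x_1 t_w,\,z)$, one has that $x_1$ is a boundary defining function for $\ff$ (so $\rho_Y=a x_1$ with $a$ smooth and positive) and that $s_2,\dots,s_\ell$ define the lifts of $\{x_2=0\},\dots,\{x_\ell=0\}$, these being the only boundary hypersurfaces in the chart. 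Pulling back, $\tfrac{dx_i}{x_i}\mapsto\tfrac{dx_1}{x_1}+\tfrac{ds_i}{s_i}$ for $i\ge 2$ and $dy_j\mapsto t_j\,dx_1+x_1\,dt_j$, so wedging and using that $\tfrac{dx_1}{x_1}$ already occurs as a factor,
$$\beta^*\mu=x_1^{\,w}\,\Bigl|\tfrac{dx_1}{x_1}\wedge\tfrac{ds_2}{s_2}\wedge\cdots\wedge\tfrac{ds_\ell}{s_\ell}\wedge dt_1\wedge\cdots\wedge dt_w\wedge dz\Bigr|.$$
The density on the right is a generating $b$-density of $[X;Y]$ in this chart, whence $\beta^*\,{}^b\Omega(X)=\rho_Y^{\,w}\,{}^b\Omega([X;Y])$ there. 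The computation is symmetric under permuting $x_1,\dots,x_\ell$ and carries over verbatim to the charts centered on a $y_j$ (with $|y_j|$ as radial variable), which together with the $x_i$-charts cover $\ff$; this proves the lemma.

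The only delicate point — the \emph{main obstacle}, such as it is — is the exponent bookkeeping. One must observe that the $\ell$ substitutions $x_i=x_1 s_i$ produce no power of $\rho_Y$ (the logarithmic $b$-forms $dx_i/x_i$ transform with no Jacobian factor, which is precisely why the $b$-calculus is well behaved under blow-ups of boundary faces), while each of the $w$ substitutions $y_j=x_1 t_j$ contributes exactly one factor of $\rho_Y$. Thus it is the codimension of $Y$ \emph{within} its minimal boundary face $F$, and not its codimension in $X$, that governs the power; the case in which $Y$ is an interior $p$-submanifold is recovered by taking $F=X$, and there the formula is consistent with the usual polar-coordinate Jacobian combined with the fact that the blow-up introduces one new boundary hypersurface.
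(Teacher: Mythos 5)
The paper gives no proof of this lemma at all: it is quoted as a result of Melrose and used directly, so there is nothing of the authors' to compare your argument against. What you supply is the standard projective-chart computation, and in substance it is correct. The key bookkeeping is right: the $\ell$ logarithmic factors $dx_i/x_i$ lift to $dx_1/x_1+ds_i/s_i$ and contribute no power of the front-face defining function, while each of the $w$ substitutions $y_j=x_1t_j$ contributes exactly one, so the exponent is the codimension of $Y$ in its minimal boundary face $F$ and not in $X$. The check in the charts centred on a $y_j$ is not quite "verbatim" (there one factor of $\rho_Y$ comes from rewriting $dy_j$ as $y_j\,(dy_j/y_j)$ rather than from a substitution), but it produces the same total power $w$, and your consistency checks ($F=X$ and the $w=0$ case of blowing up a boundary face) are correct.

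One step is stated too loosely. At a point of $Y$ lying on a boundary hypersurface of $X$ that meets $Y$ without containing it (for example $Y=\{x_1=0,\,y=0\}$ in $[0,\infty)_{x_1}\times[0,\infty)_{x_2}\times\bbR_y\times\bbR_z$, near a point with $x_2=0$), no shrinking of the chart removes the extra hypersurface, so your normal form "only $\{x_1=0\},\dots,\{x_\ell=0\}$ meet the chart" is unavailable there; and retreating to "a dense subset of $\ff$" is not by itself sufficient, since smoothness and nonvanishing of $\rho_Y^{-w}\beta^*\mu$ on a dense open subset of $\ff$ do not automatically extend to all of $\ff$. The repair is immediate: in the $p$-submanifold normal form allow some of the tangential coordinates $z$ to be boundary coordinates; the corresponding factors $dz_m/z_m$ in the generating $b$-density lift again to logarithmic differentials of defining functions of the lifted hypersurfaces (which pass through the front face) and contribute no Jacobian factor, so the computation and the exponent $w$ are unchanged and the resulting charts cover all of $\ff$. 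With that adjustment your proof is complete.
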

Indeed, using this lemma, we see that
\begin{multline}
(\beta^3_{k,\phi})^*({}^{b}\Omega(M^3\times[0,\infty))=  \\
\left(\rho^2_{\ff^0_T}\rho_{\ff^+_T}^2\rho_{\ff^0_{LT}} \rho_{\ff^0_{CT}}\rho_{\ff^0_{RT}} \rho_{\ff^+_{LT}}\rho_{\ff^+_{CT}}\rho_{\ff^+_{RT}}\rho_{\ff^0_{L}}\rho_{\ff^0_{C}}\rho_{\ff^0_R}\rho_{\ff^+_L}\rho_{\ff^+_C}\rho_{\ff^+_R}\right)^{h+1}  ({}^{b}\Omega(M^3_{k,\phi})),
\label{com.4}\end{multline}
where $\rho_H$ denotes a boundary defining function for the boundary hypersurface $H$.  
We also compute that
\begin{equation}
     \pi^*_{k,\phi,R}(x)= \rho_{\rf_0}\rho_{\rf}\rho_{\fbf_0}\rho_{\fbf}\rho_{\ff_0}\rho_{\ff},
\label{com.5}\end{equation}
so that combining with Lemma~\ref{lbd.1}, we see that
\begin{equation}
   (\beta_{k,\phi})^*\left[(\pr_L\times\Id_{[0,\infty)})^*({}^{b}\Omega(M\times [0,\infty))) \cdot(\pr_R\times\Id_{[0,\infty)})^*\pr_1^*{}^{\phi}\Omega(M) \right]= (\rho_{\rf_0}\rho_{\rf}\rho_{\fbf_0}\rho_{\fbf})^{-h-1}({}^{b}\Omega(M^2_{k,\phi})).
\label{com.6}\end{equation}
Pulling back \eqref{com.5} to $M^3_{k,\phi}$ via $\pi^3_{k,\phi,L}$ and $\pi^3_{k,\phi,R}$ gives
\begin{multline}
(\pi^3_{k,\phi,L})^*(\pi_{k,\phi,R}^*(x))= \rho_{1010}\rho_{1000}\rho_{\ff^0_{R}}\rho_{1011}\rho_{1001}\rho_{\ff^+_{R}}\rho_{0000}\rho_{\ff^0_{CT}}\rho_{\ff^0_{RT}}\rho_{0010} \\
\cdot \rho_{\ff^0_T}\rho_{\ff^0_{LT}}\rho_{\ff^0_L}\rho_{\ff^+_T}\rho_{\ff^+_{LT}}\rho_{\ff^+_L}\rho_{0001}\rho_{\ff^+_{CT}}\rho_{\ff^+_{RT}}\rho_{0011},
\label{com.7}\end{multline}
and 
\begin{multline}
(\pi^3_{k,\phi,R})^*(\pi_{k,\phi,R}^*x)= \rho_{1100}\rho_{0100}\rho_{\ff^0_C}\rho_{1101}\rho_{0101}\rho_{\ff^+_C}\rho_{0000}\rho_{\ff^0_{LT}}\rho_{\ff^0_{CT}}\rho_{1000}\\
\cdot \rho_{\ff^0_{T}}\rho_{\ff_{RT}}\rho_{\ff^0_R}\rho_{\ff^+_{T}}\rho_{\ff^+_{RT}}\rho_{\ff^+_R}\rho_{0001}\rho_{\ff^+_{LT}}\rho_{\ff^+_{CT}}\rho_{1001},
\label{com.8}\end{multline}
where $\rho_{ijlm}$ stands for $\rho_{H^{k,\phi}_{ijlm}}$.   Hence, in terms of the $\phi$-density bundle ${}^{\phi}\Omega(M)= (x^{-h-1}){}^{b}\Omega(M)$ and 
$${}^{b}\Omega^3_L(M^3_{k,\phi}):=(\pi^3_{k,\phi,L})^*(\beta_{k,\phi})^*\left[(\pr_L\times\Id_{[0,\infty)})^*({}^{b}\Omega(M\times [0,\infty)))\right],
$$  we see that
\begin{equation}
{}^{b}\Omega^3_L(M^3_{k,\phi}) \cdot (\pi^3_{k,\phi,L})^*\beta_{k,\phi}^*\left[(\pr_R\times\Id_{[0,\infty)})^*\pr_1^*{}^{\phi}\Omega(M))\right]  \cdot (\pi^3_{k,\phi,R})^*\beta_{k,\phi}^*\left[(\pr_R\times\Id_{[0,\infty)})^*\pr_1^*{}^{\phi}\Omega(M))\right] 
\label{com.9}\end{equation}
corresponds to $(\rho^{\mathfrak{a}}) {}^{b}\Omega(M^3_{k,\phi})$ with multiweight $\mathfrak{a}$ such that 
\begin{multline}
\rho^{\mathfrak{a}}= \left( \rho_{\ff^0_{LT}}\rho_{\ff^0_{CT}}\rho_{\ff^0_{RT}}\rho_{\ff^+_{LT}}\rho_{\ff^+_{CT}}\rho_{\ff^+_{RT}}\rho_{\ff^0_{R}}\rho_{\ff^+_R}\rho^2_{0000}\rho^2_{1000}\rho^2_{0001}\rho^2_{1001}  \right. \\
\left. \rho_{1010}\rho_{1011}\rho_{0010}\rho_{0011}\rho_{1100}\rho_{0100}\rho_{1101}\rho_{0101} \right)^{-h-1}.
\label{com.10}\end{multline}

Hence, if $\kappa_A$ and $\kappa_B$ denote the Schwartz kernels of operators $A\in \Psi^{-\infty,\cE}_{k,\phi}(M)$ and 
$B\in\Psi^{-\infty,\cF}_{k,\phi}(M)$ and if ${}^{b}\nu^3_L$ is a nonvanishing section of ${}^{b}\Omega^3_L(M^3_{k,\phi})$, the above discussion and a careful computation shows that 
\begin{equation}
     {}^{b}\nu^3_L  \cdot (\pi^3_{k,\phi,L})^*\kappa_A \cdot (\pi^3_{k,\phi,R})^*\kappa_B \in \cA_{\phg}^{\cG}(M^3_{k,\phi};{}^{b}\Omega(M^3_{k,\phi}))
\label{com.10b}\end{equation}
with index family $\cG$ given by
\begin{equation}
\begin{array}{ll}
 \cG|_{H^{k,\phi}_{0000}}= \cE|_{\fbf_0}+ \cF|_{\fbf_0}-2(h+1), \quad &  \cG|_{\ff^+_T}= \cE|_{\ff}+ \cF|_{\ff}, \\
 \cG|_{H^{k,\phi}_{1000}}= \cE|_{\rf_0}+ \cF|_{\fbf_0}-2(h+1), \quad & \cG|_{\ff^+_{LT}}= \cE_{\ff}+ \cF|_{\fbf}-(h+1), \\
 \cG|_{H^{k,\phi}_{0100}}= \cE|_{\lf_0}+ \cF_{\rf_0}-(h+1) , \quad & \cG|_{\ff^+_{CT}}= \cE|_{\fbf}+\cF|_{\fbf}-(h+1),  \\
  \cG|_{H^{k,\phi}_{0010}}= \cE|_{\fbf_0}+ \cF|_{\lf_0}-(h+1), \quad & \cG|_{\ff^+_{RT}}= \cE|_{\fbf}+ \cF|_{\ff}-(h+1), \\
    \cG|_{H^{k,\phi}_{0001}}= \cE|_{\fbf}+\cF_{\fbf}-2(h+1), \quad & \cG|_{\ff^+_L}= \cE|_{\ff}+ \cF|_{\lf}, \\
    \cG|_{H^{k,\phi}_{1100}}= \cE|_{\zf}+ \cF|_{\rf_0}-(h+1), \quad & \cG|_{\ff^+_C}= \cE|_{\lf}+ \cF_{\rf}, \\
    \cG|_{H^{k,\phi}_{1010}}= \cE|_{\rf_0}+ \cF_{\lf_0}-(h+1), \quad & \cG|_{\ff^+_R}= \cE|_{\rf}+ \cF|_{\ff}-(h+1), \\
\cG|_{H^{k,\phi}_{1001}}= \cE|_{\rf}+ \cF|_{\fbf}-2(h+1), \quad & \cG|_{\ff^0_T}= \cE|_{\ff_0}+\cF|_{\ff_0}, \\
\cG|_{H^{k,\phi}_{0110}}= \cE|_{\lf_0}+ \cF|_{\zf}, \quad & \cG|_{\ff^0_{LT}}=\cE|_{\ff_0}+ \cF|_{\fbf_0}-(h+1), \\
\cG|_{H^{k,\phi}_{0101}}=\cE|_{\lf}+ \cF|_{\rf}-(h+1), \quad & \cG|_{\ff^0_{CT}}= \cE|_{\fbf_0}+ \cF|_{\fbf_0}-(h+1), \\
\cG|_{H^{k,\phi}_{0011}}=\cE|_{\fbf}+ \cF|_{\lf}-(h+1), \quad & \cG|_{\ff^0_{RT}}= \cE|_{\fbf_0}+ \cF|_{\ff_0}- (h+1), \\
\cG|_{H^{k,\phi}_{1110}}= \cE|_{\zf}+ \cF|_{\zf}, \quad & \cG|_{\ff^0_L}= \cE|_{\ff_0}+ \cF|_{\lf_0}, \\
\cG|_{H^{k,\phi}_{1101}}= \cF|_{\rf}-(h+1), \quad & \cG|_{\ff^0_C}= \cE_{\lf_0}+ \cF|_{\rf_0}, \\
\cG|_{H^{k,\phi}_{1011}}= \cE|_{\rf}+\cF|_{\lf}-(h+1), \quad & \cG|_{\ff^0_R}= \cE|_{\rf_0}+ \cF|_{\ff_0}- (h+1), \\
\cG|_{H^{k,\phi}_{0111}}= \cE|_{\lf}. &
\end{array} 
\label{com.11}\end{equation}
This yields the following composition result.
\begin{theorem}
Let $E$, $F$ and $G$ be vector bundles over the transition single space $M_t$.  Suppose that $\cE$ and $\cF$ are index families associated to $M^2_{k,\phi}$ such that 
$$
    \inf\Re\cE_{\rf}+ \inf\cF_{\lf}>h+1.
$$
Then given $A\in \Psi^{m,\cE}_{k,\phi}(M;F,G)$ and $B\in \Psi^{m',\cF}_{k,\phi}(M;E,F)$, their composition is well-defined with 
$$
     A\circ B \in \Psi^{m+m',\cK}_{k,\phi}(M;E,G),
$$
where $\cK$ is the index family such that 
\begin{equation}
\begin{array}{ll}
\cK|_{\zf}&= (\cE|_{\zf}+\cF|_{\zf})\overline{\cup} (\cE|_{\rf_0}+\cF|_{\lf_0}-h-1), \\
\cK|_{\lf_0}&= (\cE|_{\lf_0}+\cF|_{\zf})\overline{\cup} (\cE|_{\fbf_0}+\cF|_{\lf_0}-h-1))\overline{\cup}(\cE|_{\ff_0}+ \cF|_{\lf_0}), \\
\cK|_{\rf_0}&= (\cE|_{\zf}+\cF|_{\rf_0})\overline{\cup}(\cE|_{\rf_0}+\cF|_{\fbf_0}-h-1)\overline{\cup}(\cE|_{\rf_0}+\cF|_{\ff_0}), \\
\cK|_{\lf}&= \cE|_{\lf}\overline{\cup} (\cE|_{\fbf}+\cF|_{\lf}-h-1)\overline{\cup} (\cE|_{\ff}+ \cF|_{\lf}), \\
\cK|_{\rf}&= (\cF|_{\rf})\overline{\cup} (\cE|_{\rf}+\cF|_{\fbf}-h-1)\overline{\cup} (\cE|_{\rf}+\cF|_{\ff}), \\
\cK|_{\ff_0}&=(\cE|_{\ff_0}+\cF|_{\ff_0})\overline{\cup} (\cE|_{\fbf_0}+\cF|_{\fbf_0}-h-1)\overline{\cup} (\cE|_{\lf_0}+ \cF|_{\rf_0}), \\
\cK|_{\fbf_0}&= (\cE|_{\fbf_0}+ \cF|_{\fbf_0}-h-1)\overline{\cup}(\cE|_{\ff_0}+ \cF|_{\fbf_0})\overline{\cup}(\cE|_{\fbf_0}+ \cF|_{\ff_0})\overline{\cup}(\cE|_{\lf_0}+\cF|_{\rf_0}), \\
\cK|_{\ff}&= (\cE|_{\ff}+\cF|_{\ff})\overline{\cup}(\cE|_{\fbf}+\cF|_{\fbf}-h-1)\overline{\cup}(\cE|_{\lf}+\cF|_{\rf}), \\
\cK|_{\fbf}&= (\cE|_{\fbf}+ \cF|_{\fbf}-h-1)\overline{\cup}(\cE|_{\ff}+\cF|_{\fbf})\overline{\cup}(\cE|_{\fbf}+\cF|_{\ff})\overline{\cup}(\cE|_{\lf}+ \cF|_{\rf}).
\end{array}
\label{com.12b}\end{equation}
\label{com.12}\end{theorem}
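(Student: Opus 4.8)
The plan is to realize $A\circ B$ as a fibre integral over the triple space $M^3_{k,\phi}$ and to apply the pushforward theorem of Melrose \cite{Melrose1992}, using the $b$-fibration property of Proposition~\ref{ts.9}, the diagonal geometry of Lemma~\ref{ts.10}, and the density and face computations \eqref{com.4}--\eqref{com.11} already carried out above. Identifying Schwartz kernels with the associated densities, the composition is
\begin{equation}
\kappa_{A\circ B}= (\pi^3_{k,\phi,C})_*\Bigl( {}^{b}\Omega^3_L(M^3_{k,\phi})\cdot (\pi^3_{k,\phi,L})^*\kappa_A\cdot (\pi^3_{k,\phi,R})^*\kappa_B\Bigr),
\label{comprop.1}\end{equation}
and for $A\in\Psi^{-\infty,\cE}_{k,\phi}(M;F,G)$, $B\in\Psi^{-\infty,\cF}_{k,\phi}(M;E,F)$ the integrand on the right was shown in \eqref{com.10b}--\eqref{com.11} to lie in $\cA^{\cG}_{\phg}(M^3_{k,\phi};{}^{b}\Omega(M^3_{k,\phi}))$ with the explicit index family $\cG$ of \eqref{com.11}. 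So the residual case reduces to pushing this polyhomogeneous $b$-density forward along $\pi^3_{k,\phi,C}$, while the general case will follow once the conormal singularity at the lifted diagonal $\Delta_{k,\phi}$ is split off.

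For the residual case, $\pi^3_{k,\phi,C}$ is a $b$-fibration by Proposition~\ref{ts.9}, and by \eqref{com.2} the only boundary hypersurface of $M^3_{k,\phi}$ that it maps onto all of $M^2_{k,\phi}$ is $H^{k,\phi}_{1011}$; from \eqref{com.11} the exponent there is $\cG|_{H^{k,\phi}_{1011}}=\cE|_{\rf}+\cF|_{\lf}-(h+1)$, whose real part is positive precisely under the hypothesis $\inf\Re\cE_{\rf}+\inf\cF_{\lf}>h+1$. Hence integrability of the fibre integral is governed by this single face, and the pushforward theorem then gives that $\kappa_{A\circ B}$ is polyhomogeneous on $M^2_{k,\phi}$ with index set at a boundary hypersurface $H$ the extended union of the $\cG|_{H'}$ over the $H'$ with $\pi^3_{k,\phi,C}(H')\subseteq H$, as listed in \eqref{com.2}; passing from the resulting $b$-density on $M^2_{k,\phi}$ to the right $k,\phi$-density bundle ${}^{k,\phi}\Omega_R$ via \eqref{com.6} shifts these exponents by $+(h+1)$ at $\rf_0,\rf,\fbf_0,\fbf$ and by nothing elsewhere. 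Collecting terms yields exactly the index family $\cK$ of \eqref{com.12b}, so $A\circ B\in\Psi^{-\infty,\cK}_{k,\phi}(M;E,G)$; this is the analogue for the present calculus of \cite[Theorem~2.11]{Vaillant} and \cite[Theorem~3.4]{ARS1}.

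For the general case I would write $A=A_\Delta+A_R$ and $B=B_\Delta+B_R$, with $A_\Delta,B_\Delta$ supported in a neighborhood of $\Delta_{k,\phi}$ meeting no boundary hypersurface other than those comprising $\ff_{k,\phi}$, and $A_R,B_R$ residual. The three cross terms containing a residual factor are handled by a mild extension of the previous paragraph: once one of the two kernels carries only a classical conormal singularity, transversal to the relevant lifted triple diagonal by Lemma~\ref{ts.10}, the fibre integral \eqref{comprop.1} still makes sense and the pushforward theorem applies, the conormal factor producing a conormal distribution of the expected order. For the principal term $A_\Delta\circ B_\Delta$ the input is again Lemma~\ref{ts.10}: $\pi^3_{k,\phi,C}$ is transversal to the lifted diagonals $\Delta^3_{k,\phi,L}$ and $\Delta^3_{k,\phi,R}$ and restricts to a diffeomorphism $\Delta^3_{k,\phi,L}\cong M^2_{k,\phi}$ carrying $\Delta^3_{k,\phi,T}$ onto $\Delta_{k,\phi}$. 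This is the standard geometric situation in which the fibre integral along $\pi^3_{k,\phi,C}$ of a distribution conormal to $\Delta^3_{k,\phi,L}$ against one conormal to $\Delta^3_{k,\phi,R}$ is conormal to $\Delta_{k,\phi}$ of order $m+m'$ in the grading of \eqref{kfb.16}, with Taylor series vanishing at every boundary hypersurface outside $\ff_{k,\phi}$, and after transport by the diffeomorphisms of Lemma~\ref{ts.10} it reduces to the composition law for conormal distributions under transversal fibre integration, exactly as in \cite{Mazzeo-MelrosePhi,Kottke}. Adding this to the residual contributions gives $A\circ B\in\Psi^{m+m',\cK}_{k,\phi}(M;E,G)$.

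The main obstacle is the combinatorial bookkeeping in the residual step: one must verify that the extended unions dictated by \eqref{com.2} and \eqref{com.11}, together with the $(h+1)$-shifts from the density conversion \eqref{com.6}, collapse precisely to \eqref{com.12b}. The delicacy is that several hypersurfaces of the triple space map to the same face of $M^2_{k,\phi}$ and that the $-(h+1)$ and $-2(h+1)$ weights appearing in \eqref{com.11} must be tracked consistently through the pushforward; everything else is a direct adaptation of the composition arguments for the $\phi$-calculus and the $b$-$\sc$ transition calculus.
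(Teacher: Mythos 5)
Your proposal is correct and follows essentially the same route as the paper: the residual case is Melrose's pushforward theorem applied to the computation \eqref{com.11} along the $b$-fibration $\pi^3_{k,\phi,C}$, with integrability controlled by the single face $H^{k,\phi}_{1011}$ and the density conversion \eqref{com.6} accounting for the $(h+1)$-shifts, and the order statement for general $m,m'$ comes from the transversality/diffeomorphism statement of Lemma~\ref{ts.10} exactly as the paper does (citing \cite[Proposition~B7.20]{EMM}). The only nitpick is notational: in your displayed composition formula the pushforward yields $\kappa_{A\circ B}$ multiplied by a lifted left $b$-density rather than $\kappa_{A\circ B}$ itself, but you correctly absorb this in the subsequent conversion via \eqref{com.6}, so the bookkeeping comes out as in \eqref{com.12b}.
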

\begin{proof}
For operators of order $-\infty$, it suffices to apply the pushforward theorem of \cite[Theorem~5]{Melrose1992} using \eqref{com.11}.  When the operators are of order $m$ and $m'$, we need to combine the pushforward theorem with Lemma~\ref{ts.10} to see that the composed operator is of the given order, \cf \cite[Proposition~B7.20]{EMM}.
\end{proof}
\begin{remark}
For $k>0$, that is, for the boundary hypersurfaces $\lf$, $\rf$, $\fbf$ and $\ff$, we recover as expected  from \eqref{com.12b} the composition result of \eqref{phi.18b} for $\phi$-operators.
\end{remark}

\begin{corollary}
If $E$, $F$ and $G$ are vector bundles over $M_t$, then 
$$
      \Psi^m_{k,\phi}(M;F,G)\circ \Psi^{m'}_{k,\phi}(M;E,F)\subset \Psi^{m+m'}_{k,\phi}(M;E,G).
$$ 
\label{com.13}\end{corollary}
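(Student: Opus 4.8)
The plan is to read off Corollary~\ref{com.13} from Theorem~\ref{com.12} by presenting the small calculus as the piece of the extended calculus attached to one particular index family. Concretely, I would let $\cE_0$ denote the index family for the boundary hypersurfaces of $M^2_{k,\phi}$ which assigns the empty set to every boundary hypersurface not contained in $\ff_{k,\phi}$ and assigns $\bbN_0$ to each boundary hypersurface contained in $\ff_{k,\phi}$. Since $\Delta_{k,\phi}$ meets a boundary hypersurface transversally whenever it meets it at all, an element of $I^{m-\frac14}(M^2_{k,\phi},\Delta_{k,\phi};\cdot)$ that vanishes to infinite order off $\ff_{k,\phi}$ is smooth up to each hypersurface of $\ff_{k,\phi}$ and rapidly decaying at the others; hence $\Psi^{-\infty,\cE_0}_{k,\phi}(M;E,F)$ is exactly the residual part $\Psi^{-\infty}_{k,\phi}(M;E,F)$ of the small calculus and
$$
\Psi^m_{k,\phi}(M;E,F)=\Psi^{m,\cE_0}_{k,\phi}(M;E,F)\qquad\text{for every }m\in\bbR .
$$

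Next I would verify that Theorem~\ref{com.12} applies with $\cE=\cF=\cE_0$. Recall from \cite{Mazzeo-MelrosePhi} that the lifted diagonal of the $\phi$-double space meets only the front face $\ff$; this is inherited by $M^2_{k,\phi}$, so $\Delta_{k,\phi}$ is disjoint from $\lf,\rf,\lf_0,\rf_0,\fbf,\fbf_0$, and $\ff_{k,\phi}$ consists exactly of $\ff$, $\ff_0$ and $\zf$. In particular $\cE_0|_{\rf}=\cE_0|_{\lf}=\emptyset$, so the left-hand side of the hypothesis of Theorem~\ref{com.12} is $+\infty>h+1$ and that hypothesis holds automatically. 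For $A\in\Psi^m_{k,\phi}(M;F,G)$ and $B\in\Psi^{m'}_{k,\phi}(M;E,F)$ this gives that $A\circ B$ is well defined with $A\circ B\in\Psi^{m+m',\cK}_{k,\phi}(M;E,G)$, where $\cK$ is the index family produced by \eqref{com.12b} with $\cE=\cF=\cE_0$; that this already encodes a genuine conormal distribution along $\Delta_{k,\phi}$ when $m,m'$ are finite is part of Theorem~\ref{com.12} (it is exactly where Lemma~\ref{ts.10} enters), so nothing further is needed on that point.

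The only genuine computation, then, is to check that $\cK=\cE_0$, and I would simply run through the nine entries of \eqref{com.12b}. At a hypersurface $H\subseteq\ff_{k,\phi}$ one term of the extended union is the unshifted sum $\cE_0|_H+\cF_0|_H=\bbN_0+\bbN_0=\bbN_0$, while every other term involves a factor indexed by one of $\lf,\rf,\lf_0,\rf_0,\fbf,\fbf_0$ — in particular every shifted term of the form $\,\cdot\,-(h+1)$ sits at a $\fbf$-type face — and is therefore empty; so $\cK|_H=\bbN_0$. At a hypersurface $H\not\subseteq\ff_{k,\phi}$ every term of the extended union carries such an empty factor, so $\cK|_H=\emptyset$. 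Thus $\cK=\cE_0$ and $A\circ B\in\Psi^{m+m',\cE_0}_{k,\phi}(M;E,G)=\Psi^{m+m'}_{k,\phi}(M;E,G)$, as claimed. I do not expect a real obstacle here: the substance of the statement is already contained in Theorem~\ref{com.12}, and what remains is the index-set bookkeeping above together with the observation, inherited from the $\phi$-double space, that the lifted diagonal avoids the boundary hypersurfaces $\lf,\rf,\lf_0,\rf_0,\fbf,\fbf_0$.
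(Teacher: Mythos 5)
Your proposal is correct and follows essentially the same route as the paper, which proves Corollary~\ref{com.13} by applying Theorem~\ref{com.12} with the index families equal to the empty set except at $\ff$, $\ff_0$ and $\zf$, where they are $\bbN_0$; your extra bookkeeping (identifying $\ff_{k,\phi}=\zf\cup\ff_0\cup\ff$, noting $\Psi^{m,\cE_0}_{k,\phi}=\Psi^m_{k,\phi}$, and checking $\cK=\cE_0$ from \eqref{com.12b}) just makes explicit what the paper leaves implicit. The only blemish is the parenthetical claim that every shifted term $\cdot-(h+1)$ sits at a $\fbf$-type face — the shift in $\cK|_{\zf}$ involves $\rf_0$ and $\lf_0$ — but since those factors are empty for $\cE_0$ as well, your argument is unaffected.
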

\begin{proof}
It suffices to apply Theorem~\ref{com.12} with index families $\cE$ and $\cF$ given by the empty set except at $\ff$, $\ff_0$ and $\zf$, where it is given by $\bbN_0$.  
\end{proof}

Similarly, the triple space of \cite{GH1,Kottke} gives the following composition result for the $b$-$\sc$ transition calculus.
\begin{theorem}[\cite{GH1,Kottke}]
Let $A\in\Psi^{m,\cE}_t(M;E,G)$ and $B\in\Psi^{m',\cF}(M;E,F)$ be $b$-$\sc$ transition pseudodifferential operators with index families $\cE$ and $\cF$ given by the empty set at $\fb$, $\lf$ and $\rf$ and such that 
$$
   \inf\Re\cE|_{\sc}\ge 0, \quad \inf\Re\cF_{\sc}\ge 0.  
$$
In this case, $A\circ B\in \Psi^{m+m',\cG}_t(M;E,G)$ with index family $\cG$ given by
\begin{equation}
\begin{array}{ll}
\cG|_{\sc}= \cE|_{\sc}+ \cF|_{\sc}, & \cG|_{\zf}= (\cE_{\zf}+ \cF_{\zf})\overline{\cup}(\cE|_{\rf_0}+\cF|_{\lf_0}), \\
\cG|_{\fb_0}= (\cE|_{\lf_0}+\cF|_{\rf_0})\overline{\cup}(\cE|_{\fb_0}+\cF|_{\fb_0}), & \cG|_{\lf_0}=(\cE|_{\lf_0}+\cF|_{\zf})\overline{\cup}(\cE|_{\fb_0}+\cF|_{\lf_0}), \\
\cG|_{\rf_0}=(\cE|_{\zf}+ \cF|_{\rf_0})\overline{\cup}(\cE|_{\rf_0}+ \cF|_{\fb_0}), & \cG|_{\fb}=\cG|_{\lf}=\cG|_{\rf}=\emptyset.
\end{array}
\label{com.14b}\end{equation}
\label{com.14}\end{theorem}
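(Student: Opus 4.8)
The plan is to mirror the argument already carried out for Theorem~\ref{com.12}, working now on the $b$-$\sc$ transition triple space $M^3_t$ of \cite{GH1,Kottke} in place of $M^3_{k,\phi}$. First I would recall the construction of $M^3_t$: one starts with $M^3\times[0,\infty)_k$, blows up its corners in order of decreasing codimension to obtain $M^3_{k,b}$ as in \eqref{ts.1}, and then blows up the appropriate diagonal-type $p$-submanifolds lying over the faces $H_{0001}$, $H_{0011}$, $H_{0101}$, $H_{1001}$, $H_{0000}$, $H_{0010}$, $H_{0100}$, $H_{1000}$ --- the $b$-$\sc$ analogues of $K^+$, $G^{\pm}_o$, $J^{\pm}_o$, $K$ from \eqref{ts.8}, but with the $\Phi$-type blow-ups replaced by the single blow-up of $\Delta_b\cap\fb$. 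The structural input, established in \cite{Kottke} (compare Proposition~\ref{ts.9} and Lemma~\ref{ts.10}), is that the three projections $\pi_L$, $\pi_C$, $\pi_R$ lift to $b$-fibrations $\pi^3_{t,o}: M^3_t\to M^2_t$ for $o\in\{L,C,R\}$, and that for $o\ne o'$ the fibration $\pi^3_{t,o}$ is transverse to the lifted diagonal $\Delta^3_{t,o'}$, inducing a diffeomorphism $\Delta^3_{t,o'}\cong M^2_t$ that carries the triple diagonal onto $\Delta_t\subset M^2_t$.

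Granting this, the $-\infty$ case is a direct application of the pushforward theorem of \cite[Theorem~5]{Melrose1992}: lift $\kappa_A$ by $\pi^3_{t,L}$ and $\kappa_B$ by $\pi^3_{t,R}$, multiply by the power of boundary defining functions dictated by Lemma~\ref{lbd.1} together with the transition density bundle ${}^{t}\Omega_R= x_{\sc}^{-n}\beta_t^*(\cdots)$, and push forward by $\pi^3_{t,C}$. The genuine work is the density-exponent bookkeeping --- the analogue of \eqref{com.4}--\eqref{com.11} --- which fixes the index sets of the lifted product, followed by reading off from the $b$-$\sc$ analogue of \eqref{com.1}--\eqref{com.3} which faces of $M^3_t$ map into which faces of $M^2_t$; feeding these into the pushforward theorem yields $\cG$ exactly as in \eqref{com.14b}. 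The hypotheses $\inf\Re\cE|_{\sc}\ge0$, $\inf\Re\cF|_{\sc}\ge0$ and the vanishing at $\fb$, $\lf$, $\rf$ ensure the pushforward integrals converge and that the composite stays in the calculus (so $\cG|_{\fb}=\cG|_{\lf}=\cG|_{\rf}=\emptyset$); the purely additive behaviour at the scattering face, $\cG|_{\sc}=\cE|_{\sc}+\cF|_{\sc}$ with no shift, comes from the $x_{\sc}^{-n}$ normalization, in contrast with the corner faces which pick up a loss.

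For operators with a nontrivial conormal singularity at the diagonal, $A\in\Psi^m_t$ and $B\in\Psi^{m'}_t$, I would argue as in \cite[Proposition~B7.20]{EMM} using the transversality from (the $b$-$\sc$ analogue of) Lemma~\ref{ts.10}: $(\pi^3_{t,L})^*\kappa_A$ is conormal to $\Delta^3_{t,L}$, $(\pi^3_{t,R})^*\kappa_B$ is conormal to $\Delta^3_{t,R}$, these two submanifolds meet transversally in $\Delta^3_{t,T}$, and $\pi^3_{t,C}$ restricts to a diffeomorphism $\Delta^3_{t,T}\cong\Delta_t$, so the pushforward of the product is conormal of order $m+m'$ to $\Delta_t$ with boundary behaviour still governed by $\cG$. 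The main obstacle, and the only step needing genuine care, is the combination of the $b$-fibration property of the $\pi^3_{t,o}$ with the density/index-set computation; but the former is precisely the content of \cite{Kottke}, and the latter is the routine (if lengthy) bookkeeping parallel to \eqref{com.4}--\eqref{com.11}, which is why the statement is simply attributed to \cite{GH1,Kottke}.
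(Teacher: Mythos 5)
The paper itself gives no proof of this theorem—it is simply quoted from \cite{GH1,Kottke}—and the justification it implicitly relies on is precisely the route you describe: the $b$-$\sc$ transition triple space with its three lifted $b$-fibrations, Melrose's pushforward theorem together with the density bookkeeping in the spirit of \eqref{com.4}--\eqref{com.11}, and the transversality/\cite[Proposition~B7.20]{EMM} argument to handle the conormal order, i.e.\ the same scheme carried out in detail for Theorem~\ref{com.12}, so your proposal matches the intended argument. The only inaccuracy is the parenthetical remark that the ``corner faces pick up a loss'': with the $b$-density convention (and the $x_{\sc}^{-n}$ normalization at $\sc$) none of the faces in \eqref{com.14b} acquires a shift, unlike the $-h-1$ shifts in \eqref{com.12b} which come from converting between $\phi$- and $b$-densities; this aside does not affect the validity of your argument.
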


\section{Symbol maps} \label{sm.0}

To define the principal symbol of an operator $A\in \Psi^{m,\cE}_{k,\phi}(M;E,F),$ it suffices to notice that its Schwartz kernel $\kappa_A$ has conormal singularities at the lifted diagonal $\Delta_{k,\phi}$, so has a principal symbol 
$$
\sigma_m(\kappa_A)\in S^{[m]}(N^*\Delta_{k,\phi};\End(E,F)).
$$  
We define the principal symbol of $A$, denoted ${}^{k,\phi}\sigma_m(A)$, to be $\sigma_m(\kappa_A)$.  By Lemma~\ref{kfb.10}, there is a natural identification $N^*\Delta_{k,\phi}\cong {}^{k,\phi}T^*M_t$, so that ${}^{k,\phi}\sigma_m(A)$ can be seen as an element of 
$S^{[m]}({}^{k,\phi}T^*M_t; \End(E,F))$. As for other pseudodifferential calculi, the principal symbol induces a short exact sequence
\begin{equation}
\xymatrix{
0 \ar[r] & \Psi^{m-1,\cE}_{k,\phi}(M;E,F)\ar[r] & \Psi^{m,\cE}_{k,\phi}(M;E,F) \ar[r]^-{{}^{k,\phi}\sigma_m} &
   S^{[m]}({}^{k,\phi}T^*M_t;\End(E,F)) \ar[r] & 0.
}
\label{sm.1}\end{equation} 
For the construction of good parametrices, we will however need other symbols capturing the asymptotic behavior of $k,\phi$-operators.  More precisely, for the boundary hypersurfaces $\zf$, $\ff_0$ and $\ff$ of $M^2_{k,\phi}$, we can define the normal operators of $A\in\Psi^{m,\cE}_{k,\phi}(M;E,F)$, for $\cE$ an index family such that $\inf\Re\cE|_{\zf}\ge 0$, $\inf\Re\cE|_{\ff_0}\ge 0$ and $\inf\Re \cE|_{\ff}\ge 0$,  by restriction of the Schwartz kernel $\kappa_A$ of $A$ to $\zf$, $\ff_0$ and $\ff$, 
\begin{equation}
 N_{\zf}(A)=\kappa_A|_{\zf}, \quad N_{\ff_0}(A)= \kappa_A|_{\ff_0}, \quad N_{\ff}(A)= \kappa_A|_{\ff}.
 \label{sm.2}\end{equation} 

Since the boundary hypersurface $\zf$ in $M^2_{\phi,k}$ is naturally identified with the $\phi$-double space $M^2_{\phi}$ of Mazzeo-Melrose \cite{Mazzeo-MelrosePhi}, the normal operator $N_{\zf}(A)$ can be seen as a $\phi$-operator.  In particular, in terms of the small calculus, there is a short exact sequence
\begin{equation}
\xymatrix{
0 \ar[r] & x_{\zf}\Psi^m_{k,\phi}(M;E,F) \ar[r] & \Psi^m_{k,\phi}(M;E,F) \ar[r]^-{N_{\zf}} & \Psi^m_{\phi}(M;E,F)\ar[r] &0,
}
\label{sm.3}\end{equation} 
where $x_{\zf}\in\CI(M^2_{k,\phi})$ is a boundary defining function for $\zf$.
\begin{proposition}
For $A\in \Psi^{m,\cE}_{k,\phi}(M;F,G)$ and $B\in \Psi^{m',\cF}_{k,\phi}(M;E,F)$ with index families $\cE$ and $\cF$ such that 
$$
\inf\cE|_{\zf}\ge 0, \quad \inf\cF|_{\zf}\ge 0, \quad \inf\Re(\cE|_{\rf}+\cF|_{\lf})>h+1\quad \mbox{and} \quad \Re(\cE|_{\rf_0}+\cF|_{\lf_0})>h+1,
$$
we have that 
\begin{equation}
  N_{\zf}(A\circ B)= N_{\zf}(A)\circ N_{\zf}(B)
\label{sm.4b}\end{equation}
with the composition on the right as $\phi$-operators.  
\label{sm.4}\end{proposition}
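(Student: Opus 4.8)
The plan is to combine the triple-space formula for composition from Section~\ref{com.0} with the observation that $\zf\subset M^2_{k,\phi}$, canonically identified with the Mazzeo--Melrose $\phi$-double space $M^2_\phi$, is the ``$k=0$ slice'' of the whole construction. Write
\[
\kappa_{A\circ B}=(\pi^3_{k,\phi,C})_*\!\left[\nu\cdot(\pi^3_{k,\phi,L})^*\kappa_A\cdot(\pi^3_{k,\phi,R})^*\kappa_B\right],
\]
with $\nu$ the density factor turning the two pulled-back density bundles into ${}^{k,\phi}\Omega_R$ (the computation leading to \eqref{com.11}), and restrict to $\zf$. By \eqref{com.2}, $(\pi^3_{k,\phi,C})^{-1}(\zf)=H^{k,\phi}_{1110}\cup H^{k,\phi}_{1010}$, where $H^{k,\phi}_{1110}$ is the lift of $M^3\times\{0\}$. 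The hypotheses $\inf\Re\cE|_{\zf}\ge0$ and $\inf\Re\cF|_{\zf}\ge0$ ensure $N_{\zf}(A)$, $N_{\zf}(B)$ are defined; $\inf\Re(\cE|_{\rf}+\cF|_{\lf})>h+1$ makes the $k,\phi$-composition legitimate (Theorem~\ref{com.12}); and $\inf\Re(\cE|_{\rf_0}+\cF|_{\lf_0})>h+1$ is exactly the hypothesis of Proposition~\ref{phi.18} needed for the $\phi$-composition $N_{\zf}(A)\circ N_{\zf}(B)$ (since $\rf_0,\lf_0$ restrict on $\zf$ to the right and left faces of $\zf\cong M^2_\phi$), and by \eqref{com.12b} it also guarantees $\inf\Re\cK|_{\zf}>0$, so $N_{\zf}(A\circ B)$ itself is defined.

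The geometric heart of the argument is that $H^{k,\phi}_{1110}$ is canonically diffeomorphic to the $\phi$-triple space $M^3_\phi$ of \cite{Mazzeo-MelrosePhi}, and that under this identification (together with $\zf\cong M^2_\phi$) the restrictions of $\pi^3_{k,\phi,L},\pi^3_{k,\phi,C},\pi^3_{k,\phi,R}$ are the $\phi$-triple-space $b$-fibrations $M^3_\phi\to M^2_\phi$, while the lifted diagonals $\Delta^3_{k,\phi,o}$ meet $H^{k,\phi}_{1110}$ in the corresponding $\phi$-triple-space diagonals. This is checked by following the blow-ups of \eqref{ts.1} and \eqref{ts.8} on the $k=0$ slice: the corner faces $H_{ij\ell 0}$ lie in $M^3\times\{0\}$ and their blow-ups restrict to produce $M^3_b$ from $M^3$; the centers $K^+,G^+_o,J^+_o$ live over $k>0$ and are disjoint from the proper transform of $M^3\times\{0\}$, so do not affect it; and the centers $K,G_o,J_o$ restrict precisely to the Mazzeo--Melrose blow-up centers producing $M^3_\phi$ from $M^3_b$. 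This is a ``commuting blow-ups'' argument of exactly the type already used in the proofs of Lemma~\ref{kfb.9b}, Proposition~\ref{ts.9} and Lemma~\ref{ts.10}. One also checks, using Lemma~\ref{lbd.1} applied to both constructions, that $\nu|_{H^{k,\phi}_{1110}}$ is the density factor appearing in $\phi$-composition.

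Granting this, restriction to $\zf$ commutes with the pullbacks, so $(\pi^3_{k,\phi,L})^*\kappa_A$ and $(\pi^3_{k,\phi,R})^*\kappa_B$ restrict along $H^{k,\phi}_{1110}$ to the pullbacks of $N_{\zf}(A)$ and $N_{\zf}(B)$ under the $\phi$-triple-space fibrations. Moreover $H^{k,\phi}_{1110}$ carries the $k=0$ slice of the fibres of $\pi^3_{k,\phi,C}$ over $\mathring{\zf}$, with $H^{k,\phi}_{1010}$ occurring only as a boundary hypersurface of those fibres, over which the fibre integral remains convergent by the index hypotheses; hence fibre-integration commutes with restriction to $\zf$, which is the standard consequence of the pushforward theorem of \cite{Melrose1992} used to prove Theorem~\ref{com.12}. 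Combining,
\[
N_{\zf}(A\circ B)=(\pi^3_{\phi,C})_*\!\left[\nu|_{H^{k,\phi}_{1110}}\cdot(\pi^3_{\phi,L})^*N_{\zf}(A)\cdot(\pi^3_{\phi,R})^*N_{\zf}(B)\right]=N_{\zf}(A)\circ N_{\zf}(B),
\]
the last equality being the triple-space description of $\phi$-composition (Proposition~\ref{phi.18}). For operators of positive order one argues in addition, as in Theorem~\ref{com.12}, with Lemma~\ref{ts.10} to control the conormal singularity at the diagonal, the restriction to $\zf$ matching the symbolic composition in $\Psi^*_\phi$.

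The main obstacle is the geometric identification $H^{k,\phi}_{1110}\cong M^3_\phi$ compatibly with all three $b$-fibrations and the three lifted diagonals: one must check that no blow-up center in \eqref{ts.8} over-blows the proper transform of $M^3\times\{0\}$, and that the order in which the $K,G_o,J_o$ are blown up, read on $k=0$, is a legitimate (commuting) reordering of the Mazzeo--Melrose construction of $M^3_\phi$. Once this compatibility is in place, the remainder is routine index bookkeeping built directly on \eqref{com.11}.
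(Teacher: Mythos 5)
Your argument is correct and is essentially the paper's own proof, just spelled out in more detail: the paper likewise applies Theorem~\ref{com.12}, observes that the restriction of the pushforward to $\zf$ comes from the restriction of \eqref{com.10b} to $H^{k,\phi}_{1110}$, and identifies $H^{k,\phi}_{1110}$ with the Mazzeo--Melrose $\phi$-triple space so that the induced composition is $\phi$-composition. Your extra bookkeeping (the role of $H^{k,\phi}_{1010}$, the density factor, and the index hypotheses guaranteeing that only $H^{k,\phi}_{1110}$ contributes at order zero) is exactly what the paper leaves implicit.
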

\begin{proof}
By Theorem~\ref{com.12}, the composition $A\circ B$ makes sense and its Schwartz kernel can be restricted to $\zf$.  This restriction comes in fact from the pushforward of the restriction of \eqref{com.10b} to $H^{k,\phi}_{1110}$.  In other words, $N_{\zf}(A\circ B)$ is given by the composition of $N_{\zf}(A)$ and $N_{\zf}(B)$ induced by $H^{k,\phi}_{1110}$ seen as triple space for $\zf$.  Since $H^{k,\phi}_{1110}$ is naturally identified with the $\phi$-triple space of \cite{Mazzeo-MelrosePhi}, the result follows.  
\end{proof}

Similarly, in terms of the vector bundle ${}^{k,\phi}N_{\tf}Y$ of \eqref{la.7}, the boundary hypersurface $\ff_0$ in $M^2_{k,\phi}$ is naturally the double space for ${}^{k,\phi}N_{\tf}Y$-suspended operators for the fiber bundle $\phi_{\tf}: \tf\to Y\times [0,\frac{\pi}2]_{\theta}$.  Hence, the normal operator $N_{\ff_0}(A)$ can be seen as a ${}^{k,\phi}N_{\tf}Y$-suspended operator.  In terms of the small calculus, this induces the short exact sequence 
\begin{equation}
\xymatrix{
0\ar[r] & x_{\ff_0}\Psi^{m}_{k,\phi}(M;E,F)\ar[r] & \Psi^m_{k,\phi}(M;E,F) \ar[r]^-{N_{\ff_0}} & \Psi^m_{\sus({}^{k,\phi}N_{\tf}Y)-\phi_{\tf}}(\tf;E,F) \ar[r] &0,
}
\label{sm.5}\end{equation}
where $x_{\ff_0}\in\CI(M^2_{k,\phi})$ is a boundary defining function for $\ff_0$.  

\begin{proposition}
For $A\in \Psi^{m,\cE}_{k,\phi}(M;F,G)$ and $B\in \Psi^{m',\cF}_{k,\phi}(M;E,F)$ with index families $\cE$ and $\cF$ such that 
\begin{equation}
\begin{gathered}
  \inf\Re\cE|_{\ff_0}\ge 0, \quad \inf\Re\cF|_{\ff_0}\ge 0, \quad \inf\Re(\cE|_{\fbf_0}+\cF|_{\fbf_0})>h+1, \\ 
  \inf\Re(\cE|_{\lf_0}+ \cF|_{\rf_0})>0\quad \mbox{and} \quad \inf\Re(\cE|_{\ff}+\cF|_{\lf})>h+1,
\end{gathered}
\label{sm.6b}\end{equation}
we have that
\begin{equation}
     N_{\ff_0}(A\circ B)= N_{\ff_0}(A)\circ N_{\ff_0}(B)
\label{sm.6c}\end{equation}
where the composition on the right is as ${}^{k,\phi}N_{\tf}Y$-suspended operators.
\label{sm.6}\end{proposition}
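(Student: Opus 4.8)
The plan is to run the argument of Proposition~\ref{sm.4} with the $\phi$-triple space of \cite{Mazzeo-MelrosePhi} replaced by the triple space of ${}^{k,\phi}N_{\tf}Y$-suspended operators for the fiber bundle $\phi_{\tf}:\tf\to Y\times[0,\frac{\pi}2]_{\theta}$. First I would note that the hypotheses \eqref{sm.6b}, supplemented as needed by the standard requirement ensuring (via Theorem~\ref{com.12}) that the composition is defined, guarantee that $A\circ B$ is a well-defined element of $\Psi^{m+m',\cK}_{k,\phi}(M;E,G)$; moreover, reading off $\cK|_{\ff_0}$ from \eqref{com.12b} and using $\inf\Re\cE|_{\ff_0}\ge 0$, $\inf\Re\cF|_{\ff_0}\ge 0$, $\inf\Re(\cE|_{\fbf_0}+\cF|_{\fbf_0})>h+1$ and $\inf\Re(\cE|_{\lf_0}+\cF|_{\rf_0})>0$ gives $\inf\Re\cK|_{\ff_0}\ge 0$, so that $\kappa_{A\circ B}$ restricts to $\ff_0$ and $N_{\ff_0}(A\circ B)$ is defined.

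The heart of the proof is to identify this restriction. As in the proof of Theorem~\ref{com.12}, $\kappa_{A\circ B}$ is the pushforward along $\pi^3_{k,\phi,C}$ of the density ${}^{b}\Omega^3_L(M^3_{k,\phi})\cdot(\pi^3_{k,\phi,L})^*\kappa_A\cdot(\pi^3_{k,\phi,R})^*\kappa_B$ of \eqref{com.10b}. By the pushforward theorem of \cite{Melrose1992}, $N_{\ff_0}(A\circ B)=\kappa_{A\circ B}|_{\ff_0}$ is obtained by pushing forward the restriction of this density to $(\pi^3_{k,\phi,C})^{-1}(\ff_0)=\ff^0_T\cup\ff^0_{CT}\cup\ff^0_C$, as recorded in \eqref{com.2}. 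Using the index family \eqref{com.11} together with the hypotheses on $\cE|_{\fbf_0}+\cF|_{\fbf_0}$ and $\cE|_{\lf_0}+\cF|_{\rf_0}$, one checks that the contributions of $\ff^0_{CT}$ and $\ff^0_C$ are carried by the boundary hypersurfaces $\fbf_0$ and $\lf_0$ of $\ff_0$ and vanish in its interior, so that only $\ff^0_T$ — the front face produced by blowing up $K=G_L\cap G_C\cap G_R$ (Lemma~\ref{ts.6}) — contributes to $N_{\ff_0}(A\circ B)$.

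It then remains to recognise $\ff^0_T$ as the triple space for ${}^{k,\phi}N_{\tf}Y$-suspended operators relative to $\phi_{\tf}$, compatibly with the identification of $\ff_0$ as the associated double space underlying \eqref{sm.5}. Under the diffeomorphism $H^b_{0000}\cong\pa M^3\times D_b$ of \eqref{ts.7} one has $K\cong\Delta^3_{\phi}\times E_b$, and blowing up $K$ yields, up to the finite-dimensional diagonal factor $E_b$, precisely the model configuration for $\phi_{\tf}$; this is the exact analogue of the identification of $H^{k,\phi}_{1110}$ with the $\phi$-triple space used in Proposition~\ref{sm.4}. With this in hand, the pushforward of the restricted density along $\pi^3_{k,\phi,C}|_{\ff^0_T}$ is by definition the suspended-operator composition of $N_{\ff_0}(A)$ and $N_{\ff_0}(B)$, giving \eqref{sm.6c}. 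The step I expect to be the main obstacle is this last identification: one must verify that the blow-up of $K$, the density weights extracted in \eqref{com.10}, and the restriction of the $b$-fibration $\pi^3_{k,\phi,C}$ to $\ff^0_T$ all match the conventions of the ${}^{k,\phi}N_{\tf}Y$-suspended calculus — though, since everything is a product of the model for $\phi_{\tf}$ with the trivial factor $E_b$, this reduces to the already known composition formula for suspended operators.
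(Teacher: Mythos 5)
Your proposal is correct and takes essentially the same route as the paper's (much terser) proof: well-definedness of the composition and of the restriction via Theorem~\ref{com.12}, identification of $N_{\ff_0}(A\circ B)$ with the pushforward of the restriction of \eqref{com.10b} to $\ff^0_T$ (the other faces $\ff^0_{CT}$ and $\ff^0_C$ in $(\pi^3_{k,\phi,C})^{-1}(\ff_0)$ contributing only terms of strictly positive order at $\ff_0$ by \eqref{com.11} and \eqref{sm.6b}), and the recognition of $\ff^0_T$ as the triple space inducing composition as ${}^{k,\phi}N_{\tf}Y$-suspended operators. The only slip is cosmetic: $\ff^0_{CT}$ and $\ff^0_C$ map onto all of $\ff_0$ under the $b$-fibration, not merely onto its corners with $\fbf_0$ and $\lf_0$, but the index-set argument you give immediately before is the correct reason their contributions vanish at $\ff_0$ and do not affect the restriction.
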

\begin{proof}
From Theorem~\ref{com.12}, we see that the composition $A\circ B$ makes sense as a $k,\phi$-pseudodifferential operators and the restriction of its Schwartz kernel to $\ff_0$ is well-defined.  Moreover, this restriction comes from the pushforward of the restriction of \eqref{com.10b} to $\ff^0_{T}$.  Thus the composition on the right of \eqref{sm.6c} is the one induced by $\ff^0_T$ seen as a triple space for $\ff_0$, which is precisely composition as ${}^{k,\phi}N_{\tf}Y$-suspended operators. 
\end{proof}

Finally, in terms of the vector bundle ${}^{k,\phi}N_{\sc}Y$ of \eqref{la.7}, the face $\ff$ does not quite correspond to the double space of ${}^{k,\phi}N_{\sc}Y$-suspended operators with respect to the fiber bundle $\phi_{\sc}: \sc\to Y\times [0,\infty)_k$.   Instead, because of the blow-up of $\Phi_0$ in \eqref{kfb.9d},  it is an adiabatic version of this suspended calculus, namely it is semi-classical in the suspension parameters with $k$ playing the role of the semi-classical parameter.  However, since suspended operators are already `classical' in the suspension parameter, insisting on having rapid decay at $\ff\cap \fbf_0$ and $\ff\cap \fbf$, the boundary hypersurface $\ff$ can be seen as a double space for $(k^{-1}){}^{k,\phi}N_{\sc}Y$-suspended operators.  That is, in terms of suspended operators, the effect of blowing up $\Phi_0$ in \eqref{kfb.9d} amounts to rescaling the suspension parameters by $k^{-1}$.  Notice that such an observation was implicitly used in \cite{Melrose-Rochon} to avoid introducing an extra blow-up.  In particular, the normal operator map $N_{\ff}$ induces the short exact sequence
\begin{equation}
\xymatrix{
0\ar[r] & x_{\ff}\Psi^m_{k,\phi}(M;E,F) \ar[r] & \Psi^m_{k,\phi}(M;E,F) \ar[r]^-{N_{\ff}} & \Psi^m_{\sus(V)-\phi_{\sc}}(\sc;E,F) \ar[r] & 0,
}
\label{sm.7}\end{equation}
where $x_{\ff}\in \CI(M^2_{k,\phi})$ is a boundary defining function for $\ff$ and $V:=(k^{-1}){}^{k,\phi}N_{\sc}Y$.

\begin{proposition}
For $A\in \Psi^{m,\cE}_{k,\phi}(M;F,G)$ and $B\in \Psi^{m',\cF}_{k,\phi}(M;E,F)$ with index families $\cE$ and $\cF$ such that
\begin{equation}
\begin{gathered}
\inf\Re \cE|_{\ff}\ge 0, \quad \inf\Re\cF|_{\ff}\ge 0, \quad \inf\Re(\cE|_{\fbf}+\cF|_{\fbf})>h+1, \\
 \inf\Re(\cE|_{\rf}+\cF|_{\lf})>h+1 \quad \mbox{and}\quad \inf\Re(\cE|_{\lf}+\inf\Re \cF|_{\rf})>0, 
\end{gathered}
\label{sm.8b}\end{equation}
we have that
\begin{equation}
  N_{\ff}(A\circ B)= N_{\ff}(A)\circ N_{\ff}(B)
\label{sm.8c}\end{equation}
with composition on the right induced by the boundary hypersurface $\ff^+_T$ seen as a triple space for $\ff$.  Furthermore, if $\cE|_{\fbf_0}=\cE|_{\fbf}=\emptyset$, then the composition on the right is as $(k^{-1}){}^{k,\phi}N_{\sc}Y$-suspended operators.
 \label{sm.8}\end{proposition}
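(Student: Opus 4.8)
The plan is to argue exactly as in the proofs of Proposition~\ref{sm.4} and Proposition~\ref{sm.6}, now restricting Schwartz kernels to the boundary hypersurface $\ff$ of $M^2_{k,\phi}$ rather than to $\zf$ or $\ff_0$. First I would apply Theorem~\ref{com.12}: the hypotheses \eqref{sm.8b} ensure that $A\circ B$ is a well-defined element of $\Psi^{m+m',\cK}_{k,\phi}(M;E,G)$ and, reading off $\cK|_{\ff}$ from \eqref{com.12b}, that $\inf\Re\cK|_{\ff}\ge 0$, so the restriction $N_{\ff}(A\circ B)=\kappa_{A\circ B}|_{\ff}$ is defined. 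Then, following the proof of Theorem~\ref{com.12}, the Schwartz kernel $\kappa_{A\circ B}$ is obtained by pushing the density-valued conormal distribution \eqref{com.10b} forward along the $b$-fibrations $\pi^3_{k,\phi,o}$; the key point is that its restriction to $\ff$ is the pushforward of the restriction of \eqref{com.10b} to the boundary hypersurface $\ff^+_T$ of $M^3_{k,\phi}$ (the one created by blowing up $K^+=G^+_L\cap G^+_C\cap G^+_R$, see Lemma~\ref{ts.5} and \eqref{ts.8}), the remaining hypersurfaces lying over $\ff$ — namely $\ff^+_{LT}$, $\ff^+_{CT}$, $\ff^+_{RT}$, $\ff^+_L$, $\ff^+_C$, $\ff^+_R$ according to \eqref{com.1}--\eqref{com.3} — contributing only to the Taylor expansion of $N_{\ff}(A\circ B)$ at the corners of $\ff$. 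Since $\ff^+_T$ is naturally a triple space for $\ff$, this yields \eqref{sm.8c}, with the composition on the right induced by $\ff^+_T$, for any index families satisfying \eqref{sm.8b}.

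For the refined assertion I would identify this triple-space composition with composition of $(k^{-1}){}^{k,\phi}N_{\sc}Y$-suspended operators under the extra hypothesis $\cE|_{\fbf_0}=\cE|_{\fbf}=\emptyset$. Recall from the discussion preceding \eqref{sm.7} that, because of the blow-up of $\Phi_0$ in \eqref{kfb.9d}, the face $\ff$ is an adiabatic version — semiclassical in the suspension parameters with $k$ as the semiclassical parameter — of the double space of ${}^{k,\phi}N_{\sc}Y$-suspended operators for $\phi_{\tf}:\tf\to Y\times[0,\frac{\pi}2]_{\theta}$, and that demanding rapid decay at $\ff\cap\fbf_0$ and $\ff\cap\fbf$ promotes it to the honest double space of $(k^{-1}){}^{k,\phi}N_{\sc}Y$-suspended operators. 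The hypothesis $\cE|_{\fbf_0}=\cE|_{\fbf}=\emptyset$ is precisely this rapid-decay condition for $A$, and the index computation \eqref{com.12b} together with the remaining hypotheses in \eqref{sm.8b} shows $A\circ B$ inherits it. In the triple space the corresponding blow-ups — those producing $K^+$, $G^+_L$, $G^+_C$, $G^+_R$, $J^+_L$, $J^+_C$, $J^+_R$ — perform the same adiabatic rescaling of the suspension parameters by $k^{-1}$, so that $\ff^+_T$, restricted to the locus of rapid decay at its intersections with the $\fbf_0$- and $\fbf$-type hypersurfaces, is exactly the triple space of $(k^{-1}){}^{k,\phi}N_{\sc}Y$-suspended operators; the pushforward of \eqref{com.10b} then reproduces suspended composition. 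This is the same device used implicitly in \cite{Melrose-Rochon}.

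The main obstacle is this last bookkeeping step: one must verify that the rescaling by $k^{-1}$ is performed consistently on the double space (via the blow-up of $\Phi_0$ in \eqref{kfb.9d}) and on the triple space (via the blow-ups yielding $K^+$ and the $G^+_o$, $J^+_o$), so that the $b$-fibrations $\pi^3_{k,\phi,o}$ restrict on $\ff^+_T$ to the three projections of the suspended triple space and the pushforward to $\ff^+_T$ produces the suspended composition law itself rather than a rescaled variant of it. Concretely this means tracking the angular coordinate $\theta=\arctan\frac{x}{k}$ and the rescaled suspension variables through the chain of blow-ups, in the spirit of the local coordinate computation of Lemma~\ref{kfb.10}; once the model near $\ff^+_T$ is set up, both the identification of triple spaces and the pushforward formula become routine.
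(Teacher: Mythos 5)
Your proposal is correct and follows essentially the same route as the paper: apply Theorem~\ref{com.12} to make sense of $A\circ B$ and of its restriction to $\ff$, identify that restriction via the pushforward theorem with the pushforward of the restriction of \eqref{com.10b} to $\ff^+_T$, and then invoke the discussion preceding \eqref{sm.7} to recognize the composition as that of $(k^{-1}){}^{k,\phi}N_{\sc}Y$-suspended operators when $\cE|_{\fbf_0}=\cE|_{\fbf}=\emptyset$. The only slight imprecision is your list of ``remaining hypersurfaces lying over $\ff$'': since the composed kernel is pushed forward along $\pi^3_{k,\phi,C}$, the faces over $\ff$ are just $\ff^+_T$, $\ff^+_{CT}$ and $\ff^+_C$, and the hypotheses $\inf\Re(\cE|_{\fbf}+\cF|_{\fbf})>h+1$ and $\inf\Re(\cE|_{\lf}+\cF|_{\rf})>0$ are exactly what makes the latter two contribute only positive-order terms at $\ff$, which does not affect your argument.
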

 \begin{proof}
 By Theorem~\ref{com.12}, the composition $A\circ B$ is a $k,\phi$-pseudodifferential operator whose restriction to $\ff$ makes sense.  Furthermore, by the pushforward theorem, this restriction comes from the pushforward of the restriction of \eqref{com.10b} to $\ff^+_T$, hence \eqref{sm.8c} holds with the composition on the right induced by $\ff^+_T$ seen as a triple space for $\ff$.  By the discussion above, this corresponds to composition as $(k^{-1}){}^{k,\phi}N_{\sc}Y$-suspended operators when $\cE|_{\fbf_0}=\cE|_{\fbf}=\emptyset$. 
  \end{proof}

\section{Low energy limit of the resolvent of Dirac fibered boundary operators} \label{le.0}

Let $\eth_{\phi}\in \Diff^1_{\phi}(M;E)$ be the elliptic formally self-adjoint first order fibered boundary operator of \S~\ref{fdt.0}.  Suppose that Assumption~\ref{dt.4} holds and that $\eth_{\phi}$ is a Dirac operator associated to a fibered boundary metric $g_{\phi}$ and a structure of Clifford module on $E$ with respect to the Clifford bundle of the $\phi$-tangent bundle.  In particular, $\eth_h$ in \eqref{dt.3} is a family of Euclidean Dirac operators.  Let $\gamma\in \CI(M;\End(E))$ be self-adjoint as an operator in $\Psi^0_{\phi}(M;E)$ and suppose that 
\begin{equation}
  \gamma^2=\Id_E, \quad \eth_{\phi}\gamma+ \gamma\eth_{\phi}=0.
\label{le.1}\end{equation}
In terms of \eqref{dt.3} and \eqref{dt.8e}, suppose also that $\gamma$ anti-commutes with $D_v$, $\eth_h$, $c$ and $\eth_Y$.

In this section, we will consider the first order $k,\phi$-operator 
\begin{equation}
    \eth_{k,\phi}:= \eth_{\phi} + k\gamma.
\label{le.2}\end{equation}
By \eqref{le.1}, notice that 
\begin{equation}
    \eth_{k,\phi}^2= \eth^2_{\phi}+ k^2\Id_E.
\label{le.3}\end{equation}
In particular, for $k>0$, $\eth_{k,\phi}^2$ has positive spectrum and is invertible.  Essentially for the same reason, its normal operator is invertible, which means  by \cite{Mazzeo-MelrosePhi} that $\eth_{k,\phi}^2$ is invertible in the small $\phi$-calculus for $k>0$.    Since $\eth_{k,\phi}^{-1}= \eth_{k,\phi} (\eth_{k,\phi}^2)^{-1}$, we see that $\eth_{k,\phi}$ is  invertible as well in the small $\phi$-calculus for $k>0$.  On the other hand, when $k=0$, $\eth_{\phi}$ is typically not invertible in $\Psi^*_{\phi}(M;E)$, but as shown in \S~\ref{fdt.0}, it is at least Fredholm when acting on suitable Sobolev spaces with an inverse modulo compact operators in the large $\phi$-calculus.  This and the invertibility for $k>0$ can be combined to invert $\eth_{k,\phi}$ as a $k,\phi$-operator as we will now explain.  In order to do this, we need to make the following hypothesis.

\begin{assumption}
There exists $\epsilon>0$ such that the interval $(-1-\epsilon,\epsilon)$ contains no critical weight of the indicial family $I(D_b,\lambda)$ of Definition~\ref{dt.6}.  There is also $\epsilon\le \epsilon_1$ such that each element $\psi$ of the kernel of $D_{\phi}$ in $L^2_b(M;E)$ is such that $x^{-\epsilon_1}\psi$ is bounded.  \label{le.4}\end{assumption} 

\begin{remark}
By Corollary~\ref{dt.22}, we can take $\epsilon_1=\epsilon$.  However, there are situations where we can take $\epsilon_1>\epsilon$ as the next example shows, yielding better control on the inverse of \eqref{le.3}.
\end{remark}
\begin{example}
If $\eth_{\phi}$ is the Hodge-deRham operator acting on forms valued in a flat vector bundle, then by Lemma~\ref{do.8c},  provided the de Rham cohomology group
\begin{equation}
H^q(Y;\ker D_v)=\{0\} \quad \mbox{for} \quad q\in\{\frac{h-1}2,\frac{h}2,\frac{h+1}2\},  
\label{HdR.2}\end{equation}
Assumption~\ref{le.4} will be satisfied for $g_{\phi}$ with metric $g_Y$ sufficiently scaled (so that the positive spectrum of $\mathfrak{d}^2$ is sufficiently large).  Moreover, if also
\begin{equation}
  H^q(Y;\ker D_v)=\{0\} \quad \mbox{for} \quad q\in\{\frac{h-2}2,\frac{h+2}2\},
\label{HdR.3}\end{equation}
then we can also assume that $\epsilon_1+\epsilon\ge 2\epsilon >1$, again provided the metric $g_Y$ arising in the asymptotic behavior of $g_{\phi}$ is sufficiently small.  Finally, if \eqref{HdR.2} holds, but not \eqref{HdR.3}, in which case $h$ is necessarily even, then we can still ensure that $\epsilon_1+\epsilon>1$ by requiring that the $L^2$-kernel of $\eth_{\phi}$ is trivial, in fact requiring to be  trivial only in degree $q\in \{\frac{h-2}2, \frac{h}2, \frac{h+2}2, \frac{h+4}2\}$ in the scattering case (when $Y=\pa M$ and $\phi$ is the identity map).
Indeed, in this case, again assuming $g_Y$ is sufficiently small, we can take $\epsilon_1>1$, since by Lemma~\ref{do.8c} the indicial root $\lambda=\frac12$ coming from the non-triviality of $H^{\frac{h\pm 2}2}(Y;\ker D_v)$  does not show up in the polyhomogeneous expansion of elements of the $L^2$-kernel.  In general, with $h$ odd or even, we can ensure that $\epsilon_1>1$ by scaling $g_Y$ provided either
\begin{equation}
   H^q(Y;\ker D_v)=\{0\} \quad \mbox{for} \quad q=\frac{h\pm\ell}2, \quad \ell\in \{0,1,2,3\},
\label{HdR.4}\end{equation}
or  that 
 we know that \eqref{HdR.2} holds and that the $L^2$-kernel of $\eth_{\phi}$ is  trivial, in fact only in degree $q=\frac{h+ 1\pm \ell}2$ with $\ell\in\{1,2,3,4,5\}$ in  the scattering case. 
\label{HdR.1}\end{example}

As in \S~\ref{fdt.0}, it will be convenient, instead of $\eth_{k,\phi}$, to work with the conjugated operator
\begin{equation}
 D_{k,\phi}:= x^{-\frac{h+1}2}\eth_{k,\phi}x^{\frac{h+1}2}= D_{\phi} +k\gamma
\label{le.4b}\end{equation}
acting formally on $L^2_b(M;E)$.  In terms of this conjugated operator, we have the following characterization of the inverse.

\begin{theorem}
There exists $G_{k,\phi}\in \Psi^{-1,\cG}_{k,\phi}(M;E)$ such that 
$$
    D_{k,\phi}G_{k,\phi}=\Id, \quad G_{k,\phi}D_{k,\phi}=\Id,
$$
where $\cG$ is an index family given by the empty set at $\lf, \rf$ and $\fbf$, while 
\begin{equation}
\begin{gathered}
  \inf\Re \cG|_{\zf}\ge-1, \quad \inf \Re\cG|_{\fbf_0}\ge h, \quad   \cG|_{\ff}=\bbN_0, \quad \inf\Re \cG|_{\ff_0}\ge 0,  \\
   \mbox{and} \quad \inf\Re \cG|_{\lf_0}\ge \nu,  \quad \inf\Re \cG|_{\rf_0}\ge h+1+\nu \quad \mbox{with} \quad \nu:=\min\{\epsilon,\epsilon_1-1\}.
\end{gathered}
\label{le.46b}\end{equation}
Furthermore, if $\epsilon+\epsilon_1> 1$ for $\epsilon$ and $\epsilon_1$ in Assumption~\ref{le.4}, then in fact $\cG|_{\zf}=(\bbN_0-1)\cup \cN$ with $\cN$ an index set with $\inf \Re \cN>0$.  
\label{le.46}\end{theorem}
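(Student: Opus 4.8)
The proof of Theorem~\ref{le.46} is essentially an assembly of the parametrix improvements carried out above, completed by a Neumann series. The plan is as follows. First I would start from Proposition~\ref{le.44}, which produces $Q_3\in\Psi^{-1,\cQ_3}_{k,\phi}(M;E)$ with $D_{k,\phi}Q_3=\Id-R_3$ and $R_3\in\Psi^{-1,\cR_3}_{k,\phi}(M;E)$ whose Schwartz kernel vanishes to positive order at every boundary hypersurface of $M^2_{k,\phi}$. Using the composition rules of Theorem~\ref{com.12} together with the iteration property \eqref{le.44c}, one checks that the powers $R_3^j$ improve uniformly (at $\zf$ one keeps track of the weight $\epsilon$ rather than $1$ in the range $\epsilon+\epsilon_1\le1$), so that the formal Neumann series $S\sim\sum_{j\ge1}R_3^j$ can be realized as an asymptotic sum, both symbolically and at the level of polyhomogeneous expansions, giving $S\in\Psi^{-1,\cS}_{k,\phi}(M;E)$. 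Setting $Q_4:=Q_3(\Id+S)$ then yields $D_{k,\phi}Q_4=\Id-R_4$ with $R_4\in\Psi^{-\infty}_{k,\phi}(M;E)$ rapidly vanishing at all boundary hypersurfaces.

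Next I would view $R_4$ as a smooth family $R_4(k)\in\dot{\Psi}^{-\infty}(M;E)$ tending rapidly to $0$ as $k\searrow0$, so that $\|R_4(k)\|_{L^2_b\to L^2_b}<1$ on an interval $[0,\delta)$ and $\Id-R_4(k)$ is invertible there with inverse $\Id+S_4(k)$, where $S_4(k)=\sum_{j\ge1}R_4(k)^j\in\dot{\Psi}^{-\infty}(M;E)$ is still rapidly vanishing as $k\searrow0$; hence $G_{k,\phi}:=Q_4(\Id+S_4)$ satisfies $D_{k,\phi}G_{k,\phi}=\Id$ for $k\in[0,\delta)$. For $k\ge\delta$ one has $D_{k,\phi}^2=D_\phi^2+k^2$ invertible, so $D_{k,\phi}$ is invertible in the small $\phi$-calculus $\Psi^{-1}_\phi(M;E)$ of Mazzeo--Melrose \cite{Mazzeo-MelrosePhi}; since a cut-off in the variable $k$ commutes with $D_{k,\phi}$, patching the two families with such a cut-off produces a single $G_{k,\phi}\in\Psi^{-1}_{k,\phi}(M;E)$ with $D_{k,\phi}G_{k,\phi}=\Id$ for all $k\ge0$. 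The identity $G_{k,\phi}D_{k,\phi}=\Id$ then follows by a density argument: for each $k>0$ the operator $\eth_{k,\phi}=\eth_\phi+k\gamma$ is injective on $L^2_\phi(M;E)$, since its square is $\eth_\phi^2+k^2\ge k^2>0$, so $D_{k,\phi}G_{k,\phi}=\Id$ forces $G_{k,\phi}D_{k,\phi}=\Id$ on each slice $k>0$, and both sides, being polyhomogeneous sections on $M^2_{k,\phi}$ agreeing on the dense set $\{k>0\}$, must coincide. Finally, the index family $\cG$ of \eqref{le.46b} is read off from the bounds on $\cQ_3=\cQ_2$ in \eqref{le.40c} and on $\cS$ by applying Theorem~\ref{com.12} to $Q_3\circ(\Id+S)$, the rapidly vanishing correction $\Id+S_4$ and the $k$-large piece not affecting these bounds; the sharpening $\cG|_{\zf}=(\bbN_0-1)\cup\cN$ with $\inf\Re\cN>0$ when $\epsilon+\epsilon_1>1$ comes from the improved iteration at $\zf$ noted after \eqref{le.44c}, using $\inf\Re\cS|_{\zf}\ge1$, $\inf\Re\cS|_{\lf_0}\ge1+\nu$ and $1+2\nu>0$.

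The main obstacle is the asymptotic summability of $\sum_j R_3^j$: one must verify against the composition formula \eqref{com.12b} that composing $R_3$ with itself strictly gains at every boundary hypersurface except possibly $\zf$, and at $\zf$ that the absence of decay of $R_3$ at $\lf_0$ and $\rf_0$ does not obstruct eventual positivity of the order of $R_3^j$ there; this is precisely the content of the dichotomy in \eqref{le.44c}, and its verification, distinguishing $\epsilon+\epsilon_1>1$ from $\epsilon+\epsilon_1\le1$, is the one genuinely delicate point. Everything else is a routine assembly of the results established in Sections~\ref{fdt.0}--\ref{sm.0}.
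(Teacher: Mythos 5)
Your assembly of the right inverse (symbolic/polyhomogeneous Neumann series for $\sum_j R_3^j$, the $\epsilon$ versus $1$ bookkeeping at $\zf$ when $\epsilon+\epsilon_1\le 1$, the operator-norm Neumann series for $\Id-R_4(k)$ on $[0,\delta)$, and the patching with the small $\phi$-calculus inverse for $k\ge\delta$) reproduces the discussion the paper gives before the theorem, and your route to $G_{k,\phi}D_{k,\phi}=\Id$ (injectivity of $D_{k,\phi}$ for each $k>0$ since $\eth_{k,\phi}^2=\eth_\phi^2+k^2\ge k^2$, then density of $\{k>0\}$ and polyhomogeneity) is a legitimate alternative to the paper's argument, which instead takes adjoints: $G_{k,\phi}^*D_{k,\phi}=\Id$ and hence $G_{k,\phi}^*=G_{k,\phi}^*(D_{k,\phi}G_{k,\phi})=G_{k,\phi}$, giving simultaneously the left inverse and self-adjointness of $G_{k,\phi}$.

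The genuine gap is in your last step, where you claim the index family \eqref{le.46b} is simply ``read off'' from $\cQ_3=\cQ_2$ and $\cS$ via Theorem~\ref{com.12}. That composition does not give the stated bounds at $\rf_0$ and $\ff$. At $\rf_0$, the formula $\cK|_{\rf_0}=(\cE|_{\zf}+\cF|_{\rf_0})\,\overline{\cup}\,\cdots$ with $\cE|_{\zf}\ni -1$ and $\cF|_{\rf_0}\ge h+1+\epsilon$ only yields $\inf\Re\cG|_{\rf_0}\ge h+\epsilon$ (the paper says exactly this: before the final step one only knows $h+\epsilon$ there), which is strictly weaker than the asserted $h+1+\nu$. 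At $\ff$, the construction only gives some index set with $\inf\Re\ge 0$, not the claim $\cG|_{\ff}=\bbN_0$, i.e.\ a smooth expansion with no fractional or logarithmic terms. The paper closes these two gaps by (i) using the self-adjointness $G_{k,\phi}^*=G_{k,\phi}$ to transfer the bound $\inf\Re\cG|_{\lf_0}\ge\nu$ into $\inf\Re\cG|_{\rf_0}\ge h+1+\nu$ (the $h+1$ shift coming from the right density convention), and (ii) observing that for $k>0$ the small $\phi$-calculus parametrix of \cite{Mazzeo-MelrosePhi} forces the expansion at $\ff$ to be smooth, and that by continuity this persists as $k\searrow 0$, whence $\cG|_{\ff}=\bbN_0$. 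Your argument can be repaired along the same lines — once you have both $D_{k,\phi}G_{k,\phi}=\Id$ and $G_{k,\phi}D_{k,\phi}=\Id$, formal self-adjointness of $D_{k,\phi}$ gives $G_{k,\phi}^*=G_{k,\phi}$ and hence the $\rf_0$ bound, and the $\ff$ statement still needs the separate $k>0$ continuity argument — but as written these two parts of the statement are not proved.
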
  

\begin{remark}
Coming back to $\eth_{k,\phi}=\eth_{\phi}+ k\gamma$, we see that $x^{\frac{h+1}2}G_{k,\phi}x^{-\frac{h+1}2}$ is such that 
$$
      \eth_{k,\phi}(x^{\frac{h+1}2}G_{k,\phi}x^{-\frac{h+1}2})=(x^{\frac{h+1}2}G_{k,\phi}x^{-\frac{h+1}2})\eth_{k,\phi}=\Id.
$$
\label{le.46c}\end{remark}

To prove Theorem~\ref{le.46}, we will closely follow the approach of Guillarmou and Hassell \cite{GH1,GH2}, but relying on the $k,\phi$-calculus described in \S~\ref{kfb.0}.  Roughly, the strategy will consist in constructing a parametrix with error term vanishing to some positive order in $k$ as $k\searrow 0$ when it is described in terms of the lift from the right of a $b$-density.  Thus in terms of the density bundle ${}^{k,\phi}\Omega_R(M)$, this will correspond to decay of order strictly bigger than $h+1$  at $\rf$ and $\fbf$ and decay of positive order at the other boundary hypersurfaces where $k=0$ in $M^2_{k,\phi}$.   Once we get such a good parametrix, we can construct the actual inverse from the parametrix by using a Neumann series argument.  The construction of the parametrix and the proof of Theorem~\ref{le.46} will involve a few steps, namely, we will need to invert $D_{k,\phi}$ at $\zf$, $\ff_0$,  $\fbf_0$ and $\ff$ making sure along the way the error term decays suitably elsewhere.

\subsection*{Step 0: Inversion at $\zf$ and $\ff_0$}

Consider then the fibered cusp operator
\begin{equation}
   D_{\fc}= x^{-\frac12}D_{\phi} x^{-\frac12}.
\label{le.5}\end{equation}
This operator is formally self-adjoint on $L^2_b(M;E)$.  By Assumption~\ref{le.4}, we can take $\delta=\frac12$ and $\mu=\frac12+\epsilon$ in Corollary~\ref{dt.31} to obtain an inverse $G_{-\frac12}: L^2_b(M;E)\to \cD_{-\frac12}$ such that 
\begin{equation}
   G_{-\frac12}D_{\fc}= \Id-\Pi, \quad D_{\fc}G_{-\frac12}=\Id-\Pi,
\label{le.6}\end{equation}
where $\Pi$ is the orthogonal projection in $L^2_b(M;E)$ onto the finite dimensional kernel of $D_{\fc}$ and $\cD_{-\frac12}\subset L^2_b(M;E)$ is the minimal domain of $D_{\fc}$.  If $\Pi=0$, then as in \cite{GH1}, one can take $x^{-\frac12}G_{-\frac12}x^{-\frac12}$ to invert $D_{k,\phi}= D_{\phi}+k\gamma$ at $\zf$, for
\begin{equation}
D_{\phi}x^{-\frac12}G_{-\frac12}x^{-\frac12}= x^{\frac12}D_{\fc}G_{-\frac12}x^{-\frac12}= x^{\frac12}\Id x^{-\frac12}=\Id.
\label{le.7}\end{equation}
If instead $\Pi\ne 0$, then \eqref{le.7} becomes
\begin{equation}
   D_{\phi}x^{-\frac12}G_{-\frac12}x^{-\frac12}= \Id - x^{\frac12}\Pi x^{-\frac12}
\label{le.8}\end{equation}
and we can proceed as in \cite{GH2} to remove the error term.  More precisely,  let $\{\varphi_j\}_{j=1}^J$ be an orthonormal basis of the kernel of $D_{\fc}$ in $L^2_b(M;E)$, so that 
$$
       \Pi= \sum_{j=1}^J (\pr_L^*\varphi_j)\pr_R^*(\varphi_j\nu_b)
$$
for the $b$-density $\nu_b= x^{h+1}dg_{\phi}$ with respect to which $D_{\fc}$ is formally self-adjoint.  By 
Assumption~\ref{le.4}, $\varphi_j=\mathcal{O}(x^{\frac12+\epsilon_1})$ near $\pa M$.  In particular, $\{x^{-\frac12}\varphi_j\}$ is a basis of the kernel of $D_{\phi}$ in $L^2_b(M;E)$.  If $\{\psi_j\}_{j=1}^J$ is a choice of orthonormal basis of $\ker_{L^2_b} D_{\phi}$, then 
$$
  \psi_i= \sum_{j=1}^J \alpha_{ij} x^{-\frac12}\varphi_j
$$
for some $\alpha_{ij}$ and 
\begin{equation}
        \Pi_{\ker_{L^2_b}D_{\phi}}= \sum_{j=1}^J (\pr_L^*\psi_j)\pr_R^*(\psi_j\nu_b)
\label{le.9}\end{equation}
is the orthogonal projection onto $\ker_{L^2_b}D_{\phi}$.  If $\alpha^{ij}$ denotes the inverse of the matrix $\alpha_{ij}$, then
\begin{equation}
   x^{-\frac12}\varphi_i= \sum_{j=1}^J \alpha^{ij} \psi_j.
\label{le.10}\end{equation}
In terms of the projection \eqref{le.9}, we compute that 
\begin{equation}
 \Pi_{\ker_{L^2_b}D_{\phi}}(x^{\frac12}\varphi_j) = \sum_{k=1}^J \left( \int_M \psi_k x^{\frac12}\varphi_j \nu_b \right)\psi_k = \sum_{k=1}^{J}\left( \int_M \left( \sum_{i=1}^J \alpha_{ki}x^{-\frac12}\varphi_i \right)x^{\frac12}\varphi_j\nu_b \right)\psi_k= \sum_{k=1}^J \alpha_{kj}\psi_k.
\label{le.11}\end{equation}
This means that 
\begin{equation}
   \psi^{\perp}_j:= x^{\frac12}\varphi_j- \Pi_{\ker_{L^2_b}D_{\phi}}(x^{\frac12}\varphi_j)=\mathcal{O}(x^{\epsilon_1}) \quad \mbox{near} \; \pa M.
\label{le.12}\end{equation}
It also follows from \eqref{le.11} that
\begin{equation}
\begin{aligned}
\sum_{j=1}^J \pr_L^*(\Pi_{\ker_{L^2_b}D_{\phi}}(x^{\frac12}\varphi_j))\pr_R^*(x^{-\frac12}\varphi_j\nu_b) &=
\sum_{j,k,\ell} \alpha_{kj}\alpha^{j\ell}(\pr_L^*\psi_k)\pr_R^*(\psi_{\ell}\nu_b)= \sum_k (\pr_L^*\psi_k)\pr_R^*(\psi_{k}\nu_b) \\
&= \Pi_{\ker_{L^2_b}D_{\phi}}.
\end{aligned}
\label{le.13}\end{equation}
\begin{lemma}
There exists $\chi_k$ such that $D_{\fc}\chi_k=x^{-\frac12}\psi^{\perp}_k$.  Moreover, $\chi_k$ is smooth on $M\setminus \pa M$ with polyhomogeneous expansion at $\pa M$ having leading term of order at least $x^{\nu+\frac12}$ with $\nu=\min\{\epsilon, \epsilon_1-1\}$.  
\label{le.14}\end{lemma}
\begin{proof}
By \eqref{le.10}, $x^{-\frac12}\varphi_i$ is orthogonal to $\psi_k^{\perp}$, which implies that $\varphi_i$ is orthogonal to $x^{-\frac12}\psi_k^{\perp}$.  Thus, $x^{-\frac12}\psi_k^{\perp}$ is orthogonal to $\ker_{L^2_b}D_{\fc}$.  Hence, taking $\delta=\frac12$ in \eqref{dt.30b}, this means that $x^{-\frac12}\psi_k^{\perp}$ is in the range of $\Id-P_2$, so that 
$$
\chi_k:= G_{-\frac12}(x^{-\frac12}\psi_k^{\perp})
$$ 
is such that
\begin{equation}
\begin{aligned}
D_{\fc}\chi_k &= x^{-\frac12}D_{\phi}x^{-\frac12}\chi_k= x^{-\frac12}D_{\phi}x^{-\frac12}G_{-\frac12}(x^{-\frac12}\psi^{\perp}_k) \\
&= (\Id-P_2)(x^{-\frac12}\psi_k^{\perp})\\
&= x^{-\frac12}\psi^{\perp}_k
\end{aligned}
\end{equation}
as claimed.  Moreover, by Assumption~\ref{le.4}, Corollary~\ref{dt.31} and Proposition~\ref{phi.17b}, $\chi_k$ is polyhomogeneous at $\pa M$ with  $\chi_k=\mathcal{O}(x^{\nu+\frac12})$. \end{proof}

Using \eqref{le.13}, we see that 
\begin{equation}
\begin{aligned}
D_{\phi}x^{-\frac12} &\left(G_{-\frac12}+ \sum_{j=1}^J (\pr_L^*\chi_j \pr_R^*(\varphi_j\nu_b)+  \pr_L^*\varphi_j\pr_R^*(\chi_j\nu_b)) \right)x^{-\frac12} \\
&\hspace{5cm}= \Id -x^{\frac12}\Pi x^{-\frac12}+ \sum_{j=1}^J \pr_L^*\psi_j^{\perp}\pr_R^*(x^{-\frac12}\varphi_j\nu_b) \\
&\hspace{5cm}=\Id+ \sum_{j=1}^J\left( -\pr_L^*(x^{\frac12}\varphi_j)\pr_R^*(x^{-\frac12}\varphi_j \nu_b) + \pr_L^*\psi^{\perp}_j\pr_R^*(x^{-\frac12}\varphi_j\nu_b) \right) \\
&\hspace{5cm}=\Id - \sum_{j=1}^J \pr_L^*(\Pi_{\ker_{L^2_b}D_{\phi}}(x^{\frac12}\varphi_j))\pr_R^*(x^{-\frac12}\varphi_j\nu_b) \\
&\hspace{5cm}= \Id-\Pi_{\ker_{L^2_b}D_{\phi}}.
\end{aligned}
\label{le.15}\end{equation}

To construct the inverse of $D_{k,\phi}$, this suggests to consider the approximate inverse
\begin{equation}
Q_0:= k^{-1}\gamma G_{\zf}^{-1}+ G_{\zf}^0
\label{le.16}\end{equation}
with 
\begin{equation}
 \begin{gathered}
 G^{-1}_{\zf}:= \sum_{j=1}^J \pr_L^{*}\psi_j \pr_R^*(\psi_j\nu_b)= \Pi_{\ker_{L^2_b}D_{\phi}}, \\
 G^0_{\zf}= x^{-\frac12}\left( G_{-\frac12} + \sum_{j=1}^J \left( \pr_L^*\chi_j\pr_R^*(\varphi_j\nu_b)+ \pr_L^*\varphi_j \pr_R^*(\chi_j \nu_b) \right)  \right)x^{-\frac12}.
\end{gathered} 
\label{le.17}\end{equation}
On $M^2_{\phi}\times [0,\infty)_k$, it is such that
\begin{equation}
(D_{\phi}+\gamma k)Q_0= \Id + R_0 \quad \mbox{with} \; R_0= k\gamma G^0_{\zf}.
\label{le.18}\end{equation} 

When we lift this parametrix to $M^2_{k,\phi}$, we can regard $Q_0$ as an element of $\Psi^{-1,\cQ_0}_{k,\phi}(M;E)$ with index family $\cQ_0$ such that 
\begin{equation}
\begin{gathered}
   \cQ_0|_{\zf}= \bbN_0-1, \quad \inf \Re\cQ_0|_{\fbf}=\inf \Re\cQ_0|_{\fbf_0}\ge h, \quad  \inf\Re\cQ_0|_{\ff_0}\ge0, \quad  \inf\Re\cQ_0|_{\ff}\ge 0 \quad \mbox{and} \\
    \inf\Re \cQ_0|_{\lf}=\inf\Re \cQ_0|_{\lf_0}\ge \nu,  \quad \inf\Re \cQ_0|_{\rf}=\inf\Re \cQ_0|_{\rf_0}\ge h+1+\nu, \quad \mbox{with} \quad \nu:=\min\{\epsilon,\epsilon_1-1\},
\end{gathered}\label{le.18b}\end{equation} 
while the error term $R_0$ has leading order 1 at $\zf$ and $\ff_0$,  leading order 0 at $\ff$, leading order $h+1$ at $\fbf_0$, leading order $h$ at $\fbf$, leading order $1+\nu$ at $\lf_0$, leading order $\nu$ at $\lf$, leading order $h+2+\nu$ at $\rf_0$ and leading order $h+1+\nu$ at $\rf$.  This means that $Q_0$ also inverts $D_{k,\phi}$ at  $\ff_0$.  
At $\ff_0$, the lift of $Q_0$ gives the expected model for the inverse of $D_{k,\phi}$.  In fact, since $\psi_k$ and $\chi_k$ are of order $x^{\epsilon_1}$ and $x^{\frac12+\nu}$ at the boundary, notice that $k^{-1}G^{-1}_{\zf}$ is of order $h+2\epsilon_1$ in terms of $(\phi,k)$-densities at $\ff_0$, while 
$$
x^{-\frac12}\left(\sum_{j=1}^J \left( \pr_L^*\chi_j\pr_R^*(\varphi_j\nu_b)+ \pr_L^*\varphi_j \pr_R^*(\chi_j \nu_b)\right)\right)x^{-\frac12}
$$
is of order $h+1+\nu+\epsilon_1$.  Hence, the term of order $0$ at $\ff_0$ of $Q_0$ comes exclusively from the term $x^{-\frac12}G_{-\frac12}x^{-\frac12}$ in $G^0_{\zf}$.

\subsection*{Step 1: Cutting off to enforce rapdid decay at $\lf$, $\fbf$ and $\rf$}

At $\fbf_0$, the error term $R_0$ does not vanish at order $h+1$.   Moreover,  the error term $R_0$ does not vanish rapidly at $\fbf$, $\lf$ and $\rf$.  This forces us to seek a better model to invert the operator at $\fbf_0$.  Looking at the behavior of $Q_0$ near $\fbf_0$, notice that $k^{-1}G^{-1}_{\zf}$ is $\mathcal{O}(x_{\fbf_0}^{h+2\epsilon_1})$, while $G^0_{\zf}$ has main term of order $h$ coming from $x^{-\frac12}G_{-\frac12}x^{-\frac12}$ (in terms of right $k,\phi$-densities).

On the other hand, before performing the last blow-up in \eqref{kfb.9bb}, we can consider the coordinates
\begin{equation}
k, \kappa= \frac{k}{x}, \kappa'= \frac{k}{x'}, y,y', z,z'
\label{le.19}\end{equation}
in the interior of $\fbf_0$, where $y$ and $z$ denote coordinates on the base and the fibers of a local trivialization of $\phi: \pa M\to Y$.  Recalling \eqref{dt.8b}, we see that in terms of these coordinates, the restriction of the operator $D_{k,\phi}$ to $\fbf_0$ is given by   
\begin{equation}
 D_{k,\phi}= D_v + kD_{\cC} \quad \mbox{with} \quad D_{\cC}:= -c\frac{\pa}{\pa \kappa}+ \frac{1}{\kappa}D_Y+ \gamma.
\label{le.20}\end{equation}
Here, the operator $D_{\cC}$ can be seen as an operator on the cone $Y\times [0,\infty)_{\kappa}$ with cone metric 
$d\kappa^2 + \kappa^2 g_Y$ acting on the sections of $\ker D_v$.  Near the apex of the cone, that is, for $\kappa<1$, the operator $\kappa D_{\cC}$ can be treated as $b$-operator, while for $\kappa>1$, the operator $D_{\cC}$ can be seen as a scattering operator in the sense of \cite{MelroseGST}.  This is consistent with the fact that, when we forget about the fibers of $\phi: \pa M\to Y$, the boundary hypersurface $\fbf_0$, before performing the last blow up in \eqref{kfb.9bb}, is just the double space for $Y\times [0,\infty]$ corresponding to a $b$-double space near $\kappa=\kappa'=0$ an a scattering double space near $\kappa=\kappa'=\infty$. 

More precisely, this double space is given by 
\begin{equation}
  (Y\times[0,\infty])^2_{b,\sc}= [Y^2 \times [0,\infty]^2; Y^2\times \{0\}^2, Y^2\times\{\infty\}^2, B_{\sc}],
\label{le.21}\end{equation}   
where $B_{\sc}$ is the intersection of the lifted diagonal with the boundary hypersurface created by the blow-up of $Y^2\times \{\infty\}^2$.  
\begin{figure}[h]
\begin{tikzpicture}
\draw(1,0) arc [radius=1, start angle=0, end angle=90];
\draw(1,0)--(4,0);
\draw(0,1)--(0,4); 
\draw(4,0)--(4,3);
\draw(0,4)--(3,4);
\draw(3,4) arc [radius=1, start angle=180, end angle=210];
\draw(4,3) arc [radius=1, start angle=270, end angle=240];
\draw (3.1339746,3.5) arc[radius=0.258819, start angle=135, end angle=315];
\node at (2.5,-0.3) {$\rf_0(Y)$};
\node at (-0.6,2.5) {$\lf_0(Y)$};
\node at (0.3,0.3) {$\fb_0(Y)$};
\node at (4.6,2) {$\lf_{\infty}(Y)$};
\node at (2,4.2) {$\rf_{\infty}(Y)$};
\node at (2.7,2.9) {$\sc(Y)$};
\node at (4.3,3.2) {$\fb_{\infty}(Y)$};
\node at (3.7,3.9) {$\fb_{\infty}(Y)$};
\end{tikzpicture}
\caption{The double space $(Y\times [0,\infty])^2_{b,\sc}$}
\label{fig.7}\end{figure}
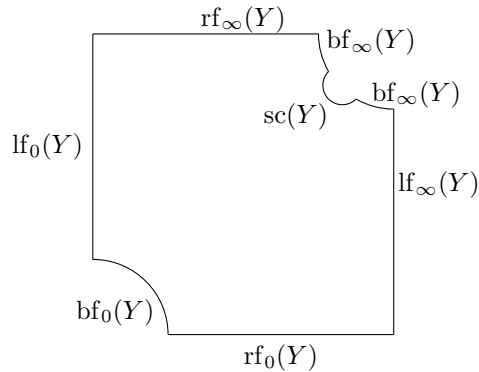

Denote by $\fb_0(Y)$, $\fb_{\infty}(Y)$ and $\sc(Y)$ the boundary hypersurfaces created by these three blow-ups and let $\lf_0(Y), \rf_0(Y), \lf_{\infty}(Y)$ and $\rf_{\infty}(Y)$ be the lifts of the boundary hypersurfaces $Y^2\times\{0\}\times [0,\infty]$, $Y^2\times[0,\infty]\times\{0\}$, $Y^2\times\{\infty\}\times [0,\infty]$ and $Y^2\times [0,\infty]\times\{\infty\}$. In terms of the boundary hypersurface $\fbf_0$, notice that $\fb_0(Y)$, $\fb_{\infty}(Y)$ and $\sc(Y)$ correspond to $\fbf_0\cap \zf$, $\fbf_0\cap \fbf$ and $\fbf_0\cap \ff$, while $\lf_{\infty}(Y)$ and $\rf_{\infty}(Y)$ correspond to $\fbf_0\cap \lf$ and $\fbf_0\cap \rf$.  However, for $\lf_0(Y)$ and $\rf_0(Y)$, there is a small twist since they correspond respectively to $\fbf_0\cap \rf$ and $\fbf_0\cap \lf$ instead of $\fbf_0\cap \lf$ and $\fbf_0\cap \rf$ as one could have naively expected.  This is consistent with the fact that, for instance, $Y^2\cap \{0\}\times[0,\infty]$ intersects $Y^2\times [0,\infty]\times \{\infty\}$, but not $Y^2\times \{\infty\}\times [0,\infty]$.

Now, by construction, the term $q_0$ of order $h$ of $Q_0$ at $\fbf_0$ is such that $\Pi_hq_0\Pi_h=q_0$, where $\Pi_h$ is the projection of \eqref{dt.5}.  To enforce rapid decay of the error term at $\fbf$, $\lf$ and $\rf$, we can take $Q_1$ to be $Q_0$ smoothly cut-off near $\fbf$, $\lf$ and $\rf$, but insisting that $Q_1=Q_0$ away from a small neighborhood of $\fbf$, $\lf$ and $\rf$, in particular near $\ff_0$ and $\zf$.  In this case, we will have that
\begin{equation}
  D_{k,\phi}Q_1= \Id - R_1
\label{le.22}\end{equation} 
with $R_1$ now having a term $r_1$ of order $h+1$ at $\fbf_0$ vanishing near $\fbf_0\cap \fbf$, $\fbf_0\cap\lf$, $\fbf_0\cap \rf$ and $\fbf_0\cap\ff_0$.  Choosing our cut-off function to be constant in the fibers of the lift of the fiber bundle 
$$
\phi\times \phi: \pa M\times \pa M\to Y\times Y 
$$ 
to $\fbf_0$,
we can also ensure that the term $q_1$ of order $h$ of $Q_1$ at $\fbf_0$ and the term $r_1$ of order $h+1$ of $R_1$ at $\fbf_0$ are such that 
$$
    \Pi_h q_1\Pi_h= q_1 \quad \mbox{and} \quad \Pi_h r_1\Pi_h= \Pi_h r_1.
$$  

We need also to choose the cut-off function near $\lf$ and $\rf$ in such a way that it does not introduce more singular terms in the expansion of the error terms at $\lf_0$ and $\rf_0$.  Near $\rf\cap \rf_0$, this can be done in terms of the right variable $\frac{x'}k$, ensuring that the error terms vanishes to order $h+2+\nu$ at $\rf_0$.  Indeed, the problematic term to cut-off is $k^{-1}G^{-1}_{\zf}$, but cutting off in this manner, that is using a cut-off function in $\frac{x'}k$, we see that $D_{\phi}$ applied to the cut-off of this term still gives zero, so yields no singular term, while $k\gamma$ yields a term of order $h+2+\nu$.  Cutting off the term $G^0_{\zf}$ in the same manner also yields at term of order $h+1+\epsilon_1$ by \eqref{le.15}. At $\lf$, we should instead cut off using a cut-off function constant in the fibers of the lift of the fiber bundle $\phi\times \Id: \pa M\times M\to Y\times M$ on $\lf_0$.  In terms of the decomposition \eqref{dt.8b}, we thus see that $D_{k,\phi}$ applied to the cut-off of $k^{-1}G_{\zf}^{-1}$ gives a term of order $\epsilon_1$ at $\lf_0$.  From \eqref{le.15}, we also see that the terms of order $\nu$ or less of $G^0_{\zf}$ at $\lf_0$ are in $\ker D_{v}$, where $\nu=\min\{\epsilon,\epsilon_1-1\}$.  Hence, we see from Lemma~\ref{dt.7} that cutting off $G^0_{\zf}$ near $\lf_0\cap\lf$ yields an error term of  order $\nu+1$ at $\lf_0$.  

To summarize, cutting off to get rapid decay at $\lf$, $\fbf$ and $\rf$, we get $Q_1$ in \eqref{le.22} where we can assume that the error term $R_1$ is in $\Psi^{-1,\cR_1}_{k,\phi}(M;E)$ with index family $\cR_1$ given by the empty set at $\lf$, $\fbf$ and $\rf$ and such that 
\begin{equation}
\begin{gathered}
\inf\Re\cR_1|_{\ff}\ge 0, \quad \inf \Re\cR_1|_{\fbf_0}\ge h+1, \quad \inf\Re\cR_1|_{\ff_0}\ge1, \\ 
\inf\Re\cR_1|_{\rf_0}\ge h+2+\nu, \quad 
 \inf\Re \cR_1|_{\lf_0}\ge 1+\nu, \quad \inf\Re \cR_1|_{\zf}\ge 1.
\end{gathered}
\label{le.22a}\end{equation}

\subsection*{Step 2: Inverting at $\fbf_0$}

To get rid of $r_1$ at $\fbf_0$, this means that in terms of \eqref{le.20}, we should try to find $q_2$ such that
\begin{equation}
      kD_{\cC}(q_2)=\Pi_h r_1.  
\label{le.22b}\end{equation}
To achieve this, we need to analyse the operator $D_{\cC}$ in terms of the double space \eqref{le.21}.  More precisely, we will invert $D_{\cC}$ using the pseudodifferential operators defined by the double space \eqref{le.21}.  To define this pseudodifferential calculus, let $\Delta_{b,\sc}$ be the lift of the diagonal in $(Y\times[0,\infty])^2$ to $(Y\times [0,\infty])^2_{b,\sc}$.  Let also ${}^{b,\sc}\Omega(Y\times[0,\infty])$ be the density bundle on $Y\times[0,\infty]$ corresponding to a $b$-density bundle near $\kappa=0$ and to a $\sc$-density bundle near $\kappa=\infty$.  If $\pr_L$ and $\pr_R$ are the projections $(Y\times [0,\infty])^2\to Y\times [0,\infty]$ on the left and right factors respectively and if $\beta_{b,\sc}: (Y\times [0,\infty])^2_{b,\sc}\to (Y\times[0,\infty])^2$ is the natural blow-down map, we can consider the lift from the right of the $b,\sc$-density bundle,
$$
     {}^{b,\sc}\Omega_R(Y\times [0,\infty]):= \beta^*_{b,\sc}\pr_R^* {}^{b,\sc}\Omega(Y\times[0,\infty]).
$$
If $F$ is a vector bundle on $Y\times [0,\infty]$, one can also consider the bundle 
$$
\Hom_{b,\sc}(F,F):=\beta_{b,\sc}^*(\pr_L^*\otimes \pr_R^*F^*).
$$
With this notation, the small calculus of $b,\sc$-operators acting on sections of $F$ can be defined as the union over all $m\in\bbR$ of the spaces
\begin{multline}
\Psi^m_{b,\sc}(Y;F) := \{ \kappa \in I^m((Y\times [0,\infty])^2_{b,\sc}, \Delta_{b,\sc}; \Hom_{b,\sc}(F,F)\otimes {}^{b,\sc}\Omega_R(Y\times [0,\infty])) \; |  \\
\kappa\equiv 0 \; \mbox{at}  \; \pa (Y\times[0,\infty])^2_{b,\sc}\setminus (\fb_0(Y)\cup \sc(Y))  \}.
\label{bsc.1}\end{multline}
If $\cE$ is an indicial family for $(Y\times [0,\infty])^2_{b,\sc}$, we can define more generally the spaces
\begin{gather}
\label{bsc.2} \Psi^{-\infty,\cE}_{b,\sc}(Y;F):= \cA^{\cE}_{\phg}(((Y\times [0,\infty])^2_{b,\sc}; \Hom_{b,\sc}(F,F)\otimes {}^{b,\sc}\Omega_R(Y\times [0,\infty])), \\
\label{bsc.3} \Psi^{m,\cE}_{b,\sc}(Y;F):= \Psi^m_{b,\sc}(Y;F) + \Psi^{-\infty,\cE}_{b,\sc}(Y;F), \quad m\in\bbR.
\end{gather}
By the discussion above, we could alternatively define the $b,\sc$-calculus by restriction of the $b$-$\sc$ transition calculus of $Y\times [0,\infty]$ to the boundary hypersurface $\fb_0$.  In particular, restriction to $\fb_0$ of the composition result of Theorem~\ref{com.14} yields the following composition result for the $b,\sc$-calculus.
\begin{theorem}
Let $A\in\Psi^{m,\cE}_{b,\sc}(Y;F)$ and $B\in\Psi^{m',\cF}(Y;F)$ be pseudodifferential $b,\sc$-operators with index families $\cE$ and $\cF$ given by the empty set at $\lf_{\infty}(Y)$, $\rf_{\infty}(Y)$ and $\fb_{\infty}(Y)$.  Assume furthermore that 
$$
   \inf\Re\cE|_{\sc(Y)}\ge 0, \quad \inf\Re\cF_{\sc(Y)}\ge 0 \quad \mbox{and} \quad \inf\Re(\cE|_{\rf_0(Y)}+\cF|_{\lf_0(Y)})>0.  
$$
In this case, $A\circ B\in \Psi^{m+m',\cG}_{b,\sc}(Y;F)$ with index family $\cG$ given by
\begin{equation}
\begin{array}{ll}
\cG|_{\sc(Y)}= \cE|_{\sc(Y)}+ \cF|_{\sc(Y)}, & \cG|_{\fb_0(Y)}= (\cE_{\fb_0(Y)}+ \cF_{\fb_0(Y)})\overline{\cup}(\cE|_{\lf_0(Y)}+\cF|_{\rf_0(Y)}), \\
\cG|_{\lf_0(Y)}=(\cE|_{\fb_0(Y)}+ \cF|_{\lf_0(Y)})\overline{\cup}(\cE|_{\lf_0(Y)}), & \cG|_{\rf_0(Y)}=(\cE|_{\rf_0(Y)}+\cF|_{\fb_0(Y)})\overline{\cup}(\cF|_{\rf_0(Y)}), \\
 \cG|_{\fb_{\infty}(Y)}=\cG|_{\lf_{\infty}(Y)}=\cG|_{\rf_{\infty}(Y)}=\emptyset. &   
\end{array}
\label{bsc.4b}\end{equation}
\label{bsc.4}\end{theorem}
\begin{proof}
Recall that in terms of the boundary hypersurface $\fb_0$ in Theorem~\ref{com.14},  the boundary hypersurfaces $\fb_0(Y)$, $\fb_{\infty}(Y)$ and $\sc(Y)$ correspond to $\fb_0\cap \zf$, $\fb_0\cap \fb$ and $\fb_0\cap \sc$, while $\lf_{\infty}(Y)$ and $\rf_{\infty}(Y)$ correspond to $\fb_0\cap \lf$ and $\fb_0\cap \rf$.  However, for $\lf_0(Y)$ and $\rf_0(Y)$, there is a small twist since they correspond respectively to $\fb_0\cap \rf$ and $\fb_0\cap \lf$.  With this understood, it suffices to look at what happens to the term of order zero at $\fb_0$ in \eqref{com.14b}.  In particular, the condition 
$$
\inf\Re(\cE|_{\rf_0(Y)}+\cF|_{\lf_0(Y)})>0
$$
is there to ensure that in \eqref{com.14b}, there are no terms of negative order at $\fb_0$ and that the term of order zero comes exclusively from the terms of order zero at $\fb_0$ of the operators that are composed.  Thus, it suffices to replace the index sets at $\fb_0$ by $0$ and restrict  \eqref{com.14b} to $\fb_0$ to obtain \eqref{bsc.4b}. 
\end{proof}

Now that we have properly defined the pseudodifferential $b,\sc$-operators, we can come back to the question of inverting the operator $D_{\cC}$ in this calculus.  First, as mentioned before, near  $\fb_0(Y)$, $\kappa D_{\cC}$ is $b$-operator.  From \eqref{dt.8b} and \eqref{le.20}, we see that its indicial family is given by
\begin{equation}
      I(\kappa D_{\cC},\lambda)= D_Y-c\lambda= I(D_b,-\lambda).
\label{le.23}\end{equation}
We also know from the parametrix construction of \S~\ref{fdt.0} that the term of order $h$ of $Q_1$ at $\fbf_0\cap\zf$ is precisely
$$
x^{-\frac12}\left(\frac{1}{2\pi} \int_{-\infty}^{\infty} e^{i\xi\log s} I(D_b,-\frac12+i\xi)^{-1}d\xi\right) (x')^{-\frac12} \frac{dx'}{x'}
$$
where $s=\frac{x}{x'}$.  In terms of the coordinates \eqref{le.19}, this becomes
\begin{equation}
   k^{-1} \kappa^{\frac12}\left(\frac{1}{2\pi} \int_{-\infty}^{\infty} e^{i\xi\log\left(\frac{\kappa}{\kappa'}\right)} I(\kappa D_{\cC},\frac12+i\xi)^{-1}d\xi\right) (\kappa')^{\frac12} \frac{d\kappa'}{\kappa'}
\label{le.24}\end{equation}
This suggests that we should consider 
$$
G_{b}^{\frac12}=\left(\frac{1}{2\pi} \int_{-\infty}^{\infty} e^{i\xi\log\left(\frac{\kappa}{\kappa'}\right)} I(\kappa D_{\cC},\frac12+i\xi)^{-1}d\xi\right) \frac{d\kappa'}{\kappa'},
$$
the inverse of $\kappa^{-\frac12}(\kappa D_{\cC})\kappa^{\frac12}$ at $\fb_0(Y)$ as a $b$-operator, since then $\kappa^{\frac12}G_{b}^{\frac12}\kappa^{\frac12}$ gives a corresponding inverse for $D_{\cC}$ at $\fb_0(Y)$,
$$
        D_{\cC}\kappa^{\frac12}G_{b}^{\frac12}\kappa^{\frac12}= \kappa^{-\frac12}(\kappa D_{\cC})\kappa^{\frac12}G_{b}^{\frac12}\kappa^{\frac12}= \kappa^{-\frac12}\Id \kappa^{\frac12}=\Id,
$$
and \eqref{le.24} is precisely $k^{-1}\kappa^{\frac12}G_b^{\frac12}\kappa^{\frac12}$ as expected.

  At $\sc(Y)$, $D_{\cC}$ can instead be inverted as a scattering operator.  From \eqref{le.20}, we see that its normal operator is 
\begin{equation}
   N_{\sc}(D_{\cC})= \eth_h +\gamma
\label{le.25}\end{equation}
where $\eth_h$ is the family of Euclidean Dirac operators of \eqref{dt.3} on  ${}^{\phi}NY= \left. {}^{\sc}T(Y\times[0,\infty])\right|_{Y\times\{\infty\}}$.  Since by assumption $\gamma\eth_h+ \eth_h\gamma=0$, we see that
\begin{equation}
     N_{\sc}(D_{\cC})^2=\eth_h^2+\Id
\label{le.26}\end{equation}
is clearly invertible as a family of suspended operators in the sense of \cite{Mazzeo-MelrosePhi}.  Thus, $N_{\sc}(D_{\cC})$ itself is invertible as a family of suspended operators with inverse 
\begin{equation}
 N_{\sc}(D_{\cC})^{-1}= N_{\sc}(D_{\cC}) (\eth_h^2+\Id)^{-1}.
\label{le.26b}\end{equation}
Hence, using this to invert $D_{\cC}$ at $\sc(Y)$, while at $\fb_0(Y)$ using instead $\kappa^{-\frac12}(\kappa D_{\cC})\kappa^{\frac12}$, we can construct a parametrix for $D_{\cC}$ as follows.  
\begin{lemma}
There exists $Q_{\cC}\in\Psi^{-1,\cQ}_{b,\sc}(Y;\ker D_v)$ and $R_{\cC}\in\Psi^{-\infty,\cR}_{b,\sc}(Y;\ker D_v)$ such that 
\begin{equation}
    D_{\cC}Q_{\cC}= \Id-R_{\cC}, \quad Q^*_{\cC}D_{\cC}=\Id-R^*_{\cC},
\label{le.27}\end{equation}
where $\cQ$ is an index family which is trivial at $\fb_{\infty}(Y), \lf_{\infty}(Y), \rf_{\infty}(Y)$, given by $\bbN_0$ at $\sc(Y)$ and such that
$$
    \inf\Re \cQ|_{\fb_0(Y)}\ge 1, \quad  \inf\Re\cQ_{\lf_0(Y)}\ge 1+\epsilon \quad\mbox{and} \quad \inf\Re\cQ_{\rf_0(Y)}\ge 1+\epsilon,
$$ 
while $\cR$ is an index family which is trivial at $\sc(Y)$, $\fb_{\infty}(Y)$, $\lf_{\infty}(Y)$, $\rf_{\infty}(Y)$, $\lf_0(Y)$, $\fb_0(Y)$ and such that $\inf \Re \cR|_{\rf_{0}(Y)}\ge 1+\epsilon$.  Moreover, at $\sc$, the restriction of $Q_{\cC}$ is given by $N_{\sc}(Q_{\cC})= N_{\sc}(D_{\cC})^{-1}$, while at $\fb_{0}(Y)$, the restriction of $\kappa^{-\frac12}Q_{\cC}\kappa^{-\frac12}$ is $G_{b}^{\frac12}$.
\label{le.27b}\end{lemma}
\begin{proof}
At the end $\kappa=\infty$, we can invert $D_{\cC}$ as a scattering operator as in \cite{Mazzeo-MelrosePhi} to obtain $Q_{\cC}$ near $\sc(Y),\fb_{\infty}(Y)$, $\lf_{\infty}(Y)$ and $\rf_{\infty}(Y)$.  At end $\kappa=0$, we can invert instead $\kappa^{-\frac12}(\kappa D_{\cC})\kappa^{\frac12}$as a $b$-operator as in \cite{MelroseAPS}, yielding a parametrix $\widetilde{Q}$ such that
$$
     \kappa^{-\frac12}(\kappa D_{\cC})\kappa^{\frac12}\widetilde{Q}=\Id-\widetilde{R},
$$
where $\widetilde{Q}$ is a $b$-operator of order one with polyhomogeneous expansion at $\lf_{0}(Y)$ and $\rf_0(Y)$ having leading term of order $\frac12+\epsilon$ and smooth at $\fb_0(Y)$ with restriction given by $G^{\frac12}_b$, while $\widetilde{R}$ is a $b$-operator of order $-\infty$ vanishing rapidly at $\lf_0(Y)$ and $\fb_0(Y)$ and with polyhomogeneous expansion at $\rf_{0}(Y)$ having leading term of order $\frac12+\epsilon$.  It suffices then to take $Q_{C}= \kappa^{\frac12}\widetilde{Q}\kappa^{\frac12}$ and $R_{\cC}= \kappa^{-\frac12}\widetilde{R}\kappa^{\frac12}$ near $\fb_0(Y)$.  This can be combined with the construction near $\sc(Y)$ to give the parametrix $Q_{\cC}$ globally as claimed.  
\end{proof}

In particular, if $u$ is a boundary defining function for $Y\times \{0\}$ in $Y\times [0,\infty]$, then since $$
\inf \Re \cR|_{\rf_0(Y)}\ge 1+\epsilon>\frac12,
$$ 
this parametrix shows that the operator $u^{\frac12}D_{\cC}u^{\frac12}$ has right and left parametrices $u^{-\frac12}Q_{\cC}u^{-\frac12}$ and $u^{-\frac12}Q_C^*u^{-\frac12}$ with compact error term,
$$
(u^{\frac12}D_{\cC}u^{\frac12})(u^{-\frac12}Q_{\cC}u^{-\frac12})= \Id-(u^{\frac12}R_{\cC}u^{-\frac12}), \quad (u^{-\frac12}Q^*_{\cC}u^{-\frac12})(u^{\frac12}D_{\cC}u^{\frac12})=\Id-(u^{-\frac12}R^*_{\cC}u^{\frac12}),
$$
implying that $D_{\cC}$ induces a Fredholm  operator
\begin{equation}
D_{\cC}: u^{\frac12}H^1_{b,\sc}(Y\times[0,\infty];\ker D_v)\to u^{-\frac12}L^2_b(Y\times[0,\infty];\ker D_v),
\label{le.28}\end{equation}
where  
\begin{multline}
H^1_{b,\sc}(Y\times [0,\infty];\ker D_v)= \{ f\in L^2_b(Y\times [0,\infty];\ker D_v) \; |  \\
  \xi f\in L^2_b(Y\times[0,\infty];\ker D_v) \; \forall \xi \in \cV_{b,\sc}(Y\times[0,\infty];\ker D_v)\}
\end{multline}
with $\cV_{b,\sc}(Y\times[0,\infty];\ker D_v)$ the Lie algebra of smooth vector fields on $Y\times [0,\infty]$ which are $b$-vector fields near $Y\times \{0\}$ and scattering vector fields near $Y\times \{\infty\}$.  The operator is also formally self-adjoint.  

\begin{lemma}
The Fredholm operator of \eqref{le.28} is a bijection.
\label{le.29}\end{lemma}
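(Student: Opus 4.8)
The plan is to deduce bijectivity of \eqref{le.28} from three facts already established: that it is Fredholm and formally self-adjoint with respect to $L^2_b$, that the parametrix of Lemma~\ref{le.27b} exists, and that, after the unitary change of unknown $v=\kappa^{-\frac{h+1}2}f$ relating $L^2_b$ to the $L^2$-space of the cone metric $d\kappa^2+\kappa^2 g_Y$, the equation $D_{\cC}f=0$ becomes $\eth_{\cC}v=-\gamma v$, where $\eth_{\cC}=-c\,\pa_\kappa+\kappa^{-1}(\eth_Y-ch/2)$ is the (curvature-suppressed) Dirac operator of the cone metric on $\ker D_v$, formally self-adjoint on the cone $L^2$. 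Since $\gamma$ anti-commutes with $c$ and with $\eth_Y$, it anti-commutes with $\eth_{\cC}$, so that $(\eth_{\cC}+\gamma)^2=\eth_{\cC}^2+\Id$, exactly as in \eqref{le.25}--\eqref{le.26}.

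Because $D_{\cC}$ is formally self-adjoint and the $L^2_b$-pairing identifies $u^{-\frac12}L^2_b$ with the dual of $u^{\frac12}L^2_b$, the cokernel of \eqref{le.28} is canonically identified with $\{g\in u^{\frac12}L^2_b(Y\times[0,\infty];\ker D_v)\ :\ D_{\cC}g=0\}$; as the domain of \eqref{le.28} is contained in $u^{\frac12}L^2_b$, it therefore suffices to prove the single assertion that $D_{\cC}f=0$ together with $f\in u^{\frac12}L^2_b(Y\times[0,\infty];\ker D_v)$ forces $f=0$. To start, I would feed $f$ through the parametrix: from $Q_{\cC}^*D_{\cC}=\Id-R_{\cC}^*$ and $D_{\cC}f=0$ one gets $f=R_{\cC}^*f$, and since $R_{\cC}^*$ is residual with index family trivial at every boundary hypersurface except $\lf_0(Y)$, the section $f$ is polyhomogeneous on $Y\times[0,\infty]$, vanishes rapidly as $\kappa\to\infty$ and, being in $u^{\frac12}L^2_b$, has leading order strictly greater than $\tfrac12$ at $\kappa=0$; write $f=O(\kappa^{\frac12+\delta})$ there with $\delta>0$. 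In particular $f\in u^{\frac12}H^1_{b,\sc}$, so no generality is lost in treating this assertion.

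The assertion then follows from a Weitzenb\"ock-type integration by parts in the cone $L^2$. Set $v=\kappa^{-\frac{h+1}2}f\in L^2(d\kappa^2+\kappa^2 g_Y)$, so that $\eth_{\cC}v=-\gamma v$; applying $\eth_{\cC}$ and using $\gamma\eth_{\cC}+\eth_{\cC}\gamma=0$ and $\gamma^2=\Id$ gives $(\eth_{\cC}^2+\Id)v=0$. Pair with $v$ over the annulus $\{a<\kappa<b\}$ and integrate by parts: the boundary term at $\kappa=c$ is bounded by a constant times $c^{h}\,\|v(c,\cdot)\|_{L^2(Y)}\,\|\eth_{\cC}v(c,\cdot)\|_{L^2(Y)}$, and since $\eth_{\cC}v=-\gamma v$ while $v=\kappa^{-\frac{h+1}2}f=O(\kappa^{-\frac h2+\delta})$ near $\kappa=0$, this is $O(c^{2\delta})$ as $c\to 0^+$ and decays rapidly as $c\to\infty$; hence both boundary contributions vanish in the limits $a\to 0$, $b\to\infty$. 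Consequently $0=\langle\eth_{\cC}^2 v,v\rangle+\|v\|^2=\|\eth_{\cC}v\|^2+\|v\|^2$, all norms taken in the cone $L^2$, so $v=0$ and therefore $f=0$. The one genuinely delicate point is the vanishing of the boundary term at $\kappa=0$: it uses the a priori decay $f=o(\kappa^{1/2})$ supplied by the parametrix — ultimately by Assumption~\ref{le.4} — together with the crucial observation that $\eth_{\cC}v=-\gamma v$ is no more singular than $v$, so that the two slice norms scale like $c^{-h/2}$ and are exactly absorbed by the factor $c^{h}$ coming from the cone slice volume; a naive estimate treating $\eth_{\cC}v$ as having lost a derivative would fail for $h$ large.
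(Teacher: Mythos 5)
Your proof is correct, and its core step is genuinely different from the paper's. The paper proves injectivity by brute-force separation of variables: it computes $\kappa^2D_{\cC}^2$, conjugates to get $-(\kappa\pa_\kappa)^2+(\widetilde{\eth}_Y^2+\widetilde{\eth}_Y+\tfrac14)+\kappa^2$ with $\widetilde{\eth}_Y=c\eth_Y$, uses Assumption~\ref{le.4} to see that $\widetilde{\eth}_Y$ has no spectrum in $[-\tfrac12,\tfrac12]$, and then decomposes into eigenspaces to reduce to a modified Bessel equation, none of whose nontrivial solutions lie in $L^2_b$; surjectivity is then obtained by the same duality/formal self-adjointness argument you use. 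You replace the spectral decomposition and Bessel analysis by the algebraic identity $(\eth_{\cC}+\gamma)^2=\eth_{\cC}^2+\Id$ (available because $\gamma$ anti-commutes with $c$ and $\eth_Y$, exactly as in \eqref{le.25}--\eqref{le.26}) together with an integration by parts on the exact cone, the boundary terms being killed by the a priori regularity and decay $f=O(\kappa^{1/2+\delta})$, $\delta>0$, which you correctly extract from $f=R_{\cC}^*f$ and membership in $u^{1/2}L^2_b$ (the paper gets the stronger $O(u^{1+\epsilon})$ from the indicial roots, but your weaker bound suffices because $\eth_{\cC}v=-\gamma v$ loses no powers of $\kappa$). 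What each buys: the paper's route needs no boundary-term bookkeeping and shows directly that the kernel is trivial in the larger space $\kappa L^2_b$, at the cost of the explicit conjugation and ODE analysis tied to the spectral gap; your route is softer and more robust (it would survive perturbations for which the exact Bessel reduction fails), but it leans more heavily on the parametrix/regularity input, where Assumption~\ref{le.4} still enters implicitly. Both arguments handle the cokernel identically, and both reductions of the distributional kernel in $u^{1/2}L^2_b$ to polyhomogeneous, rapidly decaying sections are the standard b/scattering regularity statements the paper invokes, so I see no gap.
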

\begin{proof}
Since we assume that $\eth_{\phi}$ is a Dirac operator, we know from \eqref{dt.8b} and \eqref{dt.8e} that the operator $D_{\cC}$ takes the form
\begin{equation}
  D_{\cC}=-c\frac{\pa}{\pa\kappa}+ \frac{1}{\kappa}(\eth_Y+\frac{c}2)+\gamma.
\label{le.30}\end{equation}
Using that $\gamma$ anti-commutes with $-c\frac{\pa}{\pa\kappa}+ \frac{1}{\kappa}(\eth_Y+\frac{c}2),$ that $c^2=-\Id$ and that $c$ anti-commutes with $\eth_Y$, we compute that 
\begin{equation}
  \kappa^2D_{\cC}^2= -\left( \kappa\frac{\pa}{\pa \kappa} \right)^2+ 2\kappa\frac{\pa}{\pa \kappa}+ (\eth_Y^2+c\eth_Y-\frac{3}4)+ \kappa^2.
\label{le.31}\end{equation}
Hence, we see that 
\begin{equation}
\kappa^{-1}(\kappa^2D_{\cC}^2)\kappa= -\left( \kappa\frac{\pa}{\pa \kappa} \right)^2+ \left( \eth_Y^2+c\eth_Y+\frac14 \right)+ \kappa^2.
\label{le.32}\end{equation}
Setting $\widetilde{\eth}_Y:=c\eth_Y$, this becomes
\begin{equation}
   \kappa^{-1}(\kappa^2D_{\cC}^2)\kappa= -\left( \kappa\frac{\pa}{\pa \kappa} \right)^2+ \left(  \widetilde{\eth}_Y^2+ \widetilde{\eth}_Y+\frac14\right) + \kappa^2.
\label{le.33}\end{equation}
Now, $\widetilde{\eth}_Y^*=(c\eth_Y)^*= -\eth_Yc=c\eth_Y=\widetilde{\eth}_Y$, so $\widetilde{\eth}_Y$ is formally self-adjoint.  Moreover, 
\begin{equation}
   I(D_b,\lambda)= c(\lambda-\widetilde{\eth}_Y+\frac12),
\label{le.34}\end{equation}
so Assumption~\ref{le.4} implies that $\widetilde{\eth}_Y$ has no eigenvalue in the range $[-\frac12,\frac12]$.   Now, if $\lambda$ is an eigenvalue of $\widetilde{\eth}_Y$ with eigensection $\widetilde{\sigma}$, then $\widetilde{\sigma}$ is an eigensection of $\widetilde{\eth}_Y^2+ \widetilde{\eth}_Y+\frac14$ with eigenvalue $(\lambda+\frac12)^2$.  Since $\lambda\notin [-\frac12,\frac12]$, we have in particular that $(\lambda+\frac12)^2>0$.  Hence, decomposing \eqref{le.33} in terms of the eigenspaces of $\widetilde{\eth}_Y$, we obtain the modified Bessel equation
\begin{equation}
   \left( -\left(\kappa\frac{\pa}{\pa \kappa} \right)^2+ \alpha^2+\kappa^2  \right)f=0,  \quad \mbox{with}\; \alpha^2=(\lambda+\frac12)^2>0.
\label{le.35}\end{equation} 
A basis of solutions of this equation is given by the modified Bessel functions $K_{\alpha}$ and $I_{\alpha}$.  The function $I_{\alpha}$ grows exponentially as $\kappa\to \infty$ and tends to zero as $\kappa\searrow 0$, while $K_{\alpha}$ blows up like $\kappa^{-|\alpha|}$ as $\kappa\searrow 0$ and decays exponentially at infinity.  Thus, except for the trivial solution, no solution of \eqref{le.35} are in $L^2((0,\infty),\frac{d\kappa}{\kappa})$.  This means that the operator $\kappa^{-1}(\kappa^2D_{\cC}^2)\kappa$ has a trivial kernel in $L^2_b(Y\times [0,\infty];\ker D_v)$, hence that $D_{\cC}$ has a trivial kernel in $\kappa L^2_b(Y\times [0,\infty],\ker D_v)$.  A fortiori, $D_{\cC}$ thus has a trivial kernel in $uL^2_{b}(Y\times[0,\infty];\ker D_v)$.

Now, if $\sigma\in L^2_b(Y\times[0,\infty],\ker D_v)$ is such that $D_{\cC}\sigma=0$, then in fact $\sigma$ decays rapidly as $\kappa\to \infty$ by \cite{Mazzeo-MelrosePhi} and has a polyhomogeneous expansion at $\kappa=0$ by \cite{MelroseAPS}.  Furthermore, Assumption~\ref{le.4} and the fact that $I(\kappa D_{\cC},\lambda)=I(D_b,-\lambda)$ implies that $\sigma= \mathcal{O}(u^{1+\epsilon})$ near $Y\times \{0\}$.  This shows that the operator \eqref{le.28} is injective.  

To show that it is surjective, notice the operator $u^{\frac12}D_{\cC}u^{\frac12}$ is formally self-adjoint, which forces in particular the operator \eqref{le.28} to be surjective since $D_{\cC}$ has a trivial kernel in $u^{\frac12}L^2_b(Y\times[0,\infty];\ker D_v)\subset L^2_b(Y\times[0,\infty];\ker D_v)$.    

\end{proof}

Let us denote by $G_{\cC}$ the inverse of the bijective operator in \eqref{le.28}. 

\begin{lemma}
The inverse $G_{\cC}$ is an element of $\Psi^{-1,\cG}_{b,\sc}(Y;\ker D_v)$ with index family $\cG$ trivial at $\rf_{\infty}(Y)$, $\lf_{\infty}(Y)$ and $\fb_{\infty}(Y)$, given by $\bbN_0$ at $\sc(Y)$ and such that 
\begin{equation}
\inf\Re(\cG|_{\fb_0(Y)})\ge 1, \quad \inf \Re(\cG|_{\rf_0(Y)})\ge 1+\epsilon, \quad \inf\Re(\cG|_{\lf_0(Y)})\ge 1+\epsilon.
\label{le.38b}\end{equation}
Moreover, the restriction of $u^{-\frac12}G_{\cC}u^{-\frac12}$ at $\fb_0(Y)$ is precisely $G_{b}^{\frac12}$, while at $\sc(Y)$, we have instead $N_{\sc}(G_{\cC})=N_{\sc}(D_{\cC})^{-1}$.
\label{le.38}\end{lemma}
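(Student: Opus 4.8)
The plan is to follow the bootstrap argument of Corollary~\ref{dt.31}, which in turn is modelled on \cite[Theorem~4.20]{MazzeoEdge}: starting from the two-sided parametrix $Q_{\cC}$, $R_{\cC}$, $R_{\cC}^*$ of Lemma~\ref{le.27b} and the invertibility of $D_{\cC}$ established in Lemma~\ref{le.29}, I would express $G_{\cC}$ as $Q_{\cC}$ plus correction terms that are strictly more decaying than $Q_{\cC}$ at the faces $\lf_0(Y)$ and $\rf_0(Y)$ and harmless everywhere else. First I would record that $G_{\cC}$, being the inverse of the Fredholm bijection \eqref{le.28}, is a bounded operator from $u^{-\frac12}L^2_b(Y\times[0,\infty];\ker D_v)$ to $u^{\frac12}H^1_{b,\sc}(Y\times[0,\infty];\ker D_v)$, so that any composition of the form $R'\circ G_{\cC}\circ R''$ with $R',R''$ residual operators in $\Psi^{-\infty,*}_{b,\sc}(Y;\ker D_v)$ is well defined and, by the semi-ideal property of residual operators, is itself residual with index sets inherited from $R'$ on the left and $R''$ on the right. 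Composing $D_{\cC}Q_{\cC}=\Id-R_{\cC}$ on the left with $G_{\cC}$ gives $G_{\cC}=Q_{\cC}+G_{\cC}R_{\cC}$, and composing $Q_{\cC}^*D_{\cC}=\Id-R_{\cC}^*$ on the right with $G_{\cC}$ gives $G_{\cC}=Q_{\cC}^*+R_{\cC}^*G_{\cC}$; substituting the second identity into the first yields
\[ G_{\cC}= Q_{\cC}+ Q_{\cC}^*R_{\cC}+ R_{\cC}^*G_{\cC}R_{\cC}. \]

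I would then analyse the three terms separately using the composition calculus of the $b,\sc$-operators on $Y\times[0,\infty]$ (with double space \eqref{le.21}). By Lemma~\ref{le.27b} the term $Q_{\cC}$ already lies in $\Psi^{-1,\cQ}_{b,\sc}(Y;\ker D_v)$ with $\cQ$ of exactly the shape claimed for $\cG$, and with the stated restrictions $N_{\sc}(Q_{\cC})=N_{\sc}(D_{\cC})^{-1}$ at $\sc(Y)$ and $u^{-\frac12}Q_{\cC}u^{-\frac12}|_{\fb_0(Y)}=G_b^{\frac12}$. Since $R_{\cC}$ vanishes rapidly at every face except $\rf_0(Y)$, where $\inf\Re(\cR|_{\rf_0(Y)})\ge 1+\epsilon$, and since $Q_{\cC}^*$ (the adjoint of $Q_{\cC}$) has index set at $\lf_0(Y)$ equal to that of $Q_{\cC}$ at $\rf_0(Y)$, hence of order $\ge 1+\epsilon$, the composition rule shows $Q_{\cC}^*R_{\cC}\in\Psi^{-\infty,\cS}_{b,\sc}(Y;\ker D_v)$ with $\cS$ trivial at all faces except $\lf_0(Y)$ and $\rf_0(Y)$, where $\inf\Re\cS\ge 1+\epsilon$; in particular it vanishes rapidly at $\sc(Y)$ and $\fb_0(Y)$. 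Likewise the semi-ideal argument shows $R_{\cC}^*G_{\cC}R_{\cC}$ is residual with index set supported at $\lf_0(Y)$ (inherited from $R_{\cC}^*$, of order $\ge 1+\epsilon$) and $\rf_0(Y)$ (inherited from $R_{\cC}$, of order $\ge 1+\epsilon$), and trivial elsewhere, so it too vanishes rapidly at $\sc(Y)$ and $\fb_0(Y)$.

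Putting these together, $G_{\cC}\in\Psi^{-1,\cG}_{b,\sc}(Y;\ker D_v)$ with $\cG$ obtained from $\cQ$ by extended union with index sets of order $\ge 1+\epsilon$ at $\lf_0(Y)$ and $\rf_0(Y)$; this leaves the bounds \eqref{le.38b} intact and keeps $\cG$ trivial at $\rf_\infty(Y),\lf_\infty(Y),\fb_\infty(Y)$ and equal to $\bbN_0$ at $\sc(Y)$. Because the two correction terms vanish rapidly at $\sc(Y)$ and at $\fb_0(Y)$, the restrictions of $G_{\cC}$ at those two faces coincide with those of $Q_{\cC}$, giving $N_{\sc}(G_{\cC})=N_{\sc}(D_{\cC})^{-1}$ and $u^{-\frac12}G_{\cC}u^{-\frac12}|_{\fb_0(Y)}=G_b^{\frac12}$. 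The step I expect to be the main obstacle is the bookkeeping in the $b,\sc$-composition calculus on $Y\times[0,\infty]$ — in particular making sure the parametrix error terms, which are controlled only at $\rf_0(Y)$, really produce corrections of order $\ge 1+\epsilon$ at both $\lf_0(Y)$ and $\rf_0(Y)$ that are harmless at every other face, and that the mere boundedness of $G_{\cC}$ suffices to close the semi-ideal argument; this is routine given the analogous statements of \S~\ref{fdt.0}, but it is where all the actual work lies.
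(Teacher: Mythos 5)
Your proposal is correct and takes essentially the same route as the paper: the identities $G_{\cC}=Q_{\cC}+G_{\cC}R_{\cC}$ and $G_{\cC}=Q_{\cC}^*+R_{\cC}^*G_{\cC}$, their combination into $G_{\cC}=Q_{\cC}+Q_{\cC}^*R_{\cC}+R_{\cC}^*G_{\cC}R_{\cC}$, and then the semi-ideal property of very residual operators together with Lemma~\ref{le.27b} and the $b,\sc$-composition formula are exactly the paper's argument. The only small nuance is your claim that the correction terms vanish rapidly at $\fb_0(Y)$: the composition formula in general produces nontrivial terms there (of order at least $2+2\epsilon$, coming from $\lf_0(Y)$ and $\rf_0(Y)$ contributions), but this is harmless since such terms do not affect the order-one restriction at $\fb_0(Y)$, so the identification of the restriction with $G_b^{\frac12}$ and the bound $\inf\Re(\cG|_{\fb_0(Y)})\ge 1$ still follow.
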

\begin{proof}
Using the parametrix of \eqref{le.27} and proceeding as in the proof of Corollary~\ref{dt.31}, we have that 
\begin{equation}
\begin{aligned}
G_{\cC}&= G_{\cC}\Id= G_{\cC}(D_{\cC}Q_{\cC}+R_{\cC})= Q_{\cC}+ G_{\cC}R_{\cC}, \\
G_{\cC}&= \Id G_{\cC}= (Q^*_{\cC}D_{\cC}+R_{\cC}^*)G_{\cC}= Q^*_{\cC}+R^*_{\cC}G_{\cC}.
\end{aligned}
\label{le.36}\end{equation}
Inserting the second equation in the first one thus yields
\begin{equation}
G_{\cC}= Q_{\cC}+ G_{\cC}R_{\cC}= Q_{\cC}+ Q^*_{\cC}R_{\cC}+ R^{*}_{\cC}G_{\cC}R_{\cC}.
\label{le.37}\end{equation}
Since $R_{\cC}$ and $R^*_{\cC}$ are very residual operators in the sense of \cite{MazzeoEdge}, we see by the semi-ideal property of such operators that $R^{*}_{\cC}G_{\cC}R_{\cC}$ is also semi-residual.  Hence, the result follows from \eqref{le.37}, Lemma~\ref{le.27b} and the composition formula of Theorem~\ref{bsc.4}.  
\end{proof}

In fact, near $\fb_0(Y)$, we can compare $G_{\cC}$ with $\kappa^{\frac12}G^{\frac12}_b\kappa^{\frac12}$ as follows,
\begin{equation}
  \kappa^{\frac12}G_{b}^{\frac12}\kappa^{\frac12}= \kappa^{\frac12}G_{b}^{\frac12}\kappa^{\frac12}D_{\cC}G_{\cC}= G_{\cC}+ \kappa^{\frac12}G^{\frac12}_b\kappa^{\frac12}\gamma G_{\cC},
  \label{le.38}\end{equation}
where we have used in the last step that $G^{\frac12}_{b}$ is the inverse of $\kappa^{-\frac12}\left( \kappa\left( -c\frac{\pa}{\pa\kappa}+ \frac{1}{\kappa}D_Y \right)   \right)\kappa^{\frac12}$.  Hence, from the composition formula of Theorem~\ref{bsc.4}, we see that the leading order term of $G_{\cC}$ at $\fb_0(Y)$ is at order $1$ with next term at least at order $2$.  That is,
\begin{equation}
G_{\cC}= \kappa^{\frac12}G_{b}^{\frac12}\kappa^{\frac12}+ \mathcal{O}(x^2_{\fb_0(Y)}).
\label{le.39}\end{equation}
This can be used to improve the parametrix $Q_1$ in \eqref{le.22} by removing the term of order $h+1$ at $\fbf_0$ of the error term.  
\begin{proposition}
There exists $Q_2\in \Psi^{-1,\cQ_2}_{k,\phi}(M;E)$ and $R_2\in\Psi^{-1,\cR_2}_{k,\phi}(M;E)$ such that 
\begin{equation}
   D_{k,\phi} Q_2=\Id -R_2,
\label{le.40b}\end{equation}
where $\cQ_2$ and $\cR_2$ are the empty set at $\fbf, \lf$ and $\rf$, while 
\begin{equation}
\begin{gathered}
   \cQ_2|_{\zf}= (\bbN_0-1)\cup \cN_2 \;\mbox{with}\; \inf\Re\cN_2>0, \quad \inf \Re\cQ_2|_{\fbf_0}\ge h, \quad  \inf\Re\cQ_2|_{\ff_0}\ge 0, \quad \inf\Re\cQ_2|_{\ff}\ge 0 \\
    \inf\Re \cQ_2|_{\lf_0}\ge \nu,  \quad \inf\Re \cQ_2|_{\rf_0}\ge h+1+\nu \quad \mbox{with} \quad 
   \nu:=\min\{\epsilon, \epsilon_1-1\}
\end{gathered}
\label{le.40c}\end{equation}
and 
\begin{equation}
\begin{gathered}
\inf\Re\cR_2|_{\zf}\ge 1, \quad \inf\Re\cR_2|_{\ff_0}>0, \quad \inf\Re\cR_2|_{\ff}\ge 0,  \quad \inf\Re \cR_2|_{\lf_0}\ge 1+\nu, \quad \inf\Re \cR_2|_{\rf_0}\ge h+1+\epsilon \\
\mbox{and} \quad  \inf\Re\cR_2|_{\fbf_0}>h+1. 
\end{gathered}
\label{le.40d}\end{equation}
\label{le.40}\end{proposition}
\begin{proof}
To solve \eqref{le.22b}, we can take $q_2 = k^{-1}G_{\cC}\Pi_h r_1$ seen as term of order $h$ at $\fbf_0$.   Letting $Q_2'$ be a smooth extension of $q_2$ off $\fbf_0$ corresponding to a term of order $h$ there, we can consider
$$
    Q_2= Q_1+ Q_2'.
$$
This ensures in particular that $R_2$ in \eqref{le.40b} is such that its term $r_2$ or order $h+1$ at $\fbf_0$ is such that $\Pi_h r_2=0$.  But extending $D_{v}^{-1}r_2$, seen as at term of order $h+1$, smoothly off $\fbf_0$ and adding it to $Q_2$, we can suppose that $R_2$ has no term order $h+1$ at $\fbf_0$, that is, $\inf\Re\cR_2|_{\fbf_0}>h+1$.    

Clearly then,  the term of order $h$ at $\fbf_0$ of $Q_2$ must be  the inverse of $D_{\cC}$, namely it is precisely $G_{\cC}$.  Moreover, the property \eqref{le.39} ensures that the new error term $R_2$ still vanishes to order $1$ at $\zf$, 
$$
      \inf\Re R_2|_{\zf}\ge 1.
$$ 
Finally, the extension of $q_2$ off $\fbf_0$ can be done using the `right' boundary defining function $\frac{x'}k$ near $\lf_0$.  Since $\Pi_h q_2=q_2$, this means that the part of the error term $R_2$ coming from the extension of $q_2$ will have leading order $1+\epsilon$ at $\lf_0$, so that $\inf\Re \cR_2|_{\lf_0}\ge 1+\nu$ as claimed.  On the other hand, $q_2$ has in principle a term of order $h+1+\epsilon\le h+2+\nu$ at $\rf_0$, hence the slight lost of decay at $\rf_0$. 
\end{proof}

\subsection*{Step 3: Inversion at $\ff$}

The parametrix $Q_2$ inverts $D_{k,\phi}$ at all boundary hypersurfaces of $M^2_{k,\phi}$ except at $\ff$.  There, the model to invert is
\begin{equation}
  N_{\ff}(D_{k,\phi})= D_v+ \eth_h+ \gamma k.
\label{le.41}\end{equation}
Using that $\gamma$ anti-commutes with $D_v+\eth_h$ and that $\eth_h$ anti-commute with $D_v$, we compute that 
$$
   N_{\ff}(D_{k,\phi})^2= D_v^2+ \eth_h^2+k^2.
$$
This is clearly invertible as a suspended operator for $k>0$ with inverse given by 
\begin{equation}
   N_{\ff}(D_{k,\phi})^{-1}= (D_v+\eth_h+\gamma k)(D_v^2+\eth_h^2+k^2)^{-1}.
\label{le.42}\end{equation}
To see that this matches our model as $k\searrow 0$, we should decompose the normal operator in terms of $\ker D_v$ and its orthogonal complement $(\ker D_v)^{\perp}$.  First, on $(\ker D_v)^{\perp}$, $N_{\ff}(D_{k,\phi})$ is still invertible as a suspended operator for $k=0$.  Lifting this inverse from $\ff\times \{0\}\subset M^2_{\phi}\times [0,\infty)$ to $M^2_{k,\phi}$ through the blow-down map $M^2_{k,\phi}\to M^2_{\phi}\times[0,\infty)$, this clearly corresponds to the part of $Q_2|_{\ff_0}$ acting on $(\ker D_v)^{\perp}$ on $\ff_0$, while it vanishes rapidly at $\lf, \rf, \fbf_0, \lf_0$ and $\rf_0$.  Hence, when acting on $(\ker D_v)^{\perp}$, the operator $Q_2$ can be naturally extended on $\ff$ by 
$$
\left((N_{\ff}(D_{k,\phi})|_{\ker D_v^{\perp}}\right)^{-1}.
$$

On the other hand, on $\ker D_v$, the matching of $N_{\ff}(D_{k,\phi})^{-1}$ with $Q_2$ is more in the spirit of \cite{GH1}, so we shall take the point of view offered by Lemma~\ref{kfb.9b} and work initially with $[M^2_{k,b};\Phi_+]$.  On this space, the face $\ff_{b,+}$ created by the blow-up of $\Phi_+$ corresponds to a blow-down version of $\ff$ in $M^2_{k,\phi}=[M^2_{k,b};\Phi_+,\Phi_0]$.  Because of this missing final blow-up, the model operator $N_{\ff_{b,+}}(D_{k,\phi})$ acting on $\ker D_v$ is not $\eth_h+ \gamma k$ in the limit $k\to 0+$, but instead 
$$
     k(\eth_h+\gamma).
$$
This is because near $k=0$, it is $\frac{x'}{k}$, not $x'$, which can be used as a boundary defining function for $\ff_{b,+}$.  The inverse is thus given by 
\begin{equation}
(\eth_h+\gamma)^{-1}k^{-1}= (\eth_h+\gamma)(\eth_h^2+\Id)^{-1}k^{-1}.
\label{le.43}\end{equation}
By \eqref{le.26b}, this is precisely matched by $Q_2|_{\fbf_0}$ acting on $\ker D_v$, the factor $k^{-1}$ in \eqref{le.43} indicating that $k^{-1}(\eth_h+\gamma)^{-1}$ yields a term of order $h$ instead of $h+1$ at $\fbf_0$.  

Now,  the face $\ff_0$ created by the blow-up of $\Phi_0$ is not really needed to invert the part of $D_{k,\phi}$ asymptotically acting on $\ker D_v$.  Indeed, when we are considering the action on $\ker D_v$, the operator $D_{\phi}$ becomes a scattering operator, and we can simply use the $b$-$\sc$ transition double space.  The inverse can then be lifted to $M^2_{k,\phi}$ via the blow-down map
$$
         M^2_{k,\phi}\to [M^2_{k,b};\Phi_+].
$$ 
This means that after we blow up $\Phi_0$ on $[M^2_{k,b};\Phi_+]$, we still have that the limit of $N_{\ff}(D_{k,\phi})^{-1}$ acting on $\ker D_v$ matches the term of order $h$ of $Q_2$ at $\fbf_0$, but also that it matches the part of $Q_2|_{\ff_0}$ acting on $\ker D_v$.  This yields the following improved parametrix.

\begin{proposition}
There exists $Q_3\in \Psi^{-1,\cQ_3}_{k,\phi}(M;E)$ and $R_3\in \Psi^{-1,\cR_3}_{k,\phi}(M;E)$ such that 
\begin{equation}
             D_{k,\phi}Q_3=\Id-R_3
\label{le.44b}\end{equation}
with $\cQ_3=\cQ_2$ and $\cR_3$ the same index family as $\cR_2$, except at $\ff$ where we have instead that 
$$
\inf\Re\cR_3|_{\ff}>0.
$$ 
\label{le.44}\end{proposition}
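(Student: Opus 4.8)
The plan is to correct $Q_2$ near the boundary hypersurface $\ff$ so that its restriction there becomes the exact inverse of the model operator $N_{\ff}(D_{k,\phi})= D_v+\eth_h+\gamma k$ of \eqref{le.41}. I would set $Q_3= Q_2+ Q_3'$ with $Q_3'\in\Psi^{-1}_{k,\phi}(M;E)$ concentrated near $\ff$, chosen so that $N_{\ff}(D_{k,\phi})N_{\ff}(Q_3)=\Id$, that is $N_{\ff}(Q_3)= N_{\ff}(D_{k,\phi})^{-1}$ with composition and inversion taken in the sense of the suspended (semiclassical) calculus of \eqref{sm.7}, and then verify that this correction is compatible with the models already installed at $\zf$, $\ff_0$, $\fbf_0$, $\fbf$, $\lf$ and $\rf$.

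To invert the model I would decompose using the fiberwise projection $\Pi_h$. On the range of $\Id-\Pi_h$ the operator $D_v$ is invertible, so $N_{\ff}(D_{k,\phi})^2= D_v^2+\eth_h^2+k^2$ is invertible as a suspended operator uniformly down to $k=0$; hence $N_{\ff}(D_{k,\phi})$ is invertible there as well, with inverse $(D_v+\eth_h+\gamma k)(D_v^2+\eth_h^2+k^2)^{-1}$ decaying rapidly at $\ff\cap\fbf$, $\ff\cap\lf$ and $\ff\cap\rf$, and by the discussion preceding the statement this inverse already agrees with the restriction of $Q_2$ to $\ff_0$ on $(\ker D_v)^{\perp}$. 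On the range of $\Pi_h$ one has $N_{\ff}(D_{k,\phi})=\eth_h+\gamma k$ and, using that $\gamma$ anti-commutes with $\eth_h$, $N_{\ff}(D_{k,\phi})^2=\eth_h^2+k^2$, which is invertible for $k>0$. The key point is to check, taking account of the rescaling of the suspension parameters by $k^{-1}$ induced by the blow-up of $\Phi_0$ in \eqref{kfb.9d}, that $N_{\ff}(D_{k,\phi})^{-1}$ restricts at $\ff\cap\fbf_0=\sc(Y)$ to $N_{\sc}(D_{\cC})^{-1}=(\eth_h+\gamma)(\eth_h^2+\Id)^{-1}$, hence matches the term of order $h$ of $Q_2$ at $\fbf_0$ since that term equals $G_{\cC}$ and $N_{\sc}(G_{\cC})=N_{\sc}(D_{\cC})^{-1}$ by Lemma~\ref{le.38}; and, working with the $b$-$\sc$ transition double space for $\ker D_v$ and lifting through $M^2_{k,\phi}\to[M^2_{k,b};\Phi_+]$, that it matches the part of $Q_2|_{\ff_0}$ acting on $\ker D_v$.

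Granting these matchings, $N_{\ff}(D_{k,\phi})^{-1}-N_{\ff}(Q_2)=N_{\ff}(D_{k,\phi})^{-1}N_{\ff}(R_2)$ lies in the suspended calculus on $\ff$ and vanishes at $\ff\cap\ff_0$, $\ff\cap\fbf_0$, $\ff\cap\fbf$, $\ff\cap\lf$ and $\ff\cap\rf$; I would take $Q_3'$ to be any extension of it off $\ff$ inside $\Psi^{-1}_{k,\phi}(M;E)$ that vanishes rapidly at $\lf$, $\fbf$, $\rf$ and respects the expansions of $Q_2$ and $R_2$ at $\ff_0$, $\fbf_0$, $\lf_0$, $\rf_0$ and $\zf$. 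Then $D_{k,\phi}Q_3=\Id-R_3$ with $N_{\ff}(R_3)=0$, so $R_3$ gains an order at $\ff$ and $R_3\in\Psi^{-1,\cR_3}_{k,\phi}(M;E)$ with $\cR_3$ equal to $\cR_2$ except that $\inf\Re\cR_3|_{\ff}>0$, while $\cQ_3=\cQ_2$ because $Q_3'$ does not change the leading behavior of $Q_2$ at any boundary hypersurface other than $\ff$.

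The main obstacle is exactly this compatibility step: one must confirm that once the semiclassical rescaling by $k$ coming from the $\Phi_0$ blow-up is accounted for, the inverse of the $\ff$-model agrees to leading order at $\ff\cap\fbf_0$ and $\ff\cap\ff_0$ with the pieces $G_{\cC}$ and $N_{\ff}(Q_2)|_{\ff_0}$ built in Proposition~\ref{le.40} and Lemma~\ref{le.38}. This is precisely the bookkeeping carried out in the paragraphs preceding the statement, and it is where the hypotheses on $\gamma$ and Assumption~\ref{le.4} enter, through the invertibility of $D_v^2+\eth_h^2$ on $(\ker D_v)^{\perp}$ and of $N_{\sc}(D_{\cC})^2=\eth_h^2+\Id$ on $\ker D_v$.
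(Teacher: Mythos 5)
Your treatment of the boundary face $\ff$ — inverting the model $N_{\ff}(D_{k,\phi})=D_v+\eth_h+\gamma k$ by splitting with $\Pi_h$, and checking that after the $k^{-1}$-rescaling coming from the blow-up of $\Phi_0$ the inverse matches $G_{\cC}$ at $\ff\cap\fbf_0$ and the $\ker D_v$-part of $Q_2|_{\ff_0}$ — is essentially the discussion the paper gives just before the proposition, and that part of your argument is fine: it yields $N_{\ff}(R_3)=0$, hence $\inf\Re\cR_3|_{\ff}>0$ with the other index sets unchanged.

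The genuine gap is the claim that $R_3\in\Psi^{-1,\cR_3}_{k,\phi}(M;E)$, i.e.\ that $R_3$ has pseudodifferential order $-1$. Killing the normal operator at $\ff$ gives decay at that boundary hypersurface, not a gain in conormal order along the lifted diagonal $\Delta_{k,\phi}$; since $R_2\in\Psi^{0,\cR_2}_{k,\phi}(M;E)$ and $D_{k,\phi}Q_3'$ is a priori of order $0$, your construction only yields $R_3$ of order $0$, and the order $-1$ statement is precisely what the paper's proof is devoted to (it matters later, when $\sum_j R_3^j$ must be summed asymptotically both symbolically and at the boundary faces). The missing step is symbolic: a priori one only knows that ${}^{k,\phi}\sigma_{-1}(Q_3)-\bigl({}^{k,\phi}\sigma_1(D_{k,\phi})\bigr)^{-1}$ vanishes over the boundary hypersurfaces of $M_t$, where the normal operators of $Q_3$ are exact inverses; one must therefore modify $Q_3$ in the interior of $M^2_{k,\phi}$ near the lifted diagonal — which does not disturb any of the boundary models or index families, exactly because the discrepancy vanishes at the boundary — so that ${}^{k,\phi}\sigma_{-1}(Q_3)=\bigl({}^{k,\phi}\sigma_1(D_{k,\phi})\bigr)^{-1}$ everywhere on $M_t$. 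With that choice, the short exact sequence \eqref{sm.1} applied to $\Id-D_{k,\phi}Q_3$ shows that the order-$0$ principal symbol of $R_3$ vanishes, hence $R_3$ is of order $-1$ as claimed. Without this adjustment (or an argument that your $Q_3$ already has the exact inverse symbol globally, not just over $\pa M_t$), the proposition as stated is not established.
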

\begin{proof}
This follows from the previous discussion.
\end{proof}

\subsection*{Step 4: Inversion up to an error term of order $-\infty$ decaying rapidly as $k\searrow 0$ }

By the composition rules of Theorem~\ref{com.12} and thanks to the decay rate of $R_3$ at all boundary hypersurfaces, notice that there exists $\delta>0$ such that for $H$ a boundary hypersurface of $M^2_{k,\phi}$ distinct from $\zf$, 
\begin{equation}
    R_3= \cO(x_H^{\mu}) \; \mbox{at} \; H \; \Longrightarrow \; R_3^k= \cO(x_H^{\mu+k\delta}) \; \mbox{at} \; H\quad \forall k\in \bbN_0.  
\label{le.44c}\end{equation}
When $H=\zf$, then \eqref{le.44c} holds provided $\epsilon+\epsilon_1> 1$.  However, for $\epsilon+\epsilon_1\le 1$,  it is not quite true at $\zf$, since $R_3=\cO(x_{\zf})$ there, but the lack of decay at $\lf_0$ and $\rf_0$ only ensures that $R^2_3=\cO(x_{\zf}^{\epsilon+\epsilon_1})$ at $\zf$.  Still, \eqref{le.44c} still holds for $H=\zf$ provided we take $\mu=\epsilon$ instead of $\mu=1$.    Since $R_3$ is a pseudodifferential operator of order $-1$, this means we can make sense of the formal sum
$$
      \sum_{j=1}^{\infty} R_3^{j}
$$
as an asymptotic sum, both symbolically and in terms of polyhomogeneous expansions at the various boundary hypersurfaces.  If $S$ is such an asymptotic sum, 
$$
   S\sim \sum_{j=1}^{\infty} R_3^j,
$$
then $S\in \Psi^{-1,\cS}_{k,\phi}(M;E)$ with $\cS$ satisfying the same lower bounds as $\cR_3$, except at $\zf$ when $\epsilon+\epsilon_1\le 1$, where we have instead $0<\inf\Re \cS|_{\zf}\le 1$ in that case.  

Then, by construction, 
$$
  R_4:= \Id-(\Id-R_3)(\Id+S) \in\Psi^{-\infty}_{k,\phi}(M;E) 
$$
has Schwartz kernel decaying rapidly at all boundary hypersurfaces of $M^2_{k,\phi}$.  Hence, setting $Q_4= Q_3(\Id+S)$, we have that
\begin{equation}
   D_{k,\phi}Q_4= \Id-R_4
\label{le.45}\end{equation}
with $Q_4\in \Psi^{-1,\cQ_4}_{k,\phi}(M;E)$, where $\cQ_4$ is an index family having the same lower bound as $\cQ_3$, except at $\rf_0$ where we have instead $\inf\Re\cQ_4|_{\rf_0}\ge h+\epsilon$.  Furthermore, by Theorem~\ref{com.12}, if $\epsilon+\epsilon_1>1$, then  $\inf\Re\cS|_{\zf}\ge 1$, $\inf\Re \cS|_{\lf_0}\ge 1+\nu$ and $1+2\nu>0$, so $\cQ_4|_{\zf}= (\bbN_0-1)\cup\cN_4$ with $\inf\Re \cN_4>0$.   

The error term $R_4$ can be seen as a smooth family of operators  $R_4(k)\in\dot{\Psi}^{-\infty}(M;E)$ parametrized by $k\in[0,\infty)$ and approaching rapidly $0$ as $k\searrow 0$.  In particular, the operator $R_4(k)$ 
has a small operator norm for $k$ small.  

\subsection*{Step 5:  Completion of the proof of Theorem~\ref{le.46}}
\begin{proof}
Since the operator norm of $R_4(k)$ tends to zero when $k\searrow 0$, there exists $\delta>0$ such that $\Id-R_4(k)$ is invertible with inverse given by $\Id+S_4(k)$, where 
$$
    S_4(k)= \sum_{j=1}^{\infty} R_4(k)^j \quad \mbox{for}\; k\in [0,\delta)
$$
is a smooth family of operators in $\dot{\Psi}^{-\infty}(M;E)$ decaying rapidly to zero when $k\searrow 0$.  Hence, for $k\in [0,\delta)$, we can finally find a right inverse
\begin{equation}
  G_{k,\phi}:= Q_4(\Id+S_4)  \quad \Longrightarrow \quad D_{k,\phi}G_{k,\phi}=\Id.
\label{le.45}\end{equation}
 For $k\ge \delta$, we can invert $D_{k,\phi}$ simply in the small $\phi$-calculus as in \cite{Mazzeo-MelrosePhi}, that is, with inverse in $\Psi^{-1}_{\phi}(M;E)$.  Hence, $G_{k,\phi}$ can be extended to $k\in [\delta,\infty)$ to give a right inverse for all $k\ge 0$.

By the composition rules of Theorem~\ref{com.14},  $G_{k,\phi}$ is a $k,\phi$-operator and  \eqref{le.46b} holds, except possibly at $\rf_0$ and $\ff$ where we can only conclude for the moment that $\inf\Re \cG|_{\rf_0}\ge h+\epsilon$ and $\inf\Re \cG|_{\ff}\ge 0$.  Hence, it remains to prove that $G_{k,\phi}D_{k,\phi}=\Id$ and that in fact $\inf\Re \cG|_{\rf_0}\ge h+1+\nu$ and $\cG|_{\ff}=\bbN_0$.  To see this, take the adjoint of $D_{k,\phi}G_{k,\phi}=\Id$,
$$
     G_{k,\phi}^*D_{k,\phi}=\Id.
$$
It suffices then to notice that 
$$
    G_{k,\phi}^*= G_{k,\phi}^*(D_{k,\phi}G_{k,\phi})= G_{k,\phi}.
$$
In particular, $G_{k,\phi}$ is self-adjoint as expected and $\inf\Re\cG|_{\lf_0}\ge \nu\; \Longrightarrow \;\inf\Re\cG|_{\rf_0}\ge h+1+\nu$.  On the other hand, for $k>0$ we know from the parametrix construction of \cite{Mazzeo-MelrosePhi} in the small $\phi$-calculus that the expansion at $\ff$ of $G_{k,\phi}$ must be smooth.  By continuity, this means that this is still the case in the limit $k\searrow 0$, so that $\cG|_{\ff}=\bbN_0$ as claimed.  
\end{proof}

Composing $G_{k,\phi}$ with itself  also gives a description of the inverse of $D_{k,\phi}^2=D_{\phi}^2+k^2$.
\begin{corollary}
Let $\eth_{\phi}\in\Diff^1_{\phi}(M;E)$ be a Dirac operator satisfying Assumption~\ref{le.4}.  Then there exists an operator $G_{k,\phi}^2\in \Psi^{-2,\cG_2}_{k,\phi}(M;E)$ such that 
$$
          (D_{\phi}^2+k^2)G_{k,\phi}^2= G_{k,\phi}^2 (D_{\phi}^2+k^2)= \Id,
$$
where $\cG_2$ is an index family given by the empty set at $\lf, \rf$ and $\fbf$, and such that 
\begin{equation}
\begin{gathered}
  \inf\Re \cG_2|_{\zf}\ge-2,  \quad \inf \Re\cG_2|_{\fbf_0}\ge h-1, \quad   \cG_2|_{\ff}=\bbN_0, \quad \inf\Re \cG_2|_{\ff_0}\ge \left\{ \begin{array}{ll} 0, & h>1,  \\ (0,1), & h=1,  \end{array} \right.  \\
   \mbox{and} \quad \inf\Re \cG_2|_{\lf_0}\ge (\nu-1,1)\in \bbR\times \bbN_0,  \quad \inf\Re \cG_2|_{\rf_0}\ge (h-1+\nu,1)\in \bbR\times \bbN_0.
\end{gathered}
\label{le.47b}\end{equation}\label{le.47}\end{corollary}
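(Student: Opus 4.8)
The plan is to deduce Corollary~\ref{le.47} from Theorem~\ref{le.46} by a doubling trick that manufactures an operator $\gamma$ as in \eqref{le.1}, even though none is given a priori. Concretely, I would equip $E\oplus E$ with the Clifford module structure obtained by doubling that of $E$ -- Clifford action $c'(v)=c(v)\oplus(-c(v))$ and Clifford connection $\nabla^{E}\oplus\nabla^{E}$ -- so that the associated Dirac operator is $\eth'_{\phi}=\eth_{\phi}\oplus(-\eth_{\phi})$, and set $\gamma=\left(\begin{smallmatrix}0 & \Id_E\\ \Id_E & 0\end{smallmatrix}\right)\in\CI(M;\End(E\oplus E))$. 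Then $\gamma$ is self-adjoint of order $0$, $\gamma^2=\Id$, and since $D'_{\phi}=D_{\phi}\oplus(-D_{\phi})$ is block diagonal while $\gamma$ interchanges the two summands one checks $\gamma D'_{\phi}+D'_{\phi}\gamma=0$; moreover $D'_v=D_v\oplus(-D_v)$, $\eth'_h=\eth_h\oplus(-\eth_h)$, and the operators $c$ and $\eth_Y$ attached to $\eth'_{\phi}$ via \eqref{dt.3} and \eqref{dt.8e} are again of the form $A\oplus(-A)$, so $\gamma$ anti-commutes with each of them, exactly as demanded in \S~\ref{le.0}.

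Next I would verify that Assumption~\ref{le.4} passes to $\eth'_{\phi}$ with the \emph{same} constants $\epsilon$ and $\epsilon_1$: the kernel bundle of the vertical family of $\eth'_\phi$ is $\ker D_v\oplus\ker D_v$; the indicial family of $D'_b$ is $I(D_b,\lambda)\oplus(-I(D_b,\lambda))$, which is invertible precisely when $I(D_b,\lambda)$ is, so $\Crit(D'_b)=\Crit(D_b)$ and the interval $(-1-\epsilon,\epsilon)$ is still free of critical weights; and the $L^2_b$-kernel of $D'_{\phi}$ is $\ker_{L^2_b}D_{\phi}\oplus\ker_{L^2_b}D_{\phi}$, whose elements inherit the bound ``$x^{-\epsilon_1}\psi$ bounded''. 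With this in hand, Theorem~\ref{le.46} applies to $\eth'_{\phi}$ and produces $G'_{k,\phi}\in\Psi^{-1,\cG}_{k,\phi}(M;E\oplus E)$ with $D'_{k,\phi}G'_{k,\phi}=G'_{k,\phi}D'_{k,\phi}=\Id$, where $D'_{k,\phi}=D'_{\phi}+k\gamma$ and $\cG$, $\nu=\min\{\epsilon,\epsilon_1-1\}$ are as in \eqref{le.46b}.

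The last step is purely formal. Applying \eqref{le.3} to $\eth'_{\phi}$ gives $(D'_{k,\phi})^2=(D'_{\phi})^2+k^2=(D^2_{\phi}+k^2)\,\Id_{E\oplus E}$, so $(G'_{k,\phi})^2$ is a two-sided inverse of this block-scalar operator and hence, by uniqueness of the inverse, is block diagonal with each diagonal block equal to $(D^2_{\phi}+k^2)^{-1}$. By the composition theorem (Theorem~\ref{com.12}, whose overlap hypothesis is vacuous here since $\cG|_{\lf}=\cG|_{\rf}=\emptyset$) one obtains $(G'_{k,\phi})^2\in\Psi^{-2,\cG_2}_{k,\phi}(M;E\oplus E)$ with $\cG_2$ obtained from $\cE=\cF=\cG$ through \eqref{com.12b}; restricting to one diagonal block defines $G^2_{k,\phi}:=(D^2_{\phi}+k^2)^{-1}\in\Psi^{-2,\cG_2}_{k,\phi}(M;E)$ with $(D^2_{\phi}+k^2)G^2_{k,\phi}=G^2_{k,\phi}(D^2_{\phi}+k^2)=\Id$. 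The bounds in \eqref{le.47b} -- the drop by one order at $\zf$ and $\fbf_0$, the preservation of $\cG_2|_{\ff}=\bbN_0$ and $\inf\Re\cG_2|_{\ff_0}\ge0$, and the extra logarithmic factors at $\lf_0$ and $\rf_0$ -- then follow by reading off \eqref{com.12b}, the logs coming from two contributions reaching the same order in an extended union $\overline{\cup}$ (at $\lf_0$, for instance, $\cG|_{\lf_0}+\cG|_{\zf}$ and $\cG|_{\fbf_0}+\cG|_{\lf_0}-h-1$ both descend to order $\nu-1$). The part requiring the most care is the bookkeeping in the first two paragraphs: confirming that $\eth'_{\phi}$ genuinely falls under the structural hypotheses of \S~\ref{le.0} (a Dirac operator for the doubled Clifford module, with $\gamma$ anti-commuting with $D_v,\eth_h,c,\eth_Y$, and Assumption~\ref{le.4} holding with unchanged $\epsilon,\epsilon_1$, so that $\nu$ is the same); once that is granted, everything else is formal, the composition arithmetic via \eqref{com.12b} being routine.
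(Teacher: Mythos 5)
Your proposal is correct and follows essentially the same route as the paper: double the bundle to $E\oplus E$, take $\widetilde{\eth}_{\phi}=\eth_{\phi}\oplus(-\eth_{\phi})$ together with an off-diagonal involution $\gamma$ anti-commuting with it (the paper uses $\bigl(\begin{smallmatrix}0&-i\\ i&0\end{smallmatrix}\bigr)$, you use $\bigl(\begin{smallmatrix}0&\Id\\ \Id&0\end{smallmatrix}\bigr)$, which works equally well), apply Theorem~\ref{le.46}, square the resulting inverse using Theorem~\ref{com.12}, and extract a diagonal block. Your extra bookkeeping (checking that the doubled operator satisfies Assumption~\ref{le.4} with the same $\epsilon,\epsilon_1$ and that $\gamma$ anti-commutes with $D_v,\eth_h,c,\eth_Y$) is exactly what the paper's argument implicitly relies on, so there is no substantive difference.
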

\begin{proof}
To apply Theorem~\ref{le.46}, we need to find a self-adjoint operator $\gamma\in\CI(M;\End(E))$ as in \eqref{le.1}.  One way to proceed is to consider, instead of $\eth_{\phi}$ the operator $\widetilde{\eth}_{\phi}\in\Diff^1_{\phi}(M;E\oplus E)$ given by
\begin{equation}
  \widetilde{\eth}_{\phi}= \left( \begin{array}{cc} \eth_{\phi} & 0 \\ 0 & -\eth_{\phi} \end{array} \right),
\label{le.47c}\end{equation}    
since then one can consider the self-adjoint operator $\widetilde{\gamma}\in \CI(M;\End(E\oplus E))$ given by
\begin{equation}
   \widetilde{\gamma}= \left( \begin{array}{cc} 0  &  -\sqrt{-1}  \\ \sqrt{-1} & 0 \end{array} \right).
\label{le.47d}\end{equation}
The operator  $\widetilde{\gamma}$ is such that $\widetilde{\gamma}^2=\Id_{E\oplus E}$ and $\widetilde{\eth}_{\phi}\widetilde{\gamma}+ \widetilde{\gamma}\widetilde{\eth}_{\phi}=0$.  Setting $\widetilde{D}_{\phi}=x^{-\frac{h+1}2}\widetilde{\eth}_{\phi}x^{\frac{h+1}2}$, we can thus apply Theorem~\ref{com.12} to find $\widetilde{G}_{k,\phi}\in \Psi^{-1,\cG}_{k,\phi}(M;E\oplus E)$ such that
$$
           (\widetilde{D}_{\phi}+ k\widetilde{\gamma})\widetilde{G}_{k,\phi}= \widetilde{G}_{k,\phi}(\widetilde{D}_{\phi}+ k\widetilde{\gamma})= \Id_{E\oplus E}
$$
with index family $\cG$ as in \eqref{le.46b}.  Composing $\widetilde{G}_{k,\phi}$ with itself and applying Theorem~\ref{com.12} gives us an operator $\widetilde{G}^2_{k,\phi}\in\Psi^{-2,\cG_2}_{k,\phi}(M;E\oplus E)$ with index family $\cG_2$ as in \eqref{le.47b} such that 
$$
   (\widetilde{D}_{\phi}^2+ k^2)\widetilde{G}^2_{k,\phi}= \widetilde{G}^2_{k,\phi}(\widetilde{D}_{\phi}^2+ k^2)= \Id_{E\oplus E}
$$
If $P_1:E\oplus E\to E$ is the bundle projection on the first factor, it suffices then to take $G^2_{k,\phi}=P_1\widetilde{G}^2_{k,\phi}P_1$. 
\end{proof}
\begin{remark}
In terms of $\eth_{\phi}^2$, this means that $x^{\frac{h+1}2}G^2_{k,\phi}x^{-\frac{h+1}2}$ is such that
$$
    (\eth_{\phi}^2+k^2)(x^{\frac{h+1}2}G^2_{k,\phi}x^{-\frac{h+1}2})=(x^{\frac{h+1}2}G^2_{k,\phi}x^{-\frac{h+1}2})(\eth_{\phi}^2+k^2)=\Id.
$$
\end{remark}

Thanks to Example~\ref{HdR.1}, this can be applied in particular to Hodge Laplacian of a fibered boundary metric.

\begin{corollary}
Let $\eth_{\phi}$ be the Hodge-deRham operator associated to a fibered boundary metric $g_{\phi}$ product-type up to order $2$.  Suppose that the exterior differential $d^{\ker D_v}$ and its formal adjoint $\delta^{\ker D_v}$ acting on sections of the flat vector bundle $\ker D_v\to Y$ in Lemma~\ref{do.8c} are such that the de Rham cohomology groups
\begin{gather}
  \label{le.49a}     H^q(Y;\ker D_v)=\{0\} \quad \mbox{for} \quad q\in\{ \frac{h-1}2, \frac{h}2, \frac{h+1}2\}, \\
   \label{le.49b}  \Spec(d^{\ker D_v}\delta^{\ker D_v}+  \delta^{\ker D_v} d^{\ker D_v})_{\frac{h}2}>\frac{3}4, \\
   \label{le.49c}  \Spec (d^{\ker D_v}\delta^{\ker D_v})_{\frac{h+1}2}>1.
\end{gather} 
Then there exists  an operator $G_{k,\phi}^2\in \Psi^{-2,\cG_2}_{k,\phi}(M;\Lambda^*({}^{\phi}T^*M))$ such that 
$$
          (D_{\phi}^2+k^2)G_{k,\phi}^2= G_{k,\phi}^2 (D_{\phi}^2+k^2)= \Id,
$$
where $\cG_2$ is an index family given by the empty set at $\lf, \rf$ and $\fbf$, and such that,  
\begin{equation}
\begin{gathered}
  \inf\Re \cG_2|_{\zf}\ge-2,  \quad \inf \Re\cG_2|_{\fbf_0}\ge h-1, \quad   \cG_2|_{\ff}=\bbN_0, \quad \inf\Re \cG_2|_{\ff_0}\ge  \left\{ \begin{array}{ll} 0, & h>1,  \\ (0,1), & h=1,  \end{array} \right.  \\
   \mbox{and} \quad \inf\Re \cG_2|_{\lf_0}\ge (\nu-1,1)\in \bbR\times \bbN_0,  \quad \inf\Re \cG_2|_{\rf_0}\ge (h-1+\nu,1)\in \bbR\times \bbN_0,
\end{gathered}
\label{le.47e}\end{equation}
where $\nu=\min\{\epsilon,\epsilon_1-1\}$.
\label{le.48}\end{corollary}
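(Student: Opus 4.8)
The plan is to obtain Corollary~\ref{le.48} as a direct application of Corollary~\ref{le.47}. For $\eth_{\phi}$ the Hodge--deRham operator of $g_{\phi}$, Assumption~\ref{dt.4} holds automatically (the fiberwise harmonic forms form the bundle $\ker D_v\to Y$), so the only point to verify is that \eqref{le.49a}--\eqref{le.49c} force Assumption~\ref{le.4} for a suitable $\epsilon>0$, with $\epsilon_1=\epsilon$. Granting this, Corollary~\ref{le.47} yields $G^2_{k,\phi}\in\Psi^{-2,\cG_2}_{k,\phi}(M;\Lambda^*({}^{\phi}T^*M))$ with index family \eqref{le.47b}, which is exactly \eqref{le.47e} since $\nu=\min\{\epsilon,\epsilon_1-1\}$. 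To verify the first half of Assumption~\ref{le.4} I would start from Lemma~\ref{do.8c}, which lists $\Crit(D_b)$ through \eqref{do.8d}, and use that $(-1-\epsilon,\epsilon)$ is symmetric about $-\tfrac12$ while, by Remark~\ref{dt.9b}, the set of indicial roots is invariant under $\lambda\mapsto-1-\lambda$; hence it suffices to keep each branch of \eqref{do.8d} out of $(-1-\epsilon,\epsilon)$.

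For the cohomological roots $q-\tfrac{h+1}2$ and $-(q-\tfrac{h-1}2)$: they equal $0$, $-1$, or $-\tfrac12$ precisely for $q\in\{\tfrac{h-1}2,\tfrac h2,\tfrac{h+1}2\}$, all of which lie inside $(-1-\epsilon,\epsilon)$ and are excluded by \eqref{le.49a}, while for every other integer $q$ they sit at distance at least $\tfrac12$ from $\{0,-1\}$, hence outside $(-1-\epsilon,\epsilon)$ once $\epsilon\le\tfrac12$. For the square-root branches $\ell\pm\sqrt{\zeta+(q-\tfrac{h\mp1}2)^2}$ with $\ell\in\{-1,0\}$ and $\zeta>0$, a short computation shows that avoiding $(-1-\epsilon,\epsilon)$ is equivalent to $\sqrt{\zeta+(q-\tfrac{h\mp1}2)^2}\ge 1+\epsilon$. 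The two such families in \eqref{do.8d} produce the same set of roots (reindex $q\mapsto q-1$ and use $\Spec(d^{\ker D_v}\delta^{\ker D_v})_q\setminus\{0\}=\Spec(\delta^{\ker D_v}d^{\ker D_v})_{q-1}\setminus\{0\}$), so it is enough to treat $\ell\pm\sqrt{\zeta+(q-\tfrac{h-1}2)^2}$ with $\zeta\in\Spec(\delta^{\ker D_v}d^{\ker D_v})_q\setminus\{0\}$. Since $\zeta>0$, the inequality is automatic whenever $(q-\tfrac{h-1}2)^2\ge 1$, leaving only the central degrees: if $h$ is odd the case $(q-\tfrac{h-1}2)^2=0$ forces $\zeta>1$ for $\zeta\in\Spec(\delta^{\ker D_v}d^{\ker D_v})_{(h-1)/2}\setminus\{0\}$, which via the Laplacian-intertwining isomorphism $d^{\ker D_v}$ from coexact $\tfrac{h-1}2$-forms to exact $\tfrac{h+1}2$-forms is exactly \eqref{le.49c}; if $h$ is even the cases $(q-\tfrac{h-1}2)^2=\tfrac14$ force $\zeta>\tfrac34$ for $\zeta$ in $\Spec(\delta^{\ker D_v}d^{\ker D_v})_{h/2}\setminus\{0\}$ or in $\Spec(d^{\ker D_v}\delta^{\ker D_v})_{h/2}\setminus\{0\}$, all of which appear among the eigenvalues bounded in \eqref{le.49b}. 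Since $Y$ is closed and $\ker D_v$ is flat, these spectra are discrete, so the strict inequalities yield a genuine gap, and there is $\epsilon\in(0,\tfrac12]$ with $(-1-\epsilon,\epsilon)\cap\Crit(D_b)=\emptyset$.

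For the second half of Assumption~\ref{le.4}, I would invoke Corollary~\ref{dt.22} with $\alpha=0$ and $\mu=\epsilon$ --- legitimate because $(-\epsilon,\epsilon)\subset(-1-\epsilon,\epsilon)$ contains no critical weight --- to conclude that every element of the $L^2_b$-kernel of $D_{\phi}$ has leading term of order at least $x^{\epsilon}$, so $\epsilon_1=\epsilon$ is admissible. Assumption~\ref{le.4} then holds, and Corollary~\ref{le.47} finishes the proof. I expect the only real obstacle to be the bookkeeping in this branch-by-branch check: tracking which degrees $q$ contribute roots near the open interval $(-1-\epsilon,\epsilon)$, and translating \eqref{le.49b}--\eqref{le.49c} correctly through the $d^{\ker D_v}/\delta^{\ker D_v}$ isomorphisms and the parity dichotomy in $h$; everything else is a direct appeal to Corollary~\ref{le.47}.
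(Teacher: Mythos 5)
Your proposal is correct and follows essentially the same route as the paper: reduce to checking, via Lemma~\ref{do.8c}, that no indicial root lies in (a neighbourhood of) $[-1,0]$, use the coincidence of the nonzero spectra of $d^{\ker D_v}\delta^{\ker D_v}$ and $\delta^{\ker D_v}d^{\ker D_v}$ in adjacent degrees to see that \eqref{le.49a}--\eqref{le.49c} cover all the relevant branches, take $\epsilon_1=\epsilon$ by Corollary~\ref{dt.22}, and then quote Corollary~\ref{le.47}. The only difference is bookkeeping: you collapse the two square-root families by reindexing at the outset, whereas the paper lists the six conditions \eqref{le.50a}--\eqref{le.50f} and then identifies them with the hypotheses via the same spectral symmetry.
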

\begin{proof}
We need to show that the indicial family $I(D_b,\lambda)$ has no indicial root in the interval $[-1,0]$.  By Lemma~\ref{do.8c}, this will be the case provided the de Rham cohomology groups
\begin{gather}
  \label{le.50a}     H^q(Y;\ker D_v)=\{0\} \quad\mbox{for} \quad q\in\{ \frac{h-1}2, \frac{h}2, \frac{h+1}2\}, \\
  \label{le.50b}  \left( \Spec(d^{\ker D_v}\delta^{\ker D_v}+  \delta^{\ker D_v} d^{\ker D_v})_{\frac{h}2}\setminus\{0\}\right)>\frac{3}4, \\
 \label{le.50c}    \left(\Spec (d^{\ker D_v}\delta^{\ker D_v})_{\frac{h+1}2}\setminus\{0\}\right)>1, \\
  \label{le.50d}   \left(\Spec (\delta^{\ker D_v} d^{\ker D_v})_{\frac{h-1}2}\setminus\{0\}\right)>1, \\
 \label{le.50e}    \left(\Spec (d^{\ker D_v}\delta^{\ker D_v})_{\frac{h+2}2}\setminus\{0\}\right)>\frac{3}4, \\
  \label{le.50f}    \left(\Spec (\delta^{\ker D_v} d^{\ker D_v})_{\frac{h-2}2}\setminus\{0\}\right)>\frac34.
\end{gather}

Clearly, \eqref{le.49a} and \eqref{le.49b} corresponds to \eqref{le.50a} and \eqref{le.50b}.  On the other hand, by the symmetry of the positive spectrum of the Hodge Laplacian, \eqref{le.49b} also implies  \eqref{le.50e} and \eqref{le.50f}, while \eqref{le.49c} implies \eqref{le.50c} and \eqref{le.50d}.

\end{proof}

When $Y=\pa M$ and $\phi: \pa M\to Y$ is the identity, we see, taking into account the different conventions for densities to define pseudodifferential operators, that Corollary~\ref{le.48} gives back \cite[Theorem~1]{GS}, but on a double-space with with one extra face, namely $\ff_0$.  In our parametrix construction however, the face $\ff_0$ is not required when $\phi$ is the identity map, so our parametrix does indeed descend to the $b$-$\sc$ transition double space of \cite{GH1,Kottke} as in \cite{GS}.  

On the other hand, with respect to \cite[Theorem~1]{GS}, our hypothesis is slightly less restrictive.
 Indeed, first, in the terminology of \cite{GS}, we are allowing an asymptotically conic metric to order 2 instead of $3$.  Second, the assumption \cite[(2)]{GS}, namely
\begin{equation}
     \ker_{x^{-1}L^2_b}(D^2_{\phi})=\ker_{L^2_b}(D^2_{\phi})
\label{le.51}\end{equation}
in our notation, implies in particular that 
\begin{equation}
  \ker_{x^{-1}L^2_b}D_{\phi}= \ker_{L^2_b}D_{\phi}.
\label{le.52}\end{equation}
By the relative index theorem of \cite[Theorem~6.5]{MelroseAPS}  and  the symmetry of the critical weights of $I(D_b,\lambda)$ around $-\frac12$, we can infer from \eqref{le.52} that 
\begin{equation}
  (-1,0)\cap \Crit(D_b)=\emptyset.
\label{le.53}\end{equation}
By Lemma~\ref{do.8c}, the condition \eqref{le.53} implies \eqref{le.49a}, \eqref{le.49b} and \eqref{le.49c}, but the last two with only non-strict inequalities.  However, using the symmetries of the positive spectrum of the Hodge Laplacian, the first condition of \cite[(4)]{GS}, namely
\begin{equation}
  \left| q-\frac{h+1}2 \right|\le \frac12\; \Longrightarrow \; 1-\left(\frac{h+1}2-q  \right)^2\notin \Spec(d^{\ker D_v}\delta^{\ker D_v})_q
\label{le.54}\end{equation}
in our notation, precisely rules out the equality case in the \eqref{le.49b} and \eqref{le.49c} with non-strict inequalities.  Thus, conditions \cite[(2),(4)]{GS} implies our conditions \eqref{le.49a}, \eqref{le.49b} and \eqref{le.49c}.  Conversely, if $h$ is even, notice that \eqref{le.49a} and \eqref{le.49b} implies \cite[(2),(4)]{GS} by \cite[Lemma~27]{GS}.  If instead $h$ is odd, then at least we see that \eqref{le.49a} and the stronger version of \eqref{le.49c}
$$
\Spec (d^{\ker D_v}\delta^{\ker D_v}+ \delta^{\ker D_v}d^{\ker D_v} )_{\frac{h+1}2}>1
$$
imply \cite[(2),(4)]{GS} by \cite[Remark~28]{GS}.

\section{The inverse of a non-fully elliptic supended Dirac $\phi$-operator} \label{sus.0}

In this final section, let us come back to our original motivation for studying the low energy limit of the resolvent of a Dirac $\phi$-operator.  Thus, on $M\times \bbR^q$, let 
\begin{equation}
    \eth_{\sus}= \eth_{\phi} + \eth_{\bbR^q}
\label{sus.1}\end{equation}
be a $\bbR^q$-suspended Dirac $\phi$-operator, where $\eth_{\phi}$ is a Dirac $\phi$-operator associated to a fibered boundary metric $g_{\phi}$ and a Clifford module $E\to M$ as in \S~\ref{le.0}, and where $\eth_{\bbR^q}$ is a family of Euclidean Dirac operators on $\bbR^q$ parametrized by $M$ and anti-commuting with $\eth_{\phi}$.  If $\{e_1,\ldots,e_q\}$ is the canonical basis of $\bbR^q$, then 
$$
    \eth_{\bbR^q}= \sum_{j=1}^q \cl(e_j) \nabla_{e_j}
$$  
with $\nabla$ the pull-back of the Clifford connection of $E$ to its pull-back on $M\times \bbR^q$ and $\cl(e_j)$ denotes Clifford multiplication by $e_j$.  Thus, we suppose that the Clifford module structure of $E$ lifts to a Clifford module on its pull-back on $M\times \bbR^q$ for the Clifford bundle associated to the product metric
$$
     g_{\phi}+ g_{\bbR^q}
$$
on $M\times \bbR^q$, where $g_{\bbR^q}$ is the canonical Euclidean metric on $\bbR^q$.  Taking the Fourier transform in $\bbR^q$, we obtain a family of operators
\begin{equation}
  \widehat{\eth}_{\sus}(\xi)= \eth_{\phi}+ i\cl(\xi),  \quad \xi\in\bbR^q.
\label{sus.2}\end{equation}
As noted in the introduction, for $\xi\ne 0$, this can be rewritten 
\begin{equation}
\widehat{\eth}_{\sus}= \eth_{\phi}+ k\gamma \quad \mbox{with} \; k=|\xi|,  \quad \gamma=\frac{i}{|\xi|} \cl(\xi). 
\label{sus.3}\end{equation}
Conjugating by $x^{\frac{h+1}2}$, we get the corresponding operators
\begin{equation}
  D_{\sus}= x^{-\frac{h+1}2}\eth_{\sus}x^{\frac{h+1}2}= D_{\phi}+ \eth_{\bbR^q} \quad \mbox{and} \quad \widehat{D}_{\sus}(\xi)= D_{\phi}+ i\cl(\xi)
\label{sus.3}\end{equation}
with $\xi\in\bbR^q$ and $D_{\phi}= x^{-\frac{h+1}2}\eth_{\phi}x^{\frac{h+1}2}$ as in \eqref{fdt.1}.  

We will suppose that Assumption~\ref{le.4} holds for the operator $\eth_{\phi}$.  In this case, using \eqref{sus.3}, we know by Theorem~\ref{le.46} that the inverse $G_{\xi,\phi}$ of the operator $\widehat{D}_{\sus}(\xi)$ admits a pseudodifferential description all the way down to $\xi=0$.  Hence, taking the inverse Fourier transform of the inverse $G_{\xi,\phi}$ will give a corresponding inverse for $D_{\sus}$.  The detailed description of $G_{\xi,\phi}$ in the limit $\xi\to 0$ will allow us to give a pseudifferential characterization of the inverse of $D_{\sus}$.  First, recall for instance from \cite[Lemma~6.2]{DLR} that the small calculus of $\bbR^q$-suspended $\phi$-operators acting on sections of $E$ is the union over $m\in\bbR$ of the spaces 
\begin{multline}
\Psi^m_{\phi-\sus(\bbR^q)}(M;E):= \{ \kappa\in I^m(M^2_{\phi}\times \overline{\bbR^q}, \Delta_{\phi}\times \{0\};
  \pr_1^*(\Hom_{\phi}(E,E)\otimes {}^{\phi}\Omega_R(M)))\cdot \pr_2^*(\Omega_{\bbR^q})\; | \\ \kappa\equiv 0 \; \mbox{at}\; \pa(M^2_{\phi}\times \overline{\bbR^q})\setminus (\ff\times \overline{\bbR^q}) \},
\label{sus.4}\end{multline}
where $\overline{\bbR^q}$ is the radial compactification of $\bbR^q$,  $\pr_1: M^2_{\phi}\times \overline{\bbR^q}\to M^2_{\phi}$ and $\pr_2: M^2_{\phi}\times \overline{\bbR^q}\to \overline{\bbR^q} $ are the projections on the first and second factors and $\Omega_{\bbR^q}$ is the density of the Euclidean metric on $\bbR^q$.  However, because of the lack of decay and the lack of smoothness, the inverse Fourier transform of $G_{\xi,\phi}$ will not quite be an element of $\Psi^{-2}_{\phi-\sus(\bbR^q)}(M;E)$.  We need in fact to slightly modify this space of operators if we want to include the inverse Fourier transform of $G_{\xi,\phi}$.  To describe this space, let $\rho$ be a total boundary defining function for the $b$-double space $M^2_b$.  Let $V_{\rho}=M^2_b\times \rho\bbR^q$ denote  the vector bundle of rank $q$ over $M^2_b$ trivialized by the sections $\rho e_1,\ldots, \rho e_q$.  As sections of $V_\rho$, these sections are not vanishing on $\pa M^2_b$, though of course they do vanish as sections of $M^2_b\times \bbR^q\to M^2_b$.  Let $\overline{V_\rho}= M^2_b\times \overline{\rho\bbR^q}$ denote the fiberwise radial compactification of the fiber bundle $V_\rho$.  The double space needed to describe the Schwartz kernels of our class of   
operators is obtained by blowing up the $p$-submanifold $\Phi\times \{0\}\subset M^2_b\times \overline{\rho\bbR^q}$, that is, the zero section of $V_\rho|_{\Phi}$, where $\Phi\subset M^2_b$ is the $p$-submanifold of \eqref{phi.11},
\begin{equation}
  \widetilde{M}^2_{\phi-\sus(V_\rho)}= [M^2_b\times \overline{\rho\bbR^q}; \Phi\times \{0\}].
\label{sus.4b}\end{equation} 
Denote by $\ff$ the new boundary hypersurface created by this blow-up.  Let us also denote by $\bbS(V_\rho)$, $\fbf$, $\lf$ and $\rf$ the boundary hypersurfaces of $\widetilde{M}^2_{\phi-\sus(V_{\rho})}$ corresponding to the lifts of $M^2_b\times \pa(\overline{\rho\bbR^q})$, $\fb\times \overline{\rho\bbR^q}$, $\lf\times \overline{\rho\bbR^q}$ and $\rf\times \overline{\rho\bbR^q}$.  Because of the blow-up of $\Phi\times\{0\}$, notice that the space of suspended operators \eqref{sus.4} can alternatively be defined by 
\begin{multline}
\Psi^m_{\phi-\sus(\bbR^q)}(M;E)= \{ \kappa\in I^m(\widetilde{M}^2_{\phi-\sus(V_\rho)}, \Delta_{\phi,\sus};
  \widetilde{\pr}_1^*(\Hom_{b}(E,E)\otimes \beta_b^*\pr_R^*{}^{\phi}\Omega(M)))\cdot\widetilde{\pr}_2^*(\rho^{-q}\Omega_{\rho\bbR^q}) \; | \\ \kappa\equiv 0 \; \mbox{at}\; \pa(\widetilde{M}^2_{\phi-\sus(V_\rho)})\setminus \ff \},
\label{sus.4c}\end{multline}
where $\Delta_{\phi,\sus}$ is the lift of $\Delta_b\times\{0\} \subset M^2_b\times \overline{\rho\bbR^q}$ to $\widetilde{M}^2_{\phi-\sus(V_\rho)}$ with $\Delta_b$ the $b$-diagonal in $M^2_b$, 
$$
\widetilde{\pr}_1: \widetilde{M}^2_{\phi-\sus(V_{\rho})}\to M^2_b \quad \mbox{and} \quad \widetilde{\pr}_2: \widetilde{M}^2_{\phi-\sus(V_{\rho})}\to \overline{\rho\bbR^q}
$$ 
are the natural map induced by the blow-down map and the natural projections $\overline{V_\rho}\to M^2_b$ and $\overline{V_{\rho}}\to \overline{\rho\bbR^q}$, while  $\Omega_{\rho\bbR^q}=\rho^{q}\Omega_{\bbR^q}$ is the natural Euclidean density on $\rho\bbR^q$ and
$$
\Hom_b(E,E)=\beta_b^*(\pr_L^*E\otimes \pr_R^*E^*)
$$ 
with $\pr_L:M^2\to M$ and $\pr_R:M^2\to M$ the projections on the left and right factors.

If $\cE$ is an index family associated to the manifold with corners $\widetilde{M}^2_{\phi-\sus(V_\rho)}$, one can more generally consider the spaces
\begin{equation}
\begin{aligned}
\Psi^{-\infty,\cE}_{\phi-\sus(\bbR^q)}(M;E)&:=\cA^{\cE}_{\phg}(\widetilde{M}^2_{\phi-\sus(V_{\rho})}; \widetilde{\pr}_1^*(\Hom_{b}(E,E)\otimes \beta_b^*\pr_R^*{}^{\phi}\Omega(M))\cdot\widetilde{\pr}_2^*(\rho^{-q}\Omega_{\rho\bbR^q})),  \\
\Psi^{m,\cE}_{\phi-\sus(\bbR^q)}(M;E)&:= \Psi^m_{\phi-\sus(\bbR^q)}(M;E)+\Psi^{-\infty,\cE}_{\phi-\sus(\bbR^q)}(M;E), \quad m\in\bbR.
\end{aligned}
\label{sus.5}\end{equation}

\begin{theorem}
Suppose that Assumption~\ref{le.4} holds for $\eth_{\phi}$ and that $h=\dim Y>1$.  Then the inverse $D^{-1}_{\sus}$ of $D_{\sus}$, for instance seen as acting from its minimal domain onto the $L^2$-space of sections of $E$ with respect to the metric $g_{b}+g_{\bbR^q}$ with $g_b$ a $b$-metric on $M$, is an element of $\Psi^{-1,\check{\cG}}_{\phi-\sus(\bbR^q)}(M;E)$ for an index family $\check{\cG}$ such that 
\begin{equation}
\begin{gathered}
\inf\Re\check{\cG}|_{\bbS(V_{\rho})}\ge q-1, \quad \inf\Re\check{\cG}|_{\fbf}\ge h+q, \quad \inf\Re\check{\cG}|_{\ff}\ge 0,  \\
\inf\Re\check{\cG}|_{\lf}\ge \nu+q, \quad \inf\Re\check{\cG}|_{\rf}\ge h+q+1+\nu, \quad \mbox{with} \; \nu=\min\{ \epsilon,\epsilon_1-1\}.
\end{gathered}
\label{sus.6a}\end{equation}
Furthermore, if $\epsilon+\epsilon_1>1$ for $\epsilon$ and $\epsilon_1$ as in Assumption~\ref{le.4}, then 
$$
    \check{\cG}|_{\bbS(V_\rho)}= (q-1+\bbN_0)\cup (\cN+q)
$$
with $\cN$ an index set such that $\Re\cN>0$.
\label{sus.6}\end{theorem}
\begin{proof}
Notice first that performing a standard symbolic inversion as in the proof of Proposition~\ref{dt.11}, there exists 
$Q\in \Psi^{-1}_{\phi-\sus(\bbR^q)}(M;E)$ such that 
$$
         D_{\sus}Q=\Id +R,  \quad R\in\Psi^{-\infty}_{\phi-\sus(\bbR^q)}(M;E).  
$$
Hence, taking its Fourier transform $\widehat{Q}(\xi)$ in the factor $\bbR^q$ gives for each $\eta\in\bbS^{q-1}$ an operator
$\widehat{Q}(k\eta)$ in $\Psi^{-1}_{k,\phi}(M;E)$ such that for $\gamma=i\cl(\eta)$,  
$$
     (D_{\phi}+k\gamma)\widehat{Q}(k\eta)= \Id + \widehat{R}(k\eta), \quad \widehat{R}(k\eta)\in\Psi^{-\infty}_{k,\phi}(M;E).
$$
In particular, this shows that
$$
\begin{aligned}
(D_{\phi}+k\gamma)^{-1}&= (D_{\phi}+k\gamma)^{-1}((D_{\phi}+k\gamma)\widehat{Q}(k\eta)-\widehat{R}(k\eta) ) \\
              &= \widehat{Q}(k\eta)+ (D_{\phi}+k\gamma)^{-1}\widehat{R}(k\eta).
\end{aligned}
$$
Since the inverse Fourier transform of the first term on the right hand side is already in the desired space, it suffices to concentrate on the second term.  By Theorem~\ref{le.46} and Theorem~\ref{com.12}, notice that
$$
   \widehat{R}_2(k\eta):= (D_{\phi}+k\gamma)^{-1}\widehat{R}(k\eta)\in\Psi^{-\infty,\cG}_{k,\phi}(M;E).  
$$
Now, near $\zf$, but away from the other boundary hypersurfaces, the inverse Fourier transform converts the polyhomogeneous expansion at $\zf$ into a polyhomogeneous expansion at $\bbS(V_\rho)$ with term of order $k^\ell=|\xi|^{\ell}$ at $\zf$ being converted into a term of order $\rho_{\infty}^{q+\ell}$ at $\bbS(V_\rho)$, where $\rho_{\infty}$ denotes a boundary defining function for $\bbS(V_\rho)$.  In particular, the term of order $-1$ at $\zf$ corresponds to a term of order $\rho^{q-1}$ at $\bbS(V_\rho)$ given by the pull-back of 
$$
G^{-1}_{\zf}=\Pi_{\ker_{L^2_b}D_{\phi}} 
$$
on $M^2_{b}$ to $\bbS(V_\rho)$.  Near $\fbf_0$, $\lf_0$ and $\rf_0$, but away from $\ff_0$,  we can take advantage of  the rapid decay of $\widehat{R}_2(k\eta)$ at $\fbf$, $\lf$ and $\rf$ to make the change of variable  
\begin{equation}
               \widetilde{\xi}= \frac{\xi}{\rho},  \quad \widetilde{x}= \rho x
\label{sus.6c}\end{equation}
in the inverse Fourier transform of $\widehat{R}_2(\xi)$, so that 
$$
      \left(\frac{1}{(2\pi)^q}\int_{\bbR^q}  e^{ix\cdot\xi} \widehat{R}_2(\xi) d\xi\right)dx=  \left(\frac{1}{(2\pi)^q} \int_{\rho^{-1}\bbR^q} e^{i\widetilde{x}\cdot \widetilde{\xi}} \widehat{R}_2(\rho\widetilde{\xi})  d\widetilde{\xi}\right)d\widetilde{x}
$$ 
with $\widetilde{x}$ the natural variable on the fibers of $V_\rho= M^2_b\times \rho\bbR^q$ (so that $d\widetilde{x}=\Omega_{\rho\bbR^q}$).  In particular, the inverse Fourier transform will have the claimed behavior away from the lift of $V_\rho|_{\Phi}\subset V_\rho$ on $\widetilde{M}^2_{\phi-\sus(V_{\rho})}$.  

Hence, the only problematic region left to consider is near $\ff_0$ and $\ff$ in $M^2_{\phi,k}$.  To describe the inverse Fourier transform near this region, we will first provide more details on the expansion of $(D_{\phi}+k\gamma)^{-1}$ at $\ff$ and $\ff_0$.  Let $\rho_{\ff}$ and $\rho_{\ff_0}$ be boundary defining functions for the boundary hypersurfaces $\ff$ and $\ff_0$ in $M^2_{k,\phi}$.  Then the expansion of $(D_{\phi}+k\gamma)^{-1}$ at $\ff$ in powers of $\rho_{\ff}$ makes in principle the Fourier transform hard to compute, since in local coordinates, $\rho_{\ff}=\frac{x'}{k}$, yielding a singular expansion in $k$ as $k\searrow 0$.  However, as we will now show, the expansion at $\ff$ of $\widehat{R}_2(k\eta)$ is in powers of $\rho_{\ff}\rho_{\ff_0}$, that in powers of $x'$.  Indeed, since
\begin{equation}
      \widehat{R}_2(k\eta)=(D_{\phi}+k\gamma)^{-1}-\widehat{Q}(k\eta)
\label{sus.6b}\end{equation}
and since $\widehat{Q}(k\eta)$ is already a conormal distribution with smooth expansion at $M^2_{\phi}\times [0,\infty)_k$, it clearly suffices to show that the expansion of $(D_{\phi}+k\gamma)^{-1}$ at $\ff$ is in powers of $\rho_{\ff}\rho_{\ff_0}$ instead of just $\rho_{\ff}$, a result established in Lemma~\ref{sus.7} below. 

Knowing this, we can thus take the inverse Fourier transform in $\xi$ of each term in the expansion of $\widehat{R}_2(\xi)$ at $\ff$.  Doing this, we are left with an error term with rapid decay at $\ff$.   To take the inverse Fourier transform near $\ff_0$, we can thus make the change of variable \eqref{sus.6c} again and invoke Lemma~\ref{sus.11} below to show it is of the desired form.

Still, there could be a problem while taking the inverse Fourier transform of each term in the expansion of $\widehat{R}_2(\xi)$ at $\ff$.  Indeed, in principle the expansion in $|\xi|$ would yield an expansion at the boundary hypersurface created by the blow-up of the lift of $\Phi\times \overline{\rho\bbR^q}$ in $\widetilde{M}^2_{\phi-\sus(V_\rho)}$.  The fact that we do not need to perform this blow-up to have a polyhomogeneous conormal distribution comes from the fact that the expansion in $|\xi|$ is in fact smooth in $\xi$, so when we take the inverse Fourier transform, this ensures rapid decay at this extra-blown-up face.  
To see this smoothness in the expansion at $\xi=0$, notice that by Lemma~\ref{sus.7b} below, each term in the expansion of $(D_{\phi}+k\gamma)^{-1}$ at $\ff$ has a smooth expansion in $\frac{\xi}{\rho_{\fbf_0}}$, not just $\frac{|\xi|}{\rho_{\fbf_0}}$, at $\ff_0\cap \ff$, so that by \eqref{sus.6b}, the same holds for the terms in the expansion of $\widehat{R}(\xi)$ at $\ff$.

\end{proof}

\begin{lemma}
The expansion of $G_{k,\phi}=(D_{\phi}+k\gamma)^{-1}$ at $\ff$ in Theorem~\ref{le.46} can be described in terms of powers of $\rho_{\ff}\rho_{\ff_0}\rho_{\fbf_0}$ for $\rho_{\ff}$, $\rho_{\ff_0}$ and $\rho_{\fbf_0}$ boundary defining functions for $\ff$, $\ff_0$ and $\fbf_0$.
\label{sus.7}\end{lemma}
\begin{proof}
According to \eqref{le.42}, the top order term in the expansion of $G_{k,\phi}$ at $\ff$ is given by 
\begin{equation}
   N_{\ff}(D_{k,\phi})^{-1}= (D_v+\eth_h+ \gamma k)(D_v^2+ \eth_h^2+k^2)^{-1}.
\label{sus.8}\end{equation}
It has a term of order $h$ at $\ff\cap \fbf_0$   involving only the part \eqref{le.43} of the normal operator acting on sections of $\ker D_v$. Hence, the model \eqref{sus.8} can be extended smoothly off $\ff$ to an operator $Q_0\in\Psi^{-1,\cQ_0}_{k,\phi}(M;E)$, where for $j\in\bbN_0$, $\cQ_j$ corresponds to the index family such that
$$
      \cQ_j|_{\ff}=\cQ_j|_{\ff_0}=\cQ_{j}|_{\fbf_0}-h =   \bbN_0+j
$$
with $\cQ_j$ given by the empty set elsewhere. This extension can be made in such a way that its expansion at $\ff$ is in powers of $\rho_{\ff}\rho_{\fbf_0}\rho_{\ff_0}$.  Then we have that   
\begin{equation}
    D_{k,\phi}Q_0= \Id +R_1,
\label{sus.9}\end{equation} 
and with $R_1\in \Psi^{0,\cR_1}_{k,\phi}(M;E)$ having also expansion at $\ff$ in powers of $\rho_{\ff}\rho_{\fbf_0}\rho_{\ff_0}$, where for $j\in \bbN$,  $\cR_j$ corresponds to the index family such that 
$$
    \cR_j|_{\ff}= \bbN_0+j, \quad \cR_j|_{\ff_0}-1=\cR_j|_{\fbf_0}-h-1=\bbN_0,
$$
and which is the empty set at all other boundary hypersurfaces of $M^2_{k,\phi}$.  Indeed, by Theorem~\ref{com.12}, the error term $R_1$ is of the claimed form. Since the expansion of $R_1$ at $\ff$ is in power of $\rho_{\ff}\rho_{\ff_0}\rho_{\fbf_0}$, notice that       $N_{\ff}(R_1\rho_{\ff}^{-1})$ has index sets $\bbN_0+1$ and $\bbN_0+h+2$ at $\ff_0$ and $\fbf_0$.   
In fact, adding successively terms of order $(\rho^1_{\ff}\rho^1_{\ff_0}\rho_{\fbf_0}^{h+j})$ for $j\in \bbN$ in the expansion of $Q_0$ at the corner $\ff\cap \fbf_0$ and taking a Borel sum of those, we can require as well that $N_{\ff}(R_1\rho_{\ff}^{-1})$ decays rapidly at this corner.  

Now, replacing $\cE_{\ff}$ and $\cF|_{\ff}$ by $0$ in Theorem~\ref{com.12} yields a composition result for Schwartz kernels on $\ff$.  This suggests to consider a term $Q_1\in\Psi^{-1,\cQ_1}_{k,\phi}(M;E)$ such that
$$
   N_{\ff}(Q_1\rho_{\ff}^{-1})= -N_{\ff}(Q_0)N_{\ff}(R_1\rho_{\ff}^{-1})
$$
and with expansion at $\ff$ in powers of $\rho_{\ff}\rho_{\fbf_0}\rho_{\ff_0}$.    With this understood, we have that
$$
      D_{k,\phi}(Q_0+Q_1)=\Id+R_2
$$
with $R_2\in\Psi^{0,\cR_2}_{k,\phi}(M;E)$ having expansion at $\ff$ in powers of $\rho_{\ff}\rho_{\ff_0}\rho_{\ff_0}$.  In particular, $N_{\ff}(R_2\rho_{\ff}^{-2})$ has index set $\bbN_0+2$ and $\bbN_{0}+h+3$ at $\ff_0\cap\ff$ and $\fbf_0\cap \ff$.  Adding successively terms of  order $(\rho^2_{\ff}\rho^2_{\ff_0}\rho_{\fbf_0}^{h+1+j})$ for $j\in\bbN$ in the expansion of $Q_1$ at the corner $\fbf_0\cap\ff$ and taking a Borel sum, we can also ensure that $N_{\ff}(R_2\rho_{\ff}^{-2})$ vanishes rapidly there.  Clearly, this construction can be iterated, so that more generally, we can define recursively $Q_\ell\in\Psi^{-1,\cQ_\ell}(M;E)$ having expansion in powers of $\rho_{\ff}\rho_{\ff_0}\rho_{\fbf_0}$ at $\ff$ such that $N_{\ff}(Q_\ell\rho^{-\ell}_{\ff})= -N_{\ff}(Q_0)N_{\ff}(R_\ell\rho^{-\ell}_{\ff})$  and 
$$
        D_{k,\phi}(\sum_{j=0}^\ell Q_j)= \Id + R_{\ell+1}
$$    
with $R_{\ell+1}\in \Psi^{0,\cR_{\ell+1}}_{k,\phi}(M;E)$ having expansion in powers of $\rho_{\ff}\rho_{\ff_0}\rho_{\fbf_0}$ at $\ff$ with $N_{\ff}(R_{\ell+1}\rho_{\ff}^{-\ell-1})$ vanishing rapidly at $\fbf_0\cap\ff$.     If $Q\in\Psi^{-1,\cQ_0}_{k,\phi}(M;E)$ is a Borel sum of the $Q_j$, then its expansion at $\ff$ is in powers of $\rho_{\ff}\rho_{\ff_0}\rho_{\fbf_0}$ and 
$$
      D_{k,\phi}Q=\Id+R
$$
for some $R\in\Psi^{0,\cR}_{k,\phi}(M;E)$ with $\cR$ the index family such that
$$
     \quad \cR|_{\ff_0}-1=\cR|_{\fbf_0}-h-1=\bbN_0
$$
and with $\cR$  given by the empty set elsewhere, in particular at $\ff$.  Since 
$$
    D_{k,\phi}^{-1}= D_{k,\phi}^{-1}(D_{k,\phi}Q-R)= Q- D_{k,\phi}^{-1}R,
$$  
we see from Theorem~\ref{com.12} that $D_{k,\phi}^{-1}$ has the same expansion as the one of $Q$ at $\ff$, from which the result follows.
   
\end{proof}

\begin{lemma}
The terms in the expansion of $G_{\xi,\phi}$ at $\ff_0$ can be decomposed into terms coming from $M^2_{\phi}\times \bbR^q$ and terms coming from $[M^2_{k,b};\Phi_+]\times \bbS^{q-1}$.
\label{sus.11}\end{lemma}
\begin{proof}
Using \eqref{le.39}, we know how to invert $D_{\xi,\phi}$ at $\fbf_0$.  This inverse makes sense on $[M^2_{k,b};\Phi_+]\times \bbS^{q-1}$, that is, before we blow up $\Phi_0$ in \eqref{kfb.9bb} to obtain $M^2_{k,\phi}\times\bbS^{q-1}$.  When lifted to $M^2_{k,\phi}\times \bbS^{q-1}$, it induces on $\ff_0$ the part of the inverse of $N_{\ff_0}(D_{\xi,\phi})$ in the range of $\Pi_h$.  The part of the inverse of $N_{\ff_0}(D_{\xi,\phi})$ off this range is a family of suspended operators in the usual sense, so decaying rapidly on $\fbf_0$.  Moreover, the full inverse of $N_{\ff_0}(D_{\xi,\phi})$ does not depend on $\frac{\xi}{x}$ and descends to $M^2_{\phi}\times \bbR^q$.  Hence, let $Q_0\in\Psi^{-1,\cQ_0}_{k,\phi}(M;E)$ be a parametrix of $D_{\xi,\phi}$ obtained by extending the inverses at $\fbf_0$ and $\ff_0$ smoothly and by inverting symbolically, so that 
\begin{equation}
D_{\xi,\phi}Q_0= \Id- R_0'-R_0'',
\label{sus.12}\end{equation} 
where $R_0'\in \Psi^{-\infty,\cR_0'}_{k,\phi}(M;E)$ comes from a polyhomogeneous section on $[M^2_{k,b};\Phi_+]\times\bbS^{q-1}$, $R_0''\in\Psi^{-\infty}_{k,\phi}(M;E)$ vanishes to order one at $\ff_0$ and comes from a polyhomogeneous section on $M^2_{\phi}\times \bbR^q$, and where $\cQ_0$ is an index family such that
$$
\begin{gathered}
  \inf\Re \cQ_0|_{\zf}\ge 0, \quad \inf\Re\cQ_0|_{\ff_0}\ge 0, \quad \inf\Re \cQ_0|_{\fbf_0}\ge h, \quad \inf\Re\cQ_0|_{\lf_0}>0, \quad \inf\Re\cQ_0|_{\rf_0}>h+1, \\
  \cQ_0|_{\ff}=\bbN_0, \quad \cQ_0|_{\lf}=\cQ_0|_{\rf}=\cQ_0|_{\fbf}=\emptyset,
\end{gathered}  
$$
while $\cR_0'$ is an index family such that
$$
\begin{gathered}
  \inf\Re \cR_0'|_{\zf}\ge 0, \quad \inf\Re\cR_0'|_{\ff_0}>0, \quad \inf\Re \cR_0'|_{\fbf_0}>h+1, \quad \inf\Re\cR_0'|_{\lf_0}>0, \quad \inf\Re\cR_0'|_{\rf_0}>h+1, \\
  \cR_0|_{\ff}=\bbN_0, \quad \cR_0'|_{\lf}=\cR_0'|_{\rf}=\cR_0'|_{\fbf}=\emptyset.
\end{gathered}  
$$

  Essentially, the term $R_0'$ is the error term created by the inversion at $\fbf_0$ and the inversion on $\ff_0$ in the range of $\Pi_h$, while $R_0''$ is the error term created by the inversion at $\ff_0$ off the range of $\Pi_h$ and the symbolic inversion.  Extending smoothly 
$$
   N_{\ff_0}(Q_1):= N_{\ff_0}(Q_0)N_{\ff_0}(R_0'')
$$   
off $\ff\times \{0\}$ in $M^2_{\phi}\times \bbR^q$, we obtain an operator $Q_1\in\Psi^{-\infty}_{k,\phi}(M;E)$ coming from a smooth section on $M^2\times [0,\infty)$ such that 
$$
      D_{\xi,\phi}(Q_0+Q_1)=\Id-R_1' -R_1''
$$
with $R_1'$ and $R''$ satisfying respectively the same properties as those of $R_0'$ and $R_0''$, but with $R_0''$ vanishing to order $2$ at $\ff_0$.  Proceeding recursively, we can more generally construct $Q_i\in\Psi^{-\infty}_{k,\phi}(M;E)$ coming from a smooth section on $M^2_{\phi}\times \bbR^q$ such that
$$
    D_{\xi,\phi}\left(\sum_{j=0}^{i}Q_j \right)=\Id-R_i'-R_i''
$$
with $R_i'$ and $R_i''$ satisfying the same properties as $R_0'$ and $R_0''$, but with $R_i''$ vanishing to order $j$ at $\ff_0$.  Taking a Borel sum 
$$
   Q_{\infty}\sim \sum_{j=0}^{\infty} Q_j
$$ 
at $\ff_0$ gives a a parametrix $Q_{\infty}\in \Psi^{-1,\cQ_0}_{k,\phi}(M;E)$ such that
$$
   D_{\xi,\phi}Q_{\infty}=\Id-R_{\infty}
$$
with $R_{\infty}\in\Psi^{-\infty,\cR_0}_{k\phi}(M;E)$ satisfying the same properties as $R_0$.  Proceeding as in the proof of Proposition~\ref{le.40}, we can also remove the expansion of $R_{\infty}$ at the boundary hypersurface of $[M^2_{k,b};\Phi_+]$ that lifts to $\fbf_0$ on $M^2_{k,\phi}$ to get a new parametrix $Q\in\Psi^{-1,\cQ}_{k,\phi}(M;E)$ with $\cQ$ satisfying the same properties as $\cQ_0$ and such that
\begin{equation}
  D_{\xi,\phi}Q=\Id-R
\label{sus.13}\end{equation}  
for some $R\in\Psi^{-\infty,\cR}_{k,\phi}(M;E)$ with $\cR$ satisfying the same properties as $\cR_0'$, but with $\cR|_{\ff_0}=\cR|_{\fbf_0}=\emptyset$.  In this construction, notice that we can write
$$
  Q=Q'+Q''
$$
with $Q'$ coming from a conormal distribution on $[M^2_{k,b};\Phi_+]\times \bbS^{q-1}$ and $Q''$ coming from a conormal distribution on $M^2_{\phi}\times \bbR^q$.  On the other hand, by \eqref{sus.13}, 
$$
    G_{\xi,\phi}=G_{\xi,\phi}\Id= G_{\xi,\phi}(D_{\xi,\phi}Q+R)= Q+ G_{\xi,\phi}R.
$$
Since $R$ decays rapidly on $\fbf_0$ and $\ff_0$, using the fact that in \eqref{com.12b},  the terms in the expansion at $\ff_0$ coming from $(\cE|_{\lf_0}+\cF|_{\rf_0})$ correspond to terms polyhomogeneous  on $[M^2_{k,b};\Phi_+]$, we see that $G_{\xi,\phi}$ has the same expansion as $Q$ at $\ff_0$ modulo terms coming from $[M^2_{k,b};\Phi_+]\times\bbS^{q-1}$, from which the result follows.

\end{proof}

\begin{lemma}
Suppose that $h>1$.  Then each term of the expansion of $G_{\xi,\phi}=(D_{\phi}+i\cl(\xi))^{-1}$ at $\ff$ has a smooth expansion in powers of $\frac{\xi}{\rho_{\fbf_0}}$ at  $\ff\cap\ff_0$, not just in powers of $\frac{k}{\rho_{\fbf_0}}=\frac{|\xi|}{\rho_{\fbf_0}}$.
\label{sus.7b}\end{lemma}
\begin{proof}
Let us start by showing that $N_{\ff}(D_{k,\phi})^{-1}$ has a smooth expansion in $\frac{\xi}{\rho_{\fbf_0}}$ at $\ff\cap \ff_0$.  For this purpose, we can decompose the action of $N_{\ff}(D_{k,\phi})^{-1}$ with respect to the decomposition $\ker D_v\oplus \ker D_v^{\perp}$ in the fibers of $\phi: \pa M\to Y$.  Clearly, the part acting on $(\ker D_v)^{\perp}$ is smooth in $\xi$ and it even descends to $\ff\times \bbR^q_{\xi}$ in $M^2_{\phi}\times \bbR^q_{\xi}$.  

For the part acting on $\ker D_v$, it is given by 
\begin{equation}
  (\eth_h+i\cl(\xi))^{-1}= (\eth_h+i\cl(\xi))(\eth_h^2+|\xi|^2)^{-1}.
\label{sus.10}\end{equation} 
Now, $\eth^2_{h}= |\xi|^2\Delta_h$, where $\Delta_h$ can be seen as a family of Euclidean Laplacian on the fibers of the vector bundle ${}^{k,\phi}N_{\sc}Y$ introduced in \eqref{la.5}.  Taking the Fourier transform in the fibers of this vector bundles, the operator $(\eth_h^2)+ |\xi|^2)$ thus becomes
$$
    |\xi|^2(|\zeta|^2_{\sigma_2(\Delta_h)})+1),
$$
where $\zeta$ denotes linear coordinates in the fibers of ${}^{k,\phi}N_{\sc}^*Y$ with norm $|\cdot|_{\sigma_2(\Delta_h)}$ induced by the principal symbol of $\Delta_h$.  The inverse is clearly given by
$$
      |\xi|^{-2}(|\zeta|^2_{\sigma_2(\Delta_h)}+1)^{-1}.
$$
Taking the inverse Fourier transform, we see that
$$
  (\eth_h^2+|\xi|^2)^{-1}=\left(|\xi|^{-2} \frac{1}{(2\pi)^{h+1}} \int e^{i\zeta\cdot kY}\frac{d\zeta}{1+|\zeta|^2_{\sigma_2(\Delta_h)}}\right) d(kY),
$$
where $Y$ denotes linear coordinates in the fibers of $V=(k^{-1}){}^{k,\phi}N_{\sc}Y$ in \eqref{sm.7}, so that $kY$ corresponds to linear coordinates in ${}^{k,\phi}N_{\sc}Y$.  Since $(1+|\zeta|^2_{\sigma_2(\Delta_h)})^{-1}$ has an expansion in even powers of $|\zeta|^{-1}_{\sigma_2(\Delta_h)}$, its inverse Fourier transform has an expansion of the form 
$$
     \frac{1}{(2\pi)^{h+1}}\int \frac{e^{i\zeta\cdot kY}}{(1+|\zeta|^2_{\sigma_2(\Delta_h)})} d\zeta\sim \frac{1}{|kY|^{h-1}_{\sigma_2(\Delta_h)}} \sum_{j=0}^{\infty} a_j|kY|^{2j}_{\sigma_2(\Delta_h)}
$$  
at $|kY|_{\sigma_2(\Delta_h)}=0$, where we have used the fact that $h>1$ to rule out the presence of a logarithmic term in the expansion. Hence, taking into account the change of density $d(kY)= k^{h+1}dY$, we see that at $|kY|_{\sigma_2(\Delta_h)}=0$, the inverse $(\eth_h^2+|\xi|^2)^{-1}$ has the expansion
$$
 (\eth_h^2+|\xi|^2)^{-1}\sim \left( \frac{1}{|Y|^{h-1}_{\sigma_2(\Delta_h)}} \sum_{j=0}^{\infty} a_j |\xi|^{2j}|Y|^{2j}_{\sigma_2(\Delta_h)} \right) dY \quad \mbox{at}  \quad k|Y|_{\sigma_2(\Delta_h)}=0.
$$
Since $|Y|^{-1}_{\sigma_2(\Delta_h)}$ is a boundary defining function for $\fbf_0$, this expansion corresponds to a smooth expansion in $\frac{\xi}{\rho_{\fbf_0}}$ at $\ff\cap \ff_0$.  Since $(\eth_h+i \cl(\xi))$ is already smooth in $\xi$, we thus see that the composite \eqref{sus.10} has a smooth expansion in powers of $\frac{\xi}{\rho_{\fbf_0}}$ at $\ff\cap \ff_0$.  

Hence, letting $Q_0\in\Psi^{-1,\cQ_0}_{k,\phi}(M;E)$ and $R_1\in\Psi^{0,\cR_1}_{k,\phi}(M;E)$ be as in the proof of Lemma~\ref{sus.7}, notice that $Q_0$ and $R_1$ can be chosen to have a smooth expansion in $\frac{\xi}{\rho_{\fbf_0}}$ at $\ff_0$.  In fact, adding terms of order $\rho_{\ff}\left(\frac{|\xi|}{\rho_{\fbf_0}} \right)^j$ for $j\in \bbN$ to $Q_0$ that are smooth in $\frac{\xi}{\rho_{\fbf_0}}$ and taking a Borel sum, we can further assume that $N_{\ff}(R_1\rho^{-1}_{\ff})$ decays rapidly at $\ff_0$ as well.  

Letting $\widetilde{\cQ_j}$ denote index family given by $\widetilde{\cQ}_j|_{\ff}=\bbN_0+j$, $\widetilde{\cQ}_j|_{\fbf_0}=\bbN_0+h+1$ and by the empty set elsewhere, we can thus take $Q_1\in \Psi^{-1,\widetilde{\cQ}_1}_{k,\phi}(M;E)$ with smooth expansion in $\frac{\xi}{\rho_{\fbf_0}}$ at $\ff_0$ such that
$$
    N_{\ff}(Q_1\rho_{\ff}^{-1})= -N_{\ff}(Q_0)N_{\ff}(R_1\rho_{\ff}^{-1})
$$  
and 
$$
     D_{k,\phi}(Q_0+Q_1)=\Id+R_2
$$
for $R_2\in\Psi^{0,\widetilde{\cR}_2}_{k,\phi}(M;E)$ having smooth expansion in powers of $\frac{\xi}{\rho_{\fbf_0}}$ at $\ff_0$, where $\widetilde{\cR}_j$ denotes the index family with
$$
     \widetilde{\cR}_j|_{\ff}=\bbN_0+j, \quad \widetilde{\cR}_j|_{\ff_0}= \widetilde{\cR}_j|_{\fbf_0}-h= \bbN_0+1
$$
and with $\widetilde{\cR}_j$ elsewhere given by the empty set. In fact, since we are not insisting on $Q_1$ having rapid decay at $\ff_0$ and $\fbf_0$,  we can assume that $N_{\ff}(R_2\rho_{\ff}^{-2})$ decays rapidly at $\ff_0\cap \ff$ by considering appropriate terms of order $\rho^2_{\ff}\left( \frac{|\xi|}{\rho_{\fbf_0}}\right)^j$ smooth in $\frac{\xi}{\rho_{\fbf_0}}$ for $j\in\bbN_0$ in the expansion of $Q_0$ at $\ff_0\cap \ff$ and taking a Borel sum of those.  Similarly, adding terms of order $\rho^2_{\ff}(\rho_{\fbf_0})^{h+j}$ for $j\in\bbN$ through a Borel sum in the expansion of $Q_1$ at $\ff\cap \fbf_0$, we can assume as well that $N_{\ff}(R_2\rho^{-2}_{\ff})$ vanishes rapidly at $\ff\cap \fbf_0$.  

Clearly, this construction can be iterated, so that one can more generally construct $Q_j\in \Psi^{-1,\widetilde{\cQ}_j}_{k,\phi}(M;E)$ with smooth expansion in powers of $\frac{\xi}{\rho_{\fbf_0}}$ at $\ff_0$ such that 
$$
    N_{\ff}(Q_j\rho^{-j})=-N_{\ff}(Q_0)N_{\ff}(R_j\rho_{\ff}^{-j})
$$
and 
$$
    D_{k,\phi}(Q_0+\cdots + Q_j)= \Id+ R_{j+1}
$$
with $R_{j+1}\in \Psi^{0,\widetilde{\cR}_{j+1}}_{k,\phi}(M;E)$ having a smooth expansion in powers of  $\frac{\xi}{\rho_{\fbf_0}}$ at $\ff_0$ and  such that $N_{\ff}(R_{j+1}\rho_{\ff}^{-j-1})$ vanishes rapidly at $\ff_0\cap \ff$ and $\fbf_0\cap \ff$.  Taking Borel sum of the $Q_j$, we thus obtain an operator $Q\in \Psi^{-1,\cQ_0}_{k,\phi}(M;E)$ such that $Q$ has a smooth expansion in powers of $\frac{\xi}{\rho_{\fbf_0}}$ at $\ff_0$ and 
$$
       D_{k,\phi} Q=\Id +R
$$
with $R\in \Psi^{0,\widetilde{\cR}}_{k,\phi}(M;E)$, where $\widetilde{\cR}$ is the index family with
$$
    \widetilde{\cR}|_{\ff_0}=\widetilde{\cR}|_{\fbf_0}-h= \bbN_0+1
$$
and with $\widetilde{\cR}$ given by the empty set elsewhere.  Since
$$
     D_{k,\phi}^{-1}= D_{k,\phi}^{-1}(D_{k,\phi}Q-R)= Q- D_{k,\phi}R,
$$
we see from Theorem~\ref{com.12} that $Q$ and $D_{k,\phi}^{-1}$ have the same expansion at $\ff$, from which the result follows.
\end{proof}

\appendix

\section{Blow-ups in manifolds with corners} \label{cbu.0}

In this appendix, we will establish the commutativity of blow-ups of two $p$-submanifolds used in Lemma~\ref{kfb.9b} to show that our two ways of constructing the $k,\phi$-double space are equivalent.  Indeed, this result definitely requires a proof, especially since it does not seem to follow from standard results like the commutativity of nested blow-ups or the commutativity of blow-ups of transversal $p$-submanifolds.

\begin{lemma}
Let $W$ be a manifold with corners.  Suppose that $X$ and $Y$ are two $p$-submanifolds such that their intersection 
$Z= X\cap Y$ is also a $p$-submanifold with the property that for every $w\in Z$, there is a coordinate chart 
\begin{equation}
   \varphi: \cU\to \bbR^{n_1}_{k_1}\times \bbR^{n_2}_{k_2}\times \bbR^{n_3}_{k_3}\times \bbR^{n_4}_{k_4}
\label{kqfb.7a}\end{equation} 
sending $w$ to the origin such that 
\begin{equation}
\begin{aligned}
\varphi(\cU\cap X)&= \{0\}\times \{0\}\times \bbR^{n_3}_{k_3}\times \bbR^{n_4}_{k_4}, \\
\varphi(\cU\cap Y)&= \{0\}\times \bbR^{n_2}_{k_2}\times \{0\}\times \bbR^{n_4}_{k_4}, \\
\varphi(\cU\cap Z)&= \{0\}\times\{0\}\times \{0\}\times \bbR^{n_4}_{k_4}. 
\end{aligned}
\label{kqfb.7b}\end{equation}
Then the identity map in the interior extends to a diffeomorphism 
\begin{equation}
     [W;X,Y,Z]\to [W;Y,X,Z].
\label{kqfb.7c}\end{equation}
\label{kqfb.7}\end{lemma}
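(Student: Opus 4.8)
The plan is to reduce the global statement to a purely local computation in the model situation \eqref{kqfb.7b}, and then to check that the two iterated blow-ups produce the same space by exhibiting explicit coordinate charts on each. Since blow-ups are local constructions away from the centers and agree with the identity in the interior, and since $[W;X,Y,Z]$ and $[W;Y,X,Z]$ are manifestly canonically diffeomorphic away from the preimage of $Z$ (there we are only blowing up $X$ and $Y$, and near a point of $X\setminus Z$ the second center $Y$ is absent, so the order is irrelevant; and $X$, $Y$ are disjoint outside $Z$ by the hypothesis that $Z=X\cap Y$), it suffices to work in a neighborhood $\cU$ of an arbitrary point $w\in Z$ with coordinates as in \eqref{kqfb.7a}. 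By naturality of blow-up under the diffeomorphism $\varphi$ and the fact that blow-up of a $p$-submanifold along a product factor $\bbR^{n_4}_{k_4}$ commutes with taking the product, we may further drop the $\bbR^{n_4}_{k_4}$ factor and assume $W = \bbR^{n_1}_{k_1}\times \bbR^{n_2}_{k_2}\times \bbR^{n_3}_{k_3}$ with $X = \{0\}\times\{0\}\times\bbR^{n_3}_{k_3}$, $Y = \{0\}\times\bbR^{n_2}_{k_2}\times\{0\}$, $Z = \{0\}\times\{0\}\times\{0\}$ (a single boundary face of the appropriate codimension), the point $w$ being the origin.

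Next I would compute $[W;X,Y,Z]$ directly. Write the three groups of coordinates as $u\in\bbR^{n_1}_{k_1}$, $v\in\bbR^{n_2}_{k_2}$, $t\in\bbR^{n_3}_{k_3}$. Blowing up $X=\{u=0,v=0\}$ introduces, in the region where $|u|$ dominates, coordinates $u$, $v/|u|$ (more precisely projective/polar coordinates on the $(u,v)$-variables, suitably adapted to the corner), with $t$ untouched; and similarly in the region where $|v|$ dominates. One then lifts $Y=\{u=0,t=0\}$ and $Z=\{u=0,v=0,t=0\}$ to this first blow-up and blows them up in turn, carefully tracking in each coordinate patch which of the three lifted centers actually passes through the patch. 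The hypothesis \eqref{kqfb.7b} — that $X$, $Y$ meet cleanly with intersection exactly $Z$, in a product model — is exactly what guarantees that after blowing up $X$ the lifts of $Y$ and $Z$ are again $p$-submanifolds and that the remaining blow-ups are either nested or transversal in each chart. The symmetric computation gives $[W;Y,X,Z]$ with the roles of $v$ and $t$ interchanged, i.e. of $X$ and $Y$. I would then match the two atlases chart by chart: in the region where $|u|$ dominates both, the map is essentially the identity in $(u, v/|u|, t/|u|)$; in the regions where $|v|$ or $|t|$ dominates one gets the standard identification coming from the commutativity of the blow-up of transversal submanifolds, with the final blow-up of (the lift of) $Z$ reconciling the overlap. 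Checking that these chart-wise identifications are smooth, b-compatible, and patch together to a global diffeomorphism extending the identity in the interior completes the argument.

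The main obstacle I anticipate is purely bookkeeping rather than conceptual: one must set up polar/projective coordinates on manifolds-with-corners that correctly respect the several boundary faces (the $\bbR^{n_i}_{k_i}$ factors each contribute corner directions), and then verify in \emph{every} coordinate patch of the iterated blow-up — there are several, indexed by which coordinate group is "largest" at each stage — that the lifted centers $Y$, $Z$ (resp. $X$, $Z$) are disjoint or nested or transversal as needed, so that the already-established commutativity results (commutativity of nested blow-ups, commutativity of blow-ups of transversal $p$-submanifolds) can be invoked locally. The role of the third center $Z$ is precisely to absorb the one patch where $X$ and $Y$ are \emph{not} transversal after the first blow-up, namely along their common lift; blowing up $Z$ separates the lifts of $X$ and $Y$ there and makes the two orders agree. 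Once the local model is handled, globalization is routine: the identifications are canonical where they overlap, hence glue, and they restrict to the identity on the common interior.
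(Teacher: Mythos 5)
Your strategy---reduce to the local product model \eqref{kqfb.7b} (the $\bbR^{n_4}_{k_4}$ factor riding along harmlessly), compute both iterated blow-ups explicitly in polar/projective coordinates patch by patch, and note that after the final blow-up of $Z$ there is a chart near the critical locus (the region where the $\bbR^{n_1}_{k_1}$-variables dominate) that is manifestly symmetric under exchanging $X$ and $Y$, while away from $Z$ the centers are disjoint so the order is irrelevant---is essentially identical to the paper's proof, which carries out exactly this computation and concludes from the symmetry of the final chart. The one imprecision is your appeal to ``commutativity of blow-ups of transversal submanifolds'' in the regions where $|v|$ or $|t|$ dominates: $X$ and $Y$ are nowhere transversal when $n_1\ge 1$, and what actually makes those regions unproblematic is that there one of the two centers meets the region only along $Z$ (so its blow-up does not enter the chart, the residual issue being absorbed by the last blow-up of $Z$), a point your explicit chart-by-chart computation would in any case expose and repair.
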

\begin{proof}
Since we blow up $Z$ last, notice first that this result does not quite follows from the commutativity of nested blow-ups.  Now, clearly, away from $Z$, the blow-ups of $X$ and $Y$ commute since they do not intersect.  Thus, to establish \eqref{kqfb.7c}, it suffices to establish it near $Z$.  Let $w\in Z$ be given and consider a coordinate chart $(\cU,\varphi)$ as in \eqref{kqfb.7b}.  Let $x=(x_1,\ldots, x_{n_1})$, $y=(y_1,\ldots,y_{n_2})$, $z=(z_1,\ldots,z_{n_3})$ and $w=(w_1,\ldots,w_{n_4})$ be the canonical coordinates for the factors $\bbR^{n_1}_{k_1}$, $\bbR^{n_2}_{k_2}$, $\bbR^{n_3}_{k_3}$ and $\bbR^{n_4}_{k_4}$ respectively.

When we blow up $X$, this coordinate chart is replaced by the one induced by the coordinates
$$
 (\omega_{x,y}=(\frac{x}r,\frac{y}r), r=\sqrt{|x|^2+|y^2|},z,w)\in \bbS^{n_1+n_2-1}_{k_1+k_2}\times [0,\infty)_r\times \bbR^{n_3}_{k_3}\times \bbR^{n_4}_{k_4}.
$$
In this coordinate chart, the lifts of $Y$ and $Z$ corresponds to 
$$
    (\{0\}\times \bbS^{n_2-1}_{k_2})\times [0,\infty)_r\times \{0\}\times \bbR^{n_4}_{k_4}
$$
and
$$
   \bbS^{n_1+n_2-1}_{k_1+k_2}\times \{0\}\times\{0\} \times \bbR^{n_4}_{k_4},
$$
where $\{0\}\times \bbS^{n_2-1}_{k_2}\subset \bbR^{n_1}_{k_1}\times \bbR^{n_2}_{k_2}$ is seen as a $p$-submanifold of 
$\bbS^{n_1+n_2-1}_{k_1+k_2}$ seen as the unit sphere in $\bbR^{n_1}_{k_1}\times \bbR^{n_2}_{k_2}$.  To blow up $Y$, this suggests to consider the smaller coordinate chart induced by the coordinates
$$
  (x,\omega_y=\frac{y}{|y|},r,z,w)\in \bbR^{n_1}_{k_1}\times \bbS^{n_2-1}_{k_2}\times [0,\infty)_{r}\times \bbR^{n_3}_{k_3}\times \bbR^{n_4}_{k_4}
$$  
in which the lift of $Y$ corresponds to 
$$
\{0\}\times \bbS^{n_2-1}_{k_2}\times [0,\infty)_r\times \{0\}\times \bbR^{n_4}_{k_4}
$$
and the lift of $Z$ to 
$$
\bbR^{n_1}_{k_1}\times \bbS^{n_2-1}_{k_2}\times \{0\}\times \{0\}\times \bbR^{n_4}_{k_4}.
$$  
Hence, blowing up $Y$, we obtain a coordinate chart on $[W;X,Y]$ by considering the one induced by the coordinates
$$
   (\omega_{x,z}=(\frac{x}{\rho},\frac{z}{\rho}), \rho=\sqrt{|x|^2+|z|^2}, \omega_y=\frac{y}{|y|},r=\sqrt{|x|^2+|y|^2},w)\in
   \bbS^{n_1+n_3-1}_{k_1+k_3}\times [0,\infty)_{\rho} \times \bbS^{n_2-1}_{k_2}\times [0,\infty)_r\times \bbR^{n_4}_{k_4}
$$
in which the lift of $Z$ corresponds to 
$$
   (\bbS^{n_1-1}_{k_1}\times \{0\})\times [0,\infty)_{\rho}\times \bbS^{n_2-1}_{k_2}\times\{0\}\times \bbR^{n_4}_{k_4}
$$
with $\bbS^{n_1-1}_{k_1}\times \{0\}\subset \bbR^{n_1}_{k_1}\times \bbR^{n_3}_{k_3}$ seen as a $p$-submanifold of $\bbS^{n_1+n_3-1}_{k_1+k_3}$.  To blow up $Z$, this suggests to consider the coordinate charts induced by the coordinates 
$$
(z,\omega_x=\frac{x}{|x|}, \rho=\sqrt{|x|^2+|z|^2}, \omega_y=\frac{y}{|y|}, r=\sqrt{|x|^2+|y|^2},w)\in\bbR^{n_3}_{k_{3}}\times \bbS^{n_1-1}_{k_1}\times [0,\infty)_{\rho}\times \bbS^{n_2-1}_{k_2}\times [0,\infty)_{r}\times \bbR^{n_4}_{k_4}
$$ 
in which the lift of $Z$ corresponds to 
$$
  \{0\}\times \bbS^{n_1-1}_{k_1}\times [0,\infty)_{\rho}\times \bbS^{n_2-1}_{k_2}\times \{0\}\times \bbR^{n_4}_{k_4}.
$$
Hence, blowing up the lift of $Z$, we see that $[W;X,Y,Z]$ admits a coordinate chart induced by the coordinates
\begin{multline}
(\omega_{z,r}=(\frac{z}{s},\frac{r}{s}), s= \sqrt{|z|^2+r^2}, \omega_x= \frac{x}{|x|}, \omega_y=\frac{y}{|y|}, \rho=\sqrt{|x|^2+ |y|^2},w) \\\in \bbS^{n_3}_{k_3+1}\times [0,\infty)_{s}\times \bbS^{n_1-1}_{k_1}\times \bbS^{n_2-1}_{k_2}\times [0,\infty)_{\rho}\times \bbR^{n_4}_{k_4}.
\end{multline}
In this chart, $Z$ lifts to 
$$
    \bbS^{n_3}_{k_3+1}\times \{0\}\times \bbS^{n_1-1}_{k_1}\times \bbS^{n_2-1}_{k_2}\times [0,\infty)_{\rho}\times \bbR^{n_4}_{k_4},
$$
$Y$ lifts to
$$
\bbS^{n_3}_{k_3+1}\times [0,\infty)_s\times \bbS^{n_1-1}_{k_1}\times \bbS^{n_2-1}_{k_2}\times \{0\}\times \bbR^{n_4}_{k_4}
$$
and $X$ lifts to
$$
 (\bbS^{n_3-1}_{k_3}\times \{0\})\times [0,\infty)_s\times \bbS^{n_1-1}_{k_1}\times \bbS^{n_2-1}_{k_2}\times[0,\infty)_{\rho}\times \bbR^{n_4}_{k_4},
$$
where $\bbS^{n_3-1}_{k_3}\times \{0\}\subset \bbR^{n_3}_{k_3}\times [0,\infty)_r$ is seen as a $p$-submanifold of the unit sphere $\bbS^{n_3}_{k_3+1}\subset \bbR^{n_3}_{k_3}\times [0,\infty)_{r}$.  

Since we are interested in the commutativity of the blow-ups of $X$ and $Y$ when the blow-up of $Z$ is subsequently performed, we can consider instead a smaller coordinate chart on $[W;X,Y,Z]$ in a neighborhood of the lift of $X$ which is induced by the coordinates
\begin{multline}
  (\omega_x=\frac{x}{|x|}, \omega_y=\frac{y}{|y|}, \omega_z=\frac{z}{|z|}, r=\sqrt{|x|^2+|y|^2}, \rho=\sqrt{|x|^2+|z|^2},s=\sqrt{|x|^2+|y|^2+|z|^2},w) \\
  \in \bbS^{n_1-1}_{k_1}\times \bbS^{n_2-1}_{k_2}\times\bbS^{n_3-1}_{k_3}\times [0,\infty)_{r}\times [0,\infty)_{\rho}\times [0,\infty)_{s}\times \bbR^{n_4}_{k_4}.
\label{kqfb.8}\end{multline}
This chart is defined near the intersection of the lifts of $X$, $Y$ and $Z$, which corresponds to $\bbS^{n_1-1}_{k_1}\times \bbS^{n_2-1}_{k_2}\times\bbS^{n_3-1}_{k_3}\times \{0\}\times \{0\}\times \{0\}\times \bbR^{n_4}_{k_4}$.
 
The definition of this system of coordinates is symmetric with respect to $X$ and $Y$, namely, considering instead $[W;Y,X,Z]$, we would have obtain the same coordinate system valid near the intersection of the lifts of $X,Y$ and $Z$.  This indicates that in this region, the identity map in the interior naturally extends to a diffeomorphism.  Since this clear elsewhere, the result follows. 
 
\end{proof}

\bibliography{QFBop}
\bibliographystyle{amsplain}

\end{document}